\newcommand{\sumprime}{\if@display\sideset{}{'}\sum%
            \else\sum'\fi}
\begin{document}

\numberwithin{equation}{section}

\newtheorem{theorem}{Theorem}[section]
\newtheorem{proposition}[theorem]{Proposition}
\newtheorem{conjecture}[theorem]{Conjecture}
\def\theconjecture{\unskip}
\newtheorem{corollary}[theorem]{Corollary}
\newtheorem{lemma}[theorem]{Lemma}
\newtheorem{observation}[theorem]{Observation}
\newtheorem{definition}{Definition}
\numberwithin{definition}{section} 
\newtheorem{remark}{Remark}
\def\theremark{\unskip}
\newtheorem{kl}{Key Lemma}
\def\thekl{\unskip}
\newtheorem{question}{Question}
\def\thequestion{\unskip}
\newtheorem{example}{Example}
\def\theexample{\unskip}
\newtheorem{problem}{Problem}

\thanks{Supported by National Natural Science Foundation of China, No. 12271101}

\address [Bo-Yong Chen] {Department of Mathematical Sciences, Fudan University, Shanghai, 200433, China}
\email{boychen@fudan.edu.cn}

\address [Yuanpu Xiong] {Department of Mathematical Sciences, Fudan University, Shanghai, 200433, China}
\email{ypxiong18@fudan.edu.cn}

\title{Some properties of the $p-$Bergman kernel and metric}
\author{Bo-Yong Chen and Yuanpu Xiong}

\date{}

\begin{abstract}
The $p-$Bergman kernel $K_p(\cdot)$ is shown to be of $C^{1,1/2}$ for $1<p<\infty$.   An unexpected relation between the off-diagonal $p-$Bergman kernel $K_p(\cdot,z)$ and certain weighted $L^2$ Bergman kernel is given for $1\le p\le 2$.  As applications,  we show that for each $1\le p\le 2$,  $K_p(\cdot,z)\in L^q(\Omega)$\/ for  $q< \frac{2pn}{2n-\alpha(\Omega)}$ and $|K_s(z)-K_p(z)| \lesssim |s-p||\log |s-p||$  whenever the hyperconvexity index $\alpha(\Omega)$ is positive. Counterexamples for $2<p<\infty$ are given respectively.   An optimal upper bound for the holomorphic sectional curvature of the $p-$Bergman metric when $2\le p<\infty$ is obtained.   For bounded $C^2$ domains,  it is shown that the Hardy space and the Bergman space satisfy $H^p(\Omega)\subset A^q(\Omega)$ where $q=p(1+\frac1n)$.  A new concept so-called the $p-$Schwarz content is introduced.  As  applications,  upper bounds of the Banach-Mazur distance between  $p-$Bergman spaces are given,  and $A^p(\Omega)$ is shown to be non-Chebyshev in $L^p(\Omega)$ for $0<p\le 1$.  For planar domains,  we obtain a rigidity theorem for the $p-$Bergman kernel (which is not valid in high dimensional cases),   and a characterization of non-isolated boundary points through completeness of the Narasimhan-Simha metric.  
\end{abstract}

\maketitle

\tableofcontents

\section{Introduction}

This paper is continuation of \cite{CZ},  where a general $p-$Bergman theory,  i.e.,  a theory for the $p-$Bergman kernel,   the $p-$Bergman metric and the $p-$Bergman space,  is developed.  We shall use the same notation and some basic properties in \cite{CZ} will be frequently used.   The $p-$Bergman space $A^p(\Omega)$ over a bounded domain $\Omega\subset \mathbb C^n$ is given by 
$$
A^p(\Omega)=\left\{f\in \mathcal O(\Omega): \|f\|_p^p:=\int_\Omega |f|^p<\infty\right\}
$$
(throughout this paper the integrals are with respect to the Lebesgue measure).
It is known  that the following minimizing problem
\begin{equation}\label{eq:MinProb}
 m_p(z):=\inf\left\{\|f\|_p:f\in A^p(\Omega),f(z)=1\right\}
 \end{equation}
admits at least one minimizer for $0<p<\infty$ and  exactly one minimizer $m_p(\cdot,z)$ for  $1\le p < \infty$.  The $p-$Bergman kernel and off-diagonal $p-$Bergman kernel are given by $K_p(z)=m_p(z)^{-p}$ for $0<p<\infty$ and  $K_p(z,w)=m_p(z,w)K_p(w)$ for $1\le p<\infty$ respectively. 
 Among the basic properties of the $p-$Bergman theory,  the most important is the following reproducing property:  
 \begin{equation}\label{eq:RPF}
 f(z) =  \int_\Omega |m_p(\cdot,z)|^{p-2}\,\overline{K_p(\cdot,z)}\,f,\ \ \ \forall\,f\in A^p(\Omega).
  \end{equation}
  
  A simple normal family argument yields that $m_p(\cdot)$ and $K_p(\cdot)$ are  Lipschitz continuous.  It is more difficult to verify whether they are differentiable.  The first main result of this paper is the following interior regularity theorem.  
  
  \begin{theorem}\label{th:C1,1/2}
 $K_p(\cdot)\in{C^{1,1/2}(\Omega)}$\/ for\/ $1<p<\infty$; moreover,  for any $1\le j\le 2n$ and $z\in \Omega$,
\begin{equation}\label{eq:partial derivative}
\frac{\partial{K_p}}{\partial{x}_j}(z)=p\mathrm{Re}\,\frac{\partial{K_p(\cdot,z)}}{\partial{x_j}}\bigg|_z,
\end{equation}
where $(x_1,\cdots,x_{2n})$ is the real coordinates in $\mathbb R^{2n}=\mathbb C^n$.
\end{theorem}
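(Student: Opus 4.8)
The plan is to compute difference quotients of $K_p$ directly, exploiting the extremal characterization $K_p(z)=m_p(z)^{-p}$ and the reproducing formula \eqref{eq:RPF}. Fix $z\in\Omega$ and a direction $e_j$, and for small real $t$ compare the extremal functions $m_p(\cdot,z)$ and $m_p(\cdot,z+te_j)$. The key inequalities come from using $m_p(\cdot,z)/m_p(z+te_j,z)$ (suitably renormalized) as a competitor in the minimizing problem at $z+te_j$, and vice versa: since $K_p(w)=\|m_p(\cdot,w)\|_p^{-p}$, this immediately gives two-sided bounds
\[
K_p(z+te_j)\ \ge\ \frac{K_p(z)}{|m_p(z+te_j,z)|^{p}},\qquad K_p(z)\ \ge\ \frac{K_p(z+te_j)}{|m_p(z,z+te_j)|^{p}},
\]
so the regularity of $K_p(\cdot)$ at $z$ is controlled by the regularity in $t$ of $w\mapsto m_p(w,z)=K_p(w,z)/K_p(z)$, evaluated near $w=z$. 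The first task is therefore to show that $t\mapsto K_p(z+te_j,z)$ (and the reversed object) is differentiable in $t$ at $t=0$ with the expected derivative, which is where \eqref{eq:partial derivative} will come from after taking real parts.

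To get the derivative of the off-diagonal kernel I would differentiate the reproducing identity \eqref{eq:RPF}. Writing $f=m_p(\cdot,z+te_j)$ in the reproducing formula at the point $z$, expanding $|m_p(\cdot,z+te_j)|^{p-2}$ and $\overline{K_p(\cdot,z+te_j)}$ to first order in $t$, and using that the first variation of $\|f\|_p^p$ at the minimizer vanishes in the admissible directions (the Euler--Lagrange condition underlying \eqref{eq:RPF}), the $O(t)$ terms that survive are exactly those coming from moving the interpolation node; this produces a formula of the shape $\partial_t K_p(z+te_j,z)|_{t=0}=$ (a boundary-type integral pairing $\partial_{x_j}$ of the kernel against itself). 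Symmetrizing between the roles of $z$ and $z+te_j$ and combining with the two displayed inequalities yields $\partial K_p/\partial x_j(z)=p\,\mathrm{Re}\,\partial_{x_j}K_p(\cdot,z)|_z$, which is \eqref{eq:partial derivative}; note the factor $p$ is forced by $K_p=m_p^{-p}$ and $K_p(z,z)=K_p(z)$.

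For the Hölder continuity $K_p(\cdot)\in C^{1,1/2}$, the point is to upgrade pointwise differentiability to a quantitative modulus for the gradient. Here I would use uniform interior estimates: on a fixed compact subset, $m_p(z)$ is bounded above and below, the extremal functions $m_p(\cdot,z)$ lie in a normal family with uniform $A^p$-norms, and hence — via interior elliptic/subharmonic estimates for holomorphic functions — their derivatives up to second order at $z$ are uniformly controlled. The convexity/uniform-convexity of the $L^p$ norm for $1<p<\infty$ is what gives the crucial \emph{stability} estimate $\|m_p(\cdot,z)-m_p(\cdot,w)\|_p\lesssim |z-w|^{1/2}$ (the square root, rather than a linear bound, is the hallmark of uniform convexity of $L^p$ and is exactly where the exponent $1/2$ enters). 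Feeding this stability bound into the first-variation formula for the gradient shows $|\nabla K_p(z)-\nabla K_p(w)|\lesssim |z-w|^{1/2}$ locally.

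The main obstacle I anticipate is the second step: justifying the differentiation of \eqref{eq:RPF} in the parameter and identifying the surviving first-order term cleanly. One must control the $t$-dependence of $|m_p(\cdot,z+te_j)|^{p-2}$ (problematic when $p<2$ near zeros of the extremal function) and of $K_p(\cdot,z+te_j)$ simultaneously, uniformly enough to pass to the limit under the integral; this is precisely where the uniform $L^p$-stability of the family $\{m_p(\cdot,w)\}$ and the uniform convexity of $A^p(\Omega)$ must be invoked, and it is the technical heart of the argument. The endpoint $p=2$ is classical (everything is Hilbertian and $K_2$ is even real-analytic), so the content is genuinely the range $1<p<\infty$, $p\neq2$.
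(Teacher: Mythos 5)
Your plan founders at the step you yourself flag as the technical heart: differentiating the reproducing identity \eqref{eq:RPF} in the parameter. To expand $|m_p(\cdot,z+te_j)|^{p-2}$ and $\overline{K_p(\cdot,z+te_j)}$ ``to first order in $t$'' you need the map $t\mapsto m_p(\cdot,z+te_j)$ to be differentiable (as an $L^p$-valued map, together with pointwise control), but no such regularity of the extremal family in the parameter is known a priori; the best available information is H\"older-$1/2$ stability in $z$ (essentially Theorem 4.7 of \cite{CZ}), which does not permit a first-order expansion, so the ``surviving $O(t)$ term'' cannot be identified. The Euler--Lagrange condition does not rescue this: the first variation of $\|f\|_p^p$ vanishes only against perturbations respecting the fixed node $z$, whereas here the node itself moves, and the derivative of the minimizer with respect to the node is exactly the unknown quantity --- so as written the argument is circular: differentiability of $K_p(\cdot,z)$ in the second variable is of the same order of difficulty as the statement being proved. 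There is also a sign slip in your competitor bounds: the correct inequalities read $K_p(z+te_j)\ge K_p(z)\,|m_p(z+te_j,z)|^p$ (modulus as a factor, not a divisor), and, more importantly, the matching upper bound for the difference quotient again requires first-order control of $m_p(z,z+te_j)$ in the second variable --- the same unresolved issue. Finally, the claim that uniform convexity of $L^p$ yields the exponent $1/2$ for all $1<p<\infty$ is too quick (for $p>2$ the modulus of convexity is of power type $p$); the needed H\"older estimate must be imported from \cite{CZ}.

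The paper's proof avoids differentiating the extremal family in the parameter altogether by working on the dual side. One sets $L_z(f)=f(z)$, $L_{z,j}(f)=\partial f/\partial x_j(z)$, notes via Banach--Steinhaus that $\|L_{z+te_j}-L_z-tL_{z,j}\|=O(t^2)$, represents $L_z$ in $L^q$ by $g_z=|m_p(\cdot,z)|^{p-2}m_p(\cdot,z)/m_p(z)^p$ with $\|g_z\|_q=\|L_z\|=K_p(z)^{1/p}$, and represents $L_{z,j}$ by a Hahn--Banach extension $g_{z,j}$. The one-sided comparisons $\|L_z+tL_{z,j}\|\le\|g_z+tg_{z,j}\|_q$ and $\|L_{z+te_j}-tL_{z,j}\|\le\|g_{z+te_j}-tg_{z,j}\|_q$, combined with elementary pointwise inequalities for $|b|^q-|a|^q$ and with the Brezis--Lieb lemma and dominated convergence (which require only continuity of $m_p(\zeta,z)$ in $(\zeta,z)$), yield the exact formula $\partial K_p^{q/p}/\partial x_j(z)=q\,\mathrm{Re}\int_\Omega|g_z|^{q-2}\overline{g}_z g_{z,j}$, and hence \eqref{eq:partial derivative}. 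Your final step for the $C^{1/2}$ modulus of the gradient (H\"older continuity of the off-diagonal kernel in the second variable plus Cauchy estimates on the holomorphic difference quotient) does coincide with the paper's, but it only becomes available after \eqref{eq:partial derivative} has been established by some non-circular route such as the duality argument above.
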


\begin{remark}
It is known from \cite{CZ} that there exist simply-connected pseudoconvex domains on which $K_p(\cdot)$ is not real-analytic whenever $p$ is an even larger than $4$.   It is of great interest to know the optimal regularity of $K_p(\cdot)$.   
\end{remark}

Next,  we present an unexpected relation between  $K_p(\cdot,\cdot)$ and certain weighted $L^2$ Bergman kernel for $1\le p\le 2$.  
Let $\varphi$ be a measurable real-valued function on $\Omega$ which enjoys the following property:  for each $z\in \Omega$ there exists a number $\alpha>0$ such that $e^{\alpha\varphi}$ is integrable in a neighborhood of $z$.   Set 
$$
A^2(\Omega,\varphi):=\left\{ f\in \mathcal O(\Omega): \int_\Omega |f|^2 e^{-\varphi}<\infty\right\}.
$$
According to Corollary 3.1 in \cite{PW},  $A^2(\Omega,\varphi)$ admits a unique Bergman reproducing kernel $K_{\Omega,\varphi}(\cdot,\cdot)$.
Given $p\ge 1$,  define
$$
A^2_{p,z}(\Omega):=A^2(\Omega,(2-p)\log |m_p(\cdot,z)|)\ \ \text{and}\ \ K_{2,p,z}(\cdot,\cdot):=K_{\Omega,(2-p)\log |m_p(\cdot,z)|}(\cdot,\cdot).  
$$
 
\begin{theorem}\label{th:p<2}
Let $1\le p\le 2$.  Then we have
\begin{equation}\label{eq:RF_0}
K_p(\cdot,z)=K_{2,p,z}(\cdot,z),\ \ \ \forall\,z\in \Omega.
\end{equation}
\end{theorem}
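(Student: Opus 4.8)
The plan is to observe that the reproducing formula \eqref{eq:RPF}, once the notation is unwound, is \emph{literally} the reproducing identity of the weighted $L^2$ kernel $K_{2,p,z}$, and then to conclude by uniqueness of reproducing kernels. Throughout, $\langle\,\cdot,\cdot\,\rangle_\varphi$ and $\|\cdot\|_\varphi$ denote the inner product and norm of $A^2_{p,z}(\Omega)$, i.e.\ with weight $e^{-\varphi}=|m_p(\cdot,z)|^{p-2}$ where $\varphi=(2-p)\log|m_p(\cdot,z)|$.

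First I would check that the framework is legitimate: since $1\le p\le2$ we have $2-p\ge0$ and $m_p(\cdot,z)\in\mathcal O(\Omega)$ is not identically zero (it equals $1$ at $z$), so $e^{\alpha\varphi}=|m_p(\cdot,z)|^{\alpha(2-p)}$ is continuous, hence locally integrable, for every $\alpha>0$; thus $A^2_{p,z}(\Omega)$ carries a unique reproducing kernel by \cite{PW}. The one substantive elementary point is the inclusion $A^2_{p,z}(\Omega)\subset A^p(\Omega)$, which guarantees that \eqref{eq:RPF} is available on all of $A^2_{p,z}(\Omega)$: writing $|g|^p=\big(|g|^2|m_p(\cdot,z)|^{p-2}\big)^{p/2}\,|m_p(\cdot,z)|^{-p(p-2)/2}$ and applying H\"older with the conjugate exponents $2/p$ and $2/(2-p)$ (both $\ge1$ precisely because $1\le p\le2$) gives
\begin{equation*}
\int_\Omega|g|^p\le\Big(\int_\Omega|g|^2|m_p(\cdot,z)|^{p-2}\Big)^{p/2}\Big(\int_\Omega|m_p(\cdot,z)|^{p}\Big)^{(2-p)/2}=\|g\|_\varphi^{\,p}\,m_p(z)^{p(2-p)/2}<\infty .
\end{equation*}
A similar Cauchy--Schwarz bound, $\int_\Omega|m_p(\cdot,z)|^{p-1}|g|\le\|g\|_\varphi\,m_p(z)^{p/2}<\infty$, shows that all integrals appearing below converge absolutely.

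Next, for $g\in A^2_{p,z}(\Omega)\subset A^p(\Omega)$ I would rewrite \eqref{eq:RPF} using $K_p(\cdot,z)=K_p(z)\,m_p(\cdot,z)$ and $K_p(z)=m_p(z)^{-p}>0$:
\begin{equation*}
g(z)=\int_\Omega|m_p(\cdot,z)|^{p-2}\,\overline{K_p(\cdot,z)}\,g=K_p(z)\int_\Omega g\,\overline{m_p(\cdot,z)}\,e^{-\varphi}=\big\langle g,\,K_p(\cdot,z)\big\rangle_\varphi .
\end{equation*}
Moreover $K_p(\cdot,z)=K_p(z)\,m_p(\cdot,z)$ itself lies in $A^2_{p,z}(\Omega)$, since
\begin{equation*}
\|K_p(\cdot,z)\|_\varphi^2=K_p(z)^2\int_\Omega|m_p(\cdot,z)|^{p}=K_p(z)^2\,m_p(z)^p=K_p(z)<\infty .
\end{equation*}
Hence $K_p(\cdot,z)$ is an element of $A^2_{p,z}(\Omega)$ that reproduces point evaluation at $z$; applying the last displayed identity to $g=K_p(\cdot,z)-K_{2,p,z}(\cdot,z)$ forces $\|K_p(\cdot,z)-K_{2,p,z}(\cdot,z)\|_\varphi=0$, i.e.\ \eqref{eq:RF_0}.

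I do not anticipate a real obstacle here: the heart of the matter is simply recognizing that \eqref{eq:RPF} has exactly the shape of a weighted $L^2$ reproducing identity, together with the fact that $p\le2$ both makes the weight $e^{-\varphi}=|m_p(\cdot,z)|^{p-2}$ admissible and yields the H\"older inclusion $A^2_{p,z}(\Omega)\subset A^p(\Omega)$. The only care needed is in the bookkeeping of the H\"older exponents and the absolute convergence of the integrals, both handled above; for $p>2$ every one of these ingredients fails, in line with the counterexamples announced in the introduction.
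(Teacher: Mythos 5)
Your proposal is correct and follows essentially the same route as the paper: the H\"older inclusion $A^2_{p,z}(\Omega)\subset A^p(\Omega)$, the observation that \eqref{eq:RPF} is a weighted $L^2$ reproducing identity with kernel $K_p(\cdot,z)$ (which lies in $A^2_{p,z}(\Omega)$ since its weighted norm squared equals $K_p(z)$), and testing the difference of the two reproducing identities against $K_p(\cdot,z)-K_{2,p,z}(\cdot,z)$ itself. The inner-product phrasing is only a cosmetic repackaging of the paper's computation.
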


On the other hand,  we have

\begin{proposition}\label{prop:p>2}
For each $2<p<\infty$,  there exist a bounded  domain $\Omega\subset \mathbb C$ and a point $z\in \Omega$ such that
$$
K_p(\cdot,z)\neq K_{2,p,z}(\cdot,z).
$$
\end{proposition}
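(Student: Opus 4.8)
The plan is to turn the inequality into a failure of an Euler--Lagrange identity for the extremal function $m_p(\cdot,z)$, and then to violate that identity on a suitably chosen planar domain.

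\emph{Reduction.}
First I would record three elementary facts. (i) Applying \eqref{eq:RPF} to $f=m_p(\cdot,z)$ and using $K_p(\cdot,z)=K_p(z)\,m_p(\cdot,z)$ gives $\int_\Omega|m_p(\cdot,z)|^{p-2}\,\overline{m_p(\cdot,z)}\,m_p(\cdot,z)=\int_\Omega|m_p(\cdot,z)|^p=m_p(z)^p$, so $m_p(\cdot,z)\in A^2_{p,z}(\Omega)$ with $\|m_p(\cdot,z)\|_w^2=m_p(z)^p$, where $\|\cdot\|_w$ is the norm of $A^2_{p,z}(\Omega)$, i.e.\ the $L^2$ norm with weight $|m_p(\cdot,z)|^{p-2}$, and $\langle f,g\rangle_w=\int_\Omega f\,\overline g\,|m_p(\cdot,z)|^{p-2}$ the corresponding inner product. (ii) By \eqref{eq:RPF} again, for every $h\in A^p(\Omega)$ with $h(z)=0$ one has $\langle m_p(\cdot,z),h\rangle_w=0$; this is the Euler--Lagrange equation of \eqref{eq:MinProb}. (iii) Since point evaluation is bounded on $A^2_{p,z}(\Omega)$, its reproducing kernel satisfies $K_{2,p,z}(\cdot,z)=K_{2,p,z}(z,z)\,g_z$, where $g_z$ is the unique solution of the convex quadratic problem $\min\{\|g\|_w^2:g\in A^2_{p,z}(\Omega),\ g(z)=1\}$, and $g_z$ is characterised as the unique element of $\{g(z)=1\}$ that is $\langle\cdot,\cdot\rangle_w$-orthogonal to the closed subspace $\{h\in A^2_{p,z}(\Omega):h(z)=0\}$. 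Comparing with $K_p(\cdot,z)=K_p(z)\,m_p(\cdot,z)$ and evaluating at $z$, I would conclude
\[
K_p(\cdot,z)=K_{2,p,z}(\cdot,z)\iff m_p(\cdot,z)=g_z\iff \langle m_p(\cdot,z),h\rangle_w=0\ \text{ for all }h\in A^2_{p,z}(\Omega)\text{ with }h(z)=0 .
\]
Thus it suffices to produce a bounded $\Omega\subset\mathbb C$, a point $z\in\Omega$, and a single $h\in A^2_{p,z}(\Omega)$ with $h(z)=0$ and $\int_\Omega|m_p(\cdot,z)|^{p-2}\,m_p(\cdot,z)\,\overline h\ne 0$. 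By (ii) such an $h$ necessarily lies outside $A^p(\Omega)$, so everything hinges on finding a domain on which $A^p(\Omega)$ is strictly, and ``badly'', smaller than $A^2_{p,z}(\Omega)$.

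\emph{The mechanism and the choice of domain.}
For $p>2$ the weight $|m_p(\cdot,z)|^{p-2}$ degenerates exactly where $m_p(\cdot,z)$ is small, so the idea is to arrange that $m_p(\cdot,z)$ decays along some boundary arc $\Gamma\subset\partial\Omega$ fast enough that $A^2_{p,z}(\Omega)$ contains a holomorphic function with a mild singularity concentrated at $\Gamma$ which is \emph{not} in $L^p$, and then to check that the orthogonality of the Reduction fails on it. A warning example shows this really requires a non-trivial domain: on an annulus the Laurent polynomials lie in $A^p(\Omega)$ and are dense in $A^2(\Omega)=A^2_{p,z}(\Omega)$, so there $A^p(\Omega)$ \emph{is} dense in $A^2_{p,z}(\Omega)$ and one gets $K_p(\cdot,z)=K_{2,p,z}(\cdot,z)$. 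Hence the desired $\Omega$ must be a domain on which $A^p$-approximation in $A^2_{p,z}(\Omega)$ genuinely fails — a multiply connected or slit planar domain, chosen (together with $z$) so that $m_p(\cdot,z)$ vanishes, or at least decays, near a boundary point $a$; one then takes $h=\tfrac1{w-a}-\tfrac1{z-a}$ (or a fractional power in place of $(w-a)^{-1}$), which by an elementary integration lies in $A^2_{p,z}(\Omega)\setminus A^p(\Omega)$ precisely because $p>2$ and $m_p(\cdot,z)$ is small at $a$, while $a\notin\Omega$ keeps it holomorphic.

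\emph{Verification and the main obstacle.}
Granting such an $\Omega$, the proof comes down to showing $\int_\Omega|m_p(\cdot,z)|^{p-2}\,m_p(\cdot,z)\,\overline h\ne 0$, i.e.\ that $m_p(\cdot,z)$ fails the weighted $L^2$ Euler--Lagrange equation against the singular test function $h$. I would try to extract this from the qualitative boundary behaviour of $m_p(\cdot,z)$ — it is the boundary value of a nonzero holomorphic function, so by the uniqueness theorem for such boundary values it cannot vanish on a set of positive length — together with a symmetry of $\Omega$ (for instance a reflection fixing $z$) that pins down the argument of the integral, reducing the task to a comparison of magnitudes. The main obstacle is exactly this last point: one must control the extremal function $m_p(\cdot,z)$ near the distinguished boundary feature quantitatively enough to be sure the integral does not accidentally vanish. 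Naive Laplace/Mellin estimates are treacherous here — for $p=2$ the same formal manipulations are forced to produce an equality — so the argument has to make essential use of the weight $|m_p(\cdot,z)|^{p-2}$ with $p>2$; turning that into a rigorous strict inequality, either by an explicit conformal computation on a slit disk or by a compactness argument built from domains on which $m_p(\cdot,z)$ possesses a boundary zero, is where the real work of the proposition lies.
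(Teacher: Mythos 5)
Your reduction is sound: $K_p(\cdot,z)=K_{2,p,z}(\cdot,z)$ holds iff $m_p(\cdot,z)$ is the weighted-$L^2$ minimizer $g_z$, iff $m_p(\cdot,z)$ is $\langle\cdot,\cdot\rangle_w$-orthogonal to $\{h\in A^2_{p,z}(\Omega):h(z)=0\}$, and by \eqref{eq:RPF} any witness $h$ of non-orthogonality must lie outside $A^p(\Omega)$. But from that point on the proposal does not contain a proof. No domain is actually produced, and the decisive step --- exhibiting a point $z$ and a function $h\in A^2_{p,z}(\Omega)$ with $h(z)=0$ and $\int_\Omega|m_p(\cdot,z)|^{p-2}m_p(\cdot,z)\overline h\neq 0$ --- is exactly what you defer as ``the real work.'' The mechanism you propose requires quantitative decay of the extremal function $m_p(\cdot,z)$ near a distinguished boundary point $a$ (to put $h\sim (w-a)^{-1}$ into the weighted space) \emph{and} a lower bound strong enough to keep the pairing from vanishing; neither is established, and there is no known tool in the paper or in your sketch that controls the boundary behaviour of $m_p(\cdot,z)$ on a slit or multiply connected domain to that precision. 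Even your side remark about the annulus tacitly assumes two-sided bounds on $m_p(\cdot,z)$ up to the boundary (so that $A^2_{p,z}=A^2$), which is unjustified. So the gap is genuine: the construction and the non-vanishing verification, i.e. the entire content of the proposition, are missing.

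For contrast, the paper sidesteps any test-function computation by detecting the discrepancy on the diagonal. By Proposition \ref{prop:cex} one takes a uniformly perfect compact set $E\subset\mathbb C$ with $C_q(E)=0$, $\tfrac1p+\tfrac1q=1$, and sets $\Omega=\mathbb D\setminus E$ (after scaling). Hedberg's removability theorem (Theorem \ref{th:Hedberg}) gives $A^p(\Omega)=A^p(\mathbb D)$, so $m_p(\cdot,z)$ and $K_p(\cdot,z)$ are the explicit disc kernels; for $z$ in a fixed neighborhood of $E$ they are bounded above and below, whence $K_{2,p,z}(z)\asymp K_{\Omega,2}(z)$. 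Uniform perfectness makes $\Omega$ hyperconvex, so $K_{\Omega,2}(z)\to\infty$ as $z\to E$ by Ohsawa's theorem, while $K_p(z)=K_{\mathbb D,p}(z)$ stays bounded there; hence $K_p(\cdot,z)\neq K_{2,p,z}(\cdot,z)$ for $z$ close to $E$. In other words, the inequality is forced by a removable-versus-non-removable dichotomy ($A^p$ does not see $E$, the weighted $A^2$ space does), with all the analytic input outsourced to Hedberg's theorem and Ohsawa's exhaustion result --- precisely the kind of leverage your approach would still need to invent in order to close the gap.
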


A related question\footnote{The authors are indebted to Professor Zbigniew B{\l}ocki for proposing this question during the Hayama Symposium 2022. }  is the following

\begin{problem}
Does there exist for each $2<p<\infty$ and $z\in \Omega$ a weight function $\varphi_z$ such that $K_p(\cdot,z)=K_{\Omega,\varphi_z}(\cdot,z)$?
\end{problem}

\begin{remark}
We claim that $\varphi_z$ cannot be chosen independent of $z$.  Otherwise,  $K_p(\cdot)$ would be real-analytic since $K_{\Omega,\varphi}(\cdot)$ is,   but this is not the case in general (cf. \cite{CZ}).
\end{remark}

In view of Theorem \ref{th:p<2},  we may apply the powerful $L^2$ method of the $\bar{\partial}-$equation to the $p-$Bergman kernel for $1\le p<2$.   For instance,  we are able to show  the following 

\begin{theorem}\label{th:Integ}
Let $1\le p\le 2$.  If\/ $\Omega$ is a bounded domain in $\mathbb C^n$ with the hyperconvexity index $\alpha(\Omega)>0$,  then the following properties hold:
\begin{enumerate}
\item[$(1)$]  $K_p(\cdot,z)\in L^q(\Omega)$\/ for any $z\in \Omega$ and $q< \frac{2pn}{2n-\alpha(\Omega)}$;
\item[$(2)$] $|K_s(z)-K_p(z)| \lesssim |s-p||\log |s-p||$ as $s\rightarrow p$.
 \end{enumerate}
\end{theorem}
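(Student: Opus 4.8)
The plan is to derive both statements from Theorem \ref{th:p<2}, which for fixed $z$ identifies $K_p(\cdot,z)$ with the reproducing kernel $K_{\Omega,\varphi_z}(\cdot,z)$ of the weighted space $A^2(\Omega,\varphi_z)$, where $\varphi_z=(2-p)\log|m_p(\cdot,z)|$ is plurisubharmonic (as $1\le p\le 2$) and locally bounded above. Since $K_p(\cdot,z)=m_p(\cdot,z)K_p(z)$, one has the identity $|K_p(w,z)|^2e^{-\varphi_z(w)}=K_p(z)^2|m_p(w,z)|^p$, so that $\int_\Omega|K_p(w,z)|^2e^{-\varphi_z(w)}\,dV(w)=K_p(z)$. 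The point is that one may now run the $L^2$-theory of $\bar\partial$ on $A^2(\Omega,\varphi_z)$, with the hyperconvexity index entering through an auxiliary plurisubharmonic exhaustion.

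For part $(1)$ it suffices to show that $m_p(\cdot,z)$ gains integrability past $L^p(\Omega)$. Fix $a<\alpha(\Omega)$ and a bounded negative continuous plurisubharmonic exhaustion $\rho<0$ of $\Omega$ with $-\rho\le C_a\,\delta_\Omega^{\,a}$ near $\partial\Omega$, where $\delta_\Omega$ is the boundary distance; then $(-\rho)^{-\eta}\gtrsim\delta_\Omega^{-a\eta}$. By the twisted $L^2$-estimate for $\bar\partial$ in the manner of Donnelly--Fefferman and Berndtsson--Charpentier, the weighted Bergman projection $P_{\varphi_z}$ stays bounded after inserting the factor $(-\rho)^{-\eta}$ for $0<\eta<1$; applying it to a small bump at $z$ (whose weighted projection is a constant multiple of $K_p(\cdot,z)$) and using $|K_p(w,z)|^2e^{-\varphi_z(w)}=K_p(z)^2|m_p(w,z)|^p$ yields
\begin{equation*}
\int_\Omega|m_p(w,z)|^p\,\delta_\Omega(w)^{-a\eta}\,dV(w)\ \lesssim\ \int_\Omega|m_p(w,z)|^p\,(-\rho(w))^{-\eta}\,dV(w)\ <\ \infty .
\end{equation*}
Feeding this into the sub-mean-value inequality for the plurisubharmonic function $|m_p(\cdot,z)|^p$ over balls $B_{\delta_\Omega(w)/2}(w)\subset\Omega$ gives the pointwise bound $|m_p(w,z)|\lesssim_z\delta_\Omega(w)^{(a\eta-2n)/p}$, and hence
\begin{equation*}
\int_\Omega|m_p(w,z)|^q\,dV(w)\ \le\ \Big(\sup_{w\in\Omega}|m_p(w,z)|^{q-p}\,\delta_\Omega(w)^{a\eta}\Big)\int_\Omega|m_p(w,z)|^p\,\delta_\Omega(w)^{-a\eta}\,dV(w),
\end{equation*}
where the supremum is finite as soon as $q\le\tfrac{2pn}{2n-a\eta}$. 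Letting $\eta\uparrow 1$ and $a\uparrow\alpha(\Omega)$ gives $K_p(\cdot,z)\in L^q(\Omega)$ for $q<\tfrac{2pn}{2n-\alpha(\Omega)}$. This is the $L^2$-method underlying the known estimate for the ordinary Bergman kernel in the presence of a positive hyperconvexity index (the case $p=2$), transported to the weighted setting; the main obstacle is that $\varphi_z$ is unbounded --- below at the zeros of $m_p(\cdot,z)$, and a priori above near $\partial\Omega$ --- so the twisted estimates have to be verified for a singular weight, which requires care near the zero set.

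For part $(2)$, since $s\mapsto m_s(z)$ is continuous and bounded away from $0$ and $\infty$ near $p$ (as in \cite{CZ}) and $x\mapsto x^{-1}$ is Lipschitz away from $0$, it is enough to bound $|m_s(z)^s-m_p(z)^p|$. Using $m_p(\cdot,z)$ as a competitor for the $s$-problem and $m_s(\cdot,z)$ for the $p$-problem --- the latter legitimate for $s$ near $p$ precisely because part $(1)$, applied with exponent $s$, places $m_s(\cdot,z)$ in $A^p(\Omega)$ when $s<p$ --- one obtains
\begin{equation*}
-\int_\Omega\big(|m_s(\cdot,z)|^p-|m_s(\cdot,z)|^s\big)\ \le\ m_s(z)^s-m_p(z)^p\ \le\ \int_\Omega\big(|m_p(\cdot,z)|^s-|m_p(\cdot,z)|^p\big),
\end{equation*}
so it remains to estimate $\int_\Omega\big||f|^s-|f|^p\big|\,dV$ for $f\in\{m_p(\cdot,z),m_s(\cdot,z)\}$. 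From $\big||f|^s-|f|^p\big|\le|s-p|\,|\log|f||\,|f|^{\min(s,p)}\max\big(|f|^{|s-p|},|f|^{-|s-p|}\big)$ and the splitting $\Omega=\{|f|\ge 1\}\cup\{|f|<1\}$: on $\{|f|\ge 1\}$ one absorbs the logarithm into a small power and invokes the higher integrability from part $(1)$; on $\{|f|<1\}$ one uses $\sup_{0<t\le 1}t^{\tau}\log(1/t)\lesssim\tau^{-1}$ together with $\int_\Omega|f|^{\min(s,p)}\,dV<\infty$. Balancing these two contributions --- equivalently, cutting at the level $\{|f|\sim|s-p|\}$ --- yields the stated bound $|s-p|\,|\log|s-p||$. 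The main subtlety here is the $s$-dependence: the weight $\varphi_z$ in Theorem \ref{th:p<2} itself varies with $s$ through $m_s(\cdot,z)$, whose zero set can move discontinuously, so $m_s(\cdot,z)$ is controlled only through the (not a priori uniform) output of part $(1)$, and the logarithmic factor is the price of this --- a clean Lipschitz bound would follow from a uniform-in-$s$ version of part $(1)$.
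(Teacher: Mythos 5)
Your part (1) is essentially the paper's argument: combine Theorem \ref{th:p<2} with the Berndtsson--Charpentier twisted estimate \eqref{eq:BCh} for the weighted projection with weight $\varphi_z=(2-p)\log|m_p(\cdot,z)|$ (which is plurisubharmonic, so the estimate applies despite the singularity along the zero set), deduce $\int_\Omega|m_p(\cdot,z)|^p(-\rho)^{-r}<\infty$ for $r<1$, and convert this into the $L^q$ statement via the sub-mean-value inequality. The paper packages the same information as a boundary-decay estimate (Proposition \ref{prop:BergmanIntegral}, leading to \eqref{eq:B-Integ_1}) followed by a dyadic summation, but the ingredients and the exponent bookkeeping coincide; your bump-projection step is a workable variant of the paper's choice $f=\chi_{\Omega_a}K_{\Omega_a,\varphi}(\cdot,z)$ (e.g.\ project $\chi_{B(z,r)}e^{\varphi_z}/|B(z,r)|$ and use the anti-holomorphic mean-value property).

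Part (2) has a genuine gap on one side. Your competitor scheme reduces everything to $\int_\Omega\bigl||f|^s-|f|^p\bigr|$ for $f=m_p(\cdot,z)$ and $f=m_s(\cdot,z)$; the $f=m_p(\cdot,z)$ half is fine, since the higher integrability from part (1) at the \emph{fixed} exponent $p$ gives a bound $\lesssim_z|s-p|$ with a constant that does not move. But for $s<p$ the other half requires controlling $\int_{\{|m_s(\cdot,z)|\ge1\}}\bigl(|m_s(\cdot,z)|^p-|m_s(\cdot,z)|^s\bigr)$, hence a bound on $\int_\Omega|m_s(\cdot,z)|^{p+\tau}$ (or on $\int|\log|m_s(\cdot,z)||\,|m_s(\cdot,z)|^p$) that is \emph{uniform in $s$} as $s\uparrow p$. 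Part (1) applied at exponent $s$ gives finiteness for each $s$, but with a constant which, as you yourself concede, is not a priori uniform; and your closing claim that ``the logarithmic factor is the price of this'' is not an argument --- the logarithm in your scheme arises from the cut at $|f|\sim|s-p|$ and cannot absorb a constant that may blow up as $s\to p$, so what you actually obtain is $C(s,z)|s-p|$ with uncontrolled $C(s,z)$. To repair this you would need to check that the constants in part (1) (equivalently in \eqref{eq:B-Integ_1}) are locally uniform in the exponent, which is plausible since they are built from $K_s(z)$, $m_s(z)$, $d(z,\partial\Omega_a)$ and the Berndtsson--Charpentier constant, all uniformly controlled near $p$, but it must be verified. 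The paper arranges the $s>p$ case so that the varying minimizer never enters: the inequality $K_p(z)\ge K_s(z)-C(s-p)$ comes for free from the monotonicity of $t\mapsto|\Omega|^{1/t}K_t(z)^{1/t}$ ((6.3) in \cite{CZ}), and the reverse inequality uses only the fixed function $K_p(\cdot,z)$ as competitor, bounding $\int_\Omega|K_p(\cdot,z)|^s$ by splitting at $\{\delta\le\varepsilon\}$ and optimizing $\varepsilon=(s-p)^{2/\alpha(\Omega)}$; the stated logarithm originates there, from $(C\varepsilon^{-2n/p})^{s-p}=1+O((s-p)|\log(s-p)|)$. (For $s<p$ the paper's ``without loss of generality'' amounts to rerunning its argument at base exponent $s$, which calls for the same uniformity check --- but there the constants are explicit, whereas your route leans on the full, unquantified conclusion of part (1) at exponent $s$.)
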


Recall that the hyperconvexity index $\alpha(\Omega)$ of a domain $\Omega\subset \mathbb C^n$ is defined to be the supremum of those  $\alpha\ge 0$ such that there exists a negative continuous plurisubharmonic (psh) function $\rho$ on $\Omega$ with $-\rho\lesssim \delta^\alpha$,  where $\delta$ denotes the Euclidean boundary distance (cf.  \cite{Chen17}).   Note that 
the case $p=2$ in $(1)$ has been verified in \cite{Chen17}.

In contrast with Theorem \ref{th:Integ},  we have  

\begin{proposition}\label{prop:c-example}
  For each $2<p<\infty$,  there is a bounded domain $\Omega\subset \mathbb C$ with $\alpha(\Omega)>0$ which enjoys the following properties:
\begin{enumerate}  
\item[$(1)$]   For each $p'>p$ there exists a point $z\in \Omega$ such that $K_p(\cdot,z)\notin L^{p'}(\Omega)$;
 \item[$(2)$] There exists a point $z\in \Omega$ such that $K_s(z)$ is not continuous at $s=p$.
 \end{enumerate}
  \end{proposition}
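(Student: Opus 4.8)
\emph{Plan.} Since $K_p(z)$ is a fixed positive number, part $(1)$ amounts to finding, for each $p'>p$, a point $z$ with $m_p(\cdot,z)\in A^p(\Omega)\setminus A^{p'}(\Omega)$; and since $K_s(z)=\|m_s(\cdot,z)\|_s^{-s}$, part $(2)$ amounts to finding a $z$ at which $s\mapsto\|m_s(\cdot,z)\|_s$ is discontinuous at $s=p$. A preliminary observation: $\Omega$ must be taken \emph{not simply connected}. Indeed, if $\phi:\mathbb D\to\Omega$ is a Riemann map then $f\mapsto(f\circ\phi)(\phi')^{2/s}$ is an isometry $A^s(\Omega)\to A^s(\mathbb D)$ (with $(\phi')^{2/s}$ single valued because $\phi'$ is zero free on the simply connected $\mathbb D$), and $(1-|a|^2)^{4/s}(1-\bar a\zeta)^{-4/s}$ is easily seen from \eqref{eq:RPF} to be the $s$-minimizer of $\mathbb D$ at $a$; since $\big|(1-|a|^2)^{4/s}(1-\bar a\zeta)^{-4/s}\big|^s=(1-|a|^2)^4|1-\bar a\zeta|^{-4}$ does not depend on $s$, neither does $K_s^{\mathbb D}(a)$, and hence $K_s^\Omega(z)=K_s^{\mathbb D}(\phi^{-1}z)/|\phi'(\phi^{-1}z)|^2$ is independent of $s$ — in particular, continuous in $s$ — on every simply connected planar domain.

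I would look for $\Omega$ of the form $\mathbb D\setminus\bigcup_{k\ge1}E_k$, the $E_k$ being small, pairwise disjoint, mutually well separated closed slits (or disks) accumulating at one boundary point $\zeta_0\in\partial\mathbb D$, with the sizes and spacings calibrated to $p$; a natural starting point is the domain built for Proposition~\ref{prop:p>2}, with the $E_k$ made thin enough to retain $\alpha(\Omega)>0$. The calibration must achieve: (i) the $E_k$ are sparse enough that $\Omega$ carries a bounded negative continuous psh function $\lesssim$ a fixed positive power of the boundary distance, so $\alpha(\Omega)>0$; (ii) the $p$-minimizer $m_p(\cdot,z)$ at a suitable $z$ is ``exactly $L^p$'', i.e.\ lies in $A^p(\Omega)$ but in no $A^{p'}(\Omega)$, $p'>p$ — here a single slit only produces a first-order degeneracy of the transformation factor, hence $m_p(\cdot,z)\notin L^{p'}$ merely for $p'\ge2p$, so one truly needs the infinite cluster, tuned so that the critical exponent drops all the way to $p$; and (iii) this $m_p(\cdot,z)$ is at positive $A^p(\Omega)$-distance from $\bigcup_{s>p}A^s(\Omega)$, i.e.\ its ``maximally bad'' behavior near $\zeta_0$ cannot be approximated in $A^p$-norm by better-integrable holomorphic functions on $\Omega$.

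Granting such an $\Omega$, part $(1)$ is exactly (ii), and since the divergence $\|m_p(\cdot,z)\|_{p'}=\infty$ is insensitive to which $z$ near $\zeta_0$ one takes, the point required in $(1)$ is at hand. For part $(2)$ I would argue: for $s>p$ one has $A^s(\Omega)\subset A^p(\Omega)$ and $\|f\|_p\le\|f\|_s$ (after normalizing $|\Omega|=1$), so if some sequence $s_k\downarrow p$ had $m_{s_k}(z)\to m_p(z)$, the $s_k$-minimizers $f_k$ would satisfy $f_k(z)=1$ and $\|f_k\|_p\to m_p(z)$; by uniform convexity of $L^p$ ($1<p<\infty$), minimizing sequences of the $A^p$-problem converge in $A^p$, so $f_k\to m_p(\cdot,z)$ in $A^p$, contradicting (iii) since every $f_k\in A^{s_k}\subset\bigcup_{s>p}A^s(\Omega)$. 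Hence $\liminf_{s\to p^+}m_s(z)>m_p(z)$, i.e.\ $\limsup_{s\to p^+}K_s(z)<K_p(z)$: $K_s(z)$ is discontinuous at $s=p$ (sharply contrary to Theorem~\ref{th:Integ}$(2)$, valid only for $1\le p\le2$).

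The main obstacle is the construction — realizing (ii) and (iii) simultaneously with (i). Since $\Omega$ admits no uniformization, the $p$-minimizer is not explicit, so one must calibrate the cluster $\{E_k\}$ so that $m_p(\cdot,z)$ is genuinely \emph{forced} to carry the critical singularity at $\zeta_0$, while keeping $\Omega$ fat enough for $\alpha(\Omega)>0$; these two demands pull against each other, which is why a single cusp or slit cannot work and an infinite, delicately tuned family is needed. Proving the ``non-density'' statement (iii) rigorously — that no $\bigcup_{s>p}A^s$-function tracks $m_p(\cdot,z)$ closely enough near $\zeta_0$ in $A^p$-norm — is the other hard point, and it is precisely what upgrades the ``$m_p(\cdot,z)$ is exactly $L^p$'' phenomenon of $(1)$ into the genuine discontinuity of $(2)$.
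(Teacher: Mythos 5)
Your reduction is sound as far as it goes: part $(1)$ is indeed equivalent to exhibiting $z$ with $m_p(\cdot,z)\notin L^{p'}$, your observation that $\Omega$ cannot be simply connected is correct, and your uniform-convexity argument deducing $(2)$ from a non-approximation property of $m_p(\cdot,z)$ would work if that property were established. But the proposal stops exactly where the proof begins: you never construct the domain, and you explicitly leave open both the calibration achieving your (ii) (critical integrability of the minimizer) and the ``non-density'' statement (iii). These are not technical loose ends — they are the entire content of the statement, and as written you have no mechanism that forces the minimizer to be ``exactly $L^p$'' or to stay at positive $A^p$-distance from $\bigcup_{s>p}A^s(\Omega)$, nor any way to reconcile the thinness needed for $\alpha(\Omega)>0$ with the thickness needed to prevent removability. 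So there is a genuine gap.

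For comparison, the paper's mechanism is different and much more rigid than ``tuning a cluster of slits at one boundary point''. It takes $\Omega=\mathbb D_R\setminus E$, where $E$ is a Cantor-type product set (Lindqvist) sitting exactly at the capacity threshold: $C_q(E)>0$ but $C_{q'}(E)=0$ for all $q'<q$, with $\frac1p+\frac1q=1$; uniform perfectness of $E$ gives $\alpha(\Omega)>0$ via Carleson--Totik, resolving the tension you worried about. Hedberg's removable-singularity theorem then does all the work: for every $s>p$ one has $A^s(\Omega)=A^s(\mathbb D_R)$, so $K_s(0)=1/(\pi R^2)$ exactly, while $C_q(E)>0$ forces $A^p(\mathbb C\setminus E)\neq\{0\}$, hence (choosing $R$ large) $K_p(0)\ge K_{\mathbb C\setminus E,p}(0)>1/(\pi R_0^2)\ge 1/(\pi R^2)$; this is already the jump in $(2)$, with no convexity argument needed. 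For $(1)$, if $K_p(\cdot,0)\in L^{p'}$ for some $p'>p$, then since $C_{q'}(E)=0$ it extends across $E$, and the mean-value inequality at $0$ gives $K_p(0)\le 1/(\pi R^2)$, a contradiction. In particular the relevant point is the center of the disc, not a boundary point where singularities accumulate, and the ``positive distance'' fact you flag as hard is automatic in this setup because $|E|=0$ makes $A^p(\mathbb D_R)$ a closed proper subspace of $A^p(\Omega)$ containing all of $\bigcup_{s>p}A^s(\Omega)$. Without Hedberg's theorem (or some substitute pinning the removability threshold to the exponent $p$), your program cannot be completed.
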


Recall that the  $p-$Bergman metric is defined to be
$$
B_p(z;X):=  {K_p(z)^{-\frac1p}}\cdot {\sup}_f\ |Xf(z)|
$$
where the supremum is taken over all $f\in A^p(\Omega)$ with $f(z)=0$ and $\|f\|_p=1$.  Let  $\mathrm{HSC}_p(z;X)$ denote the holomorphic sectional curvature of $B_p(z;X)$.  It is a classical fact that  $\mathrm{HSC}_2(z;X)\le 2$ (see e.g.,  \cite{Kobayashi59}).  Use the calculus of variations,  we are able to show the following

\begin{theorem}\label{th:HSC_0}
For $2\le p<\infty$ we have
\begin{equation}\label{eq:HSC_0}
\mathrm{HSC}_p(z;X)\le \frac2p\cdot \frac{i\partial\bar{\partial} \log K_p(z;X)}{B_p(z;X)^2} +\frac{p}2,
\end{equation}
where $i\partial\bar{\partial} \log K_p(z;X)$ denotes the generalized Levi form of\/ $\log K_p(z)$. 
\end{theorem}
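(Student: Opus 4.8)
The plan is to reduce \eqref{eq:HSC_0} to a one-variable estimate along the complex line through $z$ in the direction $X$, and to obtain that estimate by a direct variation of the extremal functions, as suggested by the calculus of variations. Fix $z_0\in\Omega$ and $X\ne 0$ with $B_p(z_0;X)>0$, and write $\gamma(\zeta)=z_0+\zeta X$ for $\zeta$ in a small disc about $0\in\mathbb C$. Recall from \cite{CZ} that, with the normalization for which $\mathrm{HSC}_2\le 2$, the quantity $\mathrm{HSC}_p(z_0;X)$ equals $-B_p(z_0;X)^{-2}$ times the generalized Levi form at $\zeta=0$ of $\zeta\mapsto\log B_p(\gamma(\zeta);X)^2$. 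Setting $M_p(w;X):=\sup\bigl\{|Xf(w)|^p:f\in A^p(\Omega),\,f(w)=0,\,\|f\|_p\le 1\bigr\}$, so that $M_p(w;X)=|Xg_1(w)|^p$ for the (unique, since $p>1$) $p$-Bergman metric extremal $g_1=g_1^{w,X}$, we have $\log B_p(w;X)^2=\frac2p\bigl(\log M_p(w;X)-\log K_p(w)\bigr)$. Since the term $\log K_p(\gamma(\zeta))$ contributes exactly $\frac2p B_p(z_0;X)^{-2}\,i\partial\bar\partial\log K_p(z_0;X)$ to the right-hand side of \eqref{eq:HSC_0} (with the correct sign, and interpreting everything in the generalized sense of \cite{CZ}), it suffices to prove that the generalized Levi form at $\zeta=0$ of $\zeta\mapsto\log M_p(\gamma(\zeta);X)$ is at least $-\frac{p^2}{4}B_p(z_0;X)^2$.

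For this, put $g_0:=m_p(\cdot,z_0)$, so that $g_0(z_0)=1$ and $\|g_0\|_p=m_p(z_0)$, and let $g_1$ be the $p$-Bergman metric extremal at $(z_0,X)$, so that $g_1(z_0)=0$, $\|g_1\|_p=1$, $|Xg_1(z_0)|=M_p(z_0;X)^{1/p}$, and hence $B_p(z_0;X)=m_p(z_0)|Xg_1(z_0)|$. For $\zeta$ near $0$ set $c(\zeta):=g_1(\gamma(\zeta))/g_0(\gamma(\zeta))$, which is holomorphic with $c(0)=0$ and $c'(0)=Xg_1(z_0)$ (using $g_0(z_0)=1$, $g_1(z_0)=0$), and let $f_\zeta:=g_1-c(\zeta)g_0\in A^p(\Omega)$. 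Then $f_\zeta(\gamma(\zeta))=0$, so $f_\zeta/\|f_\zeta\|_p$ competes in the extremal problem defining $M_p(\gamma(\zeta);X)$, giving
\[
\log M_p(\gamma(\zeta);X)\ \ge\ \psi(\zeta):=p\log\bigl|Xf_\zeta(\gamma(\zeta))\bigr|-\log\|f_\zeta\|_p^p ,
\]
with equality at $\zeta=0$. Since $\zeta\mapsto Xf_\zeta(\gamma(\zeta))$ is holomorphic and nonvanishing near $\zeta=0$, the first summand of $\psi$ is harmonic there; comparing the circle averages of the two sides over $|\zeta|=r$, using $\log M_p(\gamma(0);X)=\psi(0)$ and $\psi\in C^2$ near $0$, reduces the required bound to $-\,\partial_\zeta\partial_{\bar\zeta}\log\|f_\zeta\|_p^p\big|_{\zeta=0}\ge-\frac{p^2}{4}B_p(z_0;X)^2$.

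It remains to compute the left-hand side. Write $A(\zeta):=\|f_\zeta\|_p^p=\int_\Omega|g_1-c(\zeta)g_0|^p$, so $A(0)=1$. Here $p\ge 2$ is used: then $|f_\zeta|^p$ is $C^2$ in $\zeta$ and one may differentiate twice under the integral sign (dominated convergence, using $g_0,g_1\in A^p(\Omega)$ and $|\Omega|<\infty$); a direct calculation, in which the holomorphic summand $g_1$ drops out, gives $\partial_\zeta\partial_{\bar\zeta}A(0)=\frac{p^2}{4}|c'(0)|^2\int_\Omega|g_0|^2|g_1|^{p-2}$. Since $A(0)=1$ and $A$ is real, $\partial_\zeta\partial_{\bar\zeta}\log A(0)=\partial_\zeta\partial_{\bar\zeta}A(0)-|\partial_\zeta A(0)|^2\le\partial_\zeta\partial_{\bar\zeta}A(0)$, so it suffices to estimate $\int_\Omega|g_0|^2|g_1|^{p-2}$. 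By H\"older's inequality with exponents $\frac p2$ and $\frac p{p-2}$ (again meaningful precisely because $p\ge 2$),
\[
\int_\Omega|g_0|^2|g_1|^{p-2}\ \le\ \|g_0\|_p^2\,\|g_1\|_p^{p-2}\ =\ m_p(z_0)^2 ,
\]
and since $|c'(0)|^2=|Xg_1(z_0)|^2=B_p(z_0;X)^2/m_p(z_0)^2$ we obtain $-\partial_\zeta\partial_{\bar\zeta}\log A(0)\ge-\frac{p^2}{4}B_p(z_0;X)^2$, which is exactly what was needed. Tracing the reduction back yields \eqref{eq:HSC_0}; when $p=2$ the weight $|g_1|^{p-2}$ is identically $1$ and the argument recovers Kobayashi's classical bound $\mathrm{HSC}_2\le 2$.

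The main obstacle I anticipate is not this variational computation but the weak regularity of the functions involved: by Theorem \ref{th:C1,1/2}, $K_p(\cdot)$ — hence $M_p(\cdot;X)$ and $B_p(\cdot;X)$ — is only $C^{1,1/2}$, so $\mathrm{HSC}_p$, $i\partial\bar\partial\log K_p$ and the Levi form of $\log M_p(\cdot;X)$ must be handled throughout as generalized (circle-average) objects, and one has to check that the $\liminf$/$\limsup$ conventions adopted in \cite{CZ} make the circle-average comparisons above genuinely imply \eqref{eq:HSC_0} pointwise; a clean way is to first establish the estimate at points where the relevant restrictions are twice differentiable and then pass to the general case. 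A secondary technical point is the justification of differentiation under the integral sign in the computation of $\partial_\zeta\partial_{\bar\zeta}A(0)$ — this is precisely the step that fails for $1\le p<2$, where $|g_1|^{p-2}$ is singular at the zeros of $g_1$, consistent with the theorem being limited to $2\le p<\infty$.
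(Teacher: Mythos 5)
Your variational core is correct and is in substance the same as the paper's: your competitor $f_\zeta=g_1-c(\zeta)g_0$ with $c(\zeta)=g_1(\gamma(\zeta))/g_0(\gamma(\zeta))$ is exactly the paper's test family $\widetilde f_t=f_1+tf_0$ with $t=-f_1(\sigma(re^{i\theta}))/f_0(\sigma(re^{i\theta}))$ (up to normalization), and your H\"older step $\int_\Omega|g_0|^2|g_1|^{p-2}\le\|g_0\|_p^2\|g_1\|_p^{p-2}=m_p(z_0)^2$, which is where $p\ge2$ enters and which produces the constant $\tfrac p2$, is precisely the paper's key inequality; your clean $\partial_\zeta\partial_{\bar\zeta}$ computation of $\|f_\zeta\|_p^p$ is a tidier packaging of the paper's Taylor expansions. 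The genuine gap is the very first reduction. In this paper $\mathrm{HSC}_p(z;X)$ is \emph{defined} as a supremum over all holomorphic discs $\sigma$ with $\sigma(0)=z$, $\sigma'(0)=X$ of the generalized Laplacian of $\log B_p^2(\sigma;\sigma')$, and your assertion (attributed to \cite{CZ}) that this equals $-B_p(z;X)^{-2}$ times the Levi form of $\zeta\mapsto\log B_p(z+\zeta X;X)^2$ along the straight line is neither the definition used here nor justified. The point is that while the $\log K_p$ part is insensitive to the second-order jet of $\sigma$ (this is exactly Lemma \ref{lm:Invariant_1}, which needs the $C^1$ regularity from Theorem \ref{th:C1,1/2}), the term $\log\mathcal M_p^2(\sigma;\sigma')$ depends on $\sigma$ beyond first order, since the direction $\sigma'(\zeta)$ varies along the disc; so an estimate established only for the straight line bounds one competitor in the supremum, i.e.\ it yields a lower bound statement about $\mathrm{HSC}_p$, not the required upper bound \eqref{eq:HSC_0}.

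Fortunately the gap is repairable within your own scheme, and the repair is what the paper actually does: run the identical construction along an arbitrary $\sigma$, replacing $Xf_\zeta(\gamma(\zeta))$ by $\partial_\sigma f_\zeta(\zeta):=\sum_j\sigma_j'(\zeta)\,(\partial f_\zeta/\partial\zeta_j)(\sigma(\zeta))$ and $c(\zeta)$ by $g_1(\sigma(\zeta))/g_0(\sigma(\zeta))$. This function of $\zeta$ is still holomorphic and nonvanishing near $0$ (its value at $0$ is $Xg_1(z_0)\neq0$), the norm factor $\|f_\zeta\|_p^p$ is unchanged, and $c'(0)=Xg_1(z_0)$ still, so your bound $\Box\log\mathcal M_p^2(\sigma;\sigma')(0)\ge-\frac{p^2}{4}\cdot\frac2p\cdot\frac{m_p(z_0)^2}{m_p(z_0;X)^2}$-type estimate holds uniformly in $\sigma$, which is what bounding the supremum requires. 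With that modification (and keeping the $\liminf$/$\limsup$ bookkeeping you already flag when splitting $\Box$ of the difference $\frac2p\log K_p-\log\mathcal M_p^2$, a point the paper handles via Lemma \ref{lm:Invariant_1}), your argument becomes a complete proof essentially identical to the paper's.
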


\begin{remark}
Note that  $i\partial\bar{\partial} \log K_p(z;X)\ge B_p(z;X)^2$ holds for $2\le p<\infty$ (cf.  \cite{CZ}). It remains open whether equality holds.
\end{remark}

The  Hardy space $H^p(\Omega)$ is closely  related to the Bergman space $A^p(\Omega)$.   The Hardy norm and the Bergman norm can be compared as follows. 

\begin{theorem}\label{th:HL_2}
Let $\Omega$ be  a bounded domain with $C^2$ boundary in $\mathbb C^n$ and $q=p(1+\frac1n)$.  Then 
 \begin{equation}\label{eq:HL_7}
\|f\|_{A^q(\Omega)} \lesssim    \|f\|_{H^p(\Omega)}, \ \ \ \forall\,f\in H^p(\Omega).
\end{equation}
In particular,  we have $S_p(z)^{1/p}\lesssim K_q(z)^{1/q}$,  where $S_p(z)$ denotes the $p-$Szeg\"o kernel given by
$$
S_p(z):= \sup\left\{|f(z)|^p: f\in H^p(\Omega),\,\|f\|_{H^p(\Omega)}\le 1\right\}.
$$
\end{theorem}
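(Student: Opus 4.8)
The strategy is a Carleson--measure argument adapted to the non-isotropic (Kor\'anyi) geometry of $\partial\Omega$, exploiting that $|f|^s$ is plurisubharmonic for every $s>0$. Write $\delta$ for the Euclidean boundary distance and $\Omega_t=\{\delta>t\}$; on a $C^2$ domain one has the slicing characterization $\|f\|_{H^p(\Omega)}^p\asymp\sup_{0<t<t_0}\int_{\partial\Omega_t}|f|^p\,dS_t$. Integrating this over $t$ and combining it with the interior sub-mean-value inequality on balls of fixed radius shows $\int_{\Omega_{t_0}}|f|^q\,dV\lesssim\|f\|_{A^p(\Omega_{t_0})}^q\lesssim\|f\|_{H^p}^q$, so it suffices to estimate the integral over the collar $\{\delta<t_0\}$.

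The geometric input is a sharp growth estimate, $|f(z)|\lesssim\delta(z)^{-n/p}\|f\|_{H^p}$ for $z$ near $\partial\Omega$. Since $\partial\Omega$ is $C^2$, in local holomorphic coordinates centred at the boundary projection of $z$ with last coordinate along the complex normal direction, $\Omega$ contains a polydisc $D(z)$ centred at $z$ of polyradius comparable to $(\sqrt{\delta(z)},\dots,\sqrt{\delta(z)},\delta(z))$ throughout which $\delta\asymp\delta(z)$, so $\mathrm{vol}\,D(z)\asymp\delta(z)^{n+1}$. Iterating the one-variable sub-mean-value inequality for the subharmonic function $|f|^p$ over the disc factors of $D(z)$ gives $|f(z)|^p\le\mathrm{vol}(D(z))^{-1}\int_{D(z)}|f|^p\,dV$, and slicing $D(z)$ by $\{\delta=\mathrm{const}\}$ bounds the integral by $\lesssim\delta(z)\|f\|_{H^p}^p$, which proves the estimate.

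Now write $\int_\Omega|f|^q\,dV=\int_\Omega|f|^p\,d\mu$ with $d\mu:=|f|^{q-p}\,dV=|f|^{p/n}\,dV$, and show that $\mu$ restricted to the collar is a Carleson measure for $H^p(\Omega)$ in the Kor\'anyi sense, with Carleson constant $\lesssim\|f\|_{H^p}^{q-p}$. For a Kor\'anyi box $Q^K(\zeta,r)$ over $\zeta\in\partial\Omega$ --- of volume comparable to $r^{n+1}$ and with boundary shadow of surface measure comparable to $r^n$ --- H\"older's inequality with exponents $n$ and $n/(n-1)$ gives $\mu(Q^K(\zeta,r))\le(\int_{Q^K(\zeta,r)}|f|^p\,dV)^{1/n}(\mathrm{vol}\,Q^K(\zeta,r))^{1-1/n}$, while the slicing bound $\int_{Q^K(\zeta,r)}|f|^p\,dV\le\int_0^{Cr}\int_{\partial\Omega_t}|f|^p\,dS_t\,dt\lesssim r\|f\|_{H^p}^p$ turns the right-hand side into $\lesssim(r\|f\|_{H^p}^p)^{1/n}(r^{n+1})^{(n-1)/n}=\|f\|_{H^p}^{p/n}\,r^n$, precisely the required box bound. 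Feeding this into the Carleson embedding theorem for $H^p(\Omega)$ --- i.e.\ the estimate $\|\mathcal{M}f\|_{L^p(\partial\Omega)}\lesssim\|f\|_{H^p}$ for the admissible maximal function, followed by the usual sawtooth/distribution-function argument --- yields $\int_\Omega|f|^q\,dV=\int_\Omega|f|^p\,d\mu\lesssim\|\mu\|_{\mathrm{Car}}\|f\|_{H^p}^p\lesssim\|f\|_{H^p}^{q-p}\|f\|_{H^p}^p=\|f\|_{H^p}^q$, which is \eqref{eq:HL_7}. The Szeg\"o-kernel statement then follows by testing \eqref{eq:HL_7} on the $H^p$-extremal function $f_z$ for $S_p(z)$ (so $|f_z(z)|^p=S_p(z)$ and $\|f_z\|_{H^p}=1$): one gets $\|f_z\|_{A^q}\lesssim1$, and since $K_q(z)^{1/q}$ is the corresponding $A^q$-extremal value, $S_p(z)^{1/p}=|f_z(z)|\le K_q(z)^{1/q}\|f_z\|_{A^q}\lesssim K_q(z)^{1/q}$.

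The step I expect to be the main obstacle is the last one: a Carleson embedding theorem (equivalently, the admissible maximal estimate) for $H^p$ on a merely $C^2$, not necessarily pseudoconvex, domain, since the classical admissible-convergence theory is normally stated for strongly pseudoconvex domains. Note that the gain $q=p(1+\tfrac1n)$ genuinely requires the non-isotropic scaling $r^{n+1}$ versus $r^n$ above; with only the isotropic Carleson theory one would obtain the weaker exponent $q=p\,\tfrac{2n}{2n-1}$. One resolves this either by establishing the needed admissible maximal/covering lemma directly from the $C^2$ geometry near each boundary point, or by dispensing with the boundary maximal function altogether: cover the collar by a bounded-overlap family of the polydiscs $D(z)$ of the second step and run a Besicovitch-type argument, which reduces the endpoint to a quantitative (weak-type, or uniform-integrability) refinement of the slicing estimate $\int_{\partial\Omega_t}|f|^p\,dS_t\lesssim\|f\|_{H^p}^p$.
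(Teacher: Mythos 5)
Your argument is essentially correct but follows a genuinely different route from the paper. The paper's proof is elementary and self-contained: an integration by parts in $\varepsilon$ for the boundary means $M_q(\varepsilon,f)$ (a Hardy--Littlewood-type identity), the same anisotropic mean-value estimate $|f(z)|\lesssim \delta(z)^{-n/p}\|f\|_{H^p(\Omega)}$ on polydiscs of volume $\asymp\delta(z)^{n+1}$ that you use, and then a Green's-theorem (Littlewood--Paley) square-function bound $\int_\Omega \delta\,|f|^{p-2}|\nabla f|^2\lesssim \|f\|^p_{H^p(\Omega)}$ (Proposition \ref{prop:Carleson}), which absorbs the factor $|f|^{q-p}\lesssim \varepsilon^{-n(q/p-1)}\|f\|_{H^p(\Omega)}^{q-p}$ exactly when $q=p(1+\frac1n)$; no boundary maximal function is needed. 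You instead recast $\int_\Omega|f|^q$ as $\int_\Omega|f|^p\,d\mu$ with $d\mu=|f|^{p/n}dV$ and prove the Kor\'anyi-box Carleson bound $\mu(Q^K(\zeta,r))\lesssim\|f\|_{H^p(\Omega)}^{p/n}r^n$ by H\"older plus slicing --- this computation is correct, and it isolates the same non-isotropic scaling ($r^{n+1}$ versus $r^n$) that produces the exponent $q=p(1+\frac1n)$ in the paper. The one ingredient you leave unproved, the admissible (Kor\'anyi) maximal estimate $\|\mathcal M f\|_{L^p(\partial\Omega)}\lesssim\|f\|_{H^p(\Omega)}$ and the resulting Carleson embedding, is not actually an obstacle: Stein's monograph \cite{Stein} (the very reference the paper uses to define $H^p(\Omega)$) develops the admissible maximal theory for arbitrary bounded $C^2$ domains, with no pseudoconvexity hypothesis, the key point being plurisubharmonicity of $|f|^{p/2}$; so your step follows by citation, and your proposed Besicovitch-type workaround is unnecessary. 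In short, your proof trades the paper's elementary integration-by-parts/Green's-identity argument for the heavier but conceptually transparent Carleson-measure machinery; both hinge on the same anisotropic boundary geometry. (Minor point: in the final Szeg\"o-kernel step you need not assume an $H^p$-extremal function exists --- testing \eqref{eq:HL_7} on a near-extremal sequence gives $S_p(z)^{1/p}\lesssim K_q(z)^{1/q}$ just as well.)
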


Inequality  \eqref{eq:HL_7} has a long history.  In 1920,  Carleman \cite{Carleman} first proved  \eqref{eq:HL_7} for $p=2$ with optimal constant in case $\Omega\subset\subset \mathbb C$ is  smooth and simply-connected,  which provides the first complex-analytic approach of the isoperimetric problem.  In 1932,  Hardy-Littlewood proved \eqref{eq:HL_7}   for the unit disc $\mathbb D$ (cf.  \cite{HL2},  Theorem 31).   
Extensions of these theorems can be found in Markovi\'c \cite{Markovic}  and the references therein.  

It is known from \cite{CZ} that $K_p(z)$ is an exhaustion function for $p<2(1+\frac1n)$ when $\Omega$ is a bounded $C^2$ pseudoconvex domain.  Theorem \ref{th:HL_2} together with Theorem 1.1 in \cite{ChenFu} yield

\begin{corollary}
If\/ $\Omega$ is a $\delta-$regular domain in $\mathbb C^n$,  then there exists a number $\alpha>1$ such that
$$
K_{2(1+\frac1n)}(z) \gtrsim \left(\delta(z)|\log \delta(z)|^{\alpha}\right)^{-(1+\frac1n)} 
$$
holds for all $z\in \Omega$ with $\delta(z)\le 1/2$.  In particular,  $K_{2(1+\frac1n)}(z)$ is an exhaustion function.
\end{corollary}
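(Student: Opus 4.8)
The plan is to read off the desired lower bound directly from Theorem \ref{th:HL_2} combined with a known boundary lower bound for the Szeg\"o kernel. First I would apply Theorem \ref{th:HL_2} with $p=2$, so that $q=p(1+\frac1n)=2(1+\frac1n)$; the ``in particular'' part then reads $S_2(z)^{1/2}\lesssim K_{2(1+\frac1n)}(z)^{1/(2(1+\frac1n))}$, and raising both sides to the power $2(1+\frac1n)=q$ gives
\begin{equation*}
K_{2(1+\frac1n)}(z)\gtrsim S_2(z)^{\,1+\frac1n},\qquad z\in\Omega,
\end{equation*}
where $S_2(z)$ is the case $p=2$ of the $p$-Szeg\"o kernel, i.e.\ the diagonal of the classical Szeg\"o kernel of $\Omega$.

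Next I would invoke Theorem 1.1 of \cite{ChenFu}: on a $\delta$-regular domain there is a constant $\alpha>1$ such that $S_2(z)\gtrsim\bigl(\delta(z)\,|\log\delta(z)|^{\alpha}\bigr)^{-1}$ for all $z$ with $\delta(z)\le 1/2$. Substituting this into the previous inequality immediately yields
\begin{equation*}
K_{2(1+\frac1n)}(z)\gtrsim\bigl(\delta(z)\,|\log\delta(z)|^{\alpha}\bigr)^{-(1+\frac1n)},\qquad \delta(z)\le 1/2,
\end{equation*}
with the same exponent $\alpha>1$, which is precisely the claimed estimate.

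It remains to deduce that $K_{2(1+\frac1n)}$ is an exhaustion function. The right-hand side of the last display tends to $+\infty$ as $\delta(z)\to 0$, so for each $c>0$ there is $\varepsilon>0$ with $K_{2(1+\frac1n)}(z)>c$ whenever $\delta(z)<\varepsilon$; hence $\{z\in\Omega:K_{2(1+\frac1n)}(z)\le c\}\subset\{z\in\Omega:\delta(z)\ge\varepsilon\}$, which is a compact subset of $\Omega$. Since $q=2(1+\frac1n)>1$, Theorem \ref{th:C1,1/2} guarantees that $K_q$ is continuous, so each such sublevel set is closed in $\Omega$ and therefore compact, which is the definition of an exhaustion function. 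The argument is essentially bookkeeping once the two inputs are in place; the one point that needs care is to check that the Szeg\"o-kernel estimate quoted from \cite{ChenFu} is stated in exactly the form used above --- in particular that the exponent may be taken strictly greater than $1$, and that the notion of ``$\delta$-regular domain'' there (together with the definition of $H^2$, hence of $S_2$, on such domains) agrees with the one intended in the statement. I expect no genuine obstacle beyond this matching of conventions.
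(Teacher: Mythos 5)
Your argument is exactly the derivation the paper intends: apply Theorem \ref{th:HL_2} with $p=2$, $q=2(1+\frac1n)$ to get $K_q(z)\gtrsim S_2(z)^{1+\frac1n}$, feed in the boundary lower bound for the Szeg\"o kernel on $\delta$-regular domains from Theorem 1.1 of \cite{ChenFu}, and conclude the exhaustion property from the blow-up of the right-hand side together with the continuity of $K_q$ (the paper itself gives no more detail than citing these two ingredients). The only point to settle is the one you already flag --- that the estimate from \cite{ChenFu} is available in the form $S_2(z)\gtrsim(\delta(z)|\log\delta(z)|^{\alpha})^{-1}$ with $\alpha>1$ (if it is phrased there as a bound on $K_2/S_2$, one adds the standard lower bound $K_2(z)\gtrsim\delta(z)^{-2}$ for bounded pseudoconvex domains) --- so the proposal is correct and essentially coincides with the paper's proof.
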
   

Recall that a bounded $C^2$ domain $\Omega\subset\mathbb C^n$ is called a $\delta-$regular domain if there exist a bounded $C^2$ psh function $\lambda$ and a $C^2$ defining function $\rho$ on $\Omega$ such that $i\partial\bar{\partial}\lambda\ge \rho^{-1} i\partial\bar{\partial}\rho$.  
It is known from \cite{ChenFu} that this class of domains includes bounded domains with defining functions that are psh on $\partial \Omega$ and pseudoconvex domains  of D'Angelo finite type. 

The Banach-Mazur distance between two Banach spaces is of central importance in the study of local properties of Banach spaces  (see e.g.,   \cite{TJ},  Chapter 9). 

\begin{definition}
The Banach-Mazur distance between two Banach spaces $X,Y$ is defined to be
$$
d_{\mathrm{BM}}(X,Y):=\inf \left\{ \|T\|\cdot\|T^{-1}\|: T\in \mathcal L(X,Y) \right\}
$$
where $\mathcal L(X,Y)$ denotes the set of continuous isomorphisms from $X$ to $Y$.  When $X$ is not isomorphic to $Y$,  then we set $d_{\mathrm{BM}}(X,Y)=\infty$.   
\end{definition}

Since $p-$Bergman spaces  are Banach spaces for $1\le p <\infty$,  it is natural to consider the Banach-Mazur distance between them.  
Suppose $\Omega'\subset \Omega$ are bounded domains in $\mathbb C^n$ such that  every $f\in A^p(\Omega')$ extends to an element in $A^p(\Omega)$. This is satisfied for instance,  when $n\ge 2$ and $\Omega\backslash \Omega'$ is compact in $\Omega$,  in view of the Hartogs extension theorem.  Banach's open mapping theorem implies that the identity mapping $I:A^p(\Omega)\rightarrow A^p(\Omega')$ is an isomorphism,  so that   $d_{\mathrm BM}(A^p(\Omega'),A^p(\Omega))<\infty$.  A new question arises on how to estimate $d_{\mathrm BM}(A^p(\Omega'),A^p(\Omega))$ in terms of geometric conditions of $\Omega'$ and $\Omega$.  For this purpose,  we introduce the following concept,  which seems to be  of independent interest.  

\begin{definition}
Let $\Omega$ be a domain in $\mathbb C^n$ and $E$ a measurable subset of $\Omega$.  For $0<p < \infty$,  the $p-$Schwarz content of $E$ relative to $\Omega$ is defined to be
\begin{equation}\label{eq:p-SchwarzConst} 
s_p(E,\Omega):=\sup\left\{\frac{\int_E |f|^p}{\int_\Omega |f|^p}: f\in A^p(\Omega)\backslash \{0\}\right\}.
\end{equation}
\end{definition} 

\begin{remark}
One may define $s_\infty(E,\Omega):=\sup\{\|f\|_{L^\infty(E)}/\|f\|_{L^\infty(\Omega)}:f\in A^\infty(\Omega)\backslash \{0\}\}$,  which connects with the Schwarz lemma.  Moreover,   \eqref{eq:p-SchwarzConst} is somewhat similar to the definition of the classical relative capacity. 
But "capacity" is a well-known name,  so we use "content" instead.   
\end{remark}

\begin{proposition}\label{prop:BM}
Let $1\le p<\infty$.  Suppose $\Omega'\subset \Omega$ are bounded domains in $\mathbb C^n$ such that  every $f\in A^p(\Omega')$ extends to an element in $A^p(\Omega)$.  Then
\begin{equation}\label{eq:BM}
d_{\mathrm{BM}}(A^p(\Omega'),A^p(\Omega))\le \left(1-s_p(\Omega\backslash \Omega',\Omega)\right)^{-1/p}.
\end{equation}
\end{proposition}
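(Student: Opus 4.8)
The plan is to realise the Banach--Mazur distance through the restriction operator $R\colon A^p(\Omega)\to A^p(\Omega')$, $Rf:=f|_{\Omega'}$. First I would check that $R$ is a continuous linear isomorphism: it is clearly linear; since two holomorphic functions on the connected set $\Omega$ that agree on the nonempty open set $\Omega'$ coincide, it is injective; the standing hypothesis that every $f\in A^p(\Omega')$ extends to an element of $A^p(\Omega)$ makes it surjective; and $\|Rf\|_{A^p(\Omega')}^p=\int_{\Omega'}|f|^p\le\int_\Omega|f|^p=\|f\|_{A^p(\Omega)}^p$ gives $\|R\|\le 1$. Since $A^p(\Omega')$ and $A^p(\Omega)$ are Banach spaces for $1\le p<\infty$, Banach's open mapping theorem yields $R^{-1}\in\mathcal L(A^p(\Omega'),A^p(\Omega))$, so that $d_{\mathrm{BM}}(A^p(\Omega'),A^p(\Omega))\le\|R\|\,\|R^{-1}\|\le\|R^{-1}\|$.

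The remaining task is to estimate $\|R^{-1}\|$ by the $p$-Schwarz content. Put $s:=s_p(\Omega\setminus\Omega',\Omega)$; one always has $s\le 1$, and if $s=1$ there is nothing to prove, so assume $s<1$. Given $g\in A^p(\Omega')\setminus\{0\}$, let $\tilde g:=R^{-1}g\in A^p(\Omega)$ be its holomorphic extension. Splitting $\Omega=\Omega'\cup(\Omega\setminus\Omega')$ I would write
\[
\|\tilde g\|_{A^p(\Omega)}^p=\|g\|_{A^p(\Omega')}^p+\int_{\Omega\setminus\Omega'}|\tilde g|^p,
\]
and then apply the definition \eqref{eq:p-SchwarzConst} with the competitor $f=\tilde g$ to obtain $\int_{\Omega\setminus\Omega'}|\tilde g|^p\le s\int_\Omega|\tilde g|^p=s\,\|\tilde g\|_{A^p(\Omega)}^p$. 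Substituting and rearranging gives $(1-s)\,\|\tilde g\|_{A^p(\Omega)}^p\le\|g\|_{A^p(\Omega')}^p$, i.e. $\|R^{-1}g\|_{A^p(\Omega)}\le(1-s)^{-1/p}\,\|g\|_{A^p(\Omega')}$; hence $\|R^{-1}\|\le(1-s)^{-1/p}$, which combined with the previous paragraph is precisely \eqref{eq:BM}.

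I do not expect a genuine obstacle here: the whole content is the observation that the restriction operator is the natural (and morally extremal) isomorphism and that its inverse is governed exactly by the quantity measuring how much $A^p$-mass a function can concentrate on $\Omega\setminus\Omega'$. The only points that call for a line of care are the invertibility of $R$ (connectedness of $\Omega$ together with the extension hypothesis, and the bounded inverse theorem) and the degenerate case $s=1$, in which the asserted bound is vacuous.
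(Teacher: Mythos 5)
Your proposal is correct and follows essentially the same route as the paper: both use the restriction (identity) map, Banach's open mapping theorem to get a bounded inverse, and the decomposition $\int_{\Omega}|f|^p=\int_{\Omega'}|f|^p+\int_{\Omega\setminus\Omega'}|f|^p$ together with the definition of $s_p(\Omega\setminus\Omega',\Omega)$ to bound $\|R^{-1}\|\le(1-s_p(\Omega\setminus\Omega',\Omega))^{-1/p}$. The only differences are cosmetic (you spell out injectivity and the vacuous case $s_p=1$, which the paper leaves implicit).
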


We are left to find upper bounds for the $p-$Schwarz content.

\begin{proposition}\label{prop:p-Schwarz_0}
Let $\Omega$ be a bounded  domain in $\mathbb C^n$  and $E$ a measurable relatively compact subset in $\Omega$.  Set $d:=d(E,\partial \Omega)$.  Then the following properties hold:
\begin{enumerate}
\item[$(1)$] For $0<p<\infty$,  one has
$
s_p(E,\Omega) \le \frac{128/\lambda_1(\Omega)}{128/\lambda_1(\Omega)+d^2},
$
where   $\lambda_1(\Omega)$ is the first eigenvalue of the Laplacian on $\Omega$.
\item[$(2)$]  For  $1<p<\infty$,  one has
$
s_p(E,\Omega)\le \frac{C_{n,p}(\Omega)}{ C_{n,p}(\Omega)+ d^{p/ \tilde{p}}},
$
where  $\tilde{p}$ stands for the largest integer smaller than $p$.
\end{enumerate}
\end{proposition}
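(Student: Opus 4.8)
The plan is to recast both bounds as mass-comparison inequalities and prove those. For a measurable $E\subset\Omega$ and $\theta\in(0,\infty)$, the estimate
\[
\int_E|f|^p\le\theta\int_{\Omega\backslash E}|f|^p\qquad\text{for all }f\in A^p(\Omega)\backslash\{0\}
\]
is equivalent, upon writing $\int_\Omega|f|^p=\int_E|f|^p+\int_{\Omega\backslash E}|f|^p$, to $s_p(E,\Omega)\le\theta/(1+\theta)$. Thus $(1)$ is exactly the assertion $\int_E|f|^p\le\frac{128}{\lambda_1(\Omega)d^2}\int_{\Omega\backslash E}|f|^p$, and $(2)$ the assertion $\int_E|f|^p\le C_{n,p}(\Omega)\,d^{-p/\tilde p}\int_{\Omega\backslash E}|f|^p$; these are what I would prove.

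For $(1)$ I would use the first Dirichlet eigenvalue together with the subharmonicity of $|f|^{p/2}$. Fix a cutoff $\chi\in C_c^\infty(\Omega)$ with $0\le\chi\le1$, $\chi\equiv1$ on $E$, $\mathrm{supp}\,\chi$ contained in the $(d/2)$-neighborhood of $E$ — which is relatively compact in $\Omega$ precisely because $d(E,\partial\Omega)=d$ — and $\|\nabla\chi\|_\infty\lesssim d^{-1}$, $\|\nabla^2\chi\|_\infty\lesssim d^{-2}$. Put $v:=|f|^{p/2}$; since $f\in A^p(\Omega)$ we have $v\in L^2(\Omega)$, $v$ is nonnegative and subharmonic, and $v\in W^{1,2}_{\mathrm{loc}}(\Omega)$ (near a zero of $f$ of order $k$ the gradient of $v$ behaves like $|z|^{pk/2-1}$, which is square-integrable in $\mathbb{C}^n$ for every $p>0$; this, and the integrations by parts below, become routine after replacing $|f|^2$ by $|f|^2+\varepsilon$ and letting $\varepsilon\downarrow0$). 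Then $\chi v\in W^{1,2}_0(\Omega)$, so the variational characterization of $\lambda_1(\Omega)$ gives $\lambda_1(\Omega)\int_\Omega\chi^2v^2\le\int_\Omega|\nabla(\chi v)|^2$. Expanding the right-hand side, the cross term equals $\tfrac12\int\nabla(\chi^2)\cdot\nabla(v^2)=-\tfrac12\int\chi^2\Delta(v^2)\le0$, since $\Delta(v^2)=\Delta|f|^p\ge0$, so it may be discarded, and $\int\chi^2|\nabla v|^2\le\tfrac12\int\chi^2\Delta(v^2)=\tfrac12\int v^2\Delta(\chi^2)$ because $v\Delta v\ge0$ forces $|\nabla v|^2\le\tfrac12\Delta(v^2)$. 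Since $\nabla\chi$ and $\nabla^2\chi$ vanish a.e.\ on $E$, the surviving terms are $\lesssim d^{-2}\int_{\Omega\backslash E}v^2$, and bookkeeping the constants gives $\int_\Omega|\nabla(\chi v)|^2\le128\,d^{-2}\int_{\Omega\backslash E}v^2$; as $\chi\equiv1$ on $E$ the left-hand side is $\ge\lambda_1(\Omega)\int_E v^2$, and $(1)$ follows.

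For $(2)$ I would first lower the exponent. Let $\tilde p\ge1$ be the largest integer $<p$; for $f\in A^p(\Omega)\backslash\{0\}$ the function $g:=f^{\tilde p}$ is holomorphic, $g\not\equiv0$, and $\int_\Omega|g|^{p/\tilde p}=\int_\Omega|f|^p<\infty$, so $g\in A^{p/\tilde p}(\Omega)$ and
\[
\frac{\int_E|f|^p}{\int_\Omega|f|^p}=\frac{\int_E|g|^{p/\tilde p}}{\int_\Omega|g|^{p/\tilde p}}\le s_{p/\tilde p}(E,\Omega);
\]
taking the supremum over $f$ yields $s_p(E,\Omega)\le s_{p/\tilde p}(E,\Omega)$, with $1<p/\tilde p\le2$. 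It therefore suffices to establish, for every $q\in(1,2]$ and every $h\in A^q(\Omega)$, an estimate $\int_E|h|^q\le C_{n,q}(\Omega)\,d^{-q}\int_{\Omega\backslash E}|h|^q$, and then to set $C_{n,p}(\Omega):=C_{n,p/\tilde p}(\Omega)$. For this base case I would exploit that $|h|^q$ is \emph{quantitatively} subharmonic when $q>1$ — where $h\ne0$ one has $\Delta|h|^q\gtrsim(q-1)|h|^{q-2}|\nabla h|^2$ — and combine the sub-mean value property of $|h|^q$ on balls $B(z,d)\subset\Omega$, $z\in E$, with a Fubini/covering argument to compare the $L^q$-mass of $h$ on $E$ with its $L^q$-mass on the $d$-collar of $E$; the coarse geometry of $\Omega$ (say its diameter) enters only through $C_{n,q}(\Omega)$.

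The main obstacle is this base case. Because $E$ is merely measurable — it may contain balls of radius comparable to $\mathrm{diam}\,\Omega$ — there is no purely local way to trade the mass of $h$ on $E$ against the mass on $\Omega\backslash E$, so the argument must bring in a genuinely global mechanism, and it is precisely here that the hypothesis $q>1$ (equivalently $\tilde p\ge1$) is used; part $(1)$ sidesteps the issue by invoking the spectral gap $\lambda_1(\Omega)$ and hence holds for all $0<p<\infty$, at the cost of a constant governed by the generally inexplicit first eigenvalue rather than by elementary geometry. Optimizing the reduction $p\mapsto p/\tilde p$ — which lowers the exponent of $p$ to the least value $>1$ reachable by taking integer powers — is what produces the exponent $d^{p/\tilde p}$ appearing in $(2)$.
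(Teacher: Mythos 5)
Part (1) of your proposal is essentially sound and runs parallel to the paper's argument: both hinge on a cutoff equal to $1$ on $E$, the variational characterization of $\lambda_1(\Omega)$ applied to $\chi\,|f|^{p/2}$, and the (pluri)subharmonicity of $|f|^{p/2}$ to absorb the gradient term. The paper's version is slightly more economical: it takes $\chi(\delta/d)$ (cutoff in the boundary distance, only first derivatives needed) and disposes of $\int\chi^2\bigl|\nabla|f|^{p/2}\bigr|^2$ by the Caccioppoli inequality for nonnegative subharmonic functions, whereas you integrate by parts twice and so need $\|\nabla^2\chi\|_\infty\lesssim d^{-2}$ for a smooth cutoff adapted to the arbitrary set $E$; that forces a mollification of a distance function and your claimed constant $128$ is not actually earned by the bookkeeping you indicate (the paper itself only gets a slightly larger numerical constant). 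This is cosmetic; the structure of (1) is correct.

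Part (2) has a genuine gap. Your reduction $s_p(E,\Omega)\le s_{p/\tilde p}(E,\Omega)$ via $f\mapsto f^{\tilde p}$ is correct and is exactly the role the integer power plays in the paper, and the target base-case inequality $\int_E|h|^q\le C_{n,q}(\Omega)\,d^{-q}\int_{\Omega\setminus E}|h|^q$ for $1<q\le2$ is precisely what must be proved. But you do not prove it: the mechanism you sketch (sub-mean value property of $|h|^q$ on balls $B(z,d)$, $z\in E$, plus a Fubini/covering argument) cannot produce it, for the reason you yourself state -- when $E$ contains a ball of radius comparable to $\mathrm{diam}\,\Omega$, comparison at scale $d$ only trades mass on $E$ against nearby mass which is again on $E$ -- and the ``genuinely global mechanism'' you then invoke is left unspecified. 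That mechanism is the actual content of the paper's proof: set $\phi:=\chi(\delta/d)\,f^{\tilde p}$, observe that $\bar\partial\phi=f^{\tilde p}\,\bar\partial\bigl(\chi(\delta/d)\bigr)$ is supported in the collar $\{d/2\le\delta\le d\}\subset\Omega\setminus E$ with $|\bar\partial\phi|\lesssim|f|^{\tilde p}/d$, and apply an $L^q$ inequality of the form $\|\phi\|_{L^{q}(\mathbb C^n)}\le\frac{C_n|\Omega|^{1/2n}}{q-1}\sum_{j}\|\partial\phi/\partial\bar z_j\|_{L^{q}(\mathbb C^n)}$ for $1<q\le2$, obtained by combining the $L^q$-boundedness of the Beurling--Ahlfors operator (to pass from $\bar\partial$-derivatives to the full gradient), the Sobolev inequality, and H\"older on the bounded set $\Omega$. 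This is where $q>1$ enters quantitatively (the factor $1/(q-1)$), and it is exactly the global input your outline lacks; the quantitative subharmonicity $\Delta|h|^q\gtrsim(q-1)|h|^{q-2}|\nabla h|^2$ you propose does not by itself yield any transfer of mass from $E$ to $\Omega\setminus E$. Without this (or an equivalent $\bar\partial$ or singular-integral estimate), part (2) of your proposal is an unproven assertion rather than a proof. Note also that the paper proves this step for $n\ge2$; the degenerate Sobolev exponent at $n=1$, $q=2$ is a point any complete write-up must address.
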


The $p-$Schwarz content has also an unexpected application to best approximations.  Recall that for a metric space $(X,d)$ and $Y\subset{X}$,  a point $y_0\in{Y}$ is said to be the best approximation of $x\in X$ if
$
d(x,y_0)=\inf_{y\in{Y}}d(x,y).
$
Let $\mathcal{P}_Y(x)$ denote the set of all  best approximations of $x$.  If $\mathcal{P}_Y(x)\neq\emptyset$ (respectively $\mathcal{P}_Y(x)$ is a singleton) for all $x\in{X}$, then we say that $Y$ is {\it proximal} (respectively {\it Chebyshev}) (cf. \cite{Singer}). It was shown in \cite{CZ} that $A^p(\Omega)$ is proximal in $L^p(\Omega)$ for $0<p\leq\infty$ and is Chebyshev in $L^p(\Omega)$ when $1<p<\infty$,  so that one may define the $p-$Bergman projection from $L^p(\Omega)$ to $A^p(\Omega)$ by the metric projection $f\mapsto \mathcal P_{A^p(\Omega)}(f)$ for $1<p<\infty$.  Using the $p-$Schwarz content in a crucial way,  we are able to show the following 

\begin{theorem}\label{th:not_Chebyshev}
If $\Omega$ is a bounded domain in $\mathbb{C}^n$ and $0<p\leq1$, then $A^p(\Omega)$ is not a Chebyshev set in $L^p(\Omega)$.  In particular,  the $p-$Bergman projection cannot be defined through the metric projection for $0<p\le 1$.  
\end{theorem}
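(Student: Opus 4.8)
The plan is to exhibit a single $f\in L^p(\Omega)$ admitting two distinct best approximations in $A^p(\Omega)$, i.e.\ two minimizers of $g\mapsto\int_\Omega|f-g|^p$ over $g\in A^p(\Omega)$; since $A^p(\Omega)$ is proximal in $L^p(\Omega)$ for $0<p\le 1$ (recalled above), this is all that is needed. The mechanism is the concavity of $t\mapsto t^p$ on $[0,\infty)$ for $0<p\le1$, which destroys the convexity that forces uniqueness when $p>1$: for $a,b\in\mathbb C$ one has $|a|^p+|b|^p\ge|a+b|^p$, with equality, when $0<p<1$, only if $a=0$ or $b=0$. The $p$-Schwarz content enters to supply the exact measure-theoretic balance that makes the construction run.

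The core step is to produce a relatively compact measurable set $Z\subset\subset\Omega$ with $s_p(Z,\Omega)=\tfrac12$. First I would consider the sublevel sets $Z_\eta:=\{z\in\Omega:\,d(z,\partial\Omega)>\eta\}$ for $\eta>0$, each open and relatively compact in $\Omega$ and decreasing in $\eta$. The function $\eta\mapsto s_p(Z_\eta,\Omega)$ is nonincreasing; it tends to $1$ as $\eta\to0^+$, because $s_p(Z_\eta,\Omega)\ge|Z_\eta|/|\Omega|\to1$ (testing against the constant $1\in A^p(\Omega)$); and it tends to $0$ as $\eta$ approaches the inradius, because for $\eta$ in a fixed compact range the $Z_\eta$ lie in a fixed compact $K\subset\Omega$, where the sub-mean-value property of $|g|^p$ gives $\sup_K|g|^p\le C_K\|g\|_p^p$, hence $s_p(Z_\eta,\Omega)\le C_K|Z_\eta|\to0$. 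By the intermediate value property of a monotone function — patching any jump by adjoining to $Z_{\eta_0}$ a measurable portion of the compact level set $\{d(\cdot,\partial\Omega)=\eta_0\}$, which perturbs $s_p$ by at most a constant times the measure added — one gets the desired relatively compact $Z$ with $s_p(Z,\Omega)=\tfrac12$.

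Since $Z$ is relatively compact, a normal families argument then yields an extremal function $u\in A^p(\Omega)$ with $\|u\|_p=1$ and $\int_Z|u|^p=s_p(Z,\Omega)=\tfrac12$ (take $g_j\in A^p(\Omega)$ with $\|g_j\|_p=1$ and $\int_Z|g_j|^p\to\tfrac12$; extract a locally uniform limit $u$, which gives $\int_Z|u|^p=\tfrac12$ as $\overline Z$ is compact, and $\|u\|_p\le1$ by Fatou, with equality forced since otherwise $u/\|u\|_p$ would exceed $s_p(Z,\Omega)$). In particular $u\ne0$ and $\int_{\Omega\setminus Z}|u|^p=\tfrac12$. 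Now set $f:=u\cdot\chi_{\Omega\setminus Z}\in L^p(\Omega)$. Then
\[
\int_\Omega|f|^p=\int_{\Omega\setminus Z}|u|^p=\tfrac12,\qquad \int_\Omega|f-u|^p=\int_Z|u|^p=\tfrac12,
\]
so $0$ and $u$ are equidistant from $f$. For the lower bound, let $g\in A^p(\Omega)$ and write $h:=u-g\in A^p(\Omega)$; using $|g|^p+|u-g|^p\ge|u|^p$ on $Z$,
\[
\int_\Omega|f-g|^p=\int_{\Omega\setminus Z}|u-g|^p+\int_Z|g|^p\ \ge\ \int_Z|u|^p+\Big(\int_{\Omega\setminus Z}|h|^p-\int_Z|h|^p\Big)\ \ge\ \tfrac12,
\]
the last inequality because $\int_Z|h|^p\le s_p(Z,\Omega)\|h\|_p^p=\tfrac12\|h\|_p^p\le\int_{\Omega\setminus Z}|h|^p$. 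Hence $0$ and $u$ are both best approximations of $f$ and are distinct, so $A^p(\Omega)$ is not Chebyshev, and the statement about the $p$-Bergman projection follows at once. (Following the equality cases: for $0<p<1$ equality forces $g\in\{0,u\}$ a.e.\ on $Z$, and since $|Z|>0$ this forces $g\equiv0$ or $g\equiv u$ — a holomorphic function vanishing on a set of positive measure vanishes identically — so $\{0,u\}$ are exactly the best approximations; for $p=1$ the whole segment $\{cu:0\le c\le1\}$ consists of best approximations.)

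The main obstacle is the second step: the $p$-Schwarz content of $Z$ must equal $\tfrac12$ exactly. This value is forced — any $Z$ with $|Z|=|\Omega|/2$ satisfies $s_p(Z,\Omega)\ge\tfrac12$, while the lower bound above needs $s_p(Z,\Omega)\le\tfrac12$ — so the example lives precisely on the threshold between uniqueness and non-uniqueness; the delicate points are to confirm that the monotone map $\eta\mapsto s_p(Z_\eta,\Omega)$ attains rather than jumps over $\tfrac12$, and that the extremal $u$ is attained for the resulting $Z$ (for which relative compactness of $Z$ is essential).
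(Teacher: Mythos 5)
Your overall route coincides with the paper's: produce a relatively compact measurable set $E\subset\subset\Omega$ with $s_p(E,\Omega)=\tfrac12$, take an extremal $u\in A^p(\Omega)$ for it, and test the metric projection on the $L^p$ function which equals $u$ off $E$ and $0$ on $E$. Your non-uniqueness computation (both $0$ and $u$ at distance $\tfrac12$, the lower bound via $|a+b|^p\le|a|^p+|b|^p$ together with $\int_E|h|^p\le\int_{\Omega\setminus E}|h|^p$ for all $h\in A^p(\Omega)$) is exactly the paper's argument specialized to $h_1=0$, $h_2=f_E$, and your normal-families construction of $u$ is the paper's extremal-function lemma; these parts are correct. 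The genuine gap is in the step you yourself flag as delicate: producing $E$ with $s_p(E,\Omega)$ \emph{exactly} $\tfrac12$.

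Monotonicity of $\eta\mapsto s_p(Z_\eta,\Omega)$ gives no intermediate value property by itself, and your proposed repair of a possible jump --- adjoining to $Z_{\eta_0}$ a measurable portion of the level set $\{\delta=\eta_0\}$ of the boundary distance --- cannot work as stated: these level sets have Lebesgue measure zero (at a.e.\ point of $\Omega$ one has $|\nabla\delta|=1$, whereas $\nabla\delta=0$ a.e.\ on any level set of a Lipschitz function), so adjoining part of one changes no integral and leaves $s_p$ untouched; and even if a level set had positive measure, you would still need to know that adding \emph{all} of it raises $s_p$ at least to the left-hand limit (equivalently that $\lim_{\eta\uparrow\eta_0}s_p(Z_\eta,\Omega)=s_p(\{\delta\ge\eta_0\},\Omega)$), which you do not prove. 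What actually has to be shown is that no jump occurs, i.e.\ two-sided continuity of $\eta\mapsto s_p(Z_\eta,\Omega)$: on the side where the sets increase this uses the extremal function of the limit set plus monotone convergence, and on the side where they shrink it uses the Bergman inequality on a fixed compact together with $|Z_\eta\setminus Z_{\eta_0}|\to|\{\delta=\eta_0\}|=0$ --- precisely the measure-zero fact above, which you neither state nor use (it is also silently needed for your claim that $s_p(Z_\eta,\Omega)\to0$ near the inradius). All the ingredients are in your toolkit, but the continuity argument is not assembled, and the ``patch'' as written is vacuous. The paper's continuity lemma does this carefully and sidesteps the level-set issue altogether by sweeping with finite unions of balls $\Omega_t=\bigcup_{k}B(z_k,t)$, for which $|\Omega_{t_2}\setminus\Omega_{t_1}|\le N|\mathbb{B}^n|(t_2^{2n}-t_1^{2n})$ is explicit; either adopt that family, or prove $|\{\delta=c\}|=0$ and carry out the two-sided continuity argument for your $Z_\eta$.
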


Note that the  trivial estimate $K_p(z)\ge 1/|\Omega|$  always holds.  It is natural to ask 
 whether the lower bound can be achieved. 
 We give a lovely rigidity result as follows.  

\begin{theorem}\label{th:minimum_of_Bergman_kernel}
Let $\Omega$ be a bounded domain in $\mathbb{C}$.  Then the following properties hold:
\begin{enumerate}
\item[$(1)$] If $K_p(a)=1/|\Omega|$ for some $a\in\Omega$ and some $0<p<2$, then $\Omega$ is a disc centred at $a$.
\item[$(2)$] Suppose furthermore $|\overline{\Omega}^\circ\setminus\Omega|=0$.  
If $K_p(a)=1/|\Omega|$ for some $a\in\Omega$ and some $2\le p<\infty$,  then $\overline{\Omega}$ is a closed disc centred at $a$.
\end{enumerate}
\end{theorem}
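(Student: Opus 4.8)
The plan is to deduce from $K_p(a)=1/|\Omega|$ a quadrature identity for $\Omega$, and then invoke the rigidity of one‑node quadrature domains. Normalize $a=0$. Since $\|1\|_p=|\Omega|^{1/p}$ and $K_p(0)=m_p(0)^{-p}$, the hypothesis is equivalent to the constant function $1$ attaining the infimum in \eqref{eq:MinProb} at $0$. For $1\le p<\infty$ the minimizer is unique, so $m_p(\cdot,0)\equiv1$ and hence $K_p(\cdot,0)\equiv K_p(0)=1/|\Omega|$; substituting into \eqref{eq:RPF} yields
\begin{equation}\label{eq:quadrature}
\int_\Omega f\,dA=|\Omega|\,f(0),\qquad f\in A^p(\Omega).
\end{equation}
For $0<p<1$ uniqueness may fail, but $1$ is still a minimizer, so for every $g\in A^p(\Omega)$ with $g(0)=0$ the map $t\mapsto\int_\Omega|1+tg|^p\,dA$ attains its minimum at $t=0$. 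Differentiating at $t=0$ — which is legitimate when $g$ is bounded, and, more generally, when $g$ has at worst a simple pole on $\partial\Omega$, since then the contribution of a small disc about that pole to $\int_\Omega(|1+tg|^p-1)\,dA$ is $O\!\bigl(t^2\log\tfrac1{|t|}\bigr)=o(t)$ — and using $g$ and $ig$ in turn, one again obtains \eqref{eq:quadrature} for all such $g$. In either case, taking $f(w)=w^k$ gives $\int_\Omega w^k\,dA=0$ for all $k\ge1$, equivalently the Cauchy transform $C_\Omega(z):=\int_\Omega(z-w)^{-1}\,dA(w)$ equals $|\Omega|/z$ on $\C\setminus\overline\Omega$; that is, $\Omega$ is a quadrature domain with single node $0$. (For $1\le p\le2$ one may instead shortcut the derivation of \eqref{eq:quadrature} through Theorem~\ref{th:p<2}, since the weight $(2-p)\log|m_p(\cdot,0)|$ then vanishes; but that route only sees $\Omega$ up to null sets and so does not suffice for~(1).)

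Next I would invoke the classical fact that a bounded one‑node quadrature domain coincides, up to a Lebesgue‑null set, with a disc: letting $D=D(0,R)$ with $\pi R^2=|\Omega|$, this gives $|\Omega\triangle D|=0$. The cleanest proof is potential‑theoretic — compare $\Phi_\Omega(z)=\int_\Omega\log|z-w|\,dA(w)$ with $\Phi_D$; the difference $v=\Phi_\Omega-\Phi_D$ is continuous, vanishes near $\infty$, satisfies $\Delta v=2\pi(\chi_\Omega-\chi_D)$, is subharmonic off $\overline D$ and superharmonic off $\overline\Omega$, and a maximum‑principle comparison (equivalently, the uniqueness statement for partial balayage) forces $v\equiv0$ — and I would use it rather than argue via a Schwarz function, since a priori $\Omega$ is merely bounded and open, with $\partial\Omega$ arbitrary. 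Once $|\Omega\triangle D|=0$, openness of $\Omega$ together with $|\Omega\setminus D|=0$ gives $\Omega\subseteq D$, and full measure of $\Omega$ in $D$ gives $\overline\Omega=\overline D$, while $E:=D\setminus\Omega$ is relatively closed in $D$ with $|E|=0$.

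This already proves (2): $\overline\Omega=\overline D$ is a closed disc centred at $0$ — here the assumption $|\overline\Omega^{\circ}\setminus\Omega|=0$ just pins $\overline\Omega^{\circ}$ down as $D$ and is in fact forced by the above. For (1), with $0<p<2$, it remains to rule out $E\ne\emptyset$. If $z_0\in E\subseteq D$ then $z_0\in\partial\Omega$ (as $|E|=0$, $z_0$ is a limit of points of $\Omega$), and since $p<2$ the function $w\mapsto(w-z_0)^{-1}$ lies in $A^p(\Omega)$, with $(w-z_0)^{-1}+z_0^{-1}$ an admissible perturbation in the sense of the first paragraph (it is in $L^1(\Omega)$ and has only a simple pole, on $\partial\Omega$). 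Applying \eqref{eq:quadrature} to $(w-z_0)^{-1}$ gives $\int_\Omega(w-z_0)^{-1}\,dA(w)=-|\Omega|/z_0$; on the other hand $\Omega$ has full measure in $D$, so this integral equals $\int_D(w-z_0)^{-1}\,dA(w)=-C_D(z_0)=-\pi\overline{z_0}$, using the elementary identity $C_D(z)=\pi\overline z$ for $z\in\overline D$. Hence $\pi\overline{z_0}=|\Omega|/z_0=\pi R^2/z_0$, i.e. $|z_0|^2=R^2$, contradicting $z_0\in D$. Therefore $E=\emptyset$ and $\Omega=D$.

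I expect the main obstacle to be the second paragraph — converting the quadrature identity into $|\Omega\triangle D|=0$ — which is exactly the classical but non‑trivial disc‑rigidity of one‑node quadrature domains, delicate because no regularity of $\partial\Omega$ is available in advance. A subsidiary technical point, needed both for the $0<p<1$ part of the first paragraph and for the $(w-z_0)^{-1}$ perturbation in the third, is the estimate that the cutoff error near a simple pole is $O\!\bigl(t^2\log\tfrac1{|t|}\bigr)$; the crude bound $\bigl||1+tg|^p-1\bigr|\le|tg|^p$ over the whole singular region is of the wrong order and must be avoided.
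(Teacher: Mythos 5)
Your Steps 1 and 3 are sound (the first variation of $t\mapsto\int_\Omega|1+tg|^p$ for $0<p<1$, with the $O(t^2\log\frac{1}{|t|})$ control near a simple boundary pole, is a legitimate alternative to the paper's reduction, which instead uses the monotonicity of $t\mapsto|\Omega|^{1/t}K_t(a)^{1/t}$ from (6.3) of \cite{CZ} to pass from $p<1$ to $p=1$; and the computation $C_{D}(z_0)=\pi\bar z_0$ versus $|\Omega|/z_0$ is exactly the right contradiction). The genuine gap is your second paragraph, which is the analytic heart of the theorem. From the test functions $w^k$ you only know that the moments $\int_\Omega w^k\,dA$ vanish, i.e.\ that $\Phi_\Omega-\Phi_D$ vanishes near infinity (hence on the unbounded component of the complement of $\overline\Omega\cup\overline D$); this is far weaker than the hypotheses under which the classical one--node rigidity is proved. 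The properties you list for $v=\Phi_\Omega-\Phi_D$ --- continuous, zero near $\infty$, subharmonic off $\overline D$, superharmonic off $\overline\Omega$ --- do \emph{not} force $v\equiv0$ by any maximum-principle comparison: subharmonicity off $\overline D$ only locates the maximum of $v$ on $\overline D$, superharmonicity off $\overline\Omega$ only locates the minimum on $\overline\Omega$, and the energy identity $\int|\nabla v|^2=-2\pi\bigl(\int_{\Omega\setminus D}v-\int_{D\setminus\Omega}v\bigr)$ cannot be closed without a sign for $v$ on $\Omega\setminus D$ and $D\setminus\Omega$. That sign is exactly what the uniqueness of partial balayage (or Kuran's inverse mean-value theorem) extracts from the quadrature \emph{inequality} for integrable \emph{subharmonic} test functions, respectively from the identity for \emph{all} integrable harmonic functions; your identity holds only for real parts of (single-valued) analytic functions, which on a possibly multiply connected $\Omega$ with no boundary regularity is a strictly smaller class (the functions $\log|w-z_0|$, $z_0$ in a hole of $\Omega$, are not available), and the moment class you actually use is smaller still. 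So ``a bounded one-node quadrature domain is a.e.\ a disc'' cannot be invoked at this level of generality without proof, and the sketch given does not supply one. Since part (2) of your argument also routes through this step, it inherits the same gap.

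Ironically, after your Step 1 you already hold everything needed for a three-line finish, and this is what the paper does: normalize $a=0$, $\delta(a)=1$, $1\in\partial\Omega$, and apply the mean-value identity to the single function $h(z)=\frac{z+1}{z-1}+1$, which is admissible for every $0<p<2$ (it is bounded except for a simple pole at the boundary point $1$, so it lies in $A^p(\Omega)$ and is covered by your variational admissibility clause when $p<1$). Its real part $\frac{|z|^2-1}{|z-1|^2}+1$ is harmonic with value $0$ at $0$, has zero integral over the inscribed disc $\mathbb D$, and is $\ge 1$ on $\Omega\setminus\mathbb D$; hence $\int_{\Omega\setminus\mathbb D}\mathrm{Re}\,h=0$ forces $\Omega\subset\mathbb D$, i.e.\ $\Omega=\mathbb D$. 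No potential-theoretic identification of $\Omega$ with $D$ up to null sets is needed. For part (2) the paper does not attempt your ``a.e.\ disc'' step either: it uses Hedberg's theorem on $L^1$-approximation by rational functions with poles off $\overline\Omega$ to transfer the mean-value identity from $A^p(\Omega)$ to $A^1(\overline\Omega{}^\circ)$ (this is where the hypothesis $|\overline\Omega{}^\circ\setminus\Omega|=0$ enters), and then runs the same one-function argument on $\overline\Omega{}^\circ$. If you want to salvage your route, you must either upgrade your test class to all of $A^1$ (or to integrable harmonic functions) before invoking rigidity, or replace the second paragraph by a genuine proof; as written, the step from vanishing moments to $|\Omega\triangle D|=0$ is unsupported.
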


Dong and Treuer have proved  that if $K_2(a)=1/|\Omega|$ for some $a\in \Omega$ then $\Omega$ is the complement of  some closed polar set  in a disc,  by using some nontrivial results connecting with the Suita conjecture (see \cite{DT}).  Our approach to Theorem \ref{th:minimum_of_Bergman_kernel}, which is in big part motivated by a rigidity theorem in terms of the mean-value property of harmonic functions due to  Epstein \cite{Epstein},  Epstein-Schiffer \cite{ES} and Kuran \cite{Kuran},  is rather elementary.   Note that  no analogous rigidity results as Theorem \ref{th:minimum_of_Bergman_kernel} exist in high dimensional cases,  for instance,  if $K_{\Omega,p}(a)=1/|\Omega|$ holds for some $a\in \Omega$,  then  $K_{\Omega\setminus E,p}(a)=1/|\Omega\setminus E|$ also holds when $E$ is a compact set in $\Omega\setminus \{a\}$ with $|E|=0$,  in view of the Hartogs extension theorem.   On the other hand,   it is easy to see that $K_p(0)=1/|\Omega|$ when $\Omega$ is a bounded complete circular domain in $\mathbb C^n$ (compare \S\,8).  
Thus it is natural  ask  whether a bounded fat domain $\Omega$ (i.e.,  $\overline{\Omega}^\circ=\Omega$)  has to be a complete circular domain if  $K_{\Omega,p}(0)=1/|\Omega|$ holds for some $0<p<\infty$.  Yet we have

\begin{proposition}\label{prop:Non-rigid}
There exists a bounded\/ {\rm non-circular}\/ domain $\Omega\subset \mathbb C^2$ with piecewise smooth boundary  such that $K_p(0)=1/|\Omega|$ for all $2\le p<\infty$.
\end{proposition}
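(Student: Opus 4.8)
The plan is to take $\Omega$ to be a Hartogs domain over the unit disc and to reduce the whole assertion to a single moment identity. Because the constant function $1$ is admissible in \eqref{eq:MinProb} with $\|1\|_p=|\Omega|^{1/p}$, one always has $K_p(0)\ge 1/|\Omega|$, with equality precisely when $1$ minimizes $\|\cdot\|_p$ over $\{f(0)=1\}$. Since $\Omega$ is bounded, $A^p(\Omega)\subset A^2(\Omega)$ for $p\ge 2$; and since for fixed $g$ the map $t\mapsto\int_\Omega|1+tg|^p$ is convex with right derivative $p\,\mathrm{Re}\int_\Omega g$ at $t=0$, it will suffice to produce $\Omega$ satisfying
\begin{equation}\label{eq:plan-MVP}
\int_\Omega g=0\qquad\text{whenever }g\in A^2(\Omega)\text{ and }g(0)=0,
\end{equation}
for then $\int_\Omega|1+g|^p\ge|\Omega|$ for every $g\in A^p(\Omega)$ with $g(0)=0$, so $1$ is the minimizer and $K_p(0)=1/|\Omega|$, for all $2\le p<\infty$ at once.

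Next I would write down the domain. Put $\rho(w):=2+(2-3|w|^2)\,\mathrm{Re}\,w$ for $w\in\overline{\mathbb D}$; an elementary one-variable estimate gives $1\le\rho\le 3$, and $\rho$ is real-analytic. Set
$$
\Omega:=\bigl\{(z_1,z_2)\in\mathbb C^2:\ |z_1|<1,\ |z_2|^2<\rho(z_1)\bigr\}.
$$
Then $\Omega$ is a bounded domain containing the origin; its boundary is the union of the real-analytic hypersurfaces $\{|z_1|=1,\ |z_2|^2\le\rho(z_1)\}$ and $\{|z_1|\le 1,\ |z_2|^2=\rho(z_1)\}$ — the latter smooth because $\rho\ge 1>0$ forces $z_2\ne 0$ on it — which meet along a smooth edge, so $\partial\Omega$ is piecewise smooth. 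Finally $\Omega$ is \emph{not} circular, since $\rho$ is not a function of $|z_1|$ alone (for instance $\rho(1/4)\ne\rho(-1/4)$), so $\Omega$ fails to be invariant under $(z_1,z_2)\mapsto(-z_1,-z_2)$.

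It remains to verify \eqref{eq:plan-MVP}. Given $g\in A^2(\Omega)$ with $g(0)=0$, for almost every $z_1\in\mathbb D$ the slice $z_2\mapsto g(z_1,z_2)$ is holomorphic and $L^2$ on the disc $\{|z_2|<\sqrt{\rho(z_1)}\}$, so the mean value property on that disc together with Fubini gives
$$
\int_\Omega g=\pi\int_{\mathbb D}\rho(z_1)\,g(z_1,0)\,dV(z_1).
$$
The function $\phi(z_1):=g(z_1,0)$ is holomorphic on $\mathbb D$ (as $\mathbb D\times\{0\}\subset\Omega$), lies in $A^2(\mathbb D)$ by Cauchy--Schwarz, and satisfies $\phi(0)=0$; write $\phi=\sum_{n\ge 1}c_n w^n$. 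Splitting $\rho=2+u+\bar u$ with $u(w)=w-\tfrac32 w|w|^2$, one computes $\int_{\mathbb D}2\phi=2\pi\phi(0)=0$, $\int_{\mathbb D}u\phi=0$ (the monomial exponents never match), and
$$
\int_{\mathbb D}\bar u\,\phi\,dV=2\pi c_1\int_0^1\bigl(r-\tfrac32 r^3\bigr)r^2\,dr=2\pi c_1\bigl(\tfrac14-\tfrac14\bigr)=0,
$$
the last vanishing being exactly why $\rho$ was chosen this way. Hence $\int_\Omega g=0$, which establishes \eqref{eq:plan-MVP} and the proposition.

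The one point to watch is that \eqref{eq:plan-MVP} looks like infinitely many constraints: when monomials are dense in $A^2(\Omega)$ it amounts to $\int_\Omega z^\alpha=0$ for all $\alpha\ne 0$, and such density is not obvious for a possibly non-pseudoconvex $\Omega$ of this type. The Hartogs structure dissolves the difficulty — integrating out $z_2$ automatically annihilates every monomial with $\alpha_2\ne 0$ and collapses what remains to the single moment identity $\int_0^1(r-\tfrac32 r^3)r^2\,dr=0$ for the angular part of $\rho$, which is trivial to arrange while keeping $\rho>0$ and non-radial; and the slice/mean-value computation sidesteps the density question entirely. All the rest is routine bookkeeping.
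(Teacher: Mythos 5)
Your proof is correct, and it follows the same construction idea as the paper (a Hartogs domain over $\mathbb D$ whose fiber-radius function is a non-radial weight satisfying a weighted mean-value property), but the implementation is genuinely different and more self-contained. The paper reduces $K_{\Omega_\varphi,2}(0)=1/|\Omega_\varphi|$ to the weighted mean-value property on $\mathbb D$ via Ligocka's formula for weighted Bergman kernels, produces the non-radial weight abstractly by adding to a radial weight a small multiple of an element of $A^2(\mathbb D)^\perp$ obtained from a M\"obius pullback, and then passes from $p=2$ to $2<p<\infty$ using the monotonicity of $t\mapsto |\Omega|^{1/t}K_t(0)^{1/t}$ from \cite{CZ}. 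You instead (i) reduce directly to the unweighted statement $\int_\Omega g=0$ for $g\in A^2(\Omega)$ with $g(0)=0$ by Fubini and the area mean-value property on the $z_2$-slices, which is exactly the weighted mean-value property on $\mathbb D$ with weight $\rho$ and avoids Ligocka's formula; (ii) exhibit an explicit weight, $\rho=2+2\,\mathrm{Re}\,(w-\tfrac32 w|w|^2)$, whose non-radial part is verified by a one-line moment computation to be orthogonal to $A^2(\mathbb D)$ in the pairing $\int f\eta$ --- structurally the same ingredient as the paper's $\varepsilon\widetilde\eta$, but concrete, so no perturbation or positivity argument ($\rho\ge 1$ is elementary) is needed; and (iii) handle all $p\ge 2$ simultaneously via $A^p(\Omega)\subset A^2(\Omega)$ and the convexity of $t\mapsto\int_\Omega|1+tg|^p$ (equivalently $|1+a|^p\ge 1+p\,\mathrm{Re}\,a$), bypassing the kernel monotonicity inequality; this is essentially a direct proof of the ``if'' part of the paper's Lemma \ref{lm:mean_value}. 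Your checks of non-circularity and piecewise smoothness match the paper's. The only steps stated tersely --- termwise integration of the Taylor series of $\phi$ against the bounded function $\bar u$ (justified since the series converges in $A^2(\mathbb D)$) and the differentiation under the integral at $t=0$ (monotone difference quotients) --- are routine, so there is no gap. Note also that your restriction to $p\ge2$ is exactly right: the reduction uses $A^p\subset A^2$, consistent with the paper leaving the case $0<p<2$ open.
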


The $p-$Bergman kernel can be used to produce various biholomorphic invariants. Narasimhan-Simha \cite{NS} introduced the following K\"ahler metric
$$
 ds^2_{p}:=\sum_{j,k=1}^n \frac{\partial^2 \log K_{2,p}(z)}{\partial z_j\partial\bar{z}_k} dz_j\otimes d\bar{z}_k,
 $$
where $K_{2,p}(z)=K_{\Omega,\log K_p}$.   It is known that $ds^2_p$ is invariant under biholomorphic mappings for simply-connected domains and for arbitrary bounded domains when $p=2/m$,  $m\in \mathbb Z^+$ (cf.  \cite{NS},  \cite{CZ}).   
 
\begin{theorem}\label{th:completeness_isolated}
 Let $\Omega$ be a bounded  domain in $\mathbb C$.  Then the following properties hold:
 \begin{enumerate}
\item[$(1)$] If $\Omega$ has no isolated boundary points, i.e., for every $w\in \partial \Omega$ there exists a sequence $\{w_j\}\subset \mathbb C\backslash \Omega$ such that $w_j\neq w$ for all $j$ and $w_j\rightarrow w$ as $j\rightarrow \infty$,  then $ds^2_p$ is complete for all $1\le p<2$.  
 \item
[$(2)$] If $\Omega$ admits an isolated boundary point, then $ds^2_p$ is  incomplete  for all $0<p<\infty$.
\end{enumerate}
\end{theorem}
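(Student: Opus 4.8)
Write $\varphi:=\log K_p$, so that $K_{2,p}=K_{\Omega,\varphi}$ is the Bergman kernel of $A^2(\Omega,\varphi)=\{f\in\mathcal O(\Omega):\int_\Omega|f|^2/K_p<\infty\}$ and, in one complex variable, $ds^2_p=\lambda_p(z)\,|dz|^2$ with $\lambda_p(z)=M_p(z)/K_{2,p}(z)$, where $M_p(z)=\sup\{|f'(z)|^2:f\in A^2(\Omega,\varphi),\ f(z)=0,\ \|f\|_\varphi\le 1\}$; completeness of $ds^2_p$ means that $\int_\gamma\sqrt{\lambda_p}\,|dz|=\infty$ for every curve $\gamma$ leaving every compact subset of $\Omega$. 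The elementary input, available exactly for $1\le p<2$, is that $\zeta\mapsto(\zeta-w)^{-1}$ belongs to $A^p(\Omega)$ for every $w\in\C\setminus\Omega$, since $\int_\Omega|\zeta-w|^{-p}\,dA<\infty$; feeding it into \eqref{eq:MinProb} with $w$ a nearest boundary point of $z$, and comparing from the other side with the inscribed disc $B(z,\delta(z))\subset\Omega$, one gets the \emph{uniform} two-sided estimate $K_p(z)\asymp\delta(z)^{-p}$ near $\partial\Omega$, hence $e^{-\varphi}=1/K_p\asymp\delta^{\,p}$; in particular the weight vanishes on all of $\partial\Omega$ and $(\zeta-w)^{-1}\in A^2(\Omega,\varphi)$ for every $w\in\C\setminus\Omega$.

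\emph{Proof of} $(2)$. Let $a$ be an isolated boundary point, so that $\Omega\cap B(a,r)=B(a,r)\setminus\{a\}$ and $\delta(z)=|z-a|$ near $a$. If $2\le p<\infty$, the singleton $\{a\}$ is $L^p$-removable, so $K_p=K_p^{\,\widehat\Omega}$ near $a$ for $\widehat\Omega=\Omega\cup\{a\}$, whence $1/K_p$ extends continuously and positively across $a$, every $f\in A^2(\Omega,\varphi)$ extends holomorphically across $a$, and $A^2(\Omega,\varphi)$ is isometric to a weighted Bergman space on $\widehat\Omega$; its kernel, being $\sum_j e_j(z)\overline{e_j(w)}$ for an orthonormal basis of holomorphic functions, extends holomorphically in $z$ and anti-holomorphically in $w$ to $\widehat\Omega\times\widehat\Omega$, so $\log K_{2,p}$ is real-analytic near $a$ and $\lambda_p$ is bounded near $a$; a path terminating at $a$ then has finite $ds^2_p$-length and $ds^2_p$ is incomplete. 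If $1\le p<2$, the estimate above gives $1/K_p\asymp|z-a|^{\,p}$ near $a$, so a member of $A^2(\Omega,\varphi)$ can carry at most a simple pole at $a$ there; thus $A^2(\Omega,\varphi)=\mathcal H_0\oplus\C e$, where $\mathcal H_0=\{f:c_{-1}(f)=0\}$ is the closed codimension-one subspace of functions extending across $a$ (the Laurent coefficient $f\mapsto c_{-1}(f)$ being bounded) and $e$ is the normalisation of $(\,\cdot-a)^{-1}-P_{\mathcal H_0}[(\,\cdot-a)^{-1}]$. Since $\mathcal H_0$ is again a weighted Bergman space on $\widehat\Omega$, with kernel bounded near $a$, and $e(z)=c_0(z-a)^{-1}+(\text{holomorphic near }a)$ with $c_0\ne 0$, one obtains $K_{2,p}(z)=|z-a|^{-2}B(z)$ with $B$ real-analytic near $a$ and $B(a)=|c_0|^2>0$. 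As $\partial_z\partial_{\bar z}\log|z-a|\equiv 0$ away from $a$, $\lambda_p=\partial_z\partial_{\bar z}\log B$ is bounded near $a$; again a path to $a$ has finite length, so $ds^2_p$ is incomplete.

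\emph{Proof of} $(1)$. Assume $\C\setminus\Omega$ has no isolated points, i.e.\ is perfect, and fix $w\in\partial\Omega$. The plan is to prove the self-bounded-gradient bound $|\partial_z\log K_{2,p}(z)|^2\lesssim\partial_z\partial_{\bar z}\log K_{2,p}(z)=\lambda_p(z)$ for $z$ near $\partial\Omega$, together with $K_{2,p}(z)\ge c\,\delta(z)^{-2}\to\infty$ (immediate from the test function $(\zeta-w)^{-1}$). Granting this, $u:=\log K_{2,p}$ satisfies $|\nabla u|^2\lesssim\lambda_p$ and $u\to\infty$ along $\gamma$, so $\int_\gamma\sqrt{\lambda_p}\,|dz|\gtrsim\int_\gamma|\nabla u|\,|dz|\ge|u(\gamma(1^-))-u(\gamma(0))|=\infty$, and $ds^2_p$ is complete. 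The self-bounded-gradient estimate is where the hypothesis enters: it is equivalent to $M_p(z)\gtrsim\delta(z)^{-2}K_{2,p}(z)$, and to obtain it one must supply, near $w$, sufficiently many competing members of $A^2(\Omega,\varphi)$ — built from the Cauchy kernels $(\zeta-w')^{-1}$, $w'\in\C\setminus\Omega$, which cluster at $w$ at all scales precisely because $\C\setminus\Omega$ is perfect, and from their finite products — so that the Cauchy--Schwarz quotient underlying $\lambda_p$ stays bounded away from $1$ uniformly up to $\partial\Omega$. At an isolated boundary point this collapses: there $A^2(\Omega,\varphi)$ is one-dimensional modulo $\mathcal H_0$, $\log K_{2,p}\sim-2\log|z-a|$ is harmonic, and the quotient degenerates — exactly as in $(2)$. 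Alternatively, one may invoke Kobayashi's weak-convergence criterion for the weighted kernel, asking that $K_{2,p}(\cdot,z)/K_{2,p}(z)^{1/2}$ tend weakly to $0$ as $z\to\partial\Omega$, which is implied by the same growth estimates.

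\emph{The main obstacle} is the sharp lower bound $M_p(z)\gtrsim\delta(z)^{-2}K_{2,p}(z)$ (equivalently, the self-bounded-gradient estimate) near $\partial\Omega$ in Part $(1)$: this requires building genuine test functions on all of $\Omega$, not merely on an inscribed disc, and controlling how the boundary structure — perfect, but possibly with a very sparse ``scale profile'' near a given $w$ — interacts with the weight $\asymp\delta^{\,p}$. It is here that the perfectness of $\C\setminus\Omega$ and the restriction $p<2$ are jointly indispensable; for $p\ge 2$, or at an isolated boundary point, the supply of admissible Cauchy kernels collapses and the estimate genuinely fails, consistently with the statement.
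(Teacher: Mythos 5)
Your part (2) is close to a correct proof, by a route different from the paper's (the paper localizes to the punctured disc via Proposition \ref{prop:localization} and computes $K_{2,p}$ there from the rotation-invariant orthogonal basis, using the appendix asymptotics; you instead decompose $A^2(\Omega,\log K_p)=\mathcal H_0\oplus\mathbb{C}e$ abstractly, which is legitimate). Two corrections are needed, though. First, the asserted uniform two-sided estimate $K_p(z)\asymp\delta(z)^{-p}$ is false: the inscribed-disc comparison only gives $K_p(z)\le\pi^{-1}\delta(z)^{-2}$, and already for $\Omega=\mathbb D$ one has $K_p\asymp\delta^{-2}$, not $\delta^{-p}$. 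Fortunately your argument only needs the correct one-sided bounds: $K_p\gtrsim\delta^{-p}$ from the Cauchy kernel $(\zeta-a)^{-1}$ (valid for all $0<p<2$, not just $p\ge 1$), which puts $(\zeta-a)^{-1}$ into $A^2(\Omega,\log K_p)$, and the trivial $K_p\lesssim\delta^{-2}$, which already rules out double poles at $a$; so the conclusion $K_{2,p}(z)=|z-a|^{-2}B(z)$ with $B$ real-analytic and $B(a)>0$ survives. Second, the theorem asserts incompleteness for \emph{all} $0<p<\infty$, and your case split ($p\ge 2$ by removability, $1\le p<2$ by the Laurent decomposition) omits $0<p<1$; the same decomposition argument does cover it, but as written this case is missing.

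Part (1), however, has a genuine gap: you reduce completeness to the pointwise lower bound $M_p(z)\gtrsim\delta(z)^{-2}K_{2,p}(z)$ (equivalently the self-bounded-gradient estimate $|\partial_z\log K_{2,p}|^2\lesssim\lambda_p$ uniformly near $\partial\Omega$), and you explicitly leave this as ``the main obstacle,'' offering only a heuristic about Cauchy kernels clustering at all scales. That estimate is precisely where all the difficulty lies, nothing in the proposal proves it, and it is in fact much stronger than what is needed (it amounts to a quasi-hyperbolic lower bound for $ds^2_p$, which is doubtful for boundaries that are merely perfect and possibly very thin near a given $w$). The paper's proof avoids any pointwise lower bound on the metric: it combines Kobayashi's criterion (Proposition \ref{prop:b_4}, Corollary \ref{cor:b_5}) with the blow-up $K_{2,p}\gtrsim\delta^{-p}$ (Lemma \ref{lm:b_6}, via Demailly's approximation applied to the subharmonic function $\log K_p$), and then the substantive step is the density of $A^\infty(\Omega,w)$ in $A^2(\Omega,\log K_p)$ for each boundary point $w$: one enlarges $\Omega$ to $\Omega^j_w=\Omega\cup\{|z-w|<|w_j-w|\}$ (possible exactly because $w$ is non-isolated), proves $K_{\Omega^j_w,p}\uparrow K_{\Omega,p}$ using Hedberg's theorem on $L^p$ approximation by rational functions (Theorem \ref{th:Hedberg_approximation}, applied to $E=\Omega\cup\{w\}$ with $E^\circ=\Omega$), and then corrects a cutoff by solving $\bar\partial u=\bar\partial\eta_{j,\varepsilon}$ with a Donnelly--Fefferman weighted estimate in the weight $\log K_{\Omega^j_w,p}$. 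None of this machinery, nor any substitute for it, appears in your proposal, so as it stands part (1) is a plan rather than a proof.
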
 

\begin{remark}
Thus $ds^2_1$ provides a conformal invariant which distinguishes between isolated boundary points and cluster boundary points in some sense.  
\end{remark}

We also present in  appendix a precise result on the asymptotic behavior of $K_p(z)$ for the punctured unit disc $\mathbb D^\ast$ as $z\rightarrow 0$.

\section{Differentiable properties of $K_p(z)$}
\subsection{Proof of Theorem \ref{th:C1,1/2}}
We  need the following lemma.

\begin{lemma}[Brezis-Lieb]\label{lm:Brezis Lieb}
Let $0<p<\infty$.  If\/ $f_k,f\in{L^p(\Omega)}$ satisfy $f_k\rightarrow{f}$ a.e.  and $\lVert{f_k}\rVert_p\rightarrow\lVert{f}\rVert_p$,  then $\lVert{f_k-f}\rVert_p\rightarrow0$.
\end{lemma}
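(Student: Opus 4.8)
The plan is to avoid the full Brezis--Lieb identity and instead deduce the stated special case directly from Fatou's lemma. The starting point is the elementary pointwise inequality
\[
|a-b|^p\le c_p\big(|a|^p+|b|^p\big),\qquad a,b\in\mathbb C,
\]
with $c_p=2^{p-1}$ when $p\ge 1$ (convexity of $t\mapsto t^p$ on $[0,\infty)$ combined with $|a-b|\le|a|+|b|$) and $c_p=1$ when $0<p\le 1$ (subadditivity of $t\mapsto t^p$). Using it I would set
\[
g_k:=c_p\big(|f_k|^p+|f|^p\big)-|f_k-f|^p\ \ge\ 0 .
\]
Each $g_k$ is measurable, and since $f_k\to f$ a.e. we have $|f_k|^p\to|f|^p$ and $|f_k-f|^p\to 0$ a.e., hence $g_k\to 2c_p|f|^p$ a.e.

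Next I would apply Fatou's lemma to the nonnegative sequence $(g_k)$; this is legitimate, and the limiting integral is finite because $f\in L^p(\Omega)$:
\[
2c_p\|f\|_p^p=\int_\Omega 2c_p|f|^p\ \le\ \liminf_{k\to\infty}\int_\Omega g_k
=\liminf_{k\to\infty}\Big(c_p\|f_k\|_p^p+c_p\|f\|_p^p-\|f_k-f\|_p^p\Big).
\]
Here the hypothesis $\|f_k\|_p\to\|f\|_p$ enters: the summand $c_p\|f_k\|_p^p+c_p\|f\|_p^p$ converges to $2c_p\|f\|_p^p$, so the $\liminf$ on the right equals $2c_p\|f\|_p^p-\limsup_{k\to\infty}\|f_k-f\|_p^p$. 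Substituting this and cancelling the finite quantity $2c_p\|f\|_p^p$ gives $\limsup_{k\to\infty}\|f_k-f\|_p^p\le 0$, and since $\|f_k-f\|_p^p\ge 0$ this forces $\|f_k-f\|_p\to 0$, which is the assertion.

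I do not expect a genuine obstacle here; the only points that require a little care are choosing the auxiliary functions $g_k$ so that Fatou applies in the useful direction, and the remark that the $\liminf$ of a sum splits off a convergent summand. (If one prefers the original argument of Brezis and Lieb, one uses instead, for each $\varepsilon>0$, a constant $C_\varepsilon$ with $\big||a+b|^p-|a|^p-|b|^p\big|\le\varepsilon|a|^p+C_\varepsilon|b|^p$ for all $a,b\in\mathbb C$; applies the dominated convergence theorem to the truncations $\big(\,\big||f_k|^p-|f_k-f|^p-|f|^p\big|-\varepsilon|f_k-f|^p\,\big)^+$, which are bounded by $C_\varepsilon|f|^p$ and tend to $0$ a.e.; then lets $\varepsilon\downarrow 0$, using $\sup_k\|f_k-f\|_p^p<\infty$, to obtain $\|f_k\|_p^p-\|f_k-f\|_p^p-\|f\|_p^p\to 0$; and finally invokes $\|f_k\|_p^p\to\|f\|_p^p$. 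The Fatou argument is shorter because it targets directly the conclusion that is needed.)
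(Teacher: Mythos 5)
Your proof is correct and follows essentially the same route as the paper: apply Fatou's lemma to the nonnegative functions $c_p(|f_k|^p+|f|^p)-|f_k-f|^p$ and use $\|f_k\|_p\to\|f\|_p$ to split off the convergent part of the $\liminf$. Incidentally, your constant $c_p=\max\{1,2^{p-1}\}$ is the right one (the paper's $\min\{1,2^{p-1}\}$ is a slip), and your detailed justification fills in exactly what the paper leaves implicit.
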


\begin{proof}
Note that
\[|f_k-f|^p\leq{C_p(|f_k|^p+|f|^p)},\]
where $C_p:=\min\{1,2^{p-1}\}$. It suffices to apply Fatou's lemma to the function $C_p(|f_k|^p+|f|^p)-|f_k-f|^p$.
\end{proof}

Now we give a proof of Theorem \ref{th:C1,1/2} by using the duality method.
Let  $1< p<\infty$ be fixed.  Given $j$ and $z$,  the linear functionals 
\[
L_z(f):=f(z),\ \ \ L_{z,j}(f):=\frac{\partial{f}}{\partial{x_j}}(z),\ \ \ f\in{A^p(\Omega)}
\]
are continuous in view of the mean-value inequality and Cauchy's estimates.   The point is  
$$
\lVert{L_z}\rVert = \sup\{|f(z)|: f\in A^p(\Omega),\,\|f\|_p \le 1\} =K_p(z)^{1/p},
$$
 which motivates us to use functional analysis.

Let $e_1,\cdots,e_{2n}$ be the standard basis in $\mathbb{R}^{2n}=\mathbb{C}^n$.   Consider the following linear functionals
\[
\Lambda_t:=\frac{L_{z+te_j}-L_z}{t}-L_{z,j},\ \ \ t\in\mathbb{C}\setminus\{0\}.
\]
Since for every $ f\in{A^p(\Omega)}$,
\[
|\Lambda_t(f)|=\left|\frac{f(z+te_j)-f(z)}{t}-\frac{\partial{f}}{\partial{x_j}}(z)\right|=O(|t|)\ \ \ (t\rightarrow 0),
\]
we have $\lVert{\Lambda_t}\rVert=O(|t|)$ in view of 
 the Banach-Steinhaus theorem,  so that
\begin{equation}\label{eq:differentiable I}
\lVert{L_{z+te_j}-L_z-tL_{z,j}}\rVert=O(|t|^2).
\end{equation}
Set
\begin{equation}\label{eq:gz}
g_z:=\frac{|m_p(\cdot,z)|^{p-2}m_p(\cdot,z)}{m_p(z)^p}.
\end{equation}
Since $\|m_p(\cdot,z)\|_p=m_p(z)$,  we have $g_z\in{L^q(\Omega)}$ and 
\begin{equation}\label{eq:gz_norm}
\lVert{g_z}\rVert_q=m_p(z)^{-1}=K_p(z)^{1/p}=\lVert{L_z}\rVert,
\end{equation}
where $1/p+1/q=1$. Moreover, it follows from \eqref{eq:RPF} that
\begin{equation}\label{eq:reproducing I}
L_z(f)=f(z)=\int_\Omega{f}\overline{g}_z,\ \ \ \forall\, f\in{A^p(\Omega)}.
\end{equation}
By the Hahn-Banach theorem,  $L_{z,j}$ can be extended to a continuous linear functional $\widetilde{L}_{z,j}\in{(L^p(\Omega))^*}$ with $\lVert{L_{z,j}}\rVert=\lVert{\widetilde{L}_{z,j}}\rVert$. Let $g_{z,j}\in{L^q(\Omega)}$ be the unique representation of $\widetilde{L}_{z,j}$, so that $\lVert{g_{z,j}}\rVert_q=\lVert{\widetilde{L}_{z,j}}\rVert$ and
\begin{equation}\label{eq:reproducing II}
L_{z,j}(f)=\widetilde{L}_{z,j}(f)=\int_\Omega{f}\overline{g}_{z,j},\ \ \ \forall\, f\in{A^p(\Omega)}.
\end{equation}

From now on we  assume that $t\in\mathbb{R}$.  \eqref{eq:reproducing I} together with \eqref{eq:reproducing II} yield
\[
(L_z+tL_{z,j})(f)=\int_\Omega{f}\,\overline{g_z+tg_{z,j}},\ \ \ \forall\,{f\in{A^p(\Omega)}},
\]
so that
\begin{equation}\label{eq:L g}
\lVert{L_z+tL_{z,j}}\rVert\leq\lVert{g_z+tg_{z,j}}\rVert_q.
\end{equation}
Analogously,  we have
\begin{equation}\label{eq:L g 2}
\lVert{L_{z+te_j}-tL_{z,j}}\rVert\leq\lVert{g_{z+te_j}-tg_{z,j}}\rVert_q.
\end{equation}
Use \eqref{eq:differentiable I}, \eqref{eq:L g} and \eqref{eq:L g 2}, we obtain
\begin{eqnarray}\label{eq:difference I}
K_p(z+te_j)^{q/p}-K_p(z)^{q/p} &=& \lVert{L_{z+te_j}}\rVert^q-\lVert{L_z}\rVert^q\nonumber\\
&=& \lVert{L_z+tL_{z,j}}\rVert^q-\lVert{L_z}\rVert^q+O(|t|^2)\nonumber\\
&\leq& \lVert{g_z+tg_{z,j}}\rVert_q^q-\lVert{g_z}\rVert_q^q+O(|t|^2)
\end{eqnarray}
and
\begin{eqnarray}\label{eq:difference II}
K_p(z+te_j)^{q/p}-K_p(z)^{q/p} &=& \lVert{L_{z+te_j}}\rVert^q-\lVert{L_z}\rVert^q\nonumber\\
&=& \lVert{L_{z+te_j}}\rVert^q-\lVert{L_{z+te_j}-tL_{z,j}}\rVert^q-O(|t|^2)\nonumber\\
&\geq& \lVert{g_{z+te_j}}\rVert_q^q-\lVert{g_{z+te_j}-tg_{z,j}}\rVert_q^q-O(|t|^2).
\end{eqnarray}
Next,  let us  recall the following inequalities (cf. \cite{CZ}, (4.3) and (4.4)):
\begin{eqnarray*}
|b|^q &\geq& |a|^q+q\mathrm{Re}\,\left\{|a|^{q-2}\overline{a}(b-a)\right\}+A_q|b-a|^2(|a|+|b|)^{q-2},\ \ \ (1<q\leq2),\\
|b|^q &\geq& |a|^q+q\mathrm{Re}\,\left\{|a|^{q-2}\overline{a}(b-a)\right\}+\frac{1}{4^{q+3}}|b-a|^q,\ \ \ (q>2),
\end{eqnarray*}
where $a,b\in\mathbb{C}$ and $A_q:=\frac{q}{2}\min\{1,q-1\}$.  Choose $a=g_z+tg_{z,j}$ and $b=g_z$ in both cases and taking integration over $\Omega$, we obtain
\begin{eqnarray}\label{eq:difference I1}
\lVert{g_z+tg_{z,j}}\rVert_q^q-\lVert{g_z}\rVert_q^q &\leq& qt\,\mathrm{Re}\,\int_\Omega|g_z+tg_{z,j}|^{q-2}\overline{(g_z+tg_{z,j})}g_{z,j}\nonumber\\
&=:& qt\,\mathrm{Re}\,\int_\Omega|g_z|^{q-2}\overline{g}_zg_{z,j}+qt\omega_1(t),
\end{eqnarray}
where
\[
\omega_1(t):=\mathrm{Re}\,\int_\Omega\left\{|g_z+tg_{z,j}|^{q-2}\overline{(g_z+tg_{z,j})}-|g_z|^{q-2}\overline{g}_z\right\}g_{z,j}.
\]
Since
\[
\left||g_z+tg_{z,j}|^{q-2}\overline{(g_z+tg_{z,j})}g_{z,j}\right|=|g_z+tg_{z,j}|^{q-1}|g_{z,j}|\leq(|g_z|+|g_{z,j}|)^{q-1}|g_{z,j}|\in L^1(\Omega)
\]
in view of H\"older's inequality 
when $|t|<1$, we infer from the dominated convergence theorem that $\omega_1(t)\rightarrow0$ as $t\rightarrow0$. Hence \eqref{eq:difference I} combined with \eqref{eq:difference I1} gives
\begin{equation}\label{eq:differential formula I}
K_p(z+te_j)^{q/p}-K_p(z)^{q/p}\leq{qt}\,\mathrm{Re}\,\int_\Omega|g_z|^{q-2}\overline{g}_zg_{z,j}+o(t).
\end{equation}
Analogously, we have
\begin{eqnarray}\label{eq:difference II1}
\lVert{g_{z+te_j}}\rVert_q^q-\lVert{g_{z+te_j}-tg_{z,j}}\rVert_q^q &\geq& qt\,\mathrm{Re}\,\int_\Omega|g_{z+te_j}-tg_{z,j}|^{q-2}\overline{(g_{z+te_j}-tg_{z,j})}g_{z,j}\nonumber\\
&=& qt\,\mathrm{Re}\,\int_\Omega|g_z|^{q-2}\overline{g}_zg_{z,j}+qt\omega_2(t),
\end{eqnarray}
where
\[
\omega_2(t):=\mathrm{Re}\,\int_\Omega\left\{|g_{z+te_j}-tg_{z,j}|^{q-2}\overline{(g_{z+te_j}-tg_{z,j})}-|g_z|^{q-2}\overline{g}_z\right\}g_{z,j}.
\]
For the sake of convenience, we set
\[
h_t:=|g_{z+te_j}-tg_{z,j}|^{q-2}\overline{(g_{z+te_j}-tg_{z,j})},\ \ \ h:=|g_z|^{q-2}\overline{g}_z.
\]
Since  ${m_p(\zeta,z)}$ is continuous in $(\zeta,z)$ (cf. \cite{CZ}, Proposition 2.11),  we see that $h_t\rightarrow{h}$ pointwisely as $t\rightarrow0$.  Moreover,  since
\begin{eqnarray*}
\lVert{h_t}\rVert_p &=& \lVert{g_{z+te_j}-tg_{z,j}}\rVert^{q/p}_q=\lVert{g_{z+te_j}}\rVert^{q/p}_q+O(|t|)\\
&=& K_p(z+te_j)^{q/p^2}+O(|t|)\ \ \ \ \ (\text{by\ }\eqref{eq:gz_norm})\\
&\rightarrow & K_p(z)^{q/p^2}=\lVert{h}\rVert_p\ \ \ (t\rightarrow0),
\end{eqnarray*}
we infer from Lemma \ref{lm:Brezis Lieb} that
\[
|\omega_2(t)|\leq\lVert{h_t-h}\rVert_p\lVert{g_{z,j}}\rVert_q\rightarrow0.
\]
This together with \eqref{eq:difference II} and \eqref{eq:difference II1} give
\begin{equation}\label{eq:differential formula II}
K_p(z+te_j)^{q/p}-K_p(z)^{q/p}\geq{qt}\mathrm{Re}\,\int_\Omega|g_z|^{q-2}\overline{g}_zg_{z,j}+o(t).
\end{equation}
By \eqref{eq:differential formula I} and \eqref{eq:differential formula II}, we conclude that 
\begin{equation}\label{eq:differential formula directional I}
\frac{\partial{K_p^{q/p}}}{\partial{x_j}}(z)=q\,\mathrm{Re}\,\int_\Omega|g_z|^{q-2}\overline{g}_zg_{z,j}.
\end{equation}
In particular, the partial derivative $\partial{K_p}/\partial{x_j}$ exists and
\begin{equation}\label{eq:differential formula directional II}
\frac{\partial{K_p}}{\partial{x_j}}(z)=\frac{p}{q}K_p(z)^{1-q/p}\frac{\partial{K_p^{q/p}}}{\partial{x_j}}(z)=\frac{p}{q}m_p(z)^{q-p}\frac{\partial{K_p^{q/p}}}{\partial{x_j}}(z).
\end{equation}
Substitute \eqref{eq:gz}, \eqref{eq:reproducing II} and \eqref{eq:differential formula directional I} into \eqref{eq:differential formula directional II},  we obtain
\begin{eqnarray*}
\frac{\partial{K_p}}{\partial{x_j}}(z) &=& pm_p(z)^{q-p}\,\mathrm{Re}\,\int_\Omega\frac{|m_p(\cdot,z)|^{(p-1)(q-2)+p-2}}{m_p(z)^{p(q-1)}}\overline{m_p(\cdot,z)}g_{z,j}\\
&=& \frac{p}{m_p(z)^p}\mathrm{Re}\,\int_\Omega\overline{m_p(\cdot,z)}g_{z,j}\\
&=& \frac{p}{m_p(z)^p}\mathrm{Re}\,L_{z,j}\big(m_p(\cdot,z)\big)\\
&=& p\mathrm{Re}\,\frac{\partial{K_p(\cdot,z)}}{\partial{x}_j}\bigg|_z,
\end{eqnarray*}
for  $(p-1)(q-2)+p-2=0$ and $p(q-1)=q$.

It remains to verify that
\begin{equation}\label{eq:C1/2}
z\mapsto\partial{K_p(\cdot,z)}/\partial{x_j}|_z\in{C^{1/2}(\Omega)}.
\end{equation}
By Theorem 4.7 in \cite{CZ}, we know that for given domains $\Omega'\subset\subset\Omega$,
\[
|K_p(w,z)-K_p(w,z')|\leq{C|z-z'|^{1/2}},\ \ \ w,z,z'\in\Omega'
\]
for some constant $C=C(p,\Omega',\Omega)>0$.  Given two points $z,z'\in\Omega'$,  the function
\[
\widehat{h}:=\frac{K_p(\cdot,z)-K_p(\cdot,z')}{|z-z'|^{1/2}}
\]
is holomorphic on $\Omega$ and satisfies $\sup_{\Omega'}|\widehat h|\leq{C}$.
Thus Cauchy's estimate yields that for each $z'\in \Omega''\subset \subset \Omega'$,  
\[\left|\frac{\partial{\widehat h}}{\partial{x_j}}(z')\right|\leq{MC},\]
where the constant $M$ depends only on $\Omega'$, $\Omega''$ and $\Omega$. On the other hand, since $\partial{K_p(\cdot,z)}/\partial{x_j}$ is a holomorphic function on $\Omega$, we have
\[
\Bigg|\frac{\partial{K_p(\cdot,z)}}{\partial{x_j}}\bigg|_z-\frac{\partial{K_p(\cdot,z)}}{\partial{x_j}}\bigg|_{z'}\Bigg|\leq{C'|z-z'|},
\]
where $C'=C'(\Omega'',\Omega)>0$. Hence
\begin{eqnarray*}
\Bigg|\frac{\partial{K_p(\cdot,z)}}{\partial{x_j}}\bigg|_z-\frac{\partial{K_p(\cdot,z')}}{\partial{x_j}}\bigg|_{z'}\Bigg| &\leq& \Bigg|\frac{\partial{K_p(\cdot,z)}}{\partial{x_j}}\bigg|_z-\frac{\partial{K_p(\cdot,z)}}{\partial{x_j}}\bigg|_{z'}\Bigg|+\left|\frac{\partial{\widehat h}}{\partial{x_j}}(z')\right||z-z'|^{1/2}\\
&\leq&{C'|z-z'|+MC|z-z'|^{1/2}},
\end{eqnarray*}
i.e., \eqref{eq:C1/2} holds.

\subsection{An application of Theorem \ref{th:C1,1/2}}
 Recall that the\/ {\it generalized complex Laplacian}  of an upper semicontinuous function $u$ on a domain in $\mathbb C$ is given by
$$
\Box u(t):=\liminf_{r\rightarrow 0+}\frac1{r^2}\left\{\frac1{2\pi}\int_0^{2\pi}u(t+re^{i\theta})d\theta-u(t)\right\}.
$$
If $u$ is an upper semicontinuous function on a domain in $\mathbb C^n$,  then we define the\/ {\it generalized Levi form} of $u$ to be 
$$
i\partial\bar\partial u(z;X):= \left.\Box u(z+tX)\right|_{t=0},
$$   
where we identify $X=\sum X_j \partial/\partial z_j$ with $(X_1,\cdots,X_n)$.

\begin{proposition}
Let $1<p<\infty$.  Let $F:\Omega_1\rightarrow \Omega_2$ be a biholomorphic mapping between bounded simply-connected domains.   Then
$$
i\partial\bar\partial \log K_{\Omega_1,p}(z;X)= i\partial\bar\partial \log K_{\Omega_2,p}(F(z);F_\ast X).
$$
\end{proposition}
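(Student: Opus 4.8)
The plan is to deduce this from the transformation law of the $p$-Bergman kernel under biholomorphisms together with the $C^{1}$-regularity provided by Theorem \ref{th:C1,1/2}; the underlying point is that the generalized Levi form is insensitive both to adding a pluriharmonic function and to replacing the complex line along which one differentiates by a holomorphic curve with the same $1$-jet.

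First I would record the transformation law. Since $\Omega_1$ is simply-connected, the nowhere-vanishing holomorphic function $J:=\det F'$ admits a holomorphic logarithm, so $J^{2/p}$ is a well-defined holomorphic function; as the real Jacobian of $F$ is $|J|^2$, the map $g\mapsto (g\circ F)\,J^{2/p}$ is an isometric isomorphism of $A^p(\Omega_2)$ onto $A^p(\Omega_1)$, and comparing the extremal problem \eqref{eq:MinProb} at $z$ and at $F(z)$ yields $K_{\Omega_1,p}(z)=|J(z)|^2K_{\Omega_2,p}(F(z))$ (cf.\ \cite{CZ}). Hence
\[
\log K_{\Omega_1,p}(z)=\log K_{\Omega_2,p}(F(z))+\log|J(z)|^2,
\]
with $\log|J|^2$ pluriharmonic on $\Omega_1$.

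Next I would fix $z\in\Omega_1$, $X\in\mathbb C^n$ and set $w=F(z)$, $Y=F_\ast X=F'(z)X$. Restricting the identity above to the slice $t\mapsto z+tX$, the term $t\mapsto\log|J(z+tX)|^2$ is harmonic near $t=0$ (it is $2\,\mathrm{Re}$ of a holomorphic function of $t$), so the mean-value property removes its contribution to $\Box$ and
\[
i\partial\bar\partial\log K_{\Omega_1,p}(z;X)=\Box\big[\log K_{\Omega_2,p}(F(z+tX))\big]\big|_{t=0}.
\]
It then remains to prove the chain-rule identity $\Box[u(F(z+tX))]|_{t=0}=\Box[u(w+sY)]|_{s=0}=i\partial\bar\partial u(w;Y)$ for $u:=\log K_{\Omega_2,p}$. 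By Theorem \ref{th:C1,1/2} and $K_p\ge 1/|\Omega|>0$, the function $u$ is of class $C^1$ near $w$, so we may write $u(w+\xi)=u(w)+\mathrm{Re}\,\Lambda(\xi)+\phi(\xi)$ with $\Lambda$ a $\mathbb C$-linear form and $\phi\in C^1$, $\phi(0)=0$, $d\phi_0=0$. Along $F(z+tX)=w+tY+\psi(t)$, where $\psi$ is holomorphic with $\psi(0)=\psi'(0)=0$, the piece $\mathrm{Re}\,\Lambda(tY+\psi(t))$ is harmonic in $t$ and hence invisible to $\Box$; the same holds along the straight line $s\mapsto w+sY$. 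Thus both $\Box[u(F(z+tX))]|_{t=0}$ and $i\partial\bar\partial u(w;Y)$ equal $\liminf_{r\to0+}r^{-2}$ times the circle average of $\phi$, taken along $re^{i\theta}Y+\psi(re^{i\theta})$ in the first case and along $re^{i\theta}Y$ in the second. Finally, writing $\phi(re^{i\theta}Y+\psi(re^{i\theta}))-\phi(re^{i\theta}Y)=\int_0^1 d\phi_{re^{i\theta}Y+s\psi(re^{i\theta})}\big(\psi(re^{i\theta})\big)\,ds$ and using $|\psi(re^{i\theta})|=O(r^2)$ together with $\sup_{|\xi|\le\rho}|d\phi_\xi|\to0$ as $\rho\to0$, this difference is $o(r^2)$ uniformly in $\theta$, so the two $\liminf$'s coincide. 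Combining the three steps gives $i\partial\bar\partial\log K_{\Omega_1,p}(z;X)=i\partial\bar\partial\log K_{\Omega_2,p}(F(z);F_\ast X)$.

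The hard part will be this last chain-rule step: since $\log K_p$ need not be $C^2$ (by the Remark after Theorem \ref{th:C1,1/2} it is not even real-analytic in general), the classical chain rule for $\partial\bar\partial$ is unavailable, and one must argue directly from the definition. What makes it work is that after subtracting the differential the remainder $\phi$ has vanishing derivative at the base point, which forces the $O(|t|^2)$ discrepancy between the curve $t\mapsto F(z+tX)$ and its linearization $t\mapsto w+tY$ to be absorbed upon dividing by $r^2$; only $C^1$-regularity, not the full strength of Theorem \ref{th:C1,1/2}, is actually used.
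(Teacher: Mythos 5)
Your proof is correct and follows essentially the same route as the paper: the paper's Lemma \ref{lm:Invariant_2} is your transformation-law-plus-pluriharmonicity step, and its Lemma \ref{lm:Invariant_1} (that $\Box\log K_p$ along a holomorphic disc depends only on the $1$-jet of the disc, proved via the $C^1$-regularity of Theorem \ref{th:C1,1/2}) is exactly your chain-rule step, the paper merely organizing these through the auxiliary quantity $\widehat{B}_p(z;X)=\sup_\sigma\Box\log K_p\circ\sigma(0)$ instead of the single curve $t\mapsto F(z+tX)$. Your way of absorbing the $O(|t|^2)$ discrepancy (subtracting the full differential so that $d\phi_0=0$) is a harmless variant of the paper's expansion at $z+re^{i\theta}X$ combined with $\int_0^{2\pi}\tau_j(re^{i\theta})\,d\theta=0$.
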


Given $0<p<\infty$,  define
$$
\widehat{B}_p(z;X)=\widehat{B}_{\Omega,p}(z;X):=\sup_\sigma \left\{ \Box \log K_p\circ \sigma (0) \right\},
$$
where $\sigma$ is taken over all holomorphic mappings from some disc $\mathbb D_r:=\{z\in \mathbb C: |z|<r\}$ to $\Omega$ such that $\sigma(0)=z$ and $\sigma'(0)=X$.
Clearly,  it suffices to verify the following two lemmas.  

\begin{lemma}\label{lm:Invariant_1}
For $1<p<\infty$ we have
$$
i\partial\bar\partial \log K_{p}(z;X)=\widehat{B}_p(z;X).
$$
\end{lemma}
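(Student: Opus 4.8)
The plan is to prove the \emph{stronger} assertion that $\Box(\log K_p\circ\sigma)(0)$ depends only on $\sigma(0)$ and $\sigma'(0)$, and not on the remaining jets of the analytic disc $\sigma$. Once this is established, the supremum defining $\widehat{B}_p(z;X)$ is taken over a family on which the quantity is constant, and that constant value is the one attained by the affine disc $t\mapsto z+tX$ (which is admissible once $r$ is small), namely $i\partial\bar\partial\log K_p(z;X)$ by the very definition of the generalized Levi form.

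First I would invoke Theorem~\ref{th:C1,1/2}: since $K_p\in C^{1,1/2}(\Omega)$ and $K_p>0$, the function $u:=\log K_p$ lies in $C^1(\Omega)$ and has continuous Wirtinger derivatives $\partial u/\partial z_j$. Fix $z\in\Omega$, $X\in\mathbb C^n$, and a holomorphic $\sigma:\mathbb D_r\to\Omega$ with $\sigma(0)=z$ and $\sigma'(0)=X$; write $\sigma(t)=z+tX+t^2\psi(t)$ with $\psi$ holomorphic near $0$. Expanding $u$ to first order at the base point $z+tX$ along the increment $t^2\psi(t)$, and using the continuity of the $\partial u/\partial z_j$ to replace $z+tX$ by $z$ and $\psi(t)$ by $\psi(0)$ in the leading term, one obtains
\[
\log K_p(\sigma(t))=\log K_p(z+tX)+2\,\mathrm{Re}\big(c\,t^2\big)+o(|t|^2)\qquad(t\to0),\qquad c:=\sum_j\frac{\partial u}{\partial z_j}(z)\,\psi_j(0).
\]

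Next I would use two elementary features of the generalized complex Laplacian $\Box$, read off directly from its definition through circular means: (i) $\Box$ is unaffected by adding a harmonic function, since the circular mean of a harmonic function equals its value at the center; in particular $\Box$ annihilates $t\mapsto 2\,\mathrm{Re}(c\,t^2)$ because $c\,t^2$ is holomorphic in $t$; and (ii) $\Box$ at the origin is unaffected by adding a continuous term $e(t)$ with $e(0)=0$ and $e(t)=o(|t|^2)$, because $\rho^{-2}\,\big|\tfrac1{2\pi}\int_0^{2\pi}e(\rho e^{i\theta})\,d\theta\big|\le \rho^{-2}\sup_{|t|\le\rho}|e(t)|\to0$ as $\rho\to0^+$. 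Feeding the displayed identity into (i) and (ii) gives
\[
\Box(\log K_p\circ\sigma)(0)=\Box\big(t\mapsto\log K_p(z+tX)\big)\big|_{t=0}=i\partial\bar\partial\log K_p(z;X),
\]
and taking the supremum over all admissible $\sigma$ completes the argument.

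The step that requires real care is (ii): it depends on the Taylor remainder being genuinely $o(|t|^2)$, hence on $K_p$ being honestly of class $C^1$ near $z$ --- precisely what Theorem~\ref{th:C1,1/2} provides, and the reason the hypothesis $1<p<\infty$ enters. The mere Lipschitz continuity of $K_p$ available for all $0<p<\infty$ would only yield an $O(|t|^2)$ remainder, which would not be enough to conclude. Everything else is routine, since $K_p$ is positive and continuous, so all functions in sight are finite and continuous and no $-\infty$ values or convergence pathologies arise.
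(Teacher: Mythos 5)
Your argument is correct and is essentially the paper's own proof: both write $\sigma(t)=z+tX+\tau(t)$ with $\tau=O(|t|^2)$, use the $C^1$ regularity of $K_p$ from Theorem \ref{th:C1,1/2} to expand $\log K_p\circ\sigma$ about $z+tX$, freeze the Wirtinger derivatives at $z$, and kill the correction term via the mean-value property of holomorphic functions (your harmonic term $2\,\mathrm{Re}(ct^2)$ is exactly the paper's observation that $\int_0^{2\pi}\tau_j(re^{i\theta})\,d\theta=0$), so that $\Box(\log K_p\circ\sigma)(0)=i\partial\bar\partial\log K_p(z;X)$ for every admissible disc. Your closing remark correctly identifies why $C^1$ (and hence $1<p<\infty$) is the essential input.
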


\begin{lemma}\label{lm:Invariant_2}
Let $0<p<\infty$.  Let $F:\Omega_1\rightarrow \Omega_2$ be a biholomorphic mapping between bounded simply-connected domains.   Then
\begin{equation}\label{eq:invariant_2}
\widehat{B}_{\Omega_1,p}(z;X) = \widehat{B}_{\Omega_2,p}(F(z);F_\ast X).
\end{equation}
Moreover, \eqref{eq:invariant_2} holds for arbitrary bounded domains whenever $2/p\in \mathbb Z^+$.
\end{lemma}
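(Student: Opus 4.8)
The invariance rests on the transformation law of the $p$-Bergman kernel under biholomorphisms, together with the fact that the generalized complex Laplacian annihilates harmonic functions. First I would record the transformation law. Let $F:\Omega_1\to\Omega_2$ be biholomorphic with $\Omega_1$ simply-connected. Since $\det F'$ is a nowhere-vanishing holomorphic function on $\Omega_1$, it admits a holomorphic branch $(\det F')^{2/p}$, and then $f\mapsto(f\circ F)\cdot(\det F')^{2/p}$ is a linear isometry of $A^p(\Omega_2)$ onto $A^p(\Omega_1)$, by the change-of-variables identity $\int_{\Omega_1}|f\circ F|^p|\det F'|^2=\int_{\Omega_2}|f|^p$. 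Tracking the point-evaluation normalization of \eqref{eq:MinProb} through this isometry gives $m_{\Omega_1,p}(z)=|\det F'(z)|^{-2/p}\,m_{\Omega_2,p}(F(z))$, hence
\[
\log K_{\Omega_1,p}(z)=\log K_{\Omega_2,p}(F(z))+2\log|\det F'(z)|.
\]
When $2/p=m\in\mathbb{Z}^+$ the branch $(\det F')^{2/p}=(\det F')^{m}$ is globally defined with no topological hypothesis on $\Omega_1$, so this identity then holds for arbitrary bounded domains.

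Next, fix $z\in\Omega_1$ and $X$, and let $\sigma:\mathbb{D}_r\to\Omega_1$ be holomorphic with $\sigma(0)=z$ and $\sigma'(0)=X$. Composing the last identity with $\sigma$ gives, on $\mathbb{D}_r$,
\[
\log K_{\Omega_1,p}\circ\sigma=\log K_{\Omega_2,p}\circ(F\circ\sigma)+2\log|\det F'\circ\sigma|.
\]
The last summand is harmonic on $\mathbb{D}_r$: the function $\det F'\circ\sigma$ is nowhere-vanishing and holomorphic on the simply-connected disc $\mathbb{D}_r$, hence has a holomorphic logarithm there, so $\log|\det F'\circ\sigma|$ enjoys the mean-value property on every small circle centred at $0$. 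Consequently its contribution to the difference quotient defining $\Box$ vanishes identically, and although $\Box$ is not additive in general, this forces
\[
\Box\big(\log K_{\Omega_1,p}\circ\sigma\big)(0)=\Box\big(\log K_{\Omega_2,p}\circ(F\circ\sigma)\big)(0).
\]

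Finally, $F\circ\sigma:\mathbb{D}_r\to\Omega_2$ satisfies $(F\circ\sigma)(0)=F(z)$ and $(F\circ\sigma)'(0)=F'(z)\,\sigma'(0)=F_{\ast}X$, and $\sigma\mapsto F\circ\sigma$ is a bijection, with inverse $\widetilde\sigma\mapsto F^{-1}\circ\widetilde\sigma$, from the set of holomorphic discs $\sigma:\mathbb{D}_r\to\Omega_1$ with $\sigma(0)=z$, $\sigma'(0)=X$ onto the set of holomorphic discs $\widetilde\sigma:\mathbb{D}_r\to\Omega_2$ with $\widetilde\sigma(0)=F(z)$, $\widetilde\sigma'(0)=F_{\ast}X$ (the radius $r$ is preserved). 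Taking the supremum over all such $\sigma$ on both sides of the previous identity yields $\widehat{B}_{\Omega_1,p}(z;X)=\widehat{B}_{\Omega_2,p}(F(z);F_{\ast}X)$, i.e.\ \eqref{eq:invariant_2}; replacing $(\det F')^{2/p}$ by the globally defined $(\det F')^{m}$ throughout gives the same conclusion for arbitrary bounded domains when $2/p\in\mathbb{Z}^+$.

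I do not expect a deep obstacle here. The two points requiring care are the well-definedness of the branch $(\det F')^{2/p}$, which is exactly where simply-connectedness (or $2/p\in\mathbb{Z}^+$) enters, and the elementary observation that $\Box(u+v)=\Box u$ whenever $v$ is harmonic, so that the pluriharmonic Jacobian term drops out of the second-order quantity $\widehat{B}_p$.
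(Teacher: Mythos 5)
Your proof is correct and follows essentially the same route as the paper: compose the transformation rule $\log K_{\Omega_1,p}=\log K_{\Omega_2,p}\circ F+\log|J_F|^2$ with a holomorphic disc, note the Jacobian term is (pluri)harmonic so its mean-value contribution to $\Box$ vanishes exactly, and use the bijection $\sigma\mapsto F\circ\sigma$ between the competing discs before taking suprema. The only difference is that you re-derive the transformation law via the isometry $f\mapsto(f\circ F)(\det F')^{2/p}$ (which is precisely where simply-connectedness or $2/p\in\mathbb Z^+$ enters), whereas the paper simply cites it as Proposition 2.7 of \cite{CZ}.
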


\begin{proof}[Proof of Lemma \ref{lm:Invariant_1}]
Fix a holomorphic mapping $\sigma:\mathbb D_r\rightarrow \Omega$ with $\sigma(0)=z$ and $\sigma'(0)=X$.  Set
$$
\tau(t):=\sigma(t)-(\sigma(0)+t\sigma'(0))=\sigma(t)-(z+tX).
$$
Clearly,  $\tau$ is a holomorphic mapping from $\mathbb D_r$ to $\Omega$ such that $|\tau(t)|=O(|t|^2)$.  Since $K_p(\cdot)$ is $C^1$ in view of Theorem \ref{th:C1,1/2},  we have
\begin{eqnarray*}
 \log K_p\circ \sigma(r e^{i\theta}) & = &  \log K_p( z+ r e^{i\theta}X + \tau(r e^{i\theta}))\\
 & = &  \log K_p( z+ r e^{i\theta}X) + \sum_{j=1}^n \frac{\partial \log K_p}{\partial \zeta_j}( z+ r e^{i\theta}X)\,\tau_j(re^{i\theta})\\
 && + \sum_{j=1}^n \frac{\partial \log K_p}{\partial \bar\zeta_j}( z+ r e^{i\theta}X)\,\overline{\tau_j(re^{i\theta})} + o(r^2)\\
 & = &  \log K_p( z+ r e^{i\theta}X) + \sum_{j=1}^n \frac{\partial \log K_p}{\partial \zeta_j}( z)\,\tau_j(re^{i\theta})\\
 && + \sum_{j=1}^n \frac{\partial \log K_p}{\partial \bar\zeta_j}( z)\,\overline{\tau_j(re^{i\theta})} + o(r^2)
\end{eqnarray*} 
since $\int_0^{2\pi} \tau_j(re^{i\theta})d\theta=\tau_j(0)=0$.
It follows that 
$$
\Box \log K_p(\sigma)(0) = i\partial\bar\partial \log K_p(z;X).
$$
Since $\sigma$ can be arbitrarily chosen,  we are done.
\end{proof}

\begin{proof}[Proof of Lemma \ref{lm:Invariant_2}]
Given a holomorphic mapping $\sigma_1:\mathbb D_r\rightarrow \Omega_1$ with $\sigma(0)=z$ and $\sigma_1'(0)=X$,  define $\sigma_2:=F\circ \sigma_1$.  Clearly,  $\sigma_2$ is a holomorphic mapping from $\mathbb D_r$ to $\Omega_2$ satisfying $\sigma_2(0)=F(z)$ and $\sigma_2'(0)=F_\ast X$.  By the transformation rule of $K_p$ (cf.  \cite{CZ},  Proposition 2.7),  we have
$$
\log K_{\Omega_1,p}\circ \sigma_1 = \log K_{\Omega_2,p}\circ \sigma_2 + \log |J_F|^2.
$$
Since $\log |J_F|^2$ is pluriharmonic,  we infer from the mean-value property that 
\begin{eqnarray*}
&& \frac1{2\pi}\int_0^{2\pi} \log K_{\Omega_1,p}\circ \sigma_1(re^{i\theta}) d\theta - \log K_{\Omega_1,p}\circ \sigma_1(0)\\
& = &  \frac1{2\pi}\int_0^{2\pi} \log K_{\Omega_2,p}\circ \sigma_2(re^{i\theta}) d\theta - \log K_{\Omega_2,p}\circ \sigma_2(0),
\end{eqnarray*}
from which the assertion immediately follows.
\end{proof}

\section{The case $1\le p \le 2$}
\subsection{Proof of Theorem \ref{th:p<2}}
We first verify that $A^2_{p,z}(\Omega)\subset A^p(\Omega)$ for $1\le p\le 2$.  To see this, simply note that for any $f\in A^2_{p,z}(\Omega)$,
\begin{eqnarray*}
\int_\Omega |f|^p & = & \int_\Omega \left(|m_p(\cdot,z)|^{\frac{p}2(p-2)}|f|^p\right)|m_p(\cdot,z)|^{\frac{p}2(2-p)}\\
& \le & \left(\int_\Omega |m_p(\cdot,z)|^{p-2} |f|^2\right)^{p/2} \left(\int_\Omega |m_p(\cdot,z)|^p \right)^{1-p/2}\\
& = & m_p(z)^{p(1-p/2)} \left(\int_\Omega |m_p(\cdot,z)|^{p-2} |f|^2\right)^{p/2}\\
& < & \infty.
\end{eqnarray*}
Thus  we have
\begin{equation}\label{eq:RF_1}
f(z)= \int_\Omega |m_p(\cdot,z)|^{p-2}\overline{K_p(\cdot,z)}f,\ \ \ \forall\,f\in A^2_{p,z}(\Omega),
\end{equation}
in view of \eqref{eq:RPF}.
On the other hand,  the reproducing formula for $A^2_{p,z}(\Omega)$ gives
\begin{equation}\label{eq:RF_2}
f(z)= \int_\Omega |m_p(\cdot,z)|^{p-2}\overline{K_{2,p,z}(\cdot,z)}f,\ \ \ \forall\,f\in A^2_{p,z}(\Omega).
\end{equation}
Thus
\begin{equation}\label{eq:RF_3}
 \int_\Omega |m_p(\cdot,z)|^{p-2}\overline{(K_p(\cdot,z)-K_{2,p,z}(\cdot,z))}f=0,\ \ \ \forall\,f\in A^2_{p,z}(\Omega).
\end{equation}
Since 
\begin{equation}\label{eq:RF_4}
 \int_\Omega |m_p(\cdot,z)|^{p-2}|K_p(\cdot,z)|^2=K_p(z)^2 \int_\Omega |m_p(\cdot,z)|^p=
 K_p(z),  
\end{equation}
we see that $K_p(\cdot,z)\in A^2_{p,z}(\Omega)$.   Substitute $f:=K_p(\cdot,z)-K_{2,p,z}(\cdot,z)$ into \eqref{eq:RF_3},  we immediately get $f=0$,  i.e.,  \eqref{eq:RF_0} holds.

\subsection{Proof of Theorem \ref{th:Integ}}
Let $A^2(\Omega,\varphi)$ and  $K_{\Omega,\varphi}$ be given as \S\,1.  We need the following $L^2$ boundary decay estimate for $K_{\Omega,\varphi}(\cdot,z)$.

\begin{proposition}\label{prop:BergmanIntegral}
Let $\Omega\subset {\mathbb C}^n$ be a pseudoconvex domain and $\rho$ a negative continuous psh  function on $\Omega$.
 Set
  $$
 \Omega_t=\{\zeta\in \Omega:-\rho(\zeta)>t\},\ \ \ t>0.
 $$
   Let $a>0$ be given.
  For every $0<r<1$, there exist constants $\varepsilon_r,C_r>0$ such that
  \begin{equation}\label{eq:2.1}
  \int_{-\rho\le \varepsilon} |K_{\Omega,\varphi}(\cdot,z)|^2e^{-\varphi} \le C_{r}\, K_{\Omega_a,\varphi}(z) (\varepsilon/a)^{r}
  \end{equation}
  for all $z\in \Omega_a$ and $\varepsilon\le \varepsilon_r a$.
  \end{proposition}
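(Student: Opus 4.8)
The plan is to reduce \eqref{eq:2.1} to a quantitative stability estimate for the weighted Bergman kernel along the exhaustion $\Omega_\varepsilon\uparrow\Omega$, and then to prove that estimate by localization together with the Donnelly--Fefferman $L^2$ estimate; the unweighted case $\varphi\equiv0$ is exactly the argument of \cite{Chen17}, and the weighted version runs along the same lines. For the reduction set $h:=K_{\Omega,\varphi}(\cdot,z)$, so that $\int_\Omega|h|^2e^{-\varphi}=K_{\Omega,\varphi}(z,z)$ and $h$ reproduces $A^2(\Omega,\varphi)$ at $z$. Since $h|_{\Omega_\varepsilon}\in A^2(\Omega_\varepsilon,\varphi)$ and $h(z)=K_{\Omega,\varphi}(z,z)$, Cauchy--Schwarz against $K_{\Omega_\varepsilon,\varphi}(\cdot,z)$ gives $\int_{\Omega_\varepsilon}|h|^2e^{-\varphi}\ge K_{\Omega,\varphi}(z,z)^2/K_{\Omega_\varepsilon,\varphi}(z,z)$, whence
\[
\int_{-\rho\le\varepsilon}|h|^2e^{-\varphi}=K_{\Omega,\varphi}(z,z)-\int_{\Omega_\varepsilon}|h|^2e^{-\varphi}\le K_{\Omega,\varphi}(z,z)\Big(1-\tfrac{K_{\Omega,\varphi}(z,z)}{K_{\Omega_\varepsilon,\varphi}(z,z)}\Big).
\]
Because $\Omega_a\subset\Omega$ forces $K_{\Omega,\varphi}(z,z)\le K_{\Omega_a,\varphi}(z)$, inequality \eqref{eq:2.1} follows once one shows, with constants independent of $z$ and $\varepsilon$,
\[
K_{\Omega_\varepsilon,\varphi}(z,z)\le\big(1-C_r(\varepsilon/a)^r\big)^{-1}K_{\Omega,\varphi}(z,z),\qquad z\in\Omega_a,\ \ \varepsilon\le\varepsilon_r a .
\]
Here one first reduces to $\rho$ being a continuous psh exhaustion of $\Omega$, so that each $\Omega_t$ is relatively compact; this is harmless since the estimate is only used when $\alpha(\Omega)>0$, and if convenient one also regularises $\rho$.

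To prove the displayed comparison, put $g:=K_{\Omega_\varepsilon,\varphi}(\cdot,z)$ and $\psi:=-\log(-\rho/a)\in\mathrm{PSH}(\Omega)$; a direct computation gives the self-bounded-gradient inequality $i\partial\bar\partial\psi\ge i\partial\psi\wedge\bar\partial\psi$, equivalently $i\partial\bar\partial\psi\ge\rho^{-2}\,i\partial\rho\wedge\bar\partial\rho$. Take $\chi=\tilde\chi(-\rho/\varepsilon)$ with $\tilde\chi\equiv0$ on $[0,1]$ and $\tilde\chi\equiv1$ on $[2,\infty)$, so that $\chi\equiv1$ near $z$, $\chi\equiv0$ off $\Omega_\varepsilon$, and $\bar\partial\chi$ is supported in the shell $S_\varepsilon=\{\varepsilon\le-\rho\le2\varepsilon\}$, where $|\bar\partial\chi|^2_{i\partial\bar\partial\psi}\le\|\tilde\chi'\|_\infty^2\,\varepsilon^{-2}|\bar\partial\rho|^2_{i\partial\bar\partial\psi}\le\|\tilde\chi'\|_\infty^2\,\varepsilon^{-2}\rho^2\lesssim1$. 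Solving $\bar\partial u=g\,\bar\partial\chi$ on $\Omega$ by the Donnelly--Fefferman estimate with auxiliary function $\psi$ and weight $\varphi+2r\psi$ gives $u$ with
\[
\int_\Omega|u|^2e^{-\varphi-2r\psi}\lesssim\int_{S_\varepsilon}|g|^2\,|\bar\partial\chi|^2_{i\partial\bar\partial\psi}\,e^{-\varphi-2r\psi}\lesssim(\varepsilon/a)^{2r}\!\int_{S_\varepsilon}|g|^2e^{-\varphi}\lesssim(\varepsilon/a)^{2r}K_{\Omega_\varepsilon,\varphi}(z,z),
\]
the gain coming from $e^{-2r\psi}=(-\rho/a)^{2r}\le(2\varepsilon/a)^{2r}$ on $S_\varepsilon$. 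Then $G:=\chi g-u$ is holomorphic on $\Omega$ with $G(z)=K_{\Omega_\varepsilon,\varphi}(z,z)-u(z)$; estimating $\|G\|_{L^2(\Omega,e^{-\varphi})}\le\|\chi g\|+\|u\|$ and combining with $|G(z)|\ge K_{\Omega_\varepsilon,\varphi}(z,z)-|u(z)|$ yields the comparison, once $u$ is shown to be negligible.

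The heart of the matter, and the step requiring real care, is to make the correction $u$ genuinely negligible — both in the norm entering the competitor estimate and pointwise at $z$ — uniformly for $z\in\Omega_a$; this is where the weight bookkeeping in the $\bar\partial$-solution must be arranged so that the $(-\rho/a)^{2r}$-twist, which produces the gain, does not destroy control of $u$ near $\partial\Omega$. For the pointwise bound one fixes $\varepsilon_0<a/2$ and uses $|u(z)|^2\le K_{\Omega_{2\varepsilon_0},\varphi}(z,z)\int_{\Omega_{2\varepsilon_0}}|u|^2e^{-\varphi}$: on $\Omega_{2\varepsilon_0}$ one has $\bar\partial\chi\equiv0$ (so $u$ is holomorphic there) and the weights $e^{-\varphi}$ and $e^{-\varphi-2r\psi}$ are comparable with $\varepsilon$-independent constants, hence $\int_{\Omega_{2\varepsilon_0}}|u|^2e^{-\varphi}\lesssim(\varepsilon/a)^{2r}K_{\Omega_\varepsilon,\varphi}(z,z)$; and by the monotonicity $K_{\Omega,\varphi}(z,z)\le K_{\Omega_\varepsilon,\varphi}(z,z)\le K_{\Omega_{2\varepsilon_0},\varphi}(z,z)$ together with $\overline{\Omega_a}\Subset\Omega_{2\varepsilon_0}$, the kernel $K_{\Omega_{2\varepsilon_0},\varphi}$ stays bounded and $K_{\Omega_\varepsilon,\varphi}$ stays bounded below on $\overline{\Omega_a}$, so no blow-up occurs as $z\to\partial\Omega_a$ and all implied constants are independent of $z$ and $\varepsilon$. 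Feeding these bounds into the inequalities for $\|G\|$ and $|G(z)|$ gives the displayed comparison, and the reduction of the first paragraph then yields \eqref{eq:2.1}. The unweighted case $\varphi\equiv0$, $p=2$ of all of this is \cite{Chen17}, to which we refer for the technical execution of the $\bar\partial$-step.
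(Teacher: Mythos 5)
Your reduction in the first paragraph is correct, but the $\bar\partial$-step does not close, and it is exactly the point you yourself flag as ``the heart of the matter'' without carrying it out. The Donnelly--Fefferman estimate in the weight $e^{-\varphi-2r\psi}$ controls only $\int_\Omega|u|^2e^{-\varphi}(-\rho/a)^{2r}$; on $\Omega\setminus\Omega_a$ one has $(-\rho/a)^{2r}\le 1$, and this factor tends to $0$ wherever $\rho\to 0$, so the twisted norm does not dominate the norm $\int_\Omega|u|^2e^{-\varphi}$ that enters your competitor bound $\|G\|_{L^2(\Omega,e^{-\varphi})}\le\|\chi g\|+\|u\|$. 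Your subsequent estimates bound $u$ only on $\Omega_{2\varepsilon_0}$, so $\|u\|_{L^2(\Omega,e^{-\varphi})}$ is never controlled and the comparison $K_{\Omega_\varepsilon,\varphi}(z)\le\bigl(1-C_r(\varepsilon/a)^r\bigr)^{-1}K_{\Omega,\varphi}(z)$ is not established. The uniformity claims are also unjustified: for a general negative continuous psh $\rho$ the set $\Omega_a$ need not be relatively compact in $\Omega$ (take $\rho(z)=|z_1|^2-1$ on the bidisc), so $K_{\Omega_{2\varepsilon_0},\varphi}$ is not bounded on $\Omega_a$, $\overline{\Omega_a}$ is not compactly contained in $\Omega_{2\varepsilon_0}$, and $K_{\Omega_\varepsilon,\varphi}$ need not be bounded below there; even when $\Omega_a$ is relatively compact, your constants acquire a dependence on $a$ and $\varphi$, which is fatal for the way Proposition \ref{prop:BergmanIntegral} is used in the paper (in the proof of Theorem \ref{th:Integ} it is applied with $a=-\rho(z)/2$ and $\varphi=(2-p)\log|m_p(\cdot,z)|$, both varying with $z$, so the constant must depend on $r$ alone). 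Finally, $\chi=\tilde\chi(-\rho/\varepsilon)$ is not differentiable for merely continuous $\rho$, and the ``harmless'' reduction to an exhausting $\rho$ changes the hypothesis and is not justified.

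The paper's own proof is different and avoids all of this: it invokes the Berndtsson--Charpentier theorem \cite{BCh}, i.e.\ the boundedness of the Bergman projection $P_{\Omega,\varphi}$ on $L^2$ with the extra weight $(-\rho)^{-r}$, applied to $f=\chi_{\Omega_a}K_{\Omega_a,\varphi}(\cdot,z)$, for which $P_{\Omega,\varphi}(f)=K_{\Omega,\varphi}(\cdot,z)$ by the reproducing property; since $(-\rho)^{-r}\ge\varepsilon^{-r}$ on $\{-\rho\le\varepsilon\}$ and $(-\rho)^{-r}\le a^{-r}$ on $\Omega_a$, \eqref{eq:2.1} follows at once with a constant depending only on $r$. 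Your attribution of your scheme to \cite{Chen17} is therefore inaccurate as well: the unweighted case there is proved by the same projection argument, not by a kernel comparison with a $\bar\partial$-correction. To salvage your route you would need a gain-producing $\bar\partial$-estimate whose solution is controlled in the untwisted weight $e^{-\varphi}$ uniformly up to $\partial\Omega$ --- which is essentially what the weighted projection estimate encodes --- so it is both simpler and necessary for the intended application to use it directly, as the paper does.
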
 
  
  Proposition \ref{prop:BergmanIntegral} has been verified in \cite{Chen17} for the case $\varphi=0$.  Although the general case is  the same,   we still include a proof here for the sake of completeness.
  
  \begin{proof}
  Let $P_{\Omega,\varphi}: L^2(\Omega,\varphi)\rightarrow A^2(\Omega,\varphi)$ be the Bergman projection.   Then we have
  \begin{equation}\label{eq:BCh}
  \int_\Omega |P_{\Omega,\varphi}(f)|^2 (-\rho)^{-r}e^{-\varphi} \le C_r \int_\Omega |f|^2 (-\rho)^{-r}e^{-\varphi}
  \end{equation}
  for any $0<r<1$ and $f\in L^2(\Omega,\varphi)$ (cf. \cite{BCh}).  Let $\chi_E$ denote the characteristic function of a set $E$.  
  For $f:=\chi_{\Omega_a} K_{\Omega_a,\varphi}(\cdot,z)$,  we have  
\begin{eqnarray}\label{eq:BCh_2}
P_{\Omega,\varphi}(f)(\zeta) & = & \int_\Omega \chi_{\Omega_a}(\cdot) K_{\Omega_a,\varphi}(\cdot,z)\overline{K_{\Omega,\varphi}(\cdot,\zeta)}\nonumber\\
& = & \overline{\int_{\Omega_a} K_{\Omega,\varphi}(\cdot,\zeta) \overline{K_{\Omega_a,\varphi}(\cdot,z)}}\nonumber\\
& = &  K_{\Omega,\varphi}(\zeta,z)
\end{eqnarray} 
in view of the reproducing property.  
   By \eqref{eq:BCh} and \eqref{eq:BCh_2},  we obtain
    \begin{eqnarray*}
   \int_\Omega |K_{\Omega,\varphi}(\cdot,z)|^2 (-\rho)^{-r} e^{-\varphi} & \le & C_r \int_{\Omega_a} | K_{\Omega_a,\varphi}(\cdot,z) |^2 (-\rho)^{-r} e^{-\varphi}\\
   & \le & C_r a^{-r} K_{\Omega_a,\varphi}(z),
    \end{eqnarray*}
    from which the assertion immediately follows,  for 
    $$
    \int_\Omega |K_{\Omega,\varphi}(\cdot,z)|^2 (-\rho)^{-r} e^{-\varphi} \ge \varepsilon^{-r}\int_{-\rho\le \varepsilon} |K_{\Omega,\varphi}(\cdot,z)|^2 e^{-\varphi}.
    $$
  \end{proof}
  
  \begin{proof}[Proof of Theorem \ref{th:Integ}]
$(1)$  For any $0<\alpha<\alpha(\Omega)$,  we take a continuous negative plurisubharmonic function $\rho$ on $\Omega$ such that $-\rho\le C_\alpha \delta^\alpha$ for suitable constant $C_\alpha>0$.  By Theorem \ref{th:p<2},  we have $K_{2,p,z}(\cdot,z)=K_p(z)m_p(\cdot,z)$.   Apply Proposition \ref{prop:BergmanIntegral} with   $\varphi:=(2-p)\log |m_p(\cdot,z)|$ and $a=-\rho(z)/2$,  we obtain for $\varepsilon\ll1$,
  \begin{eqnarray*}
  \int_{-\rho\le \varepsilon} |m_p(\cdot,z)|^p  & = & K_p(z)^{-2} \int_{-\rho\le \varepsilon} |m_p(\cdot,z)|^{p-2} |K_{2,p,z}(\cdot,z)|^2\\
  & \le & C_{r} \frac{K_{\Omega_{a},\varphi}(z)}{K_p(z)^2}\left(\frac{\varepsilon}{-\rho(z)}\right)^r.
  \end{eqnarray*}
Define $d(z)=d(z,\partial \Omega_a)$.   Note that the mean-value  inequality yields 
$$
1/|\Omega| \le K_p(z)\le C_n \delta(z)^{-2n},
$$
  $$
\sup_{B(z,d(z))} |m_p(\cdot,z)|^p \le C_n d(z)^{-2n} m_p(z)^p\le C_n d(z)^{-2n} |\Omega|,
$$
and 
$$
K_{\Omega_{a},\varphi}(z)\le C_n d(z)^{-2n} \sup_{B(z,d(z))} e^{\varphi}=C_n d(z)^{-2n} \left(\sup_{B(z,d(z))} |m_p(\cdot,z)|^p\right)^{\frac2p-1}.
$$  
As $1\le p\le 2$,  we obtain
  \begin{equation}\label{eq:B-Integ_1}
  \int_{\delta\le \varepsilon} |m_p(\cdot,z)|^p\lesssim \varepsilon^{r\alpha}
  \end{equation}
  where the implicit constant depends only on $n,r,\alpha,z$.  
  This combined with the mean-value inequality gives
  $$
  |m_p(\zeta,z)|^p \le C_n \delta(\zeta)^{-2n} \int_{\delta\le 2\delta(\zeta)} |m_p(\cdot,z)|^p\lesssim \delta(\zeta)^{r\alpha-2n},\ \ \ \forall\, \zeta\in\Omega.
  $$
  Let $\tau>0$.  Then we have
  \begin{eqnarray*}
  \int_\Omega |m_p(\cdot,z)|^{p+\tau} & \lesssim & 1+ \sum_{k=1}^\infty \int_{2^{-k-1}<\delta \le 2^{-k}} |m_p(\cdot,z)|^{p+\tau}\\
  & \lesssim & 1+ \sum_{k=1}^\infty 2^{k\tau(2n-r\alpha)/p} \int_{\delta \le 2^{-k}} |m_p(\cdot,z)|^{p}\\
  & \lesssim & 1+ \sum_{k=1}^\infty 2^{k\tau(2n-r\alpha)/p-kr\alpha}\\
  & < & \infty
  \end{eqnarray*}
  provided $\tau(2n-r\alpha)/p<r\alpha$,  i.e.,  $\tau<\frac{pr\alpha}{2n-r\alpha}$.   Since $r$ and $\alpha$ can be arbitrarily close to $1$ and $\alpha(\Omega)$ respectively,  we are done.

  $(2)$ Without loss of generality,  we assume $s>p$.  Since $|\Omega|^{\frac1t}\cdot K_t(z)^{\frac1t}$ is  nonincreasing in $t$ (cf.  (6.3) in \cite{CZ}),  we see that 
  $$
  K_p(z)\ge |\Omega|^{\frac{p}s-1}\cdot K_s(z)^{\frac{p}s} = K_s(z)\left( |\Omega|K_s(z)\right)^{\frac{p}s-1} \ge K_s(z)- C(s-p)
  $$
  when $s$ is sufficiently close to $p$,
  where $C$ is  a suitable constant depending only on $z,\Omega$. On the other hand,  
 we infer from \eqref{eq:B-Integ_1}  that
\begin{eqnarray*}
\int_{\delta\le \varepsilon} |K_p(\cdot,z)|^{s} & \le &  \sum_{k=0}^\infty \int_{2^{-k-1}\varepsilon <\delta \le 2^{-k}\varepsilon} |K_p(\cdot,z)|^{p+s-p}\\
  & \lesssim &  \sum_{k=0}^\infty (2^{-k}\varepsilon)^{-(s-p)(2n-r\alpha)/p} \int_{\delta \le 2^{-k}\varepsilon} |K_p(\cdot,z)|^{p}\\
  & \lesssim &  \sum_{k=0}^\infty (2^{-k}\varepsilon)^{-(s-p)(2n-r\alpha)/p+r\alpha}\\
  & \lesssim & \varepsilon^{\alpha(\Omega)/2}
\end{eqnarray*} 
provided that $1-r$ and $s-p$ are sufficiently small.   
  Thus
  \begin{eqnarray*}
 \int_\Omega |K_p(\cdot,z)|^s & = &  \int_{\delta> \varepsilon} |K_p(\cdot,z)|^p |K_p(\cdot,z)|^{s-p} + \int_{\delta\le \varepsilon} |K_p(\cdot,z)|^s\\
 & \le & (C_1\varepsilon^{-\frac{2n}p})^{s-p} \int_\Omega |K_p(\cdot,z)|^p + C_2 \varepsilon^{\alpha(\Omega)/2}.
  \end{eqnarray*}
  Now we take $\varepsilon=(s-p)^{2/\alpha(\Omega)}$.  Since
  $$
  (C_1\varepsilon^{-\frac{2n}p})^{s-p}\le 1+ C_3 (s-p)|\log(s-p)|\ \ \ \text{and}\ \ \ \varepsilon^{\alpha(\Omega)/2}=s-p,
  $$
  it follows that
  \begin{eqnarray*}
  \int_\Omega |K_p(\cdot,z)|^s & \le & \int_\Omega |K_p(\cdot,z)|^p + C_4 (s-p)|\log (s-p)|\\
  & = & K_p(z)^{p-1} + C_4 (s-p)|\log (s-p)|.
  \end{eqnarray*}
  Thus 
  \begin{eqnarray*}
  K_s(z)\ge \frac{K_p(z)^s}{\int_\Omega |K_p(\cdot,z)|^s} & \ge & \frac{K_p(z)^p -C_5(s-p)}{K_p(z)^{p-1}+C_4(s-p)|\log(s-p)|}\\
  &\ge&  K_p(z)-C_6(s-p)|\log(s-p)|.
  \end{eqnarray*}
  \end{proof}

 \section{The case $2<p<\infty$}
 Let us first recall the following result on removable singularities.
  
  \begin{theorem}[cf.  Hedberg \cite{Hedberg}]\label{th:Hedberg}
 Let $D$ be a domain in $\mathbb C$ and $E$ a compact set in $D$.  Let $1<p<\infty$.  Then $A^p(D\backslash E)=A^p(D)$ if and only if $C_q(E)=0$,  where $\frac1p+\frac1q=1$.  
\end{theorem}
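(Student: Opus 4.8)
The plan is to prove both implications by potential theory built around the Cauchy--Riemann operator $\bar{\partial}$. First I would isolate the two facts about the capacity $C_q$ that are actually used (and which, for a suitable definition of $C_q$, are essentially built into it): (i) $C_q(E)=0$ is equivalent to the existence of functions $\psi_\varepsilon\in C^\infty(\C)$ with $0\le\psi_\varepsilon\le1$, vanishing in a neighbourhood of $E$, equal to $1$ off a slightly larger neighbourhood, and with $\|\nabla\psi_\varepsilon\|_{L^q}\to0$; in particular $C_q(E)=0$ forces $|E|=0$. (ii) Dually, $C_q(E)>0$ is equivalent to the existence of a non-zero positive measure $\mu$ with $\mathrm{supp}\,\mu\subset E$ whose Cauchy transform $\widehat\mu(z):=\int(z-w)^{-1}\,d\mu(w)$ lies in $L^p_{\mathrm{loc}}(\C)$; here one uses the sharp $L^p$ mapping properties of the Riesz potential $I_1$ of order one, whose kernel $|x|^{-1}$ coincides up to a constant with $|(z-w)^{-1}|$.

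For the sufficiency direction, assume $C_q(E)=0$ and take $f\in A^p(D\setminus E)$. Since $|E|=0$, $f$ is a well-defined element of $L^p(D)$ and $\bar{\partial}f=0$ distributionally on $D\setminus E$, so the distribution $\bar{\partial}f$ is supported on $E$; I would show it vanishes on all of $D$. Fixing $\phi\in C^\infty_c(D)$ and taking the cut-offs $\psi_\varepsilon$ from (i), the product $\phi\psi_\varepsilon$ is compactly supported in $D\setminus E$, so $\int_D f\,\bar{\partial}(\phi\psi_\varepsilon)=0$; writing $\bar{\partial}(\phi\psi_\varepsilon)=\psi_\varepsilon\,\bar{\partial}\phi+\phi\,\bar{\partial}\psi_\varepsilon$ and letting $\varepsilon\to0$, the first term tends to $\int_D f\,\bar{\partial}\phi$ by dominated convergence (as $\psi_\varepsilon\to1$ a.e.), while the second is $O\!\big(\|f\phi\|_{L^p}\|\bar{\partial}\psi_\varepsilon\|_{L^q}\big)\to0$ by H\"older. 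Hence $\bar{\partial}f=0$ on $D$, and elliptic regularity (Weyl's lemma for $\bar{\partial}$) shows $f$ agrees a.e.\ with a holomorphic function on $D$ that still lies in $L^p(D)$; the opposite inclusion, restriction of $A^p(D)$ into $A^p(D\setminus E)$, is trivial, so $A^p(D\setminus E)=A^p(D)$.

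For the necessity direction I would argue by contraposition: given $C_q(E)>0$, choose $\mu\ne0$ as in (ii). Off $E$ the Cauchy transform $\widehat\mu$ is holomorphic and locally bounded, and the hypothesis on $\mu$ supplies the $L^p$ control near $E$, so $\widehat\mu\in A^p(D\setminus E)$. However $\widehat\mu$ is holomorphic on all of $\C\setminus E$ with $\widehat\mu(z)\to0$ as $z\to\infty$; if it extended holomorphically across $E$ it would be entire and vanish at infinity, hence be identically zero, which is impossible because the coefficient of $1/z$ in its Laurent expansion at infinity equals $\mu(\C)>0$. Thus $\widehat\mu|_{D\setminus E}$ is not the restriction of any element of $A^p(D)$, and $A^p(D\setminus E)\ne A^p(D)$.

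The main obstacle is exactly the nonlinear potential theory encoded in (i) and (ii): that $C_q$-null sets admit $W^{1,q}$-small capacitary cut-offs (a spectral-synthesis-type statement in the circle of ideas of Maz'ya, Meyers and Adams--Hedberg) and, dually, that positive $C_q$-capacity is witnessed by a non-trivial measure on $E$ with $L^p$-integrable Riesz-$1$ potential. Granting these, the complex-analytic steps above are routine; it is precisely this potential-theoretic core for which we defer to Hedberg's theorem rather than reproving it here.
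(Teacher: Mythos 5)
The paper does not prove this statement at all: it is quoted from Hedberg's paper, so your argument can only be compared with the standard proof in the literature, and in outline it coincides with it. Your complex-analytic steps are sound: the cut-off/distributional $\bar\partial$ argument plus Weyl's lemma for sufficiency, and for necessity the Cauchy transform of a capacitary measure together with the Liouville argument (note that $D\setminus E$ may be disconnected, but gluing the hypothetical extension on $D$ with $\widehat\mu$ on $\mathbb C\setminus E$ still yields an entire function, so the contradiction stands). What you defer to facts (i) and (ii) is, however, exactly the content of Hedberg's theorem, so the proposal is a reduction to known nonlinear potential theory rather than a self-contained proof; that is legitimate, but three caveats should be made explicit. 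First, with the capacity as literally defined in Section 4 (infimum of $\int_{\mathbb C}|\nabla\phi|^q$ over all $\phi\in C_0^\infty(\mathbb C)$ with $\phi\ge1$ on $E$, no constraint on the support) every compact set has zero capacity once $q\ge2$, since one may let the support of the test function expand; so your hedge ``for a suitable definition'' is doing real work -- one must use Hedberg's condenser capacity or the inhomogeneous $W^{1,q}$-capacity, which for the range $1<q<2$ actually used in this paper has the same null sets as the displayed definition, and only for such a definition are your claims (i) (in particular $C_q(E)=0\Rightarrow|E|=0$) and (ii) correct. Second, you also need $\psi_\varepsilon\to1$ a.e.\ (or in measure, then along a subsequence); this follows because small capacity controls the measure of $\{\psi_\varepsilon\neq1\}$ via Sobolev--Poincar\'e, and should be said. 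Third, the conclusion $\widehat\mu\in A^p(D\setminus E)$ tacitly uses $|D|<\infty$: since $\widehat\mu(z)\sim\mu(\mathbb C)/z$ at infinity, it fails to be $p$-integrable near infinity when $p\le2$, so for an unbounded $D$ one must either restrict to bounded $D$ (which covers every application in this paper) or insert a localization step. Finally, only one implication of your (ii) is needed (positive capacity produces the measure); the converse is not a potential-theoretic input but a consequence of the removability direction you already proved.
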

  
  Here $C_q(E)$ is  the $q-$capacity of $E$ defined by
$
C_q(E):=\inf_\phi \int_{\mathbb C} |\nabla \phi|^q,
$
where the infimum is taken over all $\phi\in C_0^\infty(\mathbb C)$ such that $\phi\ge 1$ on $E$. 

\begin{lemma}\label{lm:Integ}
Let $1<p<\infty$ and $\frac1p+\frac1q=1$.  Let $E$ be a compact set in $\mathbb C$ satisfying $C_q(E)>0$ and $C_{q'}(E)=0$\/ for all $q'<q$.  Then there exists $R_0\gg 1$ such that the following properties hold  for all $R\ge R_0$:
\begin{enumerate}
 \item[$(1)$]  Given $p'>p$, there exists a point $z\in \mathbb D_R\backslash E$ so that $K_{\mathbb D_R\backslash E,p}(\cdot,z)\notin L^{p'}(\mathbb D_R\backslash E)$;
 \item[$(2)$] There exists a point $z\in \mathbb D_R\backslash E$ such that $K_s(z)$ is not continuous at $s=p$.
 \end{enumerate}
\end{lemma}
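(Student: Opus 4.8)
The plan is to reduce both assertions to the following rigidity fact and then invoke Hedberg's theorem (Theorem~\ref{th:Hedberg}): \emph{if $K_{\mathbb{D}_R\setminus E,p}(z)=K_{\mathbb{D}_R,p}(z)$ for every $z\in\mathbb{D}_R\setminus E$, then $A^p(\mathbb{D}_R\setminus E)=A^p(\mathbb{D}_R)$}; since $C_q(E)>0$, Theorem~\ref{th:Hedberg} forbids $A^p(\mathbb{D}_R\setminus E)=A^p(\mathbb{D}_R)$, so equality of the diagonal kernels is impossible. Take $R_0>\sup_{w\in E}|w|$, so $E\subset\subset\mathbb{D}_R$ whenever $R\ge R_0$; since $C_{q'}(E)=0$ for some $q'\in(1,q)$, the set $E$ has Lebesgue measure zero and Hausdorff dimension $<1$, hence $\mathbb{D}_R\setminus E$ is a bounded domain and restriction is a norm-preserving embedding $A^p(\mathbb{D}_R)\hookrightarrow A^p(\mathbb{D}_R\setminus E)$; in particular $m_{\mathbb{D}_R\setminus E,p}(z)\le m_{\mathbb{D}_R,p}(z)$, i.e. $K_{\mathbb{D}_R\setminus E,p}(z)\ge K_{\mathbb{D}_R,p}(z)$, for all $z\in\mathbb{D}_R\setminus E$, with strict inequality somewhere by the rigidity fact.

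Granting the rigidity fact, $(1)$ and $(2)$ follow quickly. For $(2)$: if $s>p$ its conjugate exponent $q_s$ satisfies $q_s<q$, whence $C_{q_s}(E)=0$ and Theorem~\ref{th:Hedberg} gives $A^s(\mathbb{D}_R\setminus E)=A^s(\mathbb{D}_R)$; therefore $K_s(z)=K_{\mathbb{D}_R,s}(z)=R^2/\big(\pi(R^2-|z|^2)^2\big)$ for all $z\in\mathbb{D}_R\setminus E$, a quantity independent of $s$, so $\lim_{s\to p^+}K_s(z)=K_{\mathbb{D}_R,p}(z)$. Choosing $z_0$ with $K_{\mathbb{D}_R\setminus E,p}(z_0)>K_{\mathbb{D}_R,p}(z_0)$ makes $s\mapsto K_s(z_0)$ discontinuous at $s=p$. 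For $(1)$: fix $p'>p$ with conjugate $q'<q$, so $C_{q'}(E)=0$, and suppose for contradiction that $K_{\mathbb{D}_R\setminus E,p}(\cdot,z)\in L^{p'}(\mathbb{D}_R\setminus E)$ for \emph{every} $z$. Then $m_{\mathbb{D}_R\setminus E,p}(\cdot,z)=K_{\mathbb{D}_R\setminus E,p}(\cdot,z)/K_{\mathbb{D}_R\setminus E,p}(z)\in A^{p'}(\mathbb{D}_R\setminus E)=A^{p'}(\mathbb{D}_R)\subset A^p(\mathbb{D}_R)$, so it extends to $\widetilde m_z\in A^p(\mathbb{D}_R)$ with $\widetilde m_z(z)=1$ and $\|\widetilde m_z\|_{L^p(\mathbb{D}_R)}=m_{\mathbb{D}_R\setminus E,p}(z)$; by minimality $m_{\mathbb{D}_R,p}(z)=m_{\mathbb{D}_R\setminus E,p}(z)$ for all $z$, i.e. $K_{\mathbb{D}_R\setminus E,p}\equiv K_{\mathbb{D}_R,p}$ on $\mathbb{D}_R\setminus E$, contradicting the rigidity fact; so some $z$ must fail, which is $(1)$.

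To prove the rigidity fact, assume $K_{\mathbb{D}_R\setminus E,p}\equiv K_{\mathbb{D}_R,p}$ on $\mathbb{D}_R\setminus E$. Restricting the unique disc minimizer $m_{\mathbb{D}_R,p}(\cdot,z)$ to $\mathbb{D}_R\setminus E$ yields a competitor of equal $L^p$ norm, so by uniqueness of minimizers ($1<p<\infty$) we get $m_{\mathbb{D}_R\setminus E,p}(\cdot,z)=m_{\mathbb{D}_R,p}(\cdot,z)|_{\mathbb{D}_R\setminus E}$ and likewise for $K_{\mathbb{D}_R\setminus E,p}(\cdot,z)$. Given $f\in A^p(\mathbb{D}_R\setminus E)$, substitute these identities into the reproducing formula \eqref{eq:RPF} for $\mathbb{D}_R\setminus E$ and enlarge the integration to $\mathbb{D}_R$ (valid since $|E|=0$):
\[
f(z)=\int_{\mathbb{D}_R}|m_{\mathbb{D}_R,p}(\zeta,z)|^{p-2}\,\overline{K_{\mathbb{D}_R,p}(\zeta,z)}\,f(\zeta)\,dV(\zeta)=:F(z),\qquad z\in\mathbb{D}_R\setminus E.
\]
I claim the function $F$, defined by the right-hand side for all $z\in\mathbb{D}_R$, lies in $C^1(\mathbb{D}_R)$. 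Here I use the closed forms $m_{\mathbb{D}_R,p}(\zeta,z)=\big((R^2-|z|^2)/(R^2-\bar z\zeta)\big)^{4/p}$ and $K_{\mathbb{D}_R,p}(z)=R^2/\big(\pi(R^2-|z|^2)^2\big)$, which I would verify by computing $\|m_{\mathbb{D}_R,p}(\cdot,z)\|_p^p$ directly; since $R^2-\bar z\zeta$ stays in the right half-plane for $z,\zeta\in\mathbb{D}_R$, its fractional powers are smooth and single-valued, and on any compact subset of $\mathbb{D}_R$ the integrand together with its first-order $z$-derivatives is dominated by a constant multiple of $|f|\in L^1(\mathbb{D}_R)$, so differentiation under the integral sign gives $F\in C^1(\mathbb{D}_R)$. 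As $F=f$ on the dense open set $\mathbb{D}_R\setminus E$, on which $f$ is holomorphic, the continuous function $\overline\partial F$ vanishes on a dense set, so $\overline\partial F\equiv 0$ and $F$ is holomorphic on $\mathbb{D}_R$; since $\int_{\mathbb{D}_R}|F|^p=\int_{\mathbb{D}_R\setminus E}|f|^p<\infty$, $F\in A^p(\mathbb{D}_R)$ extends $f$. As $f$ was arbitrary, $A^p(\mathbb{D}_R\setminus E)=A^p(\mathbb{D}_R)$, proving the rigidity fact.

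The main obstacle is precisely the rigidity fact — upgrading equality of the \emph{diagonal} kernels to equality of the two Bergman spaces. The device is to run the reproducing formula backwards, and the delicate point is that one genuinely needs $C^1$ (not merely continuous) regularity of the candidate extension $F$, since a continuous function that is holomorphic off a compact set of positive $q$-capacity need not be holomorphic; it is exactly the explicit disc formula for $m_{\mathbb{D}_R,p}(\cdot,z)$ that supplies this regularity. Accordingly, the only substantive computation I anticipate is the verification of that closed form together with the elementary bounds legitimizing differentiation under the integral sign.
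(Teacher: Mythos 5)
Your argument is correct, but it runs along a genuinely different track than the paper's. The paper gets both conclusions at a \emph{single} point (normalized to $0$) by a global argument: since $A^p(\mathbb C)=\{0\}$, Hedberg's theorem gives $K_{\mathbb C\setminus E,p}(z_0)>0$ somewhere; choosing $R_0$ with $K_{\mathbb C\setminus E,p}(z_0)>1/(\pi R_0^2)$, it then observes that if $K_{\mathbb D_R\setminus E,p}(\cdot,z_0)$ lay in some $L^{p'}$, Hedberg (for $p'$) would let it extend across $E$, and the mean-value inequality on $\mathbb D_R$ would force $K_{\mathbb D_R\setminus E,p}(z_0)\le 1/(\pi R^2)$, contradicting monotonicity in the domain; the same comparison $K_{\mathbb D_R\setminus E,s}(z_0)=K_{\mathbb D_R,s}(z_0)=1/(\pi R^2)<K_{\mathbb D_R\setminus E,p}(z_0)$ for $s>p$ gives $(2)$. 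This is why $R_0\gg1$ appears, and it yields one point serving all $p'$ simultaneously. You instead prove a rigidity statement --- equality of the diagonal kernels $K_{\mathbb D_R\setminus E,p}\equiv K_{\mathbb D_R,p}$ forces $A^p(\mathbb D_R\setminus E)=A^p(\mathbb D_R)$ --- by exploiting uniqueness of minimizers, the reproducing formula \eqref{eq:RPF}, and the explicit (zero-free, smooth) disc kernels to manufacture a $C^1$ extension $F$ of any $f\in A^p(\mathbb D_R\setminus E)$ and conclude holomorphy from $\bar\partial F=0$ on a dense set; Hedberg for the exponent $p$ then rules this out, and Hedberg for $p'$ (resp.\ $s$) converts the resulting strict inequality at some $z_0$ into $(1)$ (resp.\ $(2)$). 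What your route buys is a stronger statement --- the conclusions hold for \emph{every} $R$ with $E\subset\mathbb D_R$, with no largeness of $R_0$ and no reference to $A^p(\mathbb C)$; what it costs is length, and in $(1)$ your point may depend on $p'$ (the statement permits this, but the paper's single point does not). Two small touch-ups: the closed forms for $m_{\mathbb D_R,p}(\cdot,z)$ and $K_{\mathbb D_R,p}(\cdot,z)$ should be quoted from \cite{CZ} (Proposition 2.9), as the paper does --- merely ``computing $\|m_{\mathbb D_R,p}(\cdot,z)\|_p$'' does not by itself certify minimality, though uniqueness plus the variational characterization would; and you should state explicitly (as you implicitly use, and as the paper also uses silently) that $C_{q'}(E)=0$ for some $q'\in(1,q)$ gives $|E|=0$, so that restriction is isometric and $\mathbb D_R\setminus E$ is dense in $\mathbb D_R$.
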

 
 \begin{proof}
 Since $A^p(\mathbb C)=\{0\}$ for $1<p<\infty$,  it follows from Theorem \ref{th:Hedberg} that $A^p(\mathbb C\backslash E)\neq \{0\}$,  which implies  $K_{\mathbb C\backslash E,p}(z_0)> 0$ for some point $z_0$.  For the sake of simplicity,  we assume $z_0=0$.
 Take $R_0$ so that $E\subset \mathbb D_{R_0}$ and $K_{\mathbb C\backslash E,p}(0)>\frac1{\pi R_0^2}$.  Let $R\ge R_0$ be given.  Set $f_0:=K_{\mathbb D_R\backslash E,p}(\cdot,0)$.  We claim that $f_0\notin L^{p'}(\mathbb D_R\backslash E)$ for any $p'>p$.  Indeed,  suppose on the contrary that $f_0\in L^{p'}(\mathbb D_R\backslash E)$ for some $p'>p$.  Since $C_{q'}(E)=0$ where $\frac1{p'}+\frac1{q'}=1$,  it follows from  Theorem \ref{th:Hedberg} that $f_0$ can be extended holomorphically across $E$,  so that $f_0\in A^p(\mathbb D_R)$.  By the mean-value inequality,  we have
 $$
 K_{\mathbb D_R\backslash E,p}(0)^p=|f_0(0)|^p \le \frac1{\pi R^2} \int_{\mathbb D_R} |f_0|^p= \frac1{\pi R^2} \int_{\mathbb D_R\backslash E} |f_0|^p= \frac1{\pi R^2} K_{\mathbb D_R\backslash E,p}(0)^{p-1},
 $$ 
 so that
 $$
\frac1{\pi R_0^2}\ge \frac1{\pi R^2} \ge K_{\mathbb D_R\backslash E,p}(0)\ge K_{\mathbb C\backslash E,p}(0),
 $$
 which is absurd.
 
 For $s>p$,  we have 
 $$
 K_{\mathbb D_R\backslash E,s}(0)= K_{\mathbb D_R,s}(0) =\frac1{\pi R^2}\le \frac1{\pi R^2_0}<K_{\mathbb D_R\backslash E,p}(0).
 $$
 In particular,  $K_{\mathbb D_R\backslash E,s}(0)$ is not continuous at $s=p$.
 \end{proof}
 
Recall that a compact subset $E\subset\mathbb{C}$ is uniformly perfect (in the sense of Pomerenke \cite{Pommerenke}) if there exists constants $c>0$ and $r_0>0$ such that
$$
E\cap\{z\in\mathbb{C}:\ cr\leq|z-a|\leq{r}\} \neq\emptyset
$$
for all $a\in{E}$ and $0<r<r_0$.  It is known  that for a bounded domain $\Omega\subset\mathbb{C}$,  one has $\alpha(\Omega)>0$ when $\partial\Omega$ is uniformly perfect (cf.  \cite{CT},  Theorem 1.7).  
 
 \begin{proposition}\label{prop:cex}
 For each $1<q<2$,  there exists a uniformly perfect compact set $E$ in $\mathbb C$ such that $C_q(E)>0$ and $C_{q'}(E)=0$ for all $q'<q$. 
 \end{proposition}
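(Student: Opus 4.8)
The plan is to realize $E$ as a four-corner Cantor set in $\mathbb{R}^2=\mathbb{C}$ with a precisely calibrated generation sequence, and then to read off the two capacity statements from the elementary scaling of $C_{q'}$ on squares (for $C_{q'}(E)=0$) and from the Wolff-potential characterization of the variational $q$-capacity (for $C_q(E)>0$). Set $s:=(2-q)/(q-1)>0$ and $\ell_j:=4^{-j/(2-q)}(j+1)^{2/s}$, so that $\ell_0=1$ and $\ell_j/\ell_{j-1}=4^{-1/(2-q)}\bigl((j+1)/j\bigr)^{2/s}$; using the identity $4^{-1/(2-q)}\,2^{2/s}=2^{-2}$ one sees that these ratios decrease in $j$ and all lie in the compact interval $\bigl[\,4^{-1/(2-q)},\ 1/4\,\bigr]\subset(0,1/2)$. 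Let $E_0$ be the closed unit square; having built $E_{j-1}$ as a union of $4^{\,j-1}$ closed squares of side $\ell_{j-1}$, form $E_j$ by replacing each such square $Q$ with the four subsquares of side $\ell_j$ placed in the corners of $Q$ (legitimate since $\ell_j<\ell_{j-1}/2$), and put $E:=\bigcap_{j\ge0}E_j$, which at stage $j$ is a union of $4^j$ squares of side $\ell_j$. Since the consecutive side-ratios stay in a fixed compact subinterval of $(0,1/2)$, $E$ is uniformly perfect --- a standard property of such Cantor constructions (cf.\ \cite{Pommerenke}).

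For $C_{q'}(E)=0$ when $1<q'<q$: by subadditivity of capacity together with the plane scaling $C_{q'}(\,\text{square of side }\ell\,)\asymp\ell^{\,2-q'}$ (valid for $1<q'<2$), we have $C_{q'}(E)\le C_{q'}(E_j)\lesssim 4^j\ell_j^{\,2-q'}$ for every $j$, the implicit constant being independent of $j$. A direct computation gives
\[
4^j\ell_j^{\,2-q'}=4^{\,j(q'-q)/(2-q)}\,(j+1)^{\,2(q-1)(2-q')/(2-q)},
\]
and since $q'<q$ the exponentially decaying factor overwhelms the polynomially growing one, so $4^j\ell_j^{\,2-q'}\to0$ as $j\to\infty$. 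Letting $j\to\infty$ and invoking continuity of the $q'$-capacity along the decreasing sequence of compacta $E_j\downarrow E$ then yields $C_{q'}(E)=0$.

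For $C_q(E)>0$: let $\mu$ be the canonical probability measure on $E$, giving mass $4^{-j}$ to each stage-$j$ square. Since any two distinct stage-$j$ squares are at mutual distance $\gtrsim\ell_{j-1}$ while $\ell_{j-1}/\ell_j$ stays bounded, a routine estimate for homogeneous Cantor sets gives $\mu(B(x,r))\asymp 4^{-j}$ whenever $x\in E$ and $\ell_j\le r\le\ell_{j-1}$. Splitting the Wolff potential accordingly and using $4^j\ell_j^{\,2-q}=(j+1)^{\,2(q-1)}$, we obtain
\[
\int_E W^{\mu}_{1,q}\,d\mu\ \le\ \sup_{x\in E}\int_0^\infty\Bigl(\frac{\mu(B(x,r))}{r^{\,2-q}}\Bigr)^{1/(q-1)}\frac{dr}{r}\ \lesssim\ 1+\sum_{j\ge1}\bigl(4^j\ell_j^{\,2-q}\bigr)^{-1/(q-1)}\ =\ 1+\sum_{j\ge1}(j+1)^{-2}\ <\ \infty .
\]
Hence $E$ carries a nonzero measure of finite Wolff energy, so $C_q(E)>0$ by the Wolff-potential characterization of the variational $q$-capacity (see, e.g., the monograph of Adams and Hedberg, or \cite{Hedberg} together with the classical capacity estimates for Cantor sets therein).

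The one real obstacle is this borderline calibration. A self-similar Cantor set of Hausdorff dimension $2-q$ has $C_q=0$, so the polynomial correction $(j+1)^{2/s}$ must be chosen \emph{large} enough that $\sum_j\bigl(4^j\ell_j^{\,2-q}\bigr)^{-1/(q-1)}$ converges (which is precisely what forces $C_q(E)>0$) and at the same time \emph{small} enough that the ratios $\ell_j/\ell_{j-1}$ never leave $(0,1/2)$ --- needed both for the four-corner step to be legitimate and for $E$ to stay uniformly perfect. The computation above shows the four-corner model leaves exactly enough room: a one-dimensional Cantor set on a line would replace $4^{-1/(2-q)}$ by $2^{-1/(2-q)}$, which tends to $1/2$ as $q\to1^+$ and leaves no room for the correction. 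Once the calibration is in place, the exponential gain $4^{\,j(q'-q)/(2-q)}$ in the displayed formula above annihilates $C_{q'}(E)$ for every $q'<q$ simultaneously, with no further work.
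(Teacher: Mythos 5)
Your proposal is correct, and the calibration checks out: with $\ell_j=4^{-j/(2-q)}(j+1)^{2/s}$ one indeed has $4^j\ell_j^{2-q}=(j+1)^{2(q-1)}$, so the Wolff energy of the natural measure converges, while $4^j\ell_j^{2-q'}=4^{j(q'-q)/(2-q)}(j+1)^{2(q-1)(2-q')/(2-q)}\to 0$ kills $C_{q'}(E)$ for every $q'<q$, and the ratio identity $4^{-1/(2-q)}2^{2/s}=1/4$ keeps all side-ratios in $[4^{-1/(2-q)},1/4]\subset(0,1/2)$. But your route is genuinely different from the paper's. The paper does not prove the capacity dichotomy at all: it takes Lindqvist's one-dimensional Cantor set with lengths $l_j=j^{q/(2-q)}2^{-2j/(2-q)}$ (a different, though equally admissible, polynomial correction), sets $E=\mathcal C\times\mathcal C$, and quotes Lemma 7.1 of \cite{Lindqvist} for $C_q(E)>0$ and $C_{q'}(E)=0$; the entire effort of the paper's proof goes into a hands-on verification of uniform perfectness of $\mathcal C$ via an annulus argument on the intervals $I_{j,k}$. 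You do the opposite: you build the planar four-corner set directly, prove nullity yourself by finite subadditivity plus the scaling $C_{q'}(\text{square of side }\ell)\asymp\ell^{2-q'}$, prove positivity via the Hedberg--Wolff characterization applied to the canonical measure, and dispatch uniform perfectness as ``standard.'' What each buys: the paper's argument is elementary and self-contained except that the deep capacity content is outsourced to \cite{Lindqvist}; yours makes the borderline mechanism completely transparent (the exact identity $4^j\ell_j^{2-q}=(j+1)^{2(q-1)}$ is the whole story) at the cost of invoking heavier standard machinery --- the Wolff-potential theorem and the fact that, for $1<q<2$ in the plane, the variational capacity $\inf\int|\nabla\phi|^q$ and the Riesz/Bessel capacity $C_{1,q}$ have the same compact null sets (both are in Adams--Hedberg; your citation of \cite{Hedberg} alone is a bit loose). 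Two small points you should firm up if this were to be written out: the uniform perfectness assertion deserves the two-line annulus argument (exactly what the paper does for its $\mathcal C$, and it works verbatim for ratios confined to a compact subinterval of $(0,1/2)$), and in the nullity step monotonicity $C_{q'}(E)\le C_{q'}(E_j)$ already suffices --- no continuity along decreasing compacta is needed.
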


\begin{proof}
Let us recall a construction due to Lindqvist \cite{Lindqvist} of a compact set $E$ with $C_q(E)>0$ and $C_{q'}(E)=0$,  $\forall\,q'<q$. Given a sequence $\{l_j\}^\infty_{j=0}$ of positive numbers with $l_{j+1}<l_j/2$, set $\mathcal C_0=[0,l_0]$ and define $\mathcal C_j$ to be a union of $2^j$ closed intevals inductively, such that $\mathcal C_j$ is obtained from $\mathcal C_{j-1}$ with an open inteval of length $l_{j-1}-2l_j$ deleted in the center of each   $2^{j-1}$ closed inteval in $\mathcal C_{j-1}$. For example, $\mathcal C_1=[0,l_1]\cup[l_0-l_1,l_0]$, $\mathcal C_2=[0,l_2]\cup[l_1-l_2,l_1]\cup[l_0-l_1,l_0-l_1+l_2]\cup[l_0-l_2,l_0]$, etc. Write
$$
\mathcal C_j=\bigcup^{2^j}_{k=1}I_{j,k},
$$
where every $I_{j,k}$ is a closed inteval of length $l_j$, lying on the left of $I_{j,k+1}$. The Cantor-type set
$$
\mathcal C:=\bigcap^\infty_{j=0} \mathcal C_j
$$
is what we are looking for.

Now take $l_0=1$, $0<l_1<1/2$, and
$$
l_j=\frac{j^{\frac{q}{2-q}}}{2^{j\frac{2}{2-q}}},\ \ \ j\geq2.
$$
Since
$$
\frac{l_{j+1}}{l_j}=\left(\frac{j+1}{j}\right)^{\frac{q}{2-q}}\left(\frac{1}{2}\right)^{\frac{2}{2-q}},
$$
it follows that
\begin{equation}\label{eq:ratioCantor}
  \left(\frac{1}{2}\right)^{\frac{2}{2-q}}<\frac{l_{j+1}}{l_j}<\frac{1}{2},\ \ \ j\geq2.
\end{equation}
On the other hand, since
$$
\left(\left(\frac{1}{2}\right)^{\frac{2}{2-q}}\right)^2<l_2=2^{-\frac{4-q}{2-q}}<\left(\frac{1}{2}\right)^2,
$$
we may choose $l_1=l_2^{\frac{1}{2}}$ so that \eqref{eq:ratioCantor} remains valid for all $j\geq0$. Thus we obtain a Cantor-type set $\mathcal C$ in $\mathbb{R}$.  By Lemma 7.1 of \cite{Lindqvist},  we know that  the compact set
$$
E:=\mathcal C\times{\mathcal C}
$$
satisfies $C_q(E)>0$ and $C_{q'}(E)=0$ for $q'<q$.

It remains to verify the uniformly perfectness of $E$. By the definition of $E$, this is equivalent to find constants $c>0$ and $r_0>0$, such that
$$
\mathcal C\cap\{x\in\mathbb{R}:\ cr<|x-a|<r\}\neq\emptyset
$$
for all $a\in{\mathcal C}$ and $0<r<r_0$. For $0<r<r_0$, there exists an integer $j$ with $l_{j+1}<r/2<l_j$. By choosing $r_0<2l_2$, we may assume $j\geq2$. Given $j$ and $a\in{\mathcal C}$, there is an integer $k$ depending on $a$ and $j$ such that $a\in{I_{j+1,k}}$. We claim that
\begin{equation}\label{eq:nonemptyintersection}
  I_{j+1,k}\cap\{x\in\mathbb{R}:\ cr<|x-a|<r\}\neq\emptyset.
\end{equation}
Indeed, suppose on the contrary that $I_{j+1,k}\cap\{x\in\mathbb{R}:\ cr<|x-a|<r\}=\emptyset$. Since $a\in{I_{j+1,k}}$, we must have $I_{j+1,k}\subset[a-cr,a+cr]$. Take $c$ so that
$$
0<c<\frac{1}{6}\cdot\left(\frac{1}{2}\right)^{\frac{2}{2-q}}<\frac{1}{2}.
$$
This together with \eqref{eq:ratioCantor} imply that
$$
2cr<4cl_j<4\cdot2^{\frac{2}{2-q}}cl_{j+1}<\frac{2}{3}l_{j+1},
$$
i.e., the length of $[a-cr,a+cr]$ is less than the length of $I_{j+1,k}$, which is a contradiction.

In view of \eqref{eq:nonemptyintersection}, we see that if $\mathcal C\cap\{x\in\mathbb{R}:\ cr<|x-a|<r\}=\emptyset$, then
$$
(I_{j+1,k}\setminus{\mathcal C})\cap\{x\in\mathbb{R}:\ cr<|x-a|<r\}\neq\emptyset.
$$
Thus either $(a-r,a-cr)$ or $(a+cr,a+r)$ must be contained in a connected component of $I_{j+1,k}\setminus{\mathcal C}$. By the definition of $\mathcal C$, such a connected component is an open inteval of length $l_m-2l_{m+1}$ for $m\geq{j+1}$. In particular, the length is no more than $l_m\leq{l_{j+1}}$. On the other hand, the length of $(a-r,a-cr)$ or $(a+cr,a+r)$ equals to $(1-c)r$, which is no less than $r/2>l_{j+1}$ since $0<c<1/2$. This leads to a contradiction.
\end{proof}

\begin{proof}[Proof of Proposition \ref{prop:c-example}]
The conclusion follows directly from Proposition \ref{prop:cex} and Lemma \ref{lm:Integ}.
\end{proof}

\begin{proof}[Proof of Proposition \ref{prop:p>2}]
By Proposition \ref{prop:cex},  we have a uniformly perfect compact set $E\subset \mathbb C$ such that $C_q(E)=0$ where $\frac1p+\frac1q=1$. Take $R>0$ such that $E\subset \mathbb D_R$.  Set $\Omega:=\mathbb D_\mathbb R\backslash E$.  For the sake of simplicity,  we assume $R=1$.  By Theorem \ref{th:Hedberg},  we conclude that for $\zeta,z\in \Omega$,
\begin{eqnarray*}
m_{\Omega,p}(\zeta,z) & = & m_{\mathbb D,p}(\zeta,z)= \left(\frac{1-|z|^2}{1-\bar{z}\zeta}\right)^{4/p}\\
K_{\Omega,p}(\zeta,z) & = & K_{\mathbb D,p}(\zeta,z)=\frac1\pi \left(\frac{1-|z|^2}{1-\bar{z}\zeta}\right)^{4/p}(1-|z|^2)^{-2}
\end{eqnarray*}
in view of \cite{CZ},  Proposition 2.9.  Fix a domain $U$ such that $E\subset U\subset\subset \mathbb D$.  Then there exits a constant $C>1$ such that for $z\in U\backslash E$ and $\zeta\in \Omega$,
$$
 C^{-1}\le |m_{\Omega,p}(\zeta,z)| \le C,\ \ \  C^{-1}\le |K_{\Omega,p}(\zeta,z)| \le C,
$$
and consequently,  
$$
C^{-1} K_{\Omega,2}(z) \le K_{\Omega,2,p,z} (z) \le C K_{\Omega,2}(z).
$$
Since $\Omega$ is hyperconvex,  so $K_{\Omega,2}(z)$ is an exhaustion function on $\Omega$ in view of Ohsawa \cite{Ohsawa}.  Thus $\lim_{z\rightarrow E} K_{\Omega,2,p,z} (z)=+\infty$ while $\limsup_{z\rightarrow E} K_{\Omega,p} (z)<+\infty$,  from which the assertion immediately follows. 
\end{proof}

Another consequence of Theorem \ref{th:Hedberg}  and Proposition \ref{prop:cex} is the following

\begin{proposition}
For each $2<p<\infty$,  there exists a bounded domain $\Omega\subset \mathbb C$ with $\alpha(\Omega)>0$ such that $K_p(z)$ is not an exhaustion function on $\Omega$.  
\end{proposition}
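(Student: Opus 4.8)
The plan is to recycle the domain from the proof of Proposition~\ref{prop:p>2}. Fix $2<p<\infty$ and let $q$ be the conjugate exponent, so $1<q<2$; choose $q_0$ with $q<q_0<2$ and apply Proposition~\ref{prop:cex} with exponent $q_0$ to obtain a uniformly perfect compact set $E\subset\mathbb C$ with $C_{q_0}(E)>0$ and $C_{q'}(E)=0$ for all $q'<q_0$. In particular $C_q(E)=0$, and $|E|=0$. Pick $R>0$ with $E\subset\mathbb D_R$ and set $\Omega:=\mathbb D_R\setminus E$.

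First I would verify that $\alpha(\Omega)>0$. Since $E\subset\subset\mathbb D_R$, the two pieces $\partial\mathbb D_R$ and $E$ of $\partial\Omega$ lie at a positive mutual distance. The circle $\partial\mathbb D_R$ is trivially uniformly perfect, $E$ is uniformly perfect by construction, and the union of finitely many uniformly perfect sets separated by a positive distance is again uniformly perfect (shrink $r_0$ below the separation distance and treat base points near each piece separately). Hence $\partial\Omega$ is uniformly perfect, so $\alpha(\Omega)>0$ by \cite{CT}, Theorem~1.7.

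The core of the argument is the identity $K_{\Omega,p}(z)=K_{\mathbb D_R,p}(z)$ for all $z\in\Omega$. Indeed, $C_q(E)=0$ together with Hedberg's theorem (Theorem~\ref{th:Hedberg}) gives $A^p(\Omega)=A^p(\mathbb D_R)$, every $f\in A^p(\Omega)$ extending holomorphically across $E$; since $|E|=0$ the $L^p$-norms are unaffected, so the extremal problem \eqref{eq:MinProb} for $\Omega$ at a point $z\in\Omega$ coincides with the one for $\mathbb D_R$, and thus $m_{\Omega,p}(z)=m_{\mathbb D_R,p}(z)$, $K_{\Omega,p}(z)=K_{\mathbb D_R,p}(z)$. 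Since $K_{\mathbb D_R,p}$ is continuous on $\mathbb D_R$ and $E$ is a compact subset of $\mathbb D_R$, it follows that $\limsup_{z\to E}K_{\Omega,p}(z)<\infty$; as $E\subset\partial\Omega$, the function $K_{\Omega,p}$ does not tend to $+\infty$ at the whole boundary and is therefore not an exhaustion function of $\Omega$.

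I do not expect a genuine obstacle here: the construction and all the heavy ingredients --- uniform perfectness of the Cantor-type set, Hedberg's capacitary criterion, and the link between uniform perfectness and positivity of $\alpha$ --- are already available. The only mildly delicate point is the routine verification that a disjoint union of two uniformly perfect sets is uniformly perfect, handled by the $r_0$-shrinking remark above; the rest is bookkeeping.
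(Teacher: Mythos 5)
Your proof is correct and is exactly the argument the paper intends: it reuses the domain $\mathbb D_R\setminus E$ from the proof of Proposition \ref{prop:p>2}, with $C_q(E)=0$ forcing $A^p(\Omega)=A^p(\mathbb D_R)$ (hence $K_{\Omega,p}=K_{\mathbb D_R,p}$, bounded near $E\subset\partial\Omega$), and uniform perfectness of $\partial\Omega$ giving $\alpha(\Omega)>0$ via \cite{CT}. The paper states this proposition as a direct consequence of Theorem \ref{th:Hedberg} and Proposition \ref{prop:cex} without writing out details, and your write-up simply makes those details (choice of $q_0>q$, $|E|=0$, uniform perfectness of the union) explicit.
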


\section{Curvature properties of  $B_p(z;X)$}
Let $B_p(z;X)$ be the $p-$Bergman metric on a bounded domain $\Omega\subset \mathbb C^n$.  The\/ {\it holomorphic sectional curvature} of the\/ {\it Finsler} metric $B_p(z;X)$ is given by
$$
\mathrm{HSC}_p(z;X):=\sup_{\sigma}\left\{ \frac{\Box \log B_p^2(\sigma;\sigma')(0)}{-B_p^2(z;X)^2} \right\},
$$ 
where the supremum is taken over all holomorphic mappings 
$
\sigma: \mathbb D_r\rightarrow \Omega
$
 such that $\sigma(0)=z$ and $\sigma'(0)=X$.  Recall from \cite{CZ} that $B_p(z;X)=K_p(z)^{-1/p} \mathcal M_p(z;X)$,  where 
$$
\mathcal M_p(z;X)=\sup\left\{|Xf(z)|: f\in A^p(\Omega),\,f(z)=0,\, \|f\|_p=1  \right\},
$$
$Xf := \sum_j X_j \partial f/\partial z_j$.  
Note  that $\mathcal M_p(z;X)=1/m_p(z;X)$,  where 
$$
m_p(z;X)=\inf\left\{\|f\|_p: f\in A^p(\Omega),\,f(z)=0,\,Xf(z)=1 \right\}.  
$$ 

\begin{proof}[Proof of Theorem \ref{th:HSC_0}]
Note that
\begin{eqnarray}\label{eq:HSC_2}
\mathrm{HSC}_p(z;X) & \le &  \sup_\sigma\left\{\frac2p\cdot \frac{\Box \log K_p\circ \sigma(0)}{B_p(z;X)^2} \right\} + \sup_\sigma\left\{ - \frac{\Box \log\mathcal M_p^2(\sigma;\sigma')(0)}{B_p(z;X)^2} \right\}\nonumber\\
& = & \frac2p\cdot \frac{ i\partial\bar{\partial} \log K_p(z;X) }{B_p(z;X)^2} + \sup_\sigma\left\{ - \frac{\Box \log\mathcal M_p^2(\sigma;\sigma')(0)}{B_p(z;X)^2} \right\}
\end{eqnarray}
in view of Lemma \ref{lm:Invariant_1}.  Fix  $\sigma$ for a moment.
Recall that 
\begin{eqnarray*}
&& \Box \log\mathcal M_p^2(\sigma;\sigma')(0)\\
& = & \liminf_{r\rightarrow 0+}\frac1{r^2}\left\{\frac1{2\pi}\int_0^{2\pi} \log \mathcal M_p^2(\sigma;\sigma')(re^{i\theta})d\theta-\log \mathcal M_p^2(z;X)\right\}.
\end{eqnarray*}
Take $f_0,f_1\in A^p(\Omega)$ such that
\begin{enumerate}
\item $f_0(z)=1$ and $\|f_0\|_p=m_p(z)$;
\item $f_1(z)=0,\,Xf_1(z)=1$ and $\|f_1\|_p=m_p(z;X)$;
\end{enumerate}
Fix $z,r,\theta$ for a moment.  Consider the following family of holomorphic functions:
$$
\widetilde{f}_{t}=f_1+t f_0,\ \ \ t\in \mathbb C.
$$ 
Now fix
$$
t:=-\frac{f_1 ( \sigma(re^{i\theta}))}{f_0 (\sigma(re^{i\theta})) }=-re^{i\theta}+O(r^2).
$$
Then we have 
$
\widetilde f_{t} ( \sigma(re^{i\theta})) =0.
$
For  $g\in C^1(\Omega)$ and $\sigma=(\sigma_1,\cdots,\sigma_n)$,  we define
$$
\partial_\sigma g(re^{i\theta}):= \sum_{j=1}^n \sigma_j'(re^{i\theta})\frac{\partial g}{\partial \zeta_j}(\sigma(re^{i\theta})).   
$$
Then
 we have 
\begin{eqnarray*}
|\partial_\sigma \widetilde f_t (re^{i\theta}) |^2 & = & |\partial_\sigma f_1(re^{i\theta})|^2  +|t|^2 |\partial_\sigma f_0(re^{i\theta}) |^2\\
&& + 2\mathrm{Re}\left\{\partial_\sigma f_1(re^{i\theta})\,\overline{t \partial_\sigma f_0(re^{i\theta})} \right\};\\
| \partial_\sigma f_1(re^{i\theta}) |^2 &= & |1+a_1 re^{i\theta} +c(r e^{i\theta})^2+O(r^3)|^2\\
& = & 1 +  |a_1|^2 r^2   + 2\mathrm{Re}\left\{ a_1 re^{i\theta} +c(r e^{i\theta})^2 \right\} +O(r^3); \\
  |t|^2 | \partial_\sigma f_0(re^{i\theta}) |^2 & = &  |Xf_0(z)|^2 r^2 +O(r^3)
\end{eqnarray*}
and 
\begin{eqnarray*}
&& 2\mathrm{Re}\left\{\partial_\sigma f_1(re^{i\theta})\,\overline{t \partial_\sigma f_0(re^{i\theta})} \right\}\\
& = & 2\mathrm{Re}\left\{\left(1+  a_1 re^{i\theta}\right) \overline{t\left(Xf_0(z) + a_0 re^{i\theta} \right)} \right\} +O(r^3)\\
& = & -2\mathrm{Re}\left\{ a_1 \overline{ Xf_0(z) } \right\}r^2   + 2\mathrm{Re}\left\{ t \left( Xf_0(z)+a_0 re^{i\theta}\right)\right\}+O(r^3).
\end{eqnarray*}
Here and in what follows $a_j,b_j,c_j,c$ are complex numbers. 
Thus
\begin{eqnarray}\label{eq:HSC_3}
|\partial_\sigma \widetilde f_t (re^{i\theta}) |^2  & \ge & 1 +\Phi(r,\theta)  + |Xf_0(z)-a_1|^2 r^2+O(r^3),
\end{eqnarray}
where 
$$
\Phi(r,\theta)=2\mathrm{Re}\left\{ a_1 re^{i\theta} +c(r e^{i\theta})^2+ t \left( Xf_0(z)+ a_0 re^{i\theta}\right)\right\}.
$$
Since 
$$
t=-\frac{re^{i\theta}+c_1 (re^{i\theta})^2+O(r^3)}{1+b_1 re^{i\theta}+ O(r^2)}=-re^{i\theta}+(b_1-c_1)(re^{i\theta})^2+O(r^3),
$$
it follows that
\begin{eqnarray*}
\Phi(r,\theta) & = & 2\mathrm{Re}\left\{ \left(a_1 -Xf_0(z)\right)re^{i\theta} \right\} + 2\mathrm{Re}\left\{ \left(c+(b_1-c_1)Xf_0(z)- a_0\right)(re^{i\theta})^2\right\}+O(r^3).
\end{eqnarray*}
Thus
\begin{eqnarray}
 \frac1{2\pi} \int_0^{2\pi} \Phi(r,\theta) d\theta & = & O(r^3), \label{eq:HCS_3_1}\\  
 \frac1{2\pi} \int_0^{2\pi} \Phi(r,\theta)^2 d\theta & = & 2 |Xf_0(z)- a_1|^2 r^2+O(r^3).\label{eq:HCS_3_2}
\end{eqnarray}
Use the expansion $\log(1+x)=x-x^2/2+O(|x|^3)$,  we infer from $\eqref{eq:HSC_3}\sim \eqref{eq:HCS_3_2}$ that
\begin{eqnarray}
&&\frac1{2\pi} \int_0^{2\pi} \log |\partial_\sigma \widetilde f_t (re^{i\theta}) |^2 d \theta\nonumber\\
 & \ge & |Xf_0(z)-a_1|^2 r^2 +\frac1{2\pi} \int_0^{2\pi} \Phi(r,\theta)d\theta\nonumber\\
 &&  -\frac1{4\pi} \int_0^{2\pi}
\Phi(r,\theta)^2d\theta+O(r^3).\nonumber\\
&=& O(r^3)\label{eq:HSC_3-3}
\end{eqnarray}

On the other hand,  we have
\begin{eqnarray*}
J(t):=\|\widetilde f_{t}\|_p^p  =  J(0)+2\mathrm{Re}\left\{ \frac{\partial J}{\partial t}(0) t + \frac12 \frac{\partial^2 J}{\partial t^2}(0) t^2\right\} + \frac{\partial^2 J}{\partial t\partial \bar{t}}(0) |t|^2+o(r^2)
\end{eqnarray*}
where
\begin{eqnarray*}
\frac{\partial J}{\partial t}(0)  & = &  \frac{p}2 \int_\Omega |f_1|^{p-2} \overline{f_1}f_0,\\
\frac{\partial^2 J}{\partial t^2}(0) & = & \frac{p(p-2)}4 \int_\Omega |f_1|^{p-4} \overline{f_1^2} f_0^2,\\
\frac{\partial^2 J}{\partial t\partial \bar{t}} (0) & = & \frac{p^2}4 \int_\Omega |f_1|^{p-2} |f_0|^2.
\end{eqnarray*}  
  Since 
\begin{eqnarray*}
 \int_\Omega |f_1|^{p-2} |f_0|^2 & \le & \left(\int_\Omega |f_1|^p\right)^{\frac{p-2}p}\left( \int_\Omega |f_0|^p\right)^{\frac2p}=m_p(z;X)^{p-2} m_p(z)^2
\end{eqnarray*}
in view of H\"older's inequality,  we obtain
\begin{eqnarray}\label{eq:HSC_4}
\|\widetilde f_{t}\|_p^p & \le & m_p(z;X)^p +\Psi(r,\theta) +  \frac{p^2}4 m_p(z;X)^{p-2} m_p(z)^2 r^2 +o(r^2),
\end{eqnarray}
where 
\begin{eqnarray*}
\Psi(r,\theta) & = & 2\mathrm{Re}\left\{ \frac{\partial J}{\partial t}(0) t + \frac12 \frac{\partial^2 J}{\partial t^2}(0) t^2 \right\}
\end{eqnarray*}
satisfies
\begin{eqnarray}
\frac1{2\pi}\int_0^{2\pi} \Psi(r,\theta)d\theta & = &  0\label{eq:HSC_4_1}\\ 
\frac1{2\pi} \int_0^{2\pi} \Psi(r,\theta)^2 d\theta & = & 2 \left| \frac{\partial J}{\partial t}(0) \right|^2 r^2 + O(r^3)\ge O(r^3).\label{eq:HSC_4_2}
\end{eqnarray}
Use again the expansion $\log(1+x)=x-x^2/2+O(|x|^3)$,  we infer from $\eqref{eq:HSC_4}\sim \eqref{eq:HSC_4_2}$ that
\begin{eqnarray}\label{eq:HSC_5}
\frac1{2\pi} \int_0^{2\pi} \log \| \widetilde f_{t}\|_p^2\, d\theta &\le & \log m_p(z;X)^2 + \frac{pr^2}2 \frac{m_p(z)^2}{m_p(z;X)^2} +\frac2p\cdot \frac1{2\pi}\int_0^{2\pi}\frac{\Psi(r,\theta)}{m_p(z;X)^p}d\theta\nonumber\\
&& -\frac2p\cdot  \frac1{4\pi}\int_0^{2\pi} \frac{\Psi(r,\theta)^2}{m_p(z;X)^{2p}}d\theta+ o(r^2)\nonumber\\
& \le &  \log m_p(z;X)^2 + \frac{pr^2}2 \frac{m_p(z)^2}{m_p(z;X)^2} + o(r^2).
\end{eqnarray}
Use $\widetilde f_{t}$ as the test mapping,  we infer from \eqref{eq:HSC_3-3} and \eqref{eq:HSC_5} that
\begin{eqnarray*}
&& \frac1{2\pi} \int_0^{2\pi} \log \mathcal M_p^2(\sigma;\sigma')(re^{i\theta}) d\theta \\
& \ge & \log \frac1{m_p(z;X)^2} -\frac{p}2 \, \frac{ m_p(z)^2}{ m_p(z;X)^2}\,  r^2 + o(r^2).
\end{eqnarray*}
This together with \eqref{eq:HSC_2} yield \eqref{eq:HSC_0}.
\end{proof}

\section{Bergman meets Hardy}
Throughout this section,  $\Omega$ is always a bounded domain with $C^2-$boundary.  
Let $\rho$ be a $C^2$ defining function on $\Omega$.  Following Stein \cite{Stein},  we define the Hardy space $H^p(\Omega)$  to be the set of $f\in \mathcal O(\Omega)$ satisfying
  $$
  \|f\|_{H^p(\Omega)}^p:=\sup_{\varepsilon>0} \int_{\rho=-\varepsilon} |f|^p d S_\varepsilon <\infty.
  $$
  It is known that the set $H^p(\Omega)$ is independent of the choice of defining functions.  In what follows,  we fix $\rho=-\delta$,  where $\delta$ is the boundary distance.  

Given $\varepsilon>0$ and $f\in \mathcal O(\Omega)$,  define $\Omega_\varepsilon:=\{z\in \Omega:\delta(z)>\varepsilon\}$ and $M_p(\varepsilon,f):=\|f\|_{L^p(\partial \Omega_\varepsilon)}$.  
Let $\pi_\varepsilon$ be the normal projection from $\partial \Omega_\varepsilon$ to $\partial \Omega$.  Without loss of generality,  we assume that $\pi_\varepsilon$ is well-defined for $0< \varepsilon \le 1$.  
It is easy to see that
$$
M_p(\varepsilon,f)=\left(\int_{\partial\Omega} |f\circ \pi_\varepsilon^{-1}|^p |\textrm{det}(\pi_\varepsilon^{-1})'|  dS\right)^{1/p} \asymp \left(\int_{\partial\Omega} |f\circ \pi_\varepsilon^{-1}|^p dS\right)^{1/p}=: \widetilde{M}_p(\varepsilon,f).
$$

\begin{proof}[Proof of Theorem \ref{th:HL_2}]
Step 1.  Let  $-1<b<\infty$.  Integration by parts gives
\begin{eqnarray}\label{eq:HL_2}
\int_0^1 \varepsilon^b \widetilde{M}_q(\varepsilon,f)^q d\varepsilon & = & \frac1{b+1}\int_0^1 \widetilde{M}_q(\varepsilon,f)^q d\varepsilon^{b+1}\\
& = &  \frac1{b+1} \widetilde{M}_q(1,f)^q-\frac1{b+1}\int_0^1 \varepsilon^{b+1} \frac{\partial}{\partial \varepsilon} \widetilde{M}_q(\varepsilon,f)^q d\varepsilon.\nonumber
\end{eqnarray}
Since
\begin{eqnarray*}
\left| \frac{\partial}{\partial \varepsilon} \widetilde{M}_q(\varepsilon,f)^q \right|  & = &  \left| \frac{\partial}{\partial \varepsilon} \int_{\partial\Omega} |f\circ \pi_\varepsilon^{-1}|^q  dS  \right|\\
& \le & \int_{\partial \Omega} \left| \frac{\partial}{\partial \varepsilon} |f\circ \pi_\varepsilon^{-1}|^q \right|dS \\
& \lesssim & \int_{\partial \Omega} |f\circ \pi_\varepsilon^{-1}|^{q-1} |\nabla f\circ \pi_\varepsilon^{-1}| dS \\
& \lesssim & \int_{\partial \Omega_\varepsilon} |f|^{q-1} |\nabla f| dS_\varepsilon,
\end{eqnarray*}
it follows that
\begin{eqnarray*}
 \left|\int_0^1 \varepsilon^{b+1} \frac{\partial}{\partial \varepsilon} \widetilde{M}_q(\varepsilon,f)^q d\varepsilon \right|
& \lesssim & \int_0^1 \varepsilon^{b+1}  \int_{\partial \Omega_\varepsilon} |f|^{q-1} |\nabla f| dS_\varepsilon d\varepsilon\\
& \lesssim & \text{small const.} \int_0^1  \varepsilon^b M_q(\varepsilon,f)^q d\varepsilon \\
&& +\, \text{large const.} \int_0^1  \varepsilon^{b+2} \int_{\partial \Omega_\varepsilon} |f|^{q-2} |\nabla f|^2 dS_\varepsilon d\varepsilon,
\end{eqnarray*}
in view of the Cauchy-Schwarz inequality.   Since $\widetilde{M}_q(\varepsilon,f)\asymp M_q(\varepsilon,f)$,  it follows from \eqref{eq:HL_2} that
\begin{eqnarray}\label{eq:HL_3}
\int_0^1  \varepsilon^b M_q(\varepsilon,f)^q d\varepsilon \lesssim  {M}_q(1,f)^q+ \int_0^1  \varepsilon^{b+2} \int_{\partial \Omega_\varepsilon} |f|^{q-2} |\nabla f|^2 dS_\varepsilon d\varepsilon.
\end{eqnarray}

Step 2.  Let $q>p$.  Apply the mean-value inequality on suitable polydisc with center $z$,  we obtain  
\begin{equation}\label{eq:HL_5}
|f(z)|^p \lesssim \varepsilon^{-n-1} \int_{\varepsilon/2\le \delta \le 2\varepsilon} |f|^p\lesssim \varepsilon^{-n} \|f\|^p_{H^p(\Omega)}, 
\end{equation}
that is
\begin{equation}\label{eq:HL_6}
|f(z)| \lesssim \varepsilon^{-n/p} \|f\|_{H^p(\Omega)}.
\end{equation}

Step 3.  Let $0<p<q$.  By \eqref{eq:HL_6},  we have
\begin{eqnarray*}
 \int_{\partial \Omega_\varepsilon} |f|^{q-2} |\nabla f|^2 dS_\varepsilon & = &  \int_{\partial \Omega_\varepsilon} |f|^{q-p} |f|^{p-2} |\nabla f|^2 dS_\varepsilon \\
 & \lesssim & \varepsilon^{-n(\frac{q}p-1)}\|f\|_{H^p(\Omega)}^{q-p} \cdot \int_{\partial \Omega_\varepsilon}  |f|^{p-2} |\nabla f|^2 dS_\varepsilon.
\end{eqnarray*}
This together with \eqref{eq:HL_3} yield
$$
\int_0^1  \varepsilon^b M_q(\varepsilon,f)^q d\varepsilon \lesssim  {M}_q(1,f)^q+\|f\|_{H^p(\Omega)}^{q-p} \int_0^1  \varepsilon^{b+2-n(\frac{q}p-1)} \int_{\partial \Omega_\varepsilon} |f|^{p-2} |\nabla f|^2 dS_\varepsilon d\varepsilon.
$$
Take $b=0$ and $q=p(1+\frac1n)$,  we obtain 
\begin{eqnarray}\label{eq:HL_8}
\int_\Omega |f|^q & = & \int_{\Omega_1} |f|^q + \int_0^1   M_q(\varepsilon,f)^q d\varepsilon\nonumber\\
& \lesssim & \int_{\Omega_1} |f|^q + {M}_q(1,f)^q+\|f\|_{H^p(\Omega)}^{q-p} \int_0^1  \varepsilon\int_{\partial \Omega_\varepsilon} |f|^{p-2} |\nabla f|^2 dS_\varepsilon d\varepsilon\nonumber\\
& \lesssim & \int_{\Omega_1} |f|^q + {M}_q(1,f)^q+\|f\|_{H^p(\Omega)}^{q-p}\int_\Omega \delta |f|^{p-2} |\nabla f|^2\nonumber\\
& \lesssim & \int_{\Omega_1} |f|^q + {M}_q(1,f)^q+\|f\|_{H^p(\Omega)}^{q},
\end{eqnarray}
in view of  Proposition \ref{prop:Carleson} given below.   

To get \eqref{eq:HL_7},  it suffices to show that the identity mapping $I:H^p(\Omega)\rightarrow A^q(\Omega)$ is a continuous linear functional.   To see this,  take any $\{f_\nu\}\subset H^p(\Omega)$  with $f_\nu \rightarrow f_0$ in $H^p(\Omega)$.  By the Bergman inequality,  we see that $f_\nu$ converges uniformly on $\overline{\Omega}_1$ to $f_0$.   It follows immediately from \eqref{eq:HL_8} that $\|f_\nu-f_0\|_{A^q(\Omega)}\rightarrow 0$ as $\nu\rightarrow \infty$.
\end{proof}

\begin{proposition}\label{prop:Carleson}
Let $\Omega$ be a bounded domain with $C^2-$boundary in $\mathbb C^n$.  Then
\begin{equation}\label{eq:HL_4}
\int_\Omega \delta |f|^{p-2} |\nabla f|^2 \lesssim \|f\|^p_{H^p(\Omega)},\ \ \ \forall\,f\in H^p(\Omega).
\end{equation}
\end{proposition}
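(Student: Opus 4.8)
The plan is to read \eqref{eq:HL_4} as a Carleson-type bound on the positive measure $\mu_{f}:=\Delta|f|^{p}$ and to control its $\delta$-weighted total mass by a Green potential with an interior pole. Throughout write $A:=\|f\|_{H^{p}(\Omega)}^{p}=\sup_{\varepsilon>0}\int_{\partial\Omega_{\varepsilon}}|f|^{p}\,dS_{\varepsilon}$ and $u:=|f|^{p}$.

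\emph{Reduction.} For $f$ holomorphic one has the elementary pointwise identity $\Delta|f|^{p}=c_{p}\,|f|^{p-2}|\nabla f|^{2}$ at every point with $f\neq0$, for a positive constant $c_{p}$. Since $u=|f|^{p}$ is subharmonic, its distributional Laplacian $\mu_{f}$ is a non-negative Radon measure whose absolutely continuous part equals $c_{p}|f|^{p-2}|\nabla f|^{2}\,dV$ on the full-measure set $\{f\neq0\}$; hence
\[
\int_{\Omega}\delta\,|f|^{p-2}|\nabla f|^{2}\;\le\;\tfrac1{c_{p}}\int_{\Omega}\delta\,d\mu_{f}.
\]
(For $1\le p<2$, where $|f|^{p}$ need not be $C^{2}$ at zeros of $f$, I would first run the argument with the smooth subharmonic regularizations $(|f|^{2}+\eta)^{p/2}$, whose Laplacians converge weakly to $\mu_{f}$, and then let $\eta\downarrow0$.) It therefore suffices to show $\int_{\Omega}\delta\,d\mu_{f}\lesssim A$.

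\emph{The Green-potential estimate.} Fix an interior point $z_{0}\in\Omega$ and let $G=G_{\Omega}(\cdot,z_{0})\ge0$ be the Green function of $\Omega$ with pole at $z_{0}$. For a bounded $C^{2}$ domain, the Hopf lemma and the boundary Harnack principle give $G_{\Omega}(z_{0},w)\asymp\delta(w)$ as $w\to\partial\Omega$, while $G_{\Omega}(z_{0},w)\to+\infty$ as $w\to z_{0}$; in particular $G_{\Omega}(z_{0},w)\gtrsim\delta(w)$ on all of $\Omega$. The hypothesis $A<\infty$ guarantees that $u$ admits a harmonic majorant on $\Omega$ (the subharmonic analogue of belonging to a Hardy class): if $h_{\varepsilon}$ is the harmonic function on $\Omega_{\varepsilon}$ with boundary values $u|_{\partial\Omega_{\varepsilon}}$, then $h_{\varepsilon}(z_{0})=\int_{\partial\Omega_{\varepsilon}}P^{\varepsilon}_{z_{0}}\,u\,dS_{\varepsilon}\lesssim A$ uniformly for small $\varepsilon$, since the Poisson kernel $P^{\varepsilon}_{z_{0}}$ of $\Omega_{\varepsilon}$ at the fixed interior point $z_{0}$ is bounded uniformly in $\varepsilon$; together with Harnack's inequality this produces the least harmonic majorant $h$ of $u$, with $h(z_{0})\lesssim A$. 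Applying the Riesz representation to the non-negative superharmonic function $h-u$ gives $h(z_{0})-u(z_{0})=\int_{\Omega}G_{\Omega}(z_{0},w)\,d\mu_{f}(w)$, so
\[
\int_{\Omega}\delta\,d\mu_{f}\;\lesssim\;\int_{\Omega}G_{\Omega}(z_{0},\cdot)\,d\mu_{f}\;=\;h(z_{0})-u(z_{0})\;\le\;h(z_{0})\;\lesssim\;A,
\]
which is \eqref{eq:HL_4}. Equivalently, one may obtain the middle identity directly from Green's second identity for $G$ and $u$ on $\Omega_{\varepsilon}\setminus B(z_{0},\eta)$ and let $\varepsilon,\eta\downarrow0$.

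The pointwise Laplacian computation, the $\eta$-regularization, the harmonic-majorant theory and the Riesz decomposition are all routine. I expect the step requiring genuine care to be the boundary estimates that make the final chain uniform — the two-sided comparison $G_{\Omega}(z_{0},\cdot)\asymp\delta$ near $\partial\Omega$ and the uniform-in-$\varepsilon$ bound for the Poisson kernel of $\Omega_{\varepsilon}$ at $z_{0}$ — which rest on the sets $\Omega_{\varepsilon}$ forming a uniformly $C^{2}$ family for small $\varepsilon$; this uniformity is what must be invoked honestly, rather than just the regularity of $\Omega$ itself.
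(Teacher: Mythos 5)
Your argument is correct, but it follows a genuinely different route from the paper. You read the left-hand side as the $\delta$-weighted mass of the Riesz measure $\mu_f=\Delta|f|^p$ and bound it by the Green potential at one interior point: the harmonic-majorant bound $h(z_0)\lesssim\|f\|_{H^p}^p$ (via uniform-in-$\varepsilon$ Poisson kernel estimates for the level domains $\Omega_\varepsilon$), the Riesz decomposition $h(z_0)-|f(z_0)|^p=\int_\Omega G_\Omega(z_0,\cdot)\,d\mu_f$, and the Hopf-lemma comparison $G_\Omega(z_0,\cdot)\gtrsim\delta$. All of these ingredients are standard for bounded $C^2$ domains (the equivalence of the surface-integral and harmonic-majorant descriptions of Hardy classes is in Stein's book, which the paper already cites), and your handling of the zero set of $f$ by restriction/regularization is fine, so the chain closes; the step you flag --- uniformity of the Poisson kernel bound, resting on the uniform interior/exterior ball property of the family $\Omega_\varepsilon$ --- is indeed the only place where care beyond routine computation is needed. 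The paper instead argues directly and more elementarily: it takes a $C^2$ \emph{subharmonic} defining function $\rho$ (so $-\rho\asymp\delta$ and $\Delta\rho\ge 0$), applies Green's theorem on $\{\rho<-\varepsilon\}$ to $e^\varphi$ with $\varphi=\frac p2\log(|f|^2+\tau)$, uses $\Delta e^\varphi=(\Delta\varphi+|\nabla\varphi|^2)e^\varphi\ge|\nabla\varphi|^2e^\varphi$ and the subharmonicity of $\rho$ to discard the solid term, and bounds the remaining boundary term by $\int_{\rho=-\varepsilon}e^\varphi\,dS_\varepsilon\lesssim\|f\|_{H^p}^p$, then lets $\tau,\varepsilon\to0$. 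The trade-off: the paper's proof is shorter and self-contained modulo the cited construction of a subharmonic defining function, avoiding Green functions, harmonic majorants and boundary potential theory altogether; your proof requires more machinery but exposes the structural content (a Littlewood/Carleson-type inequality for the Riesz measure) and would extend to less regular domains for which $G_\Omega(z_0,\cdot)\gtrsim\delta$ and the harmonic-majorant bound are available.
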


\begin{proof}
The argument is standard.  First of all,  it is fairly easy to construct a subharmonic defining function $\rho$ on $\Omega$ (see e.g.,  \cite{ChenXing},  Lemma 2.1).  Let $\varphi$ be a $C^2$ subharmonic function on $\Omega$.  Green's theorem gives
\begin{eqnarray*}
\int_{\rho<-\varepsilon}(-\rho-\varepsilon)(\Delta \varphi+|\nabla \varphi|^2) e^\varphi
& = & \int_{\rho<-\varepsilon}(-\rho-\varepsilon) \Delta e^\varphi\\
 & = & -\int_{\rho<-\varepsilon} \Delta\rho\,e^\varphi +\int_{\rho=-\varepsilon} \frac{\partial \rho}{\partial \nu_\varepsilon} e^\varphi dS_\varepsilon\\
 & \lesssim & \int_{\rho=-\varepsilon} e^\varphi.
\end{eqnarray*}
Take first $\varphi=\frac{p}2\log (|f|^2+\tau)$ then let $\tau\rightarrow 0+$ and $\varepsilon\rightarrow 0+$,  we immediately obtain \eqref{eq:HL_4}.
\end{proof}

\section{The $p-$Schwarz content and its applications}
\subsection{Basic properties and upper bounds}\label{subsec:basic_property}
We first list a few trivial properties of $s_p$ for $0<p<\infty$ as follows:
\begin{enumerate}
\item[$(a)$] $s_p(E,\Omega)\le 1$ and $s_p(\Omega,\Omega)=1$;
\item[$(b)$] $E_1\subset E_2$ implies $s_p(E_1,\Omega)\le s_p(E_2,\Omega)$,  and $\Omega_1\subset \Omega_2$ implies $s_p(E,\Omega_1)\ge s_p(E,\Omega_2)$;
\item[$(c)$] Subadditivity:  $s_p(\bigcup_{j=1}^\infty E_j,\Omega)\le \sum_{j=1}^\infty s_p(E_j,\Omega)$;
\item[$(e)$]  $s_p(E,\Omega)\ge |E|/|\Omega|$,  where $|\cdot|$ stands for the volume;
\item[$(f)$]  $s_p(E,\Omega)\le C_n  |E|/d^{2n}$ where $d=d(E,\partial \Omega)$; this follows from the Bergman inequality: $|f(z)|^p\le C_n \delta(z)^{-2n}\int_\Omega |f|^p$ for $f\in A^p(\Omega)$.  
\end{enumerate}

\begin{example}
 $s_p(\mathbb D_r,\mathbb D)=r^{2}$.  
\end{example}

To see this,  first note that $s_p(\mathbb D_r,\mathbb D)\ge |\mathbb D_r|/|\mathbb D|=r^{2}$; on the other side,  since  $M_p(r,f)^p:=\int_0^{2\pi} |f(re^{i\theta})|^p d\theta$ is non-decreasing in $r$,  it follows that 
$$
\int_{\mathbb D_r}|f|^p = \int_0^r t M_p(t,f)^p dt\le \frac{r^2}2 M_p(r,f)^p \le \frac{r^2}{1-r^2} \int_r^1 t M_p(t,f)^p dt=\frac{r^2}{1-r^2}\int_{\mathbb D\backslash \mathbb D_r} |f|^p,
$$
i.e.,  $\int_{\mathbb D_r}|f|^p\le r^2 \int_{\mathbb D}|f|^p$,  so that $s_p(\mathbb D_r,\mathbb D)\le r^{2}$.

\begin{proposition}
If\/ $\Omega$ is a simply-connected domain in $\mathbb C^n$ and $F:\Omega\rightarrow \mathbb C^n$ is a holomorphic injective mapping,  then 
$$
s_p(E,\Omega)=s_p(F(E),F(\Omega)).
$$
Moreover, the simply-connected condition can be removed when $p=2/m$, where $m\in\mathbb{Z}^+$.
\end{proposition}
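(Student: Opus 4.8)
The plan is to exploit the transformation rule for the $p$-Bergman kernel under biholomorphic (more generally, holomorphic injective) mappings, which is recorded earlier in the excerpt (\cite{CZ}, Proposition 2.7): if $F:\Omega\to\mathbb{C}^n$ is holomorphic and injective then
\[
K_{\Omega,p}(z)=K_{F(\Omega),p}(F(z))\,|J_F(z)|^{2},
\]
and correspondingly the off-diagonal object $m_p$ pulls back so that, for the two-point ratios, the Jacobian factors disappear in a controlled way. The idea is simply to translate the extremal problem \eqref{eq:p-SchwarzConst} defining $s_p(E,\Omega)$ into the extremal problem defining $s_p(F(E),F(\Omega))$ via the change of variables $f\mapsto (f\circ F)\,J_F^{2/p}$.

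Concretely, first I would note that, for $p=2/m$ with $m\in\mathbb{Z}^+$, the power $J_F^{2/p}=J_F^{m}$ is an honest single-valued holomorphic function on $\Omega$, so the correspondence $g\in A^p(F(\Omega))\mapsto f:=(g\circ F)\,J_F^{m}\in A^p(\Omega)$ is a well-defined bijection (with inverse built from $F^{-1}$ on $F(\Omega)$). By the change-of-variables formula, $\int_\Omega |f|^p=\int_\Omega |g\circ F|^p |J_F|^{2}=\int_{F(\Omega)}|g|^p$, and likewise $\int_E |f|^p=\int_{F(E)}|g|^p$. Hence the ratio $\int_E|f|^p/\int_\Omega|f|^p$ equals $\int_{F(E)}|g|^p/\int_{F(\Omega)}|g|^p$, and taking suprema over $f$ (equivalently over $g$) yields $s_p(E,\Omega)=s_p(F(E),F(\Omega))$. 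This handles the ``moreover'' clause.

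For general $1\le p<\infty$ (indeed $0<p<\infty$) when $\Omega$ is simply-connected, the only obstruction is that $J_F^{2/p}$ need not be single-valued a priori; but $J_F$ is a nowhere-vanishing holomorphic function on a simply-connected domain, so $\log J_F$ has a single-valued holomorphic branch and $J_F^{2/p}:=\exp\!\big(\tfrac{2}{p}\log J_F\big)$ is a well-defined nowhere-vanishing holomorphic function on $\Omega$ (here I use injectivity of $F$, which gives $J_F\ne 0$). With this branch fixed, the map $g\mapsto (g\circ F)\,J_F^{2/p}$ is again a bijection $A^p(F(\Omega))\to A^p(\Omega)$ preserving both $\int_\Omega|\cdot|^p$ and $\int_E|\cdot|^p$ under $E\leftrightarrow F(E)$, and the same ratio computation gives the claimed equality. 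Since $F(\Omega)$ is then also simply-connected, one could equally run the argument with $F^{-1}$ to see the two suprema are achieved symmetrically, but this is not needed — the single bijection already gives equality.

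The only genuinely delicate point is the single-valuedness of $J_F^{2/p}$, which is exactly where the topological hypothesis (simple connectivity, or the arithmetic condition $2/p\in\mathbb{Z}^+$) enters; everything else is the elementary change-of-variables bookkeeping. I would also remark in passing that $F(\Omega)$ need not be simply-connected in general (e.g. $n\ge 2$), but $F(\Omega)$ being biholomorphic to the simply-connected $\Omega$ still guarantees $J_{F^{-1}}$ is nowhere zero and admits a global logarithm via $-\log J_F\circ F^{-1}$, so the argument is fully symmetric; alternatively one avoids this entirely by only ever using the forward map $g\mapsto (g\circ F)\,J_F^{2/p}$ as above.
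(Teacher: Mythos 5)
Your proof is correct and follows essentially the same route as the paper's: the change of variables $f\mapsto (f\circ F)\,J_F^{2/p}$, with the simply-connectedness (respectively $2/p\in\mathbb{Z}^+$) hypothesis entering exactly to fix a single-valued branch of $J_F^{2/p}$ (using $J_F\neq0$ from injectivity); the paper phrases the conclusion as two inequalities (forward map, then the same argument for $F^{-1}$), while you check that the map is a bijection preserving both integrals, which is the same computation. One passing remark is off --- since an injective holomorphic map is a biholomorphism onto its image, $F(\Omega)$ is in fact simply-connected --- but, as you yourself note, nothing in your argument depends on it.
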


\begin{proof}
Set $E'=F(E)$ and $\Omega'=F(\Omega)$. Then we have
$$
f'\in A^p(\Omega') \iff f:=f'\circ F\cdot J_F^{2/p}\in A^p(\Omega),
$$
so that 
$$
\frac{\int_{E'}|f'|^p}{\int_{\Omega'}|f'|^p} = \frac{\int_{E}|f|^p}{\int_{\Omega}|f|^p}\le s_p(E,\Omega).
$$
Take supremum over $f'\in A^p(\Omega')$,  we have $s_p(E',\Omega')\le s_p(E,\Omega)$.  Consider $F^{-1}$ instead of $F$,  we obtain the reverse inequality.
\end{proof}

\begin{proposition}\label{prop:mono}
Let $\Omega'\subset \Omega$ be two bounded domains in $\mathbb C^n$.  Then we have 
$$
\frac{K_{\Omega,p}(z)}{K_{\Omega',p}(z)}\le s_p(\Omega',\Omega),\ \ \ \forall\,z\in \Omega'.
$$
\end{proposition}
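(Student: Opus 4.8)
The plan is to test the extremal problem on the smaller domain $\Omega'$ with the minimizer produced by the larger domain $\Omega$. Fix $z\in\Omega'$ and let $f:=m_{\Omega,p}(\cdot,z)\in A^p(\Omega)$ be a minimizer for the problem \eqref{eq:MinProb} on $\Omega$ (existence holds for all $0<p<\infty$; alternatively one runs the argument below with an infimizing sequence). Then $f(z)=1$ and, since $\|m_{\Omega,p}(\cdot,z)\|_p=m_{\Omega,p}(z)$,
\[
\int_\Omega |f|^p = m_{\Omega,p}(z)^p = \frac1{K_{\Omega,p}(z)}.
\]

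Next I would simply restrict $f$ to $\Omega'$. Because $\Omega'\subset\Omega$, the restriction $f|_{\Omega'}$ lies in $A^p(\Omega')$ and still satisfies $f(z)=1$, so it is an admissible competitor for the minimizing problem defining $m_{\Omega',p}(z)$. Hence
\[
\frac1{K_{\Omega',p}(z)} = m_{\Omega',p}(z)^p \le \int_{\Omega'} |f|^p.
\]
Dividing the two displayed relations yields
\[
\frac{K_{\Omega,p}(z)}{K_{\Omega',p}(z)} \le \frac{\int_{\Omega'} |f|^p}{\int_\Omega |f|^p},
\]
and since $f\in A^p(\Omega)\setminus\{0\}$ (as $f(z)=1$), the right-hand side is at most $s_p(\Omega',\Omega)$ by the definition \eqref{eq:p-SchwarzConst} with $E=\Omega'$. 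This gives the asserted inequality.

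There is essentially no obstacle here: the statement is a direct unwinding of the definitions of $K_p$ via \eqref{eq:MinProb} and of the $p$-Schwarz content. The only point to keep track of is that the competitor on $\Omega'$ must be nonzero, which is automatic from the normalization $f(z)=1$; no compactness, duality, or regularity input is needed.
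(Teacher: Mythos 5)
Your proof is correct and follows essentially the same route as the paper: both reduce the claim to comparing $\int_{\Omega'}|f|^p$ with $\int_\Omega|f|^p$ via the definition of $s_p(\Omega',\Omega)$, you by testing the extremal problem on $\Omega'$ with the minimizer $m_{\Omega,p}(\cdot,z)$, the paper by splitting the quotient $|f(z)|^p/\int_\Omega|f|^p$ for an arbitrary competitor $f\in A^p(\Omega)\setminus\{0\}$ and taking suprema. The only (inessential) difference is that the paper's version never invokes existence of a minimizer, a point you already cover with your remark about using an infimizing sequence instead.
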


\begin{proof}
For every $z\in \Omega'$,  we have
\begin{eqnarray*}
K_{\Omega,p}(z) & = & \sup_{f\in A^p(\Omega)\backslash \{0\}}\left\{ \frac{|f(z)|^p}{\int_{\Omega}|f|^p}\right\}\\
& = & \sup_{f\in A^p(\Omega)\backslash \{0\}} \left\{\frac{|f(z)|^p}{\int_{\Omega'}|f|^p} \cdot \frac{\int_{\Omega'}|f|^p}{\int_{\Omega}|f|^p}\right\}\\
& \le & s_p(\Omega',\Omega) \sup_{f\in A^p(\Omega')\backslash \{0\}}\left\{\frac{|f(z)|^p}{\int_{\Omega'}|f|^p}\right\}\\
& = & s_p(\Omega',\Omega) K_{\Omega',p}(z).
\end{eqnarray*}
\end{proof}

\begin{corollary}
Let $\Omega'\subset \Omega$ be  bounded domains in $\mathbb C^n$ with $n\ge 2$,  such that $\Omega\setminus \Omega'$ is compact in $\Omega$.  Then $s_p(\Omega',\Omega)=1$ for all $0<p<\infty$.  
\end{corollary}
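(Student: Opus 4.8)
The bound $s_p(\Omega',\Omega)\le 1$ is immediate from property $(a)$, so the real content is $s_p(\Omega',\Omega)\ge 1$. Write $K:=\Omega\setminus\Omega'$; this is compact in $\Omega$ by hypothesis, and if $|K|=0$ there is nothing to prove, so assume $|K|>0$. Since $\Omega=\Omega'\sqcup K$, for every $f\in A^p(\Omega)\setminus\{0\}$ we have $\int_{\Omega'}|f|^p/\int_\Omega|f|^p=1-\int_K|f|^p/\int_\Omega|f|^p$, and hence the assertion is equivalent to showing that $\inf\{\int_K|f|^p/\int_\Omega|f|^p:\ f\in A^p(\Omega)\setminus\{0\}\}=0$.

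The plan is to obtain this from the compactness of the restriction operator $R:A^p(\Omega)\to L^p(K)$, $Rf:=f\chi_K$. First I would check that $R$ is compact: by the Bergman inequality any bounded subset of $A^p(\Omega)$ is locally uniformly bounded on $\Omega$, so Montel's theorem lets one extract from any sequence a subsequence converging locally uniformly on $\Omega$, in particular uniformly on a compact neighbourhood of $K$; as $|K|<\infty$ the corresponding restrictions then converge in $L^p(K)$. Thus $R$ carries bounded sets to relatively compact sets.

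Next I would argue by contradiction. Suppose the infimum above equals some $c>0$, i.e. $\int_\Omega|f|^p\le c^{-1}\int_K|f|^p$ for all $f\in A^p(\Omega)$. Since $\Omega$ is bounded, $A^p(\Omega)$ contains all polynomials and is therefore infinite-dimensional, so its bounded closed neighbourhood of the origin $\{f:\int_\Omega|f|^p\le 1\}$ cannot be relatively compact --- otherwise $A^p(\Omega)$ would be a locally compact, hence finite-dimensional, topological vector space. Pick a sequence $(f_j)$ in that set with no convergent subsequence in $A^p(\Omega)$. Since $(f_j)$ is bounded and $R$ is compact, after passing to a subsequence $(Rf_j)$ is Cauchy in $L^p(K)$, that is, $\int_K|f_j-f_k|^p\to 0$; by the standing hypothesis $\int_\Omega|f_j-f_k|^p\to 0$ as well, so $(f_j)$ is Cauchy in the complete space $A^p(\Omega)$ and therefore convergent --- contradicting the choice of $(f_j)$. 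Hence $c=0$ and $s_p(\Omega',\Omega)=1$.

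The one step requiring care is the functional-analytic bookkeeping in the range $0<p<1$, where $A^p(\Omega)$ is only a complete metrizable topological vector space rather than a Banach space. There one uses that $f\mapsto\int_\Omega|f|^p$ is continuous (indeed $1$-Lipschitz for the natural translation-invariant metric, since $\big||a|^p-|b|^p\big|\le|a-b|^p$), so that $\{f:\int_\Omega|f|^p\le 1\}$ really is a bounded neighbourhood of $0$, and one invokes F.\ Riesz's theorem that a Hausdorff topological vector space possessing a relatively compact neighbourhood of the origin is finite-dimensional; every convergence step is then word-for-word as in the Banach case. It is worth noting that the argument does not actually use $n\ge 2$.
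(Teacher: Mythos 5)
Your proof is correct, but it follows a genuinely different route from the paper. The paper does not work with the definition of $s_p$ directly: it invokes Proposition \ref{prop:mono} to bound $s_p(\Omega',\Omega)$ from below by the ratio $K_{\Omega,p}(z)/K_{\Omega',p}(z)$, and then shows this ratio tends to $1$ as $z$ approaches a peak point $z_0\in\partial\Omega$, by taking the extremal function $f_z\in A^p(\Omega')$, extending it to $A^p(\Omega)$ via the Hartogs phenomenon (this is precisely where $n\ge 2$ and the compactness of $\Omega\setminus\Omega'$ in $\Omega$ enter), and damping its mass on $\Omega\setminus\Omega'$ by multiplying with a high power $h^m$ of the peak function. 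You instead rewrite $s_p(\Omega',\Omega)=1$ as $\inf_{f\in A^p(\Omega)\setminus\{0\}}\int_K|f|^p/\int_\Omega|f|^p=0$ with $K=\Omega\setminus\Omega'$, and rule out a positive infimum by the compactness of restriction to the compact set $K$ (Bergman inequality plus Montel) combined with completeness of $A^p(\Omega)$ and F.~Riesz's theorem; your handling of the quasi-Banach range $0<p<1$ is also fine, since the metric space arguments and the TVS version of Riesz's theorem do not require local convexity. What each approach buys: yours is softer and more general---it only needs $K$ compact in $\Omega$ and $A^p(\Omega)$ infinite-dimensional, so, as you observe, the hypothesis $n\ge 2$ is not needed for the conclusion (it is an artifact of the paper's method, and natural there because the surrounding Banach--Mazur application anyway requires extendability from $A^p(\Omega')$ to $A^p(\Omega)$); on the other hand, it is non-constructive and yields no localization, whereas the paper's argument produces explicit near-extremal functions $h^m f_z$ and the stronger quantitative statement $K_{\Omega,p}(z)/K_{\Omega',p}(z)\to 1$ at a boundary peak point, which is the form in which the estimate is actually exploited via Proposition \ref{prop:mono}.
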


\begin{proof}
Since $\Omega$ is bounded,  there exists at least one peak point $z_0$ on $\partial \Omega$,  i.e.,  there is $h\in \mathcal O(\Omega)$ such that $|h|<1$,   $|h(z)|\rightarrow 1$ as $z\rightarrow z_0$ and $\sup_{\Omega\setminus B(z_0,r)}|h|<1$ for all $r>0$; for instance,  one may take $z_0$ to be a  point on $\partial \Omega$ which is the farthest to the origin.   Since $s_p(\Omega',\Omega)\le 1$,  it suffices to verify that 
\begin{equation}\label{eq:s_p=1}
\lim_{z\rightarrow z_0} \frac{K_{\Omega,p}(z)}{K_{\Omega',p}(z)}=1
\end{equation}
in view of Proposition \ref{prop:mono}.  
To see this,  we take for any given $z\in \Omega'$ a function $f_z\in A^p(\Omega')$ with $|f_z(z)|^p=K_{\Omega',p}(z)$ and $\int_{\Omega'}|f_z|^p=1$.  By the Hartogs extension theorem,  we see that $f_z\in A^p(\Omega)$.  Moreover,  it is easy to see from the maximun principle and the Bergman inequality that 
$$
\int_{\Omega\setminus \Omega'} |f_z|^p \le C \int_{\Omega'} |f_z|^p=C
$$
where $C=C(\Omega'.\Omega)>0$.  
Given $m\in \mathbb Z^+$ and $z\in \Omega$,  define $g_{m,z}:=h^m f_z$.  Clearly,  $g_{m,z}$ is holomorphic on $\Omega$ and satisfies
$$
\int_\Omega |g_{m,z}|^p= \int_{\Omega\setminus \Omega'} |g_{m,z}|^p + \int_{\Omega'} |g_{m,z}|^p \le C \left( \sup_{\Omega\setminus \Omega'}|h|\right)^m + 1<1+\varepsilon,
$$ 
provided $m\ge m_\varepsilon\gg 1$.  Thus
$$
\varliminf_{z\rightarrow z_0} \frac{K_{\Omega,p}(z)}{K_{\Omega',p}(z)}\ge \varliminf_{z\rightarrow z_0} \frac1{K_{\Omega',p}(z)} \frac{|g_{m_\varepsilon,z}(z)|^p}{\int_\Omega |g_{m_\varepsilon,p}|^p}
\ge  \varliminf_{z\rightarrow z_0} \frac{|h(z)|^{m_\varepsilon}}{1+\varepsilon} =\frac1{1+\varepsilon}.
$$
This together with the trivial inequality $K_{\Omega,p}(z)\le K_{\Omega',p}(z)$ yield \eqref{eq:s_p=1}.   

\end{proof}

Below we give an example of a pair $\Omega'\subset \Omega$ of bounded domains in $\mathbb C^n$ such that $\partial \Omega'$ intersects $\partial \Omega$ at two points while $s_p(\Omega',\Omega)=1$.   Define 
\begin{eqnarray*}
U & := & \left\{(z',z_n)\in \mathbb C^{n-1}\times \mathbb C: \mathrm{Im}\,z_n>|z'|^2\right\}\\
U' & := & \left\{(z',z_n)\in \mathbb C^{n-1}\times \mathbb C: \mathrm{Im}\,z_n>|z'|^2+ |z'|^4\right\}.   
\end{eqnarray*}
The Cayley transformation maps $U$ and $U'$ biholomorphically to the unit ball $\mathbb B^n$ and a domain $\Omega'\subset \mathbb B^n$ with $\partial \Omega'\cap \partial \mathbb B^n$ consisting of two points respectively.  In particular,  both $K_{U,p}(z)$ and $K_{U',p}(z)$ are positive for each $z\in U'$,  in view of the transformation formula for $K_p$ (cf.  \cite{CZ},  Proposition 2.7,  which naturally generalizes to unbounded cases).   Clearly,  $F_\varepsilon: (z',z_n)\mapsto (z'/\sqrt{\varepsilon},z_n/\varepsilon)$,  $\varepsilon>0$,  is an automorphism of $U$ and we have
\begin{eqnarray*}
K_{U,p}((0',\varepsilon i)) & = &  K_{U,p}((0', i)) |J_{F_\varepsilon}(0',\varepsilon i)|^2\\
K_{U',p}((0',\varepsilon i)) & = &  K_{F_\varepsilon(U'),p}((0', i)) |J_{F_\varepsilon}(0',\varepsilon i)|^2.
\end{eqnarray*}
On the other hand,  since $F_\varepsilon(U')=\{(z',z_n): \mathrm{Im}\,z_n> |z'|^2+ \varepsilon |z'|^4\}\uparrow U$ as $\varepsilon\downarrow 0$,  a standard normal family argument yields 
$$
\lim_{\varepsilon\rightarrow 0+} K_{F_\varepsilon(U'),p}((0', i))= K_{U,p}((0', i)),
$$
so that 
$$
\lim_{\varepsilon\rightarrow 0+} \frac{K_{U,p}((0',\varepsilon i))}{K_{U',p}((0',\varepsilon i))}= 1.
$$
Thus we have $\sup_{z\in \Omega'}\frac{K_{\mathbb B^n,p}(z)}{K_{\Omega',p}(z)}\ge 1$,  so that $s_p(\Omega',\mathbb B^n)\ge 1$ in view of Proposition \ref{prop:mono}.  Since $s_p(\Omega',\mathbb B^n)\le 1$,  we obtain $s_p(\Omega',\mathbb B^n) = 1$.

\begin{proof}[Proof of Proposition \ref{prop:BM}]
By Banach's open mapping theorem,  the identity mapping
$$
I: A^p(\Omega)\rightarrow A^p(\Omega'),\ \ \ f\mapsto f|_{\Omega'},
$$
is a continuous isomorphism,  which satisfies $\|I\|\le 1$ and
$$
\|I^{-1}\|=\sup \left\{\frac{\|f\|_{L^p(\Omega)}}{\|f\|_{L^p(\Omega')}}:f\in A^p(\Omega)\backslash \{0\}\right\}.
$$
It follows from the definition of the Banach-Mazur distance that
$$
d_{\mathrm{BM}}(A^p(\Omega'),A^p(\Omega))\le \sup \left\{\frac{\|f\|_{L^p(\Omega)}}{\|f\|_{L^p(\Omega')}}:f\in A^p(\Omega)\backslash \{0\}\right\}.
$$
Since 
$$
\int_{\Omega} |f|^p=\int_{\Omega'}|f|^p + \int_{\Omega\backslash \Omega'}|f|^p\le 
\int_{\Omega'}|f|^p+ s_p(\Omega\backslash \Omega',\Omega) \int_{\Omega} |f|^p,
 $$
 we immediately get \eqref{eq:BM}.
\end{proof}

\begin{proposition}\label{prop:p-Schwarz}
Let $\Omega$ be a bounded  domain in $\mathbb C^n$  and $E$ a measurable relatively compact subset in $\Omega$.  Then for any $0<p<\infty$,
\begin{equation}\label{eq:p-SchwarzUpper}
s_p(E,\Omega)\le \frac{136/\lambda_1(\Omega)}{136/\lambda_1(\Omega)+d^2}
\end{equation}
where $d:=d(E,\partial \Omega)$ and $\lambda_1(\Omega)$ denotes the first eigenvalue of the Laplacian on $\Omega$, i.e. ,
$$
\lambda_1(\Omega):=\inf\left\{\frac{\int_\Omega |\nabla \phi|^2}{\int_\Omega \phi^2}: \phi\in W^{1,2}_0(\Omega)\backslash \{0\}\right\}.
$$
\end{proposition}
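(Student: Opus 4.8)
The plan is to reduce \eqref{eq:p-SchwarzUpper} to a comparison between $\int_E|f|^p$ and $\int_{\Omega\setminus E}|f|^p$, and to prove that comparison by using the function $|f|^{p/2}$ as a test function in the variational definition of $\lambda_1(\Omega)$. Fix $f\in A^p(\Omega)\setminus\{0\}$; we may assume $\int_E|f|^p>0$, as otherwise there is nothing to prove. Because $a\mapsto a/(a+d^2)$ is increasing on $[0,\infty)$, it suffices to show
$$
\int_{\Omega\setminus E}|f|^p\ \ge\ \frac{\lambda_1(\Omega)\,d^2}{C}\int_E|f|^p
$$
for some absolute constant $C$, for then $s_p(E,\Omega)\le \frac{C/\lambda_1(\Omega)}{C/\lambda_1(\Omega)+d^2}$, which does not exceed the right-hand side of \eqref{eq:p-SchwarzUpper} as soon as $C\le136$. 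Set $u:=|f|^{p/2}$. Since $\log|f|$ is plurisubharmonic and $t\mapsto e^{pt/2}$ is convex and increasing, $u$ is a nonnegative subharmonic function on $\Omega$; moreover $u\in L^2(\Omega)$ because $u^2=|f|^p\in L^1(\Omega)$, and $u\in W^{1,2}_{\mathrm{loc}}(\Omega)$ because $|\nabla|f|^{p/2}|$ is square integrable on every relatively compact subset of $\Omega$ (the only points to check are the zeros of $f$).

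Next I would choose a Lipschitz cut-off $\chi$ on $\Omega$ with $0\le\chi\le1$, $\chi\equiv1$ on $\{z:d(z,E)\le d/4\}$, $\chi\equiv0$ on $\{z:d(z,E)\ge d/2\}$ and $|\nabla\chi|\le 4/d$; since $d(E,\partial\Omega)=d$ we have $\operatorname{supp}\chi\subset\subset\Omega$, while $\nabla\chi$ is supported in $\Omega\setminus E$, so $\chi u\in W^{1,2}_0(\Omega)$. The heart of the matter is the inequality
$$
\int_\Omega\nabla(\chi^2u)\cdot\nabla u\ =\ -\int_\Omega\chi^2u\,d\mu_u\ \le\ 0,
$$
where $\mu_u=\Delta u\ge0$ is the Riesz measure of the subharmonic function $u$; I would justify this by replacing $u$ with its mollifications $u_\varepsilon$ (smooth, subharmonic, with $\Delta u_\varepsilon\ge0$), integrating by parts, and letting $\varepsilon\to0$ using $u_\varepsilon\to u$ in $W^{1,2}$ on a neighbourhood of $\operatorname{supp}\chi$. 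Expanding $|\nabla(\chi u)|^2=|\nabla\chi|^2u^2+\nabla(\chi^2u)\cdot\nabla u$ and feeding $\chi u$ into the Rayleigh quotient then gives
$$
\lambda_1(\Omega)\int_\Omega\chi^2|f|^p\ \le\ \int_\Omega|\nabla(\chi u)|^2\ \le\ \int_\Omega|\nabla\chi|^2|f|^p.
$$
Since $\chi\equiv1$ on $E$ the left side is $\ge\lambda_1(\Omega)\int_E|f|^p$, and since $|\nabla\chi|\le4/d$ with $\nabla\chi$ vanishing outside $\Omega\setminus E$ the right side is $\le(16/d^2)\int_{\Omega\setminus E}|f|^p$. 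This yields the displayed comparison with $C=16\le136$, hence \eqref{eq:p-SchwarzUpper}.

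The only genuinely delicate point is the justification of $\int_\Omega\nabla(\chi^2u)\cdot\nabla u\le0$ together with the membership $\chi u\in W^{1,2}_0(\Omega)$ in the presence of zeros of $f$, where $u=|f|^{p/2}$ need not be smooth and $\Delta u$ is only a measure; both are handled by the mollification argument above, the second also relying on the elementary local bound $\int_K|\nabla|f|^{p/2}|^2<\infty$ for $K\subset\subset\Omega$, obtained by a direct computation near the zero set of $f$. Everything else — the algebraic reduction, the expansion of $|\nabla(\chi u)|^2$, and the construction of $\chi$ — is routine; one may obviously trade a worse constant for an even simpler cut-off, and the value $136$ appearing in \eqref{eq:p-SchwarzUpper} is not claimed to be sharp.
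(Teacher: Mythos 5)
Your proof is correct, and it rests on the same two ingredients as the paper's — testing a cut-off times $|f|^{p/2}$ in the Rayleigh quotient for $\lambda_1(\Omega)$ and using subharmonicity of $|f|^{p/2}$ to control the gradient terms — but you organize it differently, and your version is tighter. The paper proceeds in two stages: it first applies the eigenvalue inequality to $\chi(\delta/d)\,|f|^{p/2}$ and then invokes the Caccioppoli inequality as a separate lemma with a second cut-off $\chi(2\delta/d)$, the two losses compounding to the constant $128$ in the proof (recorded as $136$ in the statement). You instead expand $|\nabla(\chi u)|^2=|\nabla\chi|^2u^2+\nabla(\chi^2u)\cdot\nabla u$ and discard the second term by the sign of the Riesz measure of $u=|f|^{p/2}$; in effect you reprove Caccioppoli with the test function already adapted to the eigenvalue estimate, and with a single Lipschitz cut-off measured from $E$ rather than from $\partial\Omega$ you obtain the comparison with constant $16$, which is stronger than \eqref{eq:p-SchwarzUpper}. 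You are also more scrupulous about the technical points the paper passes over silently (its Caccioppoli statement is for $\phi\in C^1_0(\Omega)$ while $\chi(2\delta/d)$ is only Lipschitz, and nothing is said there about the zero set of $f$); your treatment of these is sound. One simplification worth noting: you can bypass the $W^{1,2}_{\mathrm{loc}}$-regularity of $u$ entirely (the advertised ``direct computation'' is slightly delicate at singular points of the zero variety when $n\ge2$) by running the whole argument with the smooth subharmonic mollifications $u_\varepsilon\downarrow u$, which are uniformly bounded on a neighbourhood of $\mathrm{supp}\,\chi$ because $u$ is continuous, and passing to the limit only in the final inequality $\lambda_1(\Omega)\int_\Omega(\chi u_\varepsilon)^2\le\int_\Omega|\nabla\chi|^2u_\varepsilon^2$, using $u_\varepsilon\ge u$ on the left and dominated convergence on the right.
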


It follows from
Sobolev's inequality that if $n>1$ then $\lambda_1(\Omega)\ge C_n^{-1} |\Omega|^{-1/n}$ holds for some constant $C_n>0$ depending only on $n$.  Thus

\begin{corollary}
For $n>1$ there exists a constant $C_n>0$ such that
\begin{equation}\label{eq:p-SchwarzUpper2}
s_p(E,\Omega)\le \frac{ C_n |\Omega|^{\frac1n} }{ C_n |\Omega|^{\frac1n}+ d^{2}}.
\end{equation}
\end{corollary}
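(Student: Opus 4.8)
The plan is to obtain \eqref{eq:p-SchwarzUpper2} as an immediate consequence of Proposition \ref{prop:p-Schwarz} combined with a Faber--Krahn-type lower bound for $\lambda_1(\Omega)$. Proposition \ref{prop:p-Schwarz} already gives
$$
s_p(E,\Omega)\le \frac{136/\lambda_1(\Omega)}{136/\lambda_1(\Omega)+d^2},
$$
and since the function $a\mapsto a/(a+d^2)$ is increasing on $(0,\infty)$, it suffices to replace $136/\lambda_1(\Omega)$ by any larger quantity of the form $C_n|\Omega|^{1/n}$; equivalently, the task reduces to establishing a lower bound $\lambda_1(\Omega)\ge c_n|\Omega|^{-1/n}$ with $c_n>0$ depending only on $n$.

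For this I would invoke the Sobolev embedding. Viewing $\Omega\subset\mathbb{C}^n=\mathbb{R}^{2n}$ with $2n\ge 4>2$ (this is where $n>1$ enters), one has $W^{1,2}_0(\Omega)\hookrightarrow L^{2^\ast}(\Omega)$ with critical exponent $2^\ast=\tfrac{2\cdot 2n}{2n-2}=\tfrac{2n}{n-1}$ and embedding constant depending only on $n$. For any $\phi\in W^{1,2}_0(\Omega)\setminus\{0\}$, H\"older's inequality then yields
$$
\int_\Omega \phi^2\le \Bigl(\int_\Omega|\phi|^{2^\ast}\Bigr)^{2/2^\ast}|\Omega|^{1-2/2^\ast}\le C_n\,|\Omega|^{1/n}\int_\Omega|\nabla\phi|^2,
$$
since $1-2/2^\ast=1-\tfrac{n-1}{n}=\tfrac1n$. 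Taking the infimum over such $\phi$ gives $\lambda_1(\Omega)\ge C_n^{-1}|\Omega|^{-1/n}$, hence $136/\lambda_1(\Omega)\le 136\,C_n\,|\Omega|^{1/n}$.

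Plugging this into the estimate of Proposition \ref{prop:p-Schwarz} and using once more the monotonicity of $a\mapsto a/(a+d^2)$, one arrives at $s_p(E,\Omega)\le \dfrac{136\,C_n\,|\Omega|^{1/n}}{136\,C_n\,|\Omega|^{1/n}+d^2}$, and absorbing the harmless factor $136$ into $C_n$ yields \eqref{eq:p-SchwarzUpper2}. There is no genuine obstacle in this deduction; the only steps that deserve a moment's attention are the elementary monotonicity observation (which ensures that an upper bound on $1/\lambda_1(\Omega)$ propagates correctly through the fraction) and the bookkeeping verifying that the Sobolev exponent produces exactly the power $|\Omega|^{1/n}$ and not some other exponent.
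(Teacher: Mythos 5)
Your argument is correct and is exactly the paper's route: the paper also deduces the corollary by combining Proposition \ref{prop:p-Schwarz} with the Sobolev-inequality lower bound $\lambda_1(\Omega)\ge C_n^{-1}|\Omega|^{-1/n}$ (valid since $2n>2$), the only difference being that you spell out the H\"older/critical-exponent bookkeeping which the paper leaves implicit.
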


Let us  recall the following well-known inequality:

\begin{proposition}[Caccioppoli inequality]\label{prop:Caccioppoli}
Let $\Omega$ be a domain in $\mathbb C^n$ and $\psi$ a nonnegative subharmonic function on $\Omega$.  Then
\begin{equation}\label{eq:Caccioppoli}
\int_\Omega \phi^2 |\nabla \psi|^2 \le 4\int_\Omega \psi^2 |\nabla \phi|^2,\ \ \ \forall\,\phi\in C^1_0(\Omega).
\end{equation}
\end{proposition}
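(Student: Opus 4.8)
The plan is to test the subharmonicity of $\psi$ against the nonnegative weight $\phi^2\psi$ and integrate by parts. Suppose first that $\psi\in C^2(\Omega)$, so that $\Delta\psi\ge 0$ pointwise. Since $\phi\in C^1_0(\Omega)$ there is no boundary term, and
\[
0\le \int_\Omega \phi^2\psi\,\Delta\psi \;=\; -\int_\Omega \nabla(\phi^2\psi)\cdot\nabla\psi \;=\; -2\int_\Omega \phi\psi\,\nabla\phi\cdot\nabla\psi \;-\;\int_\Omega \phi^2|\nabla\psi|^2,
\]
hence
\[
\int_\Omega \phi^2|\nabla\psi|^2 \;\le\; -2\int_\Omega \phi\psi\,\nabla\phi\cdot\nabla\psi \;\le\; 2\int_\Omega \big(|\phi|\,|\nabla\psi|\big)\big(\psi\,|\nabla\phi|\big).
\]

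Next I would apply Young's inequality in the sharp form $2ab\le \tfrac12 a^2+2b^2$ (which is just $\tfrac12(a-2b)^2\ge 0$) with $a=|\phi|\,|\nabla\psi|$ and $b=\psi\,|\nabla\phi|$, to get
\[
\int_\Omega \phi^2|\nabla\psi|^2 \;\le\; \frac12\int_\Omega \phi^2|\nabla\psi|^2 \;+\; 2\int_\Omega \psi^2|\nabla\phi|^2 .
\]
Absorbing the first term on the right into the left-hand side yields exactly $\int_\Omega \phi^2|\nabla\psi|^2\le 4\int_\Omega\psi^2|\nabla\phi|^2$, i.e., \eqref{eq:Caccioppoli}.

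For a general subharmonic $\psi$ I would argue by regularization. Since $\psi\ge 0$ is subharmonic it is locally bounded on $\Omega$, so the right-hand side of \eqref{eq:Caccioppoli} is finite for every $\phi\in C^1_0(\Omega)$; fix such a $\phi$ and a relatively compact open set $\omega$ with $\mathrm{supp}\,\phi\subset\subset\omega\subset\subset\Omega$. The mollifications $\psi_\varepsilon:=\psi*\chi_\varepsilon$ are smooth, subharmonic and decrease to $\psi$ on $\omega$ for small $\varepsilon$, and remain uniformly bounded there. Applying the $C^2$ case to $\psi_\varepsilon$ bounds $\int_\Omega\phi^2|\nabla\psi_\varepsilon|^2$ uniformly by $4\int_\Omega\psi_\varepsilon^2|\nabla\phi|^2$, and the latter converges to $4\int_\Omega\psi^2|\nabla\phi|^2$ by dominated convergence. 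Consequently $\{\nabla\psi_\varepsilon\}$ is bounded in $L^2_{\mathrm{loc}}$; passing to a weakly convergent subsequence identifies its limit with the distributional gradient $\nabla\psi$, so $\psi\in W^{1,2}_{\mathrm{loc}}(\Omega)$, and weak lower semicontinuity of the $L^2$ norm then transfers the estimate to $\psi$. I expect this last passage to the limit --- in particular confirming that $\psi$ genuinely lies in $W^{1,2}_{\mathrm{loc}}$ so that the integration by parts is legitimate --- to be the only real point requiring care; the inequality itself is the two lines of computation above.
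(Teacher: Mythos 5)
Your argument is correct and complete: the smooth-case computation with the weight $\phi^2\psi$, integration by parts, and Young's inequality in the form $2ab\le\tfrac12 a^2+2b^2$ gives exactly the constant $4$, and your mollification step (local boundedness of a nonnegative subharmonic function, monotone convergence of $\psi\ast\chi_\varepsilon$ to $\psi$, uniform $L^2$ bounds on the gradients, and weak compactness identifying the limit with the distributional gradient) legitimately extends the estimate to general subharmonic $\psi$, showing in particular that $\psi\in W^{1,2}_{\mathrm{loc}}(\Omega)$ so the left-hand side of \eqref{eq:Caccioppoli} is meaningful. The paper states Proposition \ref{prop:Caccioppoli} as a well-known inequality without giving a proof, so there is nothing to compare against; your regularization step is in fact the relevant point for how the inequality is used later (e.g.\ with $\psi=|f|^{p/2}$ in the proof of Proposition \ref{prop:p-Schwarz}, which need not be $C^1$ at the zeros of $f$).
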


\begin{proof}[Proof of Proposition \ref{prop:p-Schwarz}]
Take a cut-off function $\chi:\mathbb R\rightarrow [0,1]$ such that $\chi|_{(-\infty,1/2]}=0$,  $\chi|_{[1,\infty)}=1$ and $\sup|\chi'|\le 2$.  Given $f\in A^p(\Omega)$,  set $\phi:=\chi(\delta/d) |f|^{p/2}$.  Then we have
\begin{eqnarray*}
\lambda_1(\Omega)\int_E |f|^p & \le & \lambda_1(\Omega)\int_\Omega \phi^2 \le  \int_\Omega |\nabla \phi|^2\\
& \le & \frac{8}{d^2} \int_{d/2\le \delta\le d} |f|^p+2\int_{\delta\ge d/2} \left|\nabla |f|^{p/2}\right|^2. 
\end{eqnarray*}
Since $|f|^{p/2}$ is (pluri)subharmonic on $\Omega$,  it follows from \eqref{eq:Caccioppoli} that
\begin{eqnarray*}
\int_{\delta\ge d/2} \left|\nabla |f|^{p/2}\right|^2 & \le & \int_{\Omega} \chi(2\delta/d)^2 \left|\nabla |f|^{p/2}\right|^2\le 4\int_\Omega |f|^p |\nabla \chi(2\delta/d)|^2\\
& \le & \frac{64}{d^{2}} \int_{d/4\le \delta\le d/2} |f|^p.
\end{eqnarray*}
Thus we have
\begin{eqnarray*}
\lambda_1(\Omega)\int_E |f|^p & \le &  \frac{8}{d^2} \int_{d/2\le \delta\le d} |f|^p + \frac{128}{d^{2}} \int_{d/4\le \delta\le d/2} |f|^p\\
& \le & \frac{128}{d^{2}}\int_{\Omega\backslash E} |f|^p,
\end{eqnarray*}
from which \eqref{eq:p-SchwarzUpper} immediately follows.
\end{proof}

\begin{proposition}\label{prop:p-Schwarz2}
Let $\Omega$ be a bounded  domain in $\mathbb C^n$ with $n\ge 2$ and $E$ a measurable relatively compact subset in $\Omega$.  Then for any $1<p<\infty$,
\begin{equation}\label{eq:p-SchwarzUpper3}
s_p(E,\Omega)\le \frac{C_{n,p}(\Omega)}{ C_{n,p}(\Omega)+ d^{p/ \tilde{p}}}
\end{equation}
where  $\tilde{p}$ is for the largest integer smaller than $p$,  and 
$$
C_{n,p}(\Omega)=\left(\frac{C_n|\Omega|^{\frac1{2n}}}{p/ \tilde{p}-1}\right)^{p/ \tilde{p}}.
$$
\end{proposition}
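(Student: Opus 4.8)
The plan is to reduce the statement to the range $1<p\le 2$ and then run a Sobolev‐and‐Caccioppoli estimate against a cut‑off of $|f|$. For the reduction, note that if $f\in A^p(\Omega)$ then $g:=f^{\tilde p}$ is holomorphic, $|g|^{p/\tilde p}=|f|^p$, and $\int_E|f|^p=\int_E|g|^{q}$, $\int_\Omega|f|^p=\int_\Omega|g|^{q}$ with $q:=p/\tilde p\in(1,2]$. Since $\tilde q=1$ and $q/\tilde q=q$, the constant attached to the exponent $q$ is $C_{n,q}(\Omega)=\bigl(C_n|\Omega|^{1/2n}/(q-1)\bigr)^{q}=C_{n,p}(\Omega)$, and therefore
$$\frac{\int_E|f|^p}{\int_\Omega|f|^p}=\frac{\int_E|g|^q}{\int_\Omega|g|^q}\le s_q(E,\Omega).$$
Thus it suffices to prove \eqref{eq:p-SchwarzUpper3} when $\tilde p=1$, i.e. $1<p\le 2$; note that then $1<p\le 2<2n$ since $n\ge 2$, which is exactly what makes the Sobolev conjugate $p^\ast:=2np/(2n-p)$ finite.

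So assume $1<p\le 2$; we must show $d^p\int_E|f|^p\le C_{n,p}(\Omega)\int_{\Omega\setminus E}|f|^p$. Fix a one‑variable cut‑off equal to $1$ on $[1,\infty)$ and to $0$ on $(-\infty,1/2]$, and put $\chi:=\chi(\delta/d)$: then $\chi\equiv1$ near $E$, $\{\chi>0\}\subset\{\delta>d/2\}$, and $|\nabla\chi|\lesssim d^{-1}$ is supported in the shell $\Sigma:=\{d/2\le\delta\le d\}\subset\Omega\setminus E$. With $\phi:=\chi|f|\in W^{1,p}_0(\Omega)$, the Gagliardo--Nirenberg--Sobolev inequality together with Hölder's inequality (applied on $\{\phi\ne0\}$, of measure $\le|\Omega|$) give
$$\int_E|f|^p\le\int_\Omega\phi^p\le|\Omega|^{\,p/2n}\,C_n\,\|\nabla\phi\|_{L^p(\Omega)}^p,$$
and $\|\nabla\phi\|_{L^p}^p\lesssim d^{-p}\int_\Sigma|f|^p+\int_\Omega\chi^p\,|\nabla|f||^p$. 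The first summand already has the right shape, so everything is reduced to bounding the gradient term $\int_\Omega\chi^p|\nabla|f||^p$ by a constant of order $(p-1)^{-p}$ times $d^{-p}\int_\Sigma|f|^p$ — a \emph{localized $L^p$ Caccioppoli inequality} for $|f|$.

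For the gradient term I would invoke the refined subharmonicity of powers of a holomorphic function: a direct computation from $\Delta|f|^2=2|\nabla f|^2$ and $|\nabla|f|^2|^2\le 4|f|^2|\nabla f|^2$ gives, for $1<p\le 2$, the pointwise bound $\Delta|f|^p\ge p(p-1)|f|^{p-2}|\nabla f|^2$. Testing this against $\psi^2$, where $\psi$ is a smooth cut‑off chosen so that $\chi=\psi^{2/p}$ (so $\psi\in C^1_0$ and $|\nabla\psi|\lesssim d^{-1}$ on $\Sigma$), integrating by parts, and using Cauchy--Schwarz yields a Caccioppoli‑type estimate $\int_\Omega\psi^2|f|^{p-2}|\nabla f|^2\le \tfrac{C}{(p-1)^2}\int_\Sigma|f|^p|\nabla\psi|^2$, whose constant carries the characteristic blow‑up as $p\downarrow 1$; here it is essential that one uses this sharpened inequality rather than the mere subharmonicity of $|f|^{p/2}$ exploited in Proposition \ref{prop:p-Schwarz}. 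Converting this into a bound on $\int_\Omega\chi^p|\nabla|f||^p$ and combining everything should give \eqref{eq:p-SchwarzUpper3} with $C_{n,p}(\Omega)$ of the stated form; in the endgame one also uses the inradius bound $d\lesssim|\Omega|^{1/2n}$ to settle the trivial case $\int_E|f|^p\le\int_{\Omega\setminus E}|f|^p$.

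The step I expect to be the main obstacle is exactly this last conversion. Combining the $L^2$‑type Caccioppoli inequality with Hölder's inequality in the obvious way only reproduces the weaker exponent $2$ of the Corollary to Proposition \ref{prop:p-Schwarz}, together with an unwanted global factor $\|f\|_{L^p(\Omega)}^{\,2-p}$ on the right‑hand side; extracting the sharp exponent $p/\tilde p$ — i.e. genuinely localizing the $L^p$ gradient estimate to the shell $\Sigma$ — is where the real difficulty lies, and it is also the reason the hypothesis $n\ge 2$ and the precise form of $C_{n,p}(\Omega)$ enter.
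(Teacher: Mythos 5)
Your reduction to the exponent $q=p/\tilde p\in(1,2]$ via $g=f^{\tilde p}$ is sound and parallels the paper, but the core of your argument has a genuine gap that you yourself flag: after setting $\phi=\chi|f|$ and applying Gagliardo--Nirenberg--Sobolev, you must bound $\int_\Omega\chi^p\,\bigl|\nabla|f|\bigr|^p$ by $C(p-1)^{-p}d^{-p}\int_\Sigma|f|^p$, and no proof of this is given. The weighted $L^2$ Caccioppoli estimate you propose, $\int\psi^2|f|^{p-2}|\nabla f|^2\lesssim(p-1)^{-2}\int_\Sigma|f|^p|\nabla\psi|^2$, is correct, but converting it to the $L^p$ gradient bound forces a H\"older step with exponents $2/p$ and $2/(2-p)$ that produces the factor $\bigl(\int_{\{\delta>d/2\}}|f|^p\bigr)^{(2-p)/2}$, which contains $\int_E|f|^p$ itself; absorbing that term by Young's inequality trades the exponent back down and yields a contribution of order $d^{-2}(p-1)^{-2}|\Omega|^{1/n}\int_{\Omega\setminus E}|f|^p$, i.e.\ exactly the exponent $2$ of Proposition \ref{prop:p-Schwarz}, not the claimed $d^{-p/\tilde p}$. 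So the scheme as written cannot reach \eqref{eq:p-SchwarzUpper3}; the statement is not proved.

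The idea you are missing is to keep the test function holomorphic rather than passing to the modulus: the paper takes $\phi:=\chi(\delta/d)\,f^{\tilde p}$ and observes that, since $f^{\tilde p}$ is holomorphic, every anti-holomorphic derivative $\partial\phi/\partial\bar z_j$ hits only the cut-off, hence is supported in the shell $\Sigma\subset\Omega\setminus E$ and bounded by $\lesssim d^{-1}|f|^{\tilde p}$ there --- no gradient of $f$ ever appears, so no Caccioppoli step is needed. The price is that the Sobolev inequality must be fed with $\bar\partial$-derivatives only; this is supplied by the Beurling--Ahlfors ($L^p$ Calder\'on--Zygmund) bound $\|\partial\phi/\partial\bar z_j\|_{L^p}$ versus $\|\partial\phi/\partial z_j\|_{L^p}$ (Theorem \ref{th:BA}, applied one variable at a time), which gives $\|\phi\|_{L^{q}(\mathbb C^n)}\le\frac{C_n|\Omega|^{1/2n}}{q-1}\sum_j\|\partial\phi/\partial\bar z_j\|_{L^{q}}$ for $1<q\le2$, $n\ge2$; this is where the blow-up $(p/\tilde p-1)^{-1}$ in $C_{n,p}(\Omega)$ and the hypothesis $n\ge2$ actually enter. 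Applying this with $q=p/\tilde p$ to $\phi=\chi f^{\tilde p}$ gives $\bigl(\int_E|f|^p\bigr)^{\tilde p/p}\le\frac{C_n|\Omega|^{1/2n}}{p/\tilde p-1}\cdot d^{-1}\bigl(\int_{\Omega\setminus E}|f|^p\bigr)^{\tilde p/p}$, which is \eqref{eq:p-SchwarzUpper3}. Without an input of this Calder\'on--Zygmund type (or some other mechanism controlling the full gradient of $\chi f^{\tilde p}$ by data on the shell alone), your route cannot produce the sharp exponent.
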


Since $p/ \tilde{p}\rightarrow 1+$ as $p\rightarrow \infty$,  it is natural to ask

\begin{problem}
Does there exist for every $\alpha>1$ a constant $C=C(n,p,\alpha,\Omega)>0$ such that
$$
s_p(E,\Omega)\le \frac{C}{C+d^\alpha}?
$$
\end{problem}

To prove Proposition \ref{prop:p-Schwarz2},  we need the following well-known consequence of the $L^p-$mapping property of the Beurling-Ahlfors operator.   

\begin{theorem}[cf.  \cite{Ahlfors}]\label{th:BA}
There exists a numerical constant $C_0\ge 1$ such that 
$$
\|\bar{\partial}\phi\|_{L^p(\mathbb C)} \le C_0(p^\ast-1)\|\partial \phi\|_{L^p(\mathbb C)},\ \ \ \forall\,\phi\in C^1_0(\mathbb C),
$$
where $1< p<\infty$ and $p^\ast=\max\{p,\frac1{p-1}\}$.
\end{theorem}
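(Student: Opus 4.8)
The plan is to deduce Theorem~\ref{th:BA} from a norm estimate for the Beurling--Ahlfors transform. Writing $\zeta=\xi_1+i\xi_2$ for the Fourier variable and taking Fourier transforms, one checks that for $\phi\in C^\infty_0(\mathbb C)$,
$$
\widehat{\bar\partial\phi}(\xi)=\frac{\zeta}{\bar\zeta}\,\widehat{\partial\phi}(\xi),
$$
so that $\bar\partial\phi=S(\partial\phi)$, where $S$ is the Fourier multiplier operator with symbol $m(\zeta)=\zeta/\bar\zeta$. Since $|m|\equiv 1$, Plancherel's theorem gives $\|S\|_{L^2(\mathbb C)\to L^2(\mathbb C)}=1$; and in physical space $S$ is a principal-value convolution operator whose kernel is homogeneous of degree $-2$, smooth away from the origin, and has mean zero over every circle centred at $0$, i.e. a Calder\'on--Zygmund operator (a fixed linear combination of second-order Riesz transforms). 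The whole theorem then follows once we show
$$
\|S\|_{L^p(\mathbb C)\to L^p(\mathbb C)}\le C_0(p^\ast-1),\qquad 1<p<\infty,
$$
with $C_0$ a numerical constant: indeed $\|\bar\partial\phi\|_{L^p(\mathbb C)}=\|S(\partial\phi)\|_{L^p(\mathbb C)}\le C_0(p^\ast-1)\|\partial\phi\|_{L^p(\mathbb C)}$, and the general case $\phi\in C^1_0(\mathbb C)$ is obtained by approximation in $C^1$.

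To prove the displayed bound I would track constants through the standard Calder\'on--Zygmund theory. The Calder\'on--Zygmund decomposition yields a weak type $(1,1)$ estimate for $S$ with an absolute constant; Marcinkiewicz interpolation against the $L^2$ bound gives $\|S\|_{p\to p}\lesssim (p-1)^{-1}$ for $1<p\le \tfrac32$, and Riesz--Thorin interpolation between the exponents $\tfrac32$ and $2$ extends this to $1<p\le 2$ (near $p=2$ the bound is just $\lesssim 1$). The Hermitian adjoint of $S$ is the conjugate Beurling transform $\bar S$, with symbol $\bar\zeta/\zeta$, which is a Calder\'on--Zygmund operator of exactly the same type; hence by duality $\|S\|_{p\to p}=\|\bar S\|_{p'\to p'}\lesssim (p'-1)^{-1}=p-1$ for $2\le p<\infty$, where $\tfrac1p+\tfrac1{p'}=1$. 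Combining the two ranges, $\|S\|_{p\to p}\le C_0\max\{p,(p-1)^{-1}\}=C_0\,p^\ast$; since $p^\ast\ge 2$ always, one has $p^\ast\le 2(p^\ast-1)$, so $\|S\|_{p\to p}\le 2C_0(p^\ast-1)$, which is the displayed estimate after renaming the constant.

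The main obstacle is the constant tracking in the second paragraph: one must confirm that the weak type $(1,1)$ bound for $S$ carries a numerical constant, and that neither the Marcinkiewicz/Riesz--Thorin interpolation nor the duality step spoils the growth (linear in $p$ as $p\to\infty$, and of order $(p-1)^{-1}$ as $p\to 1$). All of this is classical; see \cite{Ahlfors}. I note in passing that the optimal constant is conjectured by Iwaniec to be exactly $p^\ast-1$, but only the weaker bound above is needed for the application to Proposition~\ref{prop:p-Schwarz2}.
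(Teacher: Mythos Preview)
The paper does not prove this theorem; it is stated with the attribution ``cf.\ \cite{Ahlfors}'' and used as a black box in the proof of Proposition~\ref{prop:p-Schwarz2}. Your sketch is correct and is precisely the standard route: identify $\bar\partial\phi=S(\partial\phi)$ with $S$ the Beurling--Ahlfors transform, then obtain the $L^p$ bound $\|S\|_{p\to p}\le C_0(p^\ast-1)$ by combining the unitarity on $L^2$, the weak type $(1,1)$ estimate from Calder\'on--Zygmund theory, Marcinkiewicz interpolation near $p=1$, and duality for $p>2$. The constant tracking you outline (in particular splitting $(1,2]$ into $(1,3/2]$ and $[3/2,2]$ to avoid the Marcinkiewicz blow-up near the strong endpoint, and the observation $p^\ast\le 2(p^\ast-1)$) is sound. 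This is the argument underlying the result in \cite{Ahlfors}, so there is nothing to compare against the paper itself.
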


\begin{proposition}
Set $C_p^\ast=C_0 (p^\ast-1)$. Then for any $1\le j\le n$,
\begin{equation}\label{eq:dbarVSd}
\|\partial \phi/\partial \bar{z}_j\|_{L^p(\mathbb C^n)} \le C_p^\ast \|\partial \phi/\partial {z}_j\|_{L^p(\mathbb C^n)},\ \ \ \forall\,\phi\in C^1_0(\mathbb C^n).
\end{equation}
\end{proposition}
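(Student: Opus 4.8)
The plan is to deduce the $n$-dimensional inequality \eqref{eq:dbarVSd} from the one-variable estimate of Theorem \ref{th:BA} by a routine slicing argument based on Fubini's theorem. Fix $1\le j\le n$ and $\phi\in C^1_0(\mathbb C^n)$, and write $z=(z',z_j,z'')$ with $z'=(z_1,\dots,z_{j-1})$ and $z''=(z_{j+1},\dots,z_n)$. The key observation is that both $\partial/\partial z_j$ and $\partial/\partial\bar z_j$ differentiate only in the single complex variable $z_j$, so the desired estimate decouples over the remaining variables and can be obtained fiberwise.

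First I would note that for each fixed $(z',z'')\in\mathbb C^{n-1}$ the slice function $w\mapsto\phi(z',w,z'')$ belongs to $C^1_0(\mathbb C)$: it is $C^1$ because $\phi$ is, and it has compact support in $\mathbb C$ because $\phi$ has compact support in $\mathbb C^n$. Applying Theorem \ref{th:BA} to this slice, with $p$ fixed ($1<p<\infty$), gives
\[
\int_{\mathbb C}\left|\frac{\partial\phi}{\partial\bar z_j}(z',w,z'')\right|^p dV(w)\le (C_p^\ast)^p\int_{\mathbb C}\left|\frac{\partial\phi}{\partial z_j}(z',w,z'')\right|^p dV(w),
\]
where $C_p^\ast=C_0(p^\ast-1)$ and $dV$ denotes Lebesgue measure on $\mathbb C$. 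Then I would integrate both sides over $(z',z'')\in\mathbb C^{n-1}$ and invoke Fubini's theorem — legitimate since $\partial\phi/\partial\bar z_j$ and $\partial\phi/\partial z_j$ are continuous with compact support, hence in $L^p(\mathbb C^n)$ — to obtain
\[
\int_{\mathbb C^n}\left|\frac{\partial\phi}{\partial\bar z_j}\right|^p\le (C_p^\ast)^p\int_{\mathbb C^n}\left|\frac{\partial\phi}{\partial z_j}\right|^p.
\]
Taking $p$-th roots yields \eqref{eq:dbarVSd}, with exactly the same constant as in the one-dimensional Beurling-Ahlfors bound.

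There is essentially no serious obstacle here; the only points requiring a word of care are the verification that the restriction of $\phi$ to a complex line parallel to the $z_j$-axis is again a compactly supported $C^1$ function, so that Theorem \ref{th:BA} genuinely applies on each fiber, and the justification of the interchange of integration, which is immediate from the compact support and continuity of the integrands. No loss is incurred in passing from $\mathbb C$ to $\mathbb C^n$.
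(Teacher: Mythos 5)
Your argument is correct and is essentially identical to the paper's proof: the paper also applies the one-variable Beurling--Ahlfors estimate of Theorem \ref{th:BA} to the slice $w\mapsto\phi(\cdot,z')$ in the $j$-th variable and then integrates over the remaining variables via Fubini, preserving the constant $C_p^\ast$. No gap.
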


\begin{proof}
Since $\phi(\cdot,z')\in C^1_0(\mathbb C)$ for given $z':=(z_2,\cdots,z_n)$,  it follows from Theorem \ref{th:BA} that
$$
\int_{\mathbb C} |\partial \phi/\partial \bar{z}_1(\cdot,z')|^p \le (C_p^\ast)^p \int_{\mathbb C} |\partial \phi/\partial {z}_1(\cdot,z')|^p,
$$
so that 
$$
\int_{\mathbb C^n} |\partial \phi/\partial \bar{z}_1|^p\le (C_p^\ast)^p \int_{\mathbb C^n} |\partial \phi/\partial {z}_1|^p.
$$
Analogously,  we have 
$$
\int_{\mathbb C^n} |\partial \phi/\partial \bar{z}_j|^p\le (C_p^\ast)^p \int_{\mathbb C^n} |\partial \phi/\partial {z}_j|^p,\ \ \ 
j\ge 2.
$$
\end{proof}

Replace $\phi$ by $\bar{\phi}$ in \eqref{eq:dbarVSd},  we have
\begin{equation}\label{eq:dbarVSd2}
\|\partial \phi/\partial {z}_j\|_{L^p(\mathbb C^n)} \le C_p^\ast \|\partial \phi/\partial \bar{z}_j\|_{L^p(\mathbb C^n)},\ \ \ \forall\,1\le j\le n,\,\forall\,\phi\in C^1_0(\mathbb C^n).
\end{equation}
With $z_j=x_j+iy_j$,  we have 
$$
\partial \phi/\partial x_j=\partial\phi/\partial z_j+\partial\phi/\partial \bar{z}_j,\ \ \ \partial \phi/\partial y_j=i\partial\phi/\partial z_j-i\partial\phi/\partial \bar{z}_j.
$$
Thus the Minkowski inequality gives
\begin{equation}\label{eq:dVSdbar}
\| \nabla \phi\|_{L^p(\mathbb C^n)} \le 2(C_p^\ast+1) \sum_{j=1}^n \|\partial \phi/\partial\bar{z}_j\|_{L^p(\mathbb C^n)},\ \ \ \forall\,\phi\in C^1_0(\mathbb C^n).
\end{equation}
Recall the following Sobolev inequality:
\begin{equation}\label{eq:Sobolev}
\|\phi\|_{L^{\frac{2n}{2n-1}}(\mathbb C^n)}\le C_n \| \nabla \phi\|_{L^1(\mathbb C^n)},\ \ \ \forall\,\phi\in C^1_0(\mathbb C^n).
\end{equation}
Replace $\phi$ by $|\phi|^q$ with $q=\frac{2n-1}{2n-p}p$ into \eqref{eq:Sobolev},  we obtain
\begin{eqnarray*}
\left(\int_{\mathbb C^n} |\phi|^{\frac{2np}{2n-p}}\right)^{\frac{2n-1}{2n}} & \le & C_n q \int_{\mathbb C^n} |\phi|^{q-1}|\nabla \phi|\\
& \le & C_n q \| \nabla \phi\|_{L^p(\mathbb C^n)} \left(\int_{\mathbb C^n} |\phi|^{\frac{2np}{2n-p}}\right)^{1-\frac1p},
\end{eqnarray*}
i.e.,
$$
\|\phi\|_{L^{\frac{2np}{2n-p}}(\mathbb C^n)} \le C_n \frac{2n-1}{2n-p}p \| \nabla \phi\|_{L^p(\mathbb C^n)}.
$$
This combined with \eqref{eq:dVSdbar} gives
\begin{equation}\label{eq:dVSdbar2}
\|\phi\|_{L^{\frac{2np}{2n-p}}(\mathbb C^n)}\le \frac{C_n}{p-1}  \sum_{j=1}^n \|\partial \phi/\partial\bar{z}_j\|_{L^p(\mathbb C^n)},\ \ \ \forall\,\phi\in C^1_0(\mathbb C^n),
\end{equation}
when $1<p\le 2$ and $n>1$.  Here and in what follows we use the same symbol $C_n$ to denote all positive constants depending only on $n$.  

By \eqref{eq:dVSdbar2} and H\"older's inequality,  we have
\begin{equation}\label{eq:dVSdbar3}
\|\phi\|_{L^{p}(\mathbb C^n)}\le \frac{C_n|\Omega|^{\frac1{2n}}}{p-1}  \sum_{j=1}^n \|\partial \phi/\partial\bar{z}_j\|_{L^p(\mathbb C^n)},\ \ \ \forall\,\phi\in C^1_0(\mathbb C^n),
\end{equation}

\begin{proof}[Proof of Proposition \ref{prop:p-Schwarz2}]
  Take a cut-off function $\chi:\mathbb R\rightarrow [0,1]$ such that $\chi|_{(-\infty,1/2]}=0$,  $\chi|_{[1,\infty)}=1$ and $\sup|\chi'|\le 2$.  Given $f\in A^p(\Omega)$ with $1<p<\infty$,  set $\phi:=\chi(\delta/d) f^{\tilde{p}}$.  Since $1< p/\tilde{p}\le 2$,   \eqref{eq:dVSdbar3} implies   
\begin{eqnarray*}
\left(\int_E |f|^p\right)^{\tilde{p} /p}  \le  \|\phi\|_{L^{p/ \tilde{p}}(\Omega)}
& \le & \frac{C_n|\Omega|^{\frac1{2n}}}{p/ \tilde{p}-1}  \sum_{j=1}^n \|\partial \phi/\partial\bar{z}_j\|_{L^{p/ \tilde{p}}(\Omega)}\\
& \le & \frac{C_n|\Omega|^{\frac1{2n}}}{p/ \tilde{p}-1}\cdot \frac1d \left(\int_{\Omega\backslash E} |f|^p\right)^{\tilde{p} /p}, 
\end{eqnarray*}  
from which the assertion immediately follows.
\end{proof}

Proposition \ref{prop:p-Schwarz} and Proposition \ref{prop:p-Schwarz2} yield the following

\begin{corollary}\label{cor:p-SchwarzLower}
Let $\Omega$ be a bounded domain in $\mathbb C^n$.  Set $\Omega_\varepsilon:=\{z\in \Omega:\delta(z)>\varepsilon\}$.  Then
\begin{enumerate}
\item[$(1)$]  $1-s_p(\Omega_\varepsilon,\Omega)\gtrsim \varepsilon^2$ for\/ $0<p<\infty$;
\item[$(2)$]  $1-s_p(\Omega_\varepsilon,\Omega)\gtrsim \varepsilon^{p/\tilde{p}}$ for\/ $1<p<\infty$.
\end{enumerate}
Here the implicit constants depend only on $n,p,\Omega$.
\end{corollary}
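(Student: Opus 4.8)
The plan is to deduce both inequalities directly from Proposition \ref{prop:p-Schwarz} and Proposition \ref{prop:p-Schwarz2}, applied with the test set $E=\Omega_\varepsilon$. First I would check that $\Omega_\varepsilon$ is an admissible choice: for all sufficiently small $\varepsilon>0$ the set $\Omega_\varepsilon=\{z\in\Omega:\delta(z)>\varepsilon\}$ is nonempty, measurable, and relatively compact in $\Omega$ (here we use that $\Omega$ is bounded), and since $\delta>\varepsilon$ on $\Omega_\varepsilon$ we have $d(\Omega_\varepsilon,\partial\Omega)=\inf_{\Omega_\varepsilon}\delta\ge\varepsilon$.

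For part $(1)$, I would substitute $E=\Omega_\varepsilon$ into Proposition \ref{prop:p-Schwarz} and use that $t\mapsto A/(A+t)$ is decreasing, where $A:=136/\lambda_1(\Omega)$: combined with $d(\Omega_\varepsilon,\partial\Omega)^2\ge\varepsilon^2$ this gives $s_p(\Omega_\varepsilon,\Omega)\le A/(A+\varepsilon^2)$, hence $1-s_p(\Omega_\varepsilon,\Omega)\ge \varepsilon^2/(A+\varepsilon^2)$. Restricting to $\varepsilon\le 1$, the denominator is bounded by a constant depending only on $\Omega$, so $1-s_p(\Omega_\varepsilon,\Omega)\gtrsim\varepsilon^2$. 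Part $(2)$ is word for word the same with Proposition \ref{prop:p-Schwarz2} in place of Proposition \ref{prop:p-Schwarz}: writing $C:=C_{n,p}(\Omega)$ one gets $s_p(\Omega_\varepsilon,\Omega)\le C/(C+\varepsilon^{p/\tilde{p}})$ from $d(\Omega_\varepsilon,\partial\Omega)^{p/\tilde{p}}\ge\varepsilon^{p/\tilde{p}}$, whence $1-s_p(\Omega_\varepsilon,\Omega)\ge \varepsilon^{p/\tilde{p}}/(C+\varepsilon^{p/\tilde{p}})\gtrsim\varepsilon^{p/\tilde{p}}$ for small $\varepsilon$, the implicit constant depending only on $n,p,\Omega$.

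I do not expect a substantial obstacle: the only two points needing a line of care are the legitimacy of $\Omega_\varepsilon$ as a test set $E$ in the two propositions and the elementary monotonicity manipulation that turns an upper bound on $s_p$ into a lower bound on $1-s_p$. If one wishes part $(2)$ to hold also for $n=1$, one observes that the proof of Proposition \ref{prop:p-Schwarz2} carries over verbatim there as well, since $1<p/\tilde{p}\le 2$ always and the corresponding one-dimensional Sobolev and Beurling--Ahlfors estimates are available; otherwise one simply keeps the dimension restriction $n\ge 2$ inherited from Proposition \ref{prop:p-Schwarz2}.
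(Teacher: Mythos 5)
Your proposal is correct and is precisely the paper's own derivation: the corollary is stated as an immediate consequence of Propositions \ref{prop:p-Schwarz} and \ref{prop:p-Schwarz2}, applied with $E=\Omega_\varepsilon$ (measurable, relatively compact, with $d(\Omega_\varepsilon,\partial\Omega)\ge\varepsilon$) and then the elementary monotonicity step turning the upper bound on $s_p$ into the lower bound on $1-s_p$. The only point to treat with care is your parenthetical claim that part $(2)$ extends verbatim to $n=1$: the Sobolev step \eqref{eq:dVSdbar2} is borderline when $p/\tilde{p}=2$ (i.e.\ $p=2$) in one complex variable, which is why the paper keeps the restriction $n\ge 2$ in Proposition \ref{prop:p-Schwarz2}.
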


On the other hand,  we have

\begin{proposition}\label{prop:p-SchwarzUpper}
Let $\Omega$ be a bounded domain in $\mathbb C^n$ with $\alpha(\Omega)>0$.  Then for any $1\le p\le 2$ and $\alpha<\alpha(\Omega)$ there is a constant $C>0$ such that $1-s_p(\Omega_\varepsilon,\Omega)\le C \varepsilon^{\alpha}$. 
\end{proposition}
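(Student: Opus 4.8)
Here the point is that an \emph{upper} bound on $1-s_p(\Omega_\varepsilon,\Omega)$ is equivalent to a \emph{lower} bound on the supremum defining $s_p$, so it suffices to exhibit one good test function rather than a uniform estimate over all of $A^p(\Omega)$. The plan is therefore to fix, once and for all, a point $z_0\in\Omega$ and to use $f_0:=m_p(\cdot,z_0)\in A^p(\Omega)$, which does not depend on $\varepsilon$ and satisfies $\int_\Omega|f_0|^p=\|m_p(\cdot,z_0)\|_p^p=m_p(z_0)^p>0$. Since $\Omega\setminus\Omega_\varepsilon=\{\delta\le\varepsilon\}$, the definition \eqref{eq:p-SchwarzConst} gives
\[
s_p(\Omega_\varepsilon,\Omega)\ \ge\ \frac{\int_{\Omega_\varepsilon}|f_0|^p}{\int_\Omega|f_0|^p}\ =\ 1-\frac{1}{m_p(z_0)^p}\int_{\delta\le\varepsilon}|m_p(\cdot,z_0)|^p,
\]
hence
\[
1-s_p(\Omega_\varepsilon,\Omega)\ \le\ \frac{1}{m_p(z_0)^p}\int_{\delta\le\varepsilon}|m_p(\cdot,z_0)|^p .
\]
Thus the whole statement reduces to a boundary-decay estimate for the single function $m_p(\cdot,z_0)$.

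For that estimate I would simply quote what is already proved inside the proof of Theorem~\ref{th:Integ}: for $1\le p\le 2$, every $0<r<1$ and every $\alpha<\alpha(\Omega)$, one has
\[
\int_{\delta\le\varepsilon}|m_p(\cdot,z_0)|^p\ \lesssim\ \varepsilon^{r\alpha}\qquad(\varepsilon\to 0+),
\]
which is precisely \eqref{eq:B-Integ_1}, with implicit constant depending only on $n,r,\alpha,z_0$. (Recall this is obtained by combining Theorem~\ref{th:p<2}, i.e.\ $K_p(\cdot,z_0)=K_p(z_0)m_p(\cdot,z_0)$, with Proposition~\ref{prop:BergmanIntegral} applied to the weight $\varphi=(2-p)\log|m_p(\cdot,z_0)|$ and $a=-\rho(z_0)/2$, and then estimating $K_{\Omega_a,\varphi}(z_0)$ by the mean value inequality.) Feeding this back into the displayed inequality gives $1-s_p(\Omega_\varepsilon,\Omega)\lesssim\varepsilon^{r\alpha}$ for all sufficiently small $\varepsilon$.

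To match the exponent claimed, given $\alpha<\alpha(\Omega)$ as in the statement I would pick $\alpha'$ with $\alpha<\alpha'<\alpha(\Omega)$ and then $r\in(\alpha/\alpha',1)$, so that $r\alpha'>\alpha$; the previous step applied with $(\alpha',r)$ yields $1-s_p(\Omega_\varepsilon,\Omega)\le C\varepsilon^{r\alpha'}\le C\varepsilon^{\alpha}$ for $\varepsilon\le\varepsilon_0$. For $\varepsilon>\varepsilon_0$ one uses the trivial bound $1-s_p(\Omega_\varepsilon,\Omega)\le 1$ together with boundedness of $\Omega$ (so $\Omega_\varepsilon=\varnothing$, and hence $s_p(\Omega_\varepsilon,\Omega)=1$, once $\varepsilon$ exceeds the inradius, while on the remaining compact range $\varepsilon^{\alpha}\ge\varepsilon_0^{\alpha}$), which lets one enlarge $C$ to absorb that range.

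I do not expect a serious obstacle here: all the analytic content — the $L^2$ boundary decay of the weighted Bergman kernel and the reduction to it via Theorem~\ref{th:p<2} — is already carried out in Theorem~\ref{th:Integ}, and the present proposition is just that computation re-read as a statement about one fixed competitor. The only things needing care are (i) the conceptual point, stressed above, that for an upper bound on $1-s_p$ a single test function is enough, and (ii) the harmless bookkeeping with the exponents $r,\alpha,\alpha'$; one should also note there is no circularity, since Proposition~\ref{prop:p-SchwarzUpper} is not invoked in the proof of Theorem~\ref{th:Integ}, so borrowing \eqref{eq:B-Integ_1} is legitimate.
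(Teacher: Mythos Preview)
Your argument is correct and essentially identical to the paper's: the paper states a short lemma to the effect that $\int_{\Omega\setminus\Omega_\varepsilon}|f|^p\ge[1-s_p(\Omega_\varepsilon,\Omega)]\int_\Omega|f|^p$ for all $f\in A^p(\Omega)$, then substitutes $f=K_p(\cdot,z)$ and invokes \eqref{eq:B-Integ_1}; since $K_p(\cdot,z)=K_p(z)\,m_p(\cdot,z)$, this is exactly your single-test-function computation with $f_0=m_p(\cdot,z_0)$. One small slip in your wrap-up: when $\Omega_\varepsilon=\varnothing$ one has $s_p(\varnothing,\Omega)=0$, not $1$; this does not affect the conclusion, since for $\varepsilon$ exceeding the inradius one still has $1-s_p(\Omega_\varepsilon,\Omega)=1\le(\text{inradius})^{-\alpha}\varepsilon^{\alpha}$, which is absorbed into $C$ just as on the compact range $(\varepsilon_0,\text{inradius}]$.
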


The proof follows directly from \eqref{eq:B-Integ_1} and the following

\begin{lemma}
Let $\Omega$ be a bounded domain in $\mathbb C^n$.   Then
$$
\int_{\Omega\backslash \Omega_\varepsilon} |K_p(\cdot,z)|^p\ge \left[1-s_p(\Omega_\varepsilon,\Omega)\right]K_p(z)^{p-1},\ \ \ \forall\,z\in \Omega.
$$
\end{lemma}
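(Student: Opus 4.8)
The plan is to test the defining inequality of the $p-$Schwarz content against the off-diagonal kernel $f=K_p(\cdot,z)$ itself, after first recording its $L^p$ normalization.

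First I would observe that, directly from the definitions $K_p(\cdot,z)=K_p(z)\,m_p(\cdot,z)$, $\|m_p(\cdot,z)\|_p=m_p(z)$ and $K_p(z)=m_p(z)^{-p}$, one has
$$
\int_\Omega |K_p(\cdot,z)|^p=K_p(z)^p\int_\Omega|m_p(\cdot,z)|^p=K_p(z)^p\,m_p(z)^p=K_p(z)^{p-1}.
$$
In particular $K_p(\cdot,z)\in A^p(\Omega)$, and it is not identically zero since $K_p(z,z)=K_p(z)>0$; hence it is an admissible competitor in the definition \eqref{eq:p-SchwarzConst} of $s_p(\Omega_\varepsilon,\Omega)$.

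Applying that definition with $E=\Omega_\varepsilon$ to the function $f=K_p(\cdot,z)$ gives
$$
\int_{\Omega_\varepsilon}|K_p(\cdot,z)|^p\le s_p(\Omega_\varepsilon,\Omega)\int_\Omega|K_p(\cdot,z)|^p=s_p(\Omega_\varepsilon,\Omega)\,K_p(z)^{p-1},
$$
and subtracting this from the identity $\int_\Omega|K_p(\cdot,z)|^p=K_p(z)^{p-1}$ yields
$$
\int_{\Omega\setminus\Omega_\varepsilon}|K_p(\cdot,z)|^p=K_p(z)^{p-1}-\int_{\Omega_\varepsilon}|K_p(\cdot,z)|^p\ge\bigl(1-s_p(\Omega_\varepsilon,\Omega)\bigr)K_p(z)^{p-1},
$$
which is the assertion.

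There is essentially no obstacle here: the only point worth double-checking is that $K_p(\cdot,z)$ is genuinely a nonzero element of $A^p(\Omega)$, so that it may legitimately be used in \eqref{eq:p-SchwarzConst}, and this was verified above. Combining this lemma with the interior integrability estimate \eqref{eq:B-Integ_1} then gives Proposition \ref{prop:p-SchwarzUpper} at once.
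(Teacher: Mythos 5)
Your proof is correct and is essentially the paper's argument: the paper states the inequality $\int_{\Omega\setminus\Omega_\varepsilon}|f|^p\ge[1-s_p(\Omega_\varepsilon,\Omega)]\int_\Omega|f|^p$ for all $f\in A^p(\Omega)$ and then substitutes $f=K_p(\cdot,z)$, using the same normalization $\int_\Omega|K_p(\cdot,z)|^p=K_p(z)^{p-1}$ that you verified. The only difference is that you make the normalization and the admissibility of $K_p(\cdot,z)$ explicit, which the paper leaves implicit.
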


\begin{proof}
For every $f\in A^p(\Omega)$,  we have
$$
\int_\Omega |f|^p -\int_{\Omega\backslash \Omega_\varepsilon} |f|^p \le s_p(\Omega_\varepsilon,\Omega)\int_\Omega |f|^p,
$$
i.e.,  
$$
\int_{\Omega\backslash \Omega_\varepsilon} |f|^p\ge \left[1-s_p(\Omega_\varepsilon,\Omega)\right]\int_\Omega |f|^p.
$$
It suffices to substitute $f=K_p(\cdot,z)$ into the previous inequality.  
\end{proof}

\begin{definition}
Let $\Omega$ be a bounded domain in $\mathbb C^n$.  The (one-sided) upper and lower $p-$Schwarz dimensions of $\partial \Omega$ are given by
\begin{eqnarray*}
\overline{\mathrm{dim}}_{p}(\partial \Omega) & := & 2n-\varliminf_{\varepsilon\rightarrow 0+} \frac{\log(1-s_p(\Omega_\varepsilon,\Omega))}{\log\varepsilon}\\
\underline{\mathrm{dim}}_{p}(\partial \Omega) & := & 2n-\varlimsup_{\varepsilon\rightarrow 0+} \frac{\log(1-s_p(\Omega_\varepsilon,\Omega))}{\log\varepsilon}
\end{eqnarray*}
respectively.  
\end{definition}

Corollary \ref{cor:p-SchwarzLower} and Proposition \ref{prop:p-SchwarzUpper} yield the following

\begin{corollary}
\begin{enumerate}
\item[$(1)$] $\underline{\mathrm{dim}}_{p}(\partial \Omega)\ge 2n-2$,   $\forall\,  0<p<\infty$;
\item[$(2)$] $\underline{\mathrm{dim}}_{p}(\partial \Omega)\ge 2n-p/\tilde{p}$,  $\forall\,1<p<\infty$;
\item[$(3)$] $\overline{\mathrm{dim}}_{p}(\partial \Omega)\le 2n-\alpha(\Omega)$,  $\forall\,1\le p\le 2$.
\end{enumerate}
\end{corollary}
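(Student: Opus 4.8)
This corollary is a purely formal consequence of the definition of the $p-$Schwarz dimensions together with the two-sided control of $1-s_p(\Omega_\varepsilon,\Omega)$ furnished by Corollary \ref{cor:p-SchwarzLower} and Proposition \ref{prop:p-SchwarzUpper}, so the plan is simply to unwind the $\varliminf/\varlimsup$ in the definition. The only bookkeeping point is that $\log\varepsilon<0$ for $\varepsilon\in(0,1)$, so dividing an inequality by $\log\varepsilon$ reverses it; note also that $|\Omega_\varepsilon|>0$ forces $s_p(\Omega_\varepsilon,\Omega)>0$, hence $\log\bigl(1-s_p(\Omega_\varepsilon,\Omega)\bigr)<0$, so the quotients appearing below are positive and well defined for small $\varepsilon$.

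For $(1)$ and $(2)$ I would invoke Corollary \ref{cor:p-SchwarzLower}, which produces a constant $c=c(n,p,\Omega)>0$ with
$$
1-s_p(\Omega_\varepsilon,\Omega)\ \ge\ c\,\varepsilon^{\beta},\qquad 0<\varepsilon\ll 1,
$$
where $\beta=2$ in case $(1)$ (for every $0<p<\infty$) and $\beta=p/\tilde{p}$ in case $(2)$ (for every $1<p<\infty$). Taking logarithms and dividing by $\log\varepsilon<0$ gives
$$
\frac{\log\bigl(1-s_p(\Omega_\varepsilon,\Omega)\bigr)}{\log\varepsilon}\ \le\ \beta+\frac{\log c}{\log\varepsilon},
$$
and since $\log c/\log\varepsilon\to 0$ as $\varepsilon\to 0+$, passing to $\varlimsup$ yields $\varlimsup_{\varepsilon\to0+}\log\bigl(1-s_p(\Omega_\varepsilon,\Omega)\bigr)/\log\varepsilon\le\beta$. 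Hence $\underline{\mathrm{dim}}_p(\partial\Omega)=2n-\varlimsup_{\varepsilon\to0+}\log\bigl(1-s_p(\Omega_\varepsilon,\Omega)\bigr)/\log\varepsilon\ge 2n-\beta$, which is exactly $(1)$ resp.\ $(2)$.

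For $(3)$ I would use Proposition \ref{prop:p-SchwarzUpper}: for each $p\in[1,2]$ and each $\alpha<\alpha(\Omega)$ there is $C=C(\alpha,n,p,\Omega)>0$ with $1-s_p(\Omega_\varepsilon,\Omega)\le C\,\varepsilon^{\alpha}$ for small $\varepsilon$. Taking logarithms and dividing by $\log\varepsilon<0$ now gives
$$
\frac{\log\bigl(1-s_p(\Omega_\varepsilon,\Omega)\bigr)}{\log\varepsilon}\ \ge\ \alpha+\frac{\log C}{\log\varepsilon},
$$
so $\varliminf_{\varepsilon\to0+}\log\bigl(1-s_p(\Omega_\varepsilon,\Omega)\bigr)/\log\varepsilon\ge\alpha$. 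Since $\alpha<\alpha(\Omega)$ was arbitrary we get $\varliminf_{\varepsilon\to0+}\log\bigl(1-s_p(\Omega_\varepsilon,\Omega)\bigr)/\log\varepsilon\ge\alpha(\Omega)$, whence $\overline{\mathrm{dim}}_p(\partial\Omega)=2n-\varliminf_{\varepsilon\to0+}\log\bigl(1-s_p(\Omega_\varepsilon,\Omega)\bigr)/\log\varepsilon\le 2n-\alpha(\Omega)$.

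There is no genuine obstacle here; the only things to keep track of are the sign of $\log\varepsilon$ when dividing and, in $(3)$, the fact that one must pass to the supremum over $\alpha<\alpha(\Omega)$ rather than set $\alpha=\alpha(\Omega)$ directly, since the constant $C$ in Proposition \ref{prop:p-SchwarzUpper} degenerates as $\alpha\uparrow\alpha(\Omega)$.
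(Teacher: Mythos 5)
Your proposal is correct and matches the paper's intent: the paper states this corollary as an immediate consequence of Corollary \ref{cor:p-SchwarzLower} and Proposition \ref{prop:p-SchwarzUpper}, and your argument is exactly the routine unwinding of the definitions (taking logarithms, dividing by $\log\varepsilon<0$, and in $(3)$ letting $\alpha\uparrow\alpha(\Omega)$) that this entails. No gaps.
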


Recall that the (one-sided) upper and lower Minkowski dimensions of $\partial \Omega$ are given by
\begin{eqnarray*}
\overline{\mathrm{dim}}_{\mathcal M}(\partial \Omega) & := & 2n-\varliminf_{\varepsilon\rightarrow 0+} \frac{\log(|\Omega|-|\Omega_\varepsilon|)}{\log\varepsilon}\\
\underline{\mathrm{dim}}_{\mathcal M}(\partial \Omega) & := & 2n-\varlimsup_{\varepsilon\rightarrow 0+} \frac{\log(|\Omega|-|\Omega_\varepsilon|)}{\log\varepsilon}
\end{eqnarray*}
respectively.  Since $s_p(\Omega_\varepsilon,\Omega)\ge |\Omega_\varepsilon|/|\Omega|$,  we immediately get the following

\begin{corollary}
$$
\overline{\mathrm{dim}}_{p}(\partial \Omega)\le \overline{\mathrm{dim}}_{\mathcal M}(\partial \Omega),\ \ \ \forall\,0<p<\infty. 
$$
\end{corollary}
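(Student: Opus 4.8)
The plan is to deduce the estimate directly from the elementary lower bound $s_p(E,\Omega)\ge |E|/|\Omega|$ (property $(e)$ of $s_p$ listed at the beginning of this section), applied to the set $E=\Omega_\varepsilon$. First I would record that, for every $0<p<\infty$ and every $\varepsilon>0$ with $\Omega_\varepsilon\neq\emptyset$,
$$
1-s_p(\Omega_\varepsilon,\Omega)\le 1-\frac{|\Omega_\varepsilon|}{|\Omega|}=\frac{|\Omega|-|\Omega_\varepsilon|}{|\Omega|}.
$$

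Next I would take logarithms of both sides and divide by $\log\varepsilon$. The key point is that $\log\varepsilon<0$ for all sufficiently small $\varepsilon$, so this division \emph{reverses} the inequality, giving
$$
\frac{\log\big(1-s_p(\Omega_\varepsilon,\Omega)\big)}{\log\varepsilon}\ge \frac{\log\big(|\Omega|-|\Omega_\varepsilon|\big)}{\log\varepsilon}-\frac{\log|\Omega|}{\log\varepsilon}.
$$
Since $\log|\Omega|/\log\varepsilon\to 0$ as $\varepsilon\to 0+$, and since a pointwise inequality between two functions of $\varepsilon$ is inherited by their lower limits, passing to $\varliminf_{\varepsilon\to 0+}$ yields
$$
\varliminf_{\varepsilon\rightarrow 0+}\frac{\log\big(1-s_p(\Omega_\varepsilon,\Omega)\big)}{\log\varepsilon}\ge \varliminf_{\varepsilon\rightarrow 0+}\frac{\log\big(|\Omega|-|\Omega_\varepsilon|\big)}{\log\varepsilon}.
$$
Subtracting both sides from $2n$ and recalling the definitions of $\overline{\mathrm{dim}}_{p}(\partial\Omega)$ and $\overline{\mathrm{dim}}_{\mathcal M}(\partial\Omega)$ gives the claimed inequality.

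I do not expect any real obstacle here: the entire content is the inclusion-type bound $|\Omega_\varepsilon|/|\Omega|\le s_p(\Omega_\varepsilon,\Omega)$. The only step deserving a moment's attention is the reversal of the inequality upon dividing by the negative quantity $\log\varepsilon$, together with the observation that the stray term $\log|\Omega|/\log\varepsilon$ contributes nothing in the limit. For completeness one may note that the same computation with $\varlimsup$ in place of $\varliminf$ yields, verbatim, the analogous bound $\underline{\mathrm{dim}}_{p}(\partial\Omega)\le \underline{\mathrm{dim}}_{\mathcal M}(\partial\Omega)$ for the lower dimensions.
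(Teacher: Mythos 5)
Your proof is correct and follows exactly the paper's route: the whole content is the elementary bound $s_p(\Omega_\varepsilon,\Omega)\ge |\Omega_\varepsilon|/|\Omega|$ (property $(e)$), which the paper likewise cites and then passes through the definitions of the two upper dimensions. Your careful handling of the sign of $\log\varepsilon$ and of the vanishing term $\log|\Omega|/\log\varepsilon$ just makes explicit what the paper leaves as ``immediate.''
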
 

\begin{remark}
By the above corollaries,  we have $\overline{\mathrm{dim}}_{\mathcal M}(\partial \Omega)\ge 2n-p/\tilde{p}$.  Letting $p\rightarrow \infty$,  we obtain $\overline{\mathrm{dim}}_{\mathcal M}(\partial \Omega)\ge 2n-1$.
\end{remark}

\subsection{Non-uniqueness of best approximations}

\begin{lemma}\label{lm:minimal_element_p_Schwarz}
Let $\Omega$ be a domain in $\mathbb{C}^n$ and $E$ a measurable relatively compact subset in $\Omega$. If $s_p(E,\Omega)>0$, then there exists $f_E\in{A^p(\Omega)}\setminus\{0\}$ such that
\begin{equation}\label{eq:NU_1}
s_p(E,\Omega)=\frac{\int_E|f_E|^p}{\int_\Omega|f_E|^p}.
\end{equation}
\end{lemma}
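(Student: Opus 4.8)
The plan is a routine normal families argument: take a maximizing sequence for the quotient defining $s_p(E,\Omega)$ in \eqref{eq:p-SchwarzConst}, extract a locally uniform limit, and check that no mass is lost in the limit — the relative compactness of $E$ in $\Omega$ being precisely what prevents a maximizing sequence from concentrating near $\partial\Omega$. The argument is insensitive to whether $p\ge 1$.

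Since the quotient $\int_E|f|^p/\int_\Omega|f|^p$ is invariant under $f\mapsto cf$, I would first choose $f_k\in A^p(\Omega)\setminus\{0\}$ with $\int_\Omega|f_k|^p=1$ and $\int_E|f_k|^p\to s_p(E,\Omega)$. Each $f_k$ is holomorphic with $L^p$ mass equal to $1$, so the Bergman inequality $|f(z)|^p\le C_n\delta(z)^{-2n}\int_\Omega|f|^p$ recalled in \S\ref{subsec:basic_property} shows that $\{f_k\}$ is uniformly bounded on every compact subset of $\Omega$; by Montel's theorem, after passing to a subsequence, $f_k\to f_E$ locally uniformly on $\Omega$ for some $f_E\in\mathcal O(\Omega)$. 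Fatou's lemma then gives $\int_\Omega|f_E|^p\le\liminf_k\int_\Omega|f_k|^p=1$, so $f_E\in A^p(\Omega)$. On the other hand $\overline E$ is compact in $\Omega$, so $f_k\to f_E$ uniformly on $\overline E$, whence $|f_k|^p\to|f_E|^p$ uniformly on $\overline E$ and $\int_E|f_k|^p\to\int_E|f_E|^p$; consequently $\int_E|f_E|^p=s_p(E,\Omega)>0$, and in particular $f_E\not\equiv 0$ with $\int_\Omega|f_E|^p\ge\int_E|f_E|^p>0$.

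Finally, combining these bounds, $\int_E|f_E|^p/\int_\Omega|f_E|^p\ge s_p(E,\Omega)$ because $\int_\Omega|f_E|^p\le 1$, while the reverse inequality is the definition of $s_p(E,\Omega)$ as a supremum; hence \eqref{eq:NU_1} holds, and necessarily $\int_\Omega|f_E|^p=1$. There is no genuine obstacle here: the proof is entirely soft, and the one point not to overlook is that relative compactness of $E$ in $\Omega$ is exactly what upgrades local uniform convergence to convergence of the numerators $\int_E|f_k|^p$, so that the extremal value is actually attained in the limit.
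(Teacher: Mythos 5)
Your argument is correct and is essentially the same as the paper's: normalize a maximizing sequence, extract a locally uniform limit by a normal family argument, use Fatou's lemma for $\int_\Omega|f_E|^p\le 1$, and use the relative compactness of $E$ to pass to the limit in the numerator, then squeeze between the two inequalities. No issues.
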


\begin{proof}
Take a sequence $\{f_j\}\subset {A^p(\Omega)\setminus\{0\}}$ such that $\int_\Omega|f_j|^p=1$ and 
$$
\int_E|f_j|^p>(1-1/j)s_p(E,\Omega).
$$
Clearly, $\{f_j\}$ forms a normal family and by extracting a subsequence we may assume that $f_j$ converges locally uniformly to certain $f_E\in {\mathcal O(\Omega)}$ and Fatou's lemma gives
\[
\int_\Omega|f_E|^p\leq\liminf_{j\rightarrow\infty}\int_\Omega|f_j|^p=1.\qedhere
\]
Since $E$ is relatively compact in $\Omega$, we have
\[
\int_E|f_E|^p=\lim_{j\rightarrow\infty}\int_E|f_j|^p=s_p(E,\Omega)>0.
\]
Thus
$$
s_p(E,\Omega)\ge \frac{\int_E|f_E|^p}{\int_\Omega|f_E|^p}\ge s_p(E,\Omega),
$$
i.e.,  \eqref{eq:NU_1} holds.
\end{proof}

\begin{lemma}\label{lm:continuous_Schwarz}
For any bounded domain $\Omega$ in $\mathbb{C}^n$, there exists an relatively compact open subset $E$ in $\Omega$ such that $s_p(E,\Omega)=1/2$.
\end{lemma}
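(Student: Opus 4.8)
The plan is to produce a one-parameter family of relatively compact open subsets of $\Omega$ along which $s_p(\,\cdot\,,\Omega)$ varies \emph{continuously} from $0$ up past $1/2$, and then invoke the intermediate value theorem. Write $\Omega_\varepsilon=\{z\in\Omega:\delta(z)>\varepsilon\}$. Since $s_p(\Omega_\varepsilon,\Omega)\ge|\Omega_\varepsilon|/|\Omega|$ and $|\Omega_\varepsilon|\to|\Omega|$ as $\varepsilon\to0+$, I would first fix $\varepsilon_0>0$ with $|\Omega_{\varepsilon_0}|>|\Omega|/2$, so that $s_p(\Omega_{\varepsilon_0},\Omega)>1/2$; here $\Omega_{\varepsilon_0}$ is nonempty, open, and relatively compact in $\Omega$ with $d(\Omega_{\varepsilon_0},\partial\Omega)\ge\varepsilon_0$. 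Pick $a\in\Omega_{\varepsilon_0}$ and set $E_r:=\Omega_{\varepsilon_0}\cap B(a,r)$ for $r\ge0$. Then each $E_r$ is open and relatively compact in $\Omega$ with $d(E_r,\partial\Omega)\ge\varepsilon_0$; the family is increasing in $r$; $E_0=\emptyset$, so $s_p(E_0,\Omega)=0$; and $E_R=\Omega_{\varepsilon_0}$ as soon as $B(a,R)\supset\Omega_{\varepsilon_0}$, so $s_p(E_R,\Omega)>1/2$.

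The heart of the matter is the continuity of $\phi(r):=s_p(E_r,\Omega)$ on $[0,R]$. As $\phi$ is nondecreasing (monotonicity of $s_p$ in the first variable), it suffices to control one-sided limits. For $r_n\uparrow r$ one has $\bigcup_n E_{r_n}=E_r$ exactly; choosing via Lemma~\ref{lm:minimal_element_p_Schwarz} an extremal $f_r\in A^p(\Omega)\setminus\{0\}$ with $s_p(E_r,\Omega)=\int_{E_r}|f_r|^p/\int_\Omega|f_r|^p$ (the case $s_p(E_r,\Omega)=0$ being trivial), monotone convergence gives $\int_{E_{r_n}}|f_r|^p\to\int_{E_r}|f_r|^p$, whence $\liminf_n\phi(r_n)\ge\phi(r)$, and combined with $\phi(r_n)\le\phi(r)$ this yields $\phi(r_n)\to\phi(r)$. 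For $r_n\downarrow r$ one has $\bigcap_n E_{r_n}=E_r\cup N$ with $N\subset\partial B(a,r)$ a Lebesgue null set, so $s_p\!\left(\bigcap_n E_{r_n},\Omega\right)=\phi(r)$; here I would rerun the normal-family argument from the proof of Lemma~\ref{lm:minimal_element_p_Schwarz}. Take $f_n$ with $\int_\Omega|f_n|^p=1$ and $\int_{E_{r_n}}|f_n|^p=\phi(r_n)$; by the Bergman inequality $\{f_n\}$ is a normal family, so along a subsequence $f_n\to f_0$ locally uniformly with $\int_\Omega|f_0|^p\le1$. For fixed $m$ and $n\ge m$ we have $\phi(r_n)=\int_{E_{r_n}}|f_n|^p\le\int_{E_{r_m}}|f_n|^p$, and since $\overline{E_{r_m}}$ is compact in $\Omega$ the right side tends to $\int_{E_{r_m}}|f_0|^p$; letting $m\to\infty$ gives $\lim_n\phi(r_n)\le\int_{E_r}|f_0|^p\le\phi(r)$, while $\phi(r_n)\ge\phi(r)$ gives the reverse.

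With $\phi$ continuous and nondecreasing on $[0,R]$, $\phi(0)=0$ and $\phi(R)>1/2$, the intermediate value theorem produces $r_0\in(0,R)$ with $\phi(r_0)=1/2$, and $E:=E_{r_0}$ is the required relatively compact open subset. The only genuinely delicate point is the continuity of $\phi$, and within it the upper semicontinuity direction, where one must combine the normal-family compactness with the bookkeeping that the $E_{r_n}$ remain relatively compact with uniformly positive distance to $\partial\Omega$, so that locally uniform convergence upgrades to convergence of the relevant integrals; the lower direction is immediate once the extremal functions of Lemma~\ref{lm:minimal_element_p_Schwarz} are in hand.
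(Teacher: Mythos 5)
Your proposal is correct, and it follows the same overall skeleton as the paper: build a one-parameter monotone family of relatively compact open sets along which $s_p(\cdot,\Omega)$ moves continuously from $0$ to a value above $1/2$, using Lemma~\ref{lm:minimal_element_p_Schwarz} for extremal functions, and finish with the intermediate value theorem. The differences are in the family and in how continuity is proved. The paper fixes a compact $K$ with $|K|/|\Omega|>1/2$, covers it by finitely many balls of a common radius $\delta_0<d(K,\partial\Omega)$, and lets the radii shrink, i.e.\ $\Omega_t=\bigcup_k B(z_k,t)$; both one-sided continuities then come \emph{quantitatively} from the Bergman inequality $|f|^p\le C_{\delta_0}\int_\Omega|f|^p$ on $\Omega_{\delta_0}$ together with the explicit volume bound $|\Omega_{t_2}\setminus\Omega_{t_1}|\le N|\mathbb{B}^n|(t_2^{2n}-t_1^{2n})$. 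You instead take $E_r=\Omega_{\varepsilon_0}\cap B(a,r)$ and argue softly: left continuity by monotone convergence applied to the extremal of $E_r$, and right continuity by a normal-family limit of the extremals of $E_{r_n}$ combined with the observation that $\bigcap_n E_{r_n}$ differs from $E_r$ only by a subset of the sphere $\partial B(a,r)$, which is Lebesgue-null, plus Fatou to keep $\|f_0\|_p\le 1$. Both arguments are sound; your upper-semicontinuity step is the genuinely different ingredient (compactness of the extremals versus the paper's uniform sup-bound on a small-volume annulus), and it has the mild advantage of not needing any volume estimate for the symmetric difference, while the paper's method is more elementary and in fact would also apply verbatim to your family, since $|E_{r_2}\setminus E_{r_1}|\le|B(a,r_2)|-|B(a,r_1)|$ and the Bergman inequality holds uniformly on $\Omega_{\varepsilon_0}$.
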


\begin{proof}
By  property $(e)$ in \S\,7.1,  there exists a compact subset $K\subset\Omega$ such that
\[
s_p(K,\Omega)\geq\frac{|K|}{|\Omega|}>\frac{1}{2}.
\]
For fixed $0<\delta_0<d(K,\partial\Omega)$, there are a finite number of  balls $B(z_1,\delta_0),\cdots,B(z_N,\delta_0)$ such that
\[
K\subset\bigcup^N_{k=1}B(z_k,\delta_0).
\]
We define
\[
\Omega_t:=\bigcup^N_{k=1}B(z_k,t),\ \ \ 0<t<\delta_0.
\]
Clearly, $K\subset\Omega_{\delta_0}$ and $|\Omega_t|\rightarrow0$ when $t\rightarrow0$. Moreover, for $0<t_1<t_2<\delta_0$, we have
\begin{equation}\label{eq:volume_difference}
|\Omega_{t_2}\setminus\Omega_{t_1}|\leq\sum^N_{k=1}|B(z_k,t_2)\setminus{B(z_k,t_1)}|=N|\mathbb{B}^n|(t_2^{2n}-t_1^{2n}),
\end{equation}
where $\mathbb{B}^n$ is the unit ball in $\mathbb{C}^n$.

We claim that
\[
t\mapsto{s_p(\Omega_t,\Omega)},\ \ \ 0<t\leq\delta_0
\]
is a continuous function. First, we show the left continuity at $t_0\in(0,\delta_0]$.  By Lemma \ref{lm:minimal_element_p_Schwarz}, there exists $f_{t_0}\in{A^p(\Omega)\setminus\{0\}}$ such that $s_p(\Omega_{t_0},\Omega)=\int_{\Omega_{t_0}}|f_{t_0}|^p/\int_\Omega|f_{t_0}|^p$. Hence for $0<t<t_0$,  
\begin{eqnarray*}
s_p(\Omega_{t_0},\Omega) & \geq & {s_p(\Omega_t,\Omega)}\  \geq\   \frac{\int_{\Omega_t}|f_{t_0}|^p}{\int_\Omega|f_{t_0}|^p}\\
&=& \frac{\int_{\Omega_{t_0}}|f_{t_0}|^p-\int_{\Omega_{t_0}\setminus\Omega_t}|f_{t_0}|^p}{\int_\Omega|f_{t_0}|^p}\\
&=& s_p(\Omega_{t_0},\Omega)-\frac{\int_{\Omega_{t_0}\setminus\Omega_t}|f_{t_0}|^p}{\int_\Omega|f_{t_0}|^p}.
\end{eqnarray*}
The Bergman inequality asserts that
\begin{equation}\label{eq:Bergman_ineq_Chebyshev}
|f(z)|^p\leq{C_{\delta_0}}\int_\Omega|f|^p,\ \ \ z\in\Omega_{\delta_0},\ f\in{A^p(\Omega)}.
\end{equation}
Thus
\[
s_p(\Omega_{t_0},\Omega)\geq{s_p(\Omega_t,\Omega)}\geq{s_p(\Omega_{t_0},\Omega)}-C_{\delta_0}|\Omega_{t_0}\setminus\Omega_t|,
\]
which together with \eqref{eq:volume_difference} imply $\lim_{t\rightarrow{t_0-}}s_p(\Omega_t,\Omega)=s_p(\Omega_{t_0},\Omega)$. 

On the other hand, for $0<t_0<t<\delta_0$ and $f\in{A^p(\Omega)\setminus\{0\}}$,  one has
\[
\frac{\int_{\Omega_t}|f|^p}{\int_\Omega|f|^p}=\frac{\int_{\Omega_{t_0}}|f|^p+\int_{\Omega_t\setminus\Omega_{t_0}}|f|^p}{\int_\Omega|f|^p}\leq{s_p(\Omega_{t_0},\Omega)}+\frac{\int_{\Omega_t\setminus\Omega_{t_0}}|f|^p}{\int_\Omega|f|^p}.
\]
Again, the Bergman inequality \eqref{eq:Bergman_ineq_Chebyshev} gives
\[
\frac{\int_{\Omega_t}|f|^p}{\int_\Omega|f|^p}\leq{s_p(\Omega_{t_0},\Omega)}+C_{\delta_0}|\Omega_t\setminus\Omega_{t_0}|.
\]
Since $f\in{A^p(\Omega)\setminus\{0\}}$ can be arbitrarily chosen,  we have
\[
s_p(\Omega_{t_0},\Omega)\leq{s_p(\Omega_t,\Omega)}\leq{s_p(\Omega_{t_0},\Omega)}+C'_{t_0}|\Omega_t\setminus\Omega_{t_0}|,
\]
so that
 $\lim_{t\rightarrow{t_0+}}s_p(\Omega_t,\Omega)=s_p(\Omega_{t_0},\Omega)$ in view of \eqref{eq:volume_difference}.
 
Note that $s_p(\Omega_{\delta_0},\Omega)\ge s_p(K,\Omega)>1/2$,  and property $(f)$ in \S\,7.1 yields  $\lim_{t\rightarrow0+}s_p(\Omega_t,\Omega)=0$.  By the intermediate value theorem, there exists some $\Omega_{t_0}=:E$ with $s_p(E,\Omega)=1/2$.
\end{proof}

\begin{proof}[Proof of Theorem \ref{th:not_Chebyshev}]
By Lemma \ref{lm:minimal_element_p_Schwarz} and Lemma \ref{lm:continuous_Schwarz}, there exists a relative compact open subset $E$ in $\Omega$ and $f_E\in{A^p(\Omega)\setminus\{0\}}$ such that
\[
\frac{\int_E|f_E|^p}{\int_\Omega|f_E|^p}=s_p(E,\Omega)=\frac{1}{2}.
\]
Hence
\begin{equation}\label{eq:not_Chebyshev1}
\int_E|f_E|^p=\int_{\Omega\setminus{E}}|f_E|^p=:I.
\end{equation}
Since 
$$
\frac12=s_p(E,\Omega) \ge \frac{\int_E|f|^p}{\int_\Omega |f|^p}
= \frac{\int_E|f|^p} {\int_E|f|^p+ \int_{\Omega\setminus {E}} |f|^p}
$$
for all $f\in A^p(\Omega)\setminus\{0\}$,  it follows that
\begin{equation}\label{eq:not_Chebyshev2}
\int_E|f|^p\leq\int_{\Omega\setminus{E}}|f|^p,\ \ \ \forall\,f\in A^p(\Omega).
\end{equation}

Given $h_1\in{A^p(\Omega)}$,    define $h_2:=h_1+f_E\in{A^p(\Omega)}$ and
\[
g(z):=\begin{cases}
h_1(z),\ \ \ z\in{E},\\
h_2(z),\ \ \ z\in\Omega\setminus{E}.
\end{cases}
\]
Clearly, $g\in{L^p(\Omega)}$.  Moreover,  since $|a-b|^p\leq|a|^p+|b|^p$ for all $a,b\in\mathbb{C}$ and $0<p\leq1$, we infer from \eqref{eq:not_Chebyshev2} that for $h\in{A^p(\Omega)}$,  
\begin{eqnarray*}
\int_\Omega|g-h|^p & = & \int_E|h_1-h|^p+\int_{\Omega\setminus{E}}|h_2-h|^p\\
& \geq & \int_E|h_1-h|^p+\int_E|h_2-h|^p\\
& \geq & \int_E|h_1-h_2|^p
\ = \ I.
\end{eqnarray*}
On the other hand,   \eqref{eq:not_Chebyshev1} implies
\[
\int_\Omega|g-h_1|^p=\int_{\Omega\setminus{E}}|h_2-h_1|^p=\int_{\Omega\setminus{E}}|f_E|^p=I
\]
and
\[
\int_\Omega|g-h_2|^p=\int_E|h_1-h_2|^p=\int_E|f_E|^p=I.
\]
Thus $h_1,h_2\in\mathcal{P}_{A^p(\Omega)}(g)$. Clearly,  $h_1\neq h_2$.
\end{proof}

\section{Proofs of Theorem \ref{th:minimum_of_Bergman_kernel} and Proposition \ref{prop:Non-rigid}} 

We first give a simple fact as follows. 

\begin{lemma}\label{lm:mean_value}
Let $\Omega$ be a bounded domain in $\mathbb{C}^n$ and $1\le p<\infty$.  Then $K_p(a)=1/|\Omega|$\/ for some $a\in\Omega$ if and only if
\begin{equation}\label{eq:mean_value}
f(a)=\frac{1}{|\Omega|}\int_\Omega{f},\ \ \ \forall\,f\in{A^p(\Omega)}.
\end{equation}
\end{lemma}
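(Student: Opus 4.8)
The plan is to prove the equivalence for a fixed $a\in\Omega$, which clearly implies the stated ``for some $a$'' formulation, treating the two implications separately. The one ingredient I would isolate first is the trivial bound $K_p(a)\ge 1/|\Omega|$: testing the minimizing problem \eqref{eq:MinProb} with the constant function $1\in A^p(\Omega)$ gives $m_p(a)\le\|1\|_p=|\Omega|^{1/p}$, hence $K_p(a)=m_p(a)^{-p}\ge 1/|\Omega|$.

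For the implication \eqref{eq:mean_value}$\Rightarrow K_p(a)=1/|\Omega|$, I would take an arbitrary $f\in A^p(\Omega)$ with $f(a)=1$ and apply the mean-value identity followed by H\"older's inequality (this is where $1\le p<\infty$ enters):
$$1=|f(a)|=\frac1{|\Omega|}\left|\int_\Omega f\right|\le\frac1{|\Omega|}\int_\Omega|f|\le\frac1{|\Omega|}\,\|f\|_p\,|\Omega|^{1-1/p}=|\Omega|^{-1/p}\|f\|_p.$$
Taking the infimum over all such $f$ yields $m_p(a)\ge|\Omega|^{1/p}$, i.e.\ $K_p(a)\le 1/|\Omega|$; together with the trivial lower bound this forces equality.

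For the converse, assume $K_p(a)=1/|\Omega|$, equivalently $m_p(a)=|\Omega|^{1/p}$. Then the constant function $1$ satisfies $1(a)=1$ and $\|1\|_p=|\Omega|^{1/p}=m_p(a)$, so it solves \eqref{eq:MinProb} at $z=a$; since the minimizer is unique for $1\le p<\infty$, we conclude $m_p(\cdot,a)\equiv 1$. Consequently $|m_p(\cdot,a)|^{p-2}\equiv 1$ and $K_p(\cdot,a)=m_p(\cdot,a)K_p(a)\equiv 1/|\Omega|$, so the reproducing property \eqref{eq:RPF} reads precisely $f(a)=\frac1{|\Omega|}\int_\Omega f$ for every $f\in A^p(\Omega)$, which is \eqref{eq:mean_value}.

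There is no serious obstacle here; the only points deserving attention are the legitimacy of H\"older's inequality, which genuinely requires $p\ge 1$, and the appeal to uniqueness of the $p$-Bergman minimizer at $p=1$ --- but both are already available from the setup around \eqref{eq:MinProb} and \eqref{eq:RPF}.
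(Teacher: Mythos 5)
Your proof is correct and follows essentially the same route as the paper: the direction $K_p(a)=1/|\Omega|\Rightarrow$ \eqref{eq:mean_value} is identical (constant $1$ attains the minimum, uniqueness of the minimizer for $1\le p<\infty$ gives $m_p(\cdot,a)\equiv 1$, then \eqref{eq:RPF}), and the converse is the same H\"older argument, the only cosmetic difference being that you test with an arbitrary competitor $f$ with $f(a)=1$ while the paper applies the mean-value identity directly to $f=K_p(\cdot,a)$ and uses $\int_\Omega|K_p(\cdot,a)|^p=K_p(a)^{p-1}$.
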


\begin{proof}
Only if part.  Since
$$
m_p(a)=K_p(a)^{-1/p}=|\Omega|^{1/p}=\left(\int_\Omega 1^p \right)^{1/p},
$$
it follows from the uniqueness of the minimizer in \eqref{eq:MinProb} that $m_p(\cdot,a)=1$,  i.e.,  $K_p(\cdot,a)=K_p(a)=1/|\Omega|$.  This together with  the reproducing formula \eqref{eq:RPF} yield \eqref{eq:mean_value}.

If part.  Note that 
$$
K_p(a)=\frac1{|\Omega|} \int_\Omega K_p(\cdot,a) \le \frac1{|\Omega|} |\Omega|^{1-\frac1p} \left(\int_\Omega |K_p(\cdot,a)|^p \right)^{\frac1p}=|\Omega|^{-\frac1p} K_p(a)^{1-\frac1p},
$$
i.e.,  $K_p(a)\le 1/|\Omega|$.  On the other hand,  we naturally have $K_p(a)\ge 1/|\Omega|$.  Thus $K_p(a)=1/|\Omega|$.  
\end{proof}

\begin{proof}[Proof of Theorem \ref{th:minimum_of_Bergman_kernel}/$(1)$]
We first deal with the case $1\le p <2$.  Without loss of generality,  we may assume that 
$$
a=0, \,\delta(a):=d(a,\partial \Omega)=1\ \text{and}\  1\in\partial\Omega.
$$
 Clearly,  $\mathbb{D}\subset \Omega$,  thus it suffices to show that $\Omega\subset\mathbb{D}$.  Set
\[
h(z)=\frac{z+1}{z-1}+1,\ \ \ z\in\Omega.
\]
It is easy to verify that  $h\in{A^p(\Omega)}$ for all  $1\leq{p}<2$;  moreover,  
\begin{equation}\label{eq:real_part_minimum}
\mathrm{Re}\,h(z)=\frac{|z|^2-1}{|z-1|^2}+1,
\end{equation}
 is an integrable harmonic function on $\Omega$,  so that 
 \[
\int_{\mathbb{D}}\mathrm{Re}\,h=0,
\]
in view of the mean-value property.  On the other hand,  
  Lemma \ref{lm:mean_value} implies
\[
\int_\Omega\mathrm{Re}\,h=\mathrm{Re}\,\int_\Omega{h}=|\Omega|\cdot\mathrm{Re}\,h(0)=0.
\]
Thus
\begin{equation}\label{eq:interal_zero}
\int_{\Omega\setminus\mathbb{D}}\mathrm{Re}\,h=0
\end{equation}
Since  $\mathrm{Re}\,h\geq 1$ holds on $\Omega\setminus\mathbb{D}$ in view of \eqref{eq:real_part_minimum}, it follows from  \eqref{eq:interal_zero}  that $\Omega\setminus{\mathbb D}$ is of zero measure,  which has to be empty,  for  $\Omega$ is a domain.  Thus  $\Omega\subset\mathbb{D}$.

Now suppose $0<p<1$.  By (6.3) in \cite{CZ},  we have
$$
1=|\Omega|^{\frac1p}\cdot K_p(a)^{\frac1p}\ge |\Omega|\cdot K_1(a)\ge 1,
$$
so that $K_1(a)=1/|\Omega|$.  By the previous argument,  we conclude that $\Omega$ is a disc centred at $a$.   
\end{proof}

To complete the proof  of Theorem \ref{th:minimum_of_Bergman_kernel},  we need the following

\begin{theorem}[cf. \cite{HedbergApproximation}, Theorem 1]\label{th:Hedberg_approximation}
Let $E$ be a bounded measurable set in $\mathbb{C}$ and $1\leq{q}<2$. Then for every function $f\in{L^q(E)}\cap\mathcal{O}(E^\circ)$, there exists a sequence of rational functions $\{f_k\}$ with poles in $\mathbb{C}\setminus{E}$ such that
\[
\lim_{k\rightarrow\infty}\|f_k-f\|_{L^q(E)}=0.
\]
\end{theorem}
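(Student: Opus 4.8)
The natural approach is a Hahn--Banach duality argument built on the Cauchy transform, in which the hypothesis $q<2$ is used in two essential ways: through the local integrability of $1/z$, and through the comparability of an ``$L^q$-analytic capacity'' with diameter.

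Since $1\le q<2$, the dual of $L^q(E)$ is $L^{q'}(E)$ with $q'=q/(q-1)\in(2,\infty]$, and by Hahn--Banach it suffices to show that any $g\in L^{q'}(E)$ annihilating every rational function with poles in $\mathbb C\setminus E$ (that lies in $L^q(E)$) also satisfies $\int_E gf\,dA=0$. Extend $g$ by zero to $\mathbb C$ and set $\widehat g(z):=\frac1\pi\int_E\frac{g(\zeta)}{z-\zeta}\,dA(\zeta)$, so that $\bar\partial\widehat g=g$ in $\mathcal D'(\mathbb C)$. For $a\in\mathbb C\setminus E$ the function $\zeta\mapsto(\zeta-a)^{-1}$ belongs to $L^q(E)$ --- here $q<2$ is used when $a\in\partial E$ --- so testing $g$ against it gives $\widehat g(a)=0$; hence $\widehat g\equiv0$ on $\mathbb C\setminus E$, in particular near $\infty$, which also forces $\int_E g\zeta^k\,dA=0$ for all $k\ge0$. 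Using $q<2$ once more, $\zeta\mapsto(z-\zeta)^{-1}\in L^q_{\mathrm{loc}}$, and H\"older's inequality together with the standard estimate for a Cauchy potential show that $\widehat g$ is bounded and H\"older continuous on $\mathbb C$ (log-Lipschitz when $q=1$). Therefore $\widehat g\equiv0$ on $\overline{\mathbb C\setminus E}=\mathbb C\setminus E^\circ$, so $\widehat g$ vanishes on $\partial E^\circ$. Finally $\partial\widehat g$ is the Beurling--Ahlfors transform of $g$, whence $\widehat g\in W^{1,r}_{\mathrm{loc}}(\mathbb C)$ for a suitable $r>1$; since a Sobolev function has zero gradient a.e.\ on its zero set, $g=\bar\partial\widehat g=0$ a.e.\ on $E\setminus E^\circ$. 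Hence $\int_E gf\,dA=\int_{E^\circ}gf\,dA$, and (if $E^\circ=\emptyset$ the proof already terminates) the task is reduced to
\[
\int_{E^\circ}f\,\bar\partial\widehat g\,dA=0,\qquad f\in\mathcal O(E^\circ)\cap L^q(E^\circ).
\]

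Formally this integral vanishes, since $\bar\partial f=0$ on $E^\circ$ and $\widehat g=0$ on $\partial E^\circ$. To make this precise one exhausts $E^\circ$ by open sets $\Omega_j\uparrow E^\circ$ with $\overline{\Omega_j}\subset E^\circ$ and piecewise-smooth boundary and applies the Cauchy--Pompeiu formula, $\int_{\Omega_j}f\,\bar\partial\widehat g\,dA=\frac1{2i}\int_{\partial\Omega_j}f\,\widehat g\,dz$; the left-hand side converges to $\int_{E^\circ}f\,\bar\partial\widehat g\,dA$ by dominated convergence because H\"older gives $fg\in L^1(E^\circ)$. The difficulty --- and this is the analytic heart of the theorem --- is that the boundary integral need not obviously tend to $0$: although $\widehat g$ is small on $\partial\Omega_j$, being continuous and vanishing on $\partial E^\circ$, the factor $|f|$ may grow near $\partial E^\circ$ (only the Bergman bound $|f(z)|\lesssim d(z,\partial E^\circ)^{-2/q}\|f\|_{L^q}$ is available) and $\partial\Omega_j$ can be very long when $\partial E$ is irregular, so crude estimates fail to close.

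The remedy, following \cite{HedbergApproximation}, is Vitushkin-type localization. Fix a square grid of small mesh $\delta$ with a subordinate partition of unity $\{\varphi_k\}$ and write $f=\sum_kT_{\varphi_k}f$, where $T_\varphi h:=\varphi h-\frac1\pi\int\frac{h(\zeta)\,\bar\partial\varphi(\zeta)}{\,\cdot\,-\zeta}\,dA(\zeta)$ satisfies $\bar\partial(T_\varphi h)=\varphi\,\bar\partial h$ and hence is holomorphic wherever $h$ is and outside $\mathrm{supp}\,\varphi$. One then approximates each $T_{\varphi_k}f$ in $L^q(E)$ by a rational function with poles in $\mathbb C\setminus E$, with quantitative control, and sums over $k$. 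This is where $q<2$ enters decisively once more: the relevant ``$L^q$-analytic capacity'' of a set of diameter $\delta$ is comparable to $\delta$ (in sharp contrast with the $q\ge2$ regime, which is governed by a capacitary condition in the spirit of Theorem~\ref{th:Hedberg}), and this comparability is exactly what yields local approximants with errors summable in $k$ and lets the scheme converge as $\delta\to0$. Establishing these capacity estimates and the convergence of the localization sum is the genuine work; it is carried out in \cite{HedbergApproximation}, and it is this step, not the elementary duality reduction above, that constitutes the main obstacle.
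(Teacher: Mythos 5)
This statement is not proved in the paper at all: it is imported verbatim from Hedberg's 1972 paper (hence the ``cf.~\cite{HedbergApproximation}, Theorem 1'' in the header) and is only used as a black box, with $q=1$, in the proof of Theorem \ref{th:minimum_of_Bergman_kernel}\,(2). So there is no internal argument to compare yours with; the comparison can only be with Hedberg's original proof. Judged as a proof, your write-up has a real gap, and you name it yourself: after the duality reduction you do not prove the decisive assertion $\int_{E^\circ} f\,\bar\partial\widehat g\,dA=0$, but refer it back to \cite{HedbergApproximation} --- that is, to the very theorem being established. The reduction itself is essentially sound (for every $a\in\mathbb{C}\setminus E$ the kernel $(\zeta-a)^{-1}$ lies in $L^q(E)$ because $q<2$, so $\widehat g$ vanishes on $\mathbb{C}\setminus E$ and, by continuity, on $\mathbb{C}\setminus E^\circ=\overline{\mathbb{C}\setminus E}$; the Sobolev argument giving $g=0$ a.e.\ on $E\setminus E^\circ$ is correct), but a reduction plus a citation of the target statement is not a proof, and the Vitushkin/capacity discussion at the end is description rather than argument.

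It is worth noting that for $1<q<2$ you could have closed the argument with the very tool you already invoke, avoiding localization altogether: $\widehat g$ is continuous, compactly supported, lies in $W^{1,q'}_{\mathrm{loc}}$ ($2<q'<\infty$, Beurling transform bounded on $L^{q'}$), and vanishes on the closed set $\mathbb{C}\setminus E^\circ$; truncating its real and imaginary parts at level $\varepsilon$ produces functions supported in compact subsets of $E^\circ$ converging to $\widehat g$ in $W^{1,q'}$ (using again that $\nabla\widehat g=0$ a.e.\ on the zero set), and after mollification one gets $\varphi_k\in C_0^\infty(E^\circ)$ with $\bar\partial\varphi_k\to g$ in $L^{q'}$. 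Since $\bar\partial f=0$ as a distribution on $E^\circ$, each $\int_{E^\circ}f\,\bar\partial\varphi_k$ vanishes, and H\"older's inequality with exponents $q,q'$ passes to the limit, giving $\int_{E^\circ}fg=0$. The endpoint $q=1$ --- which is exactly the case the paper uses --- resists this shortcut ($q'=\infty$, the Beurling transform is not bounded on $L^\infty$, and the final pairing against $f\in L^1$ fails), and it is this endpoint that genuinely requires the finer analysis of Hedberg's paper. So: correct set-up, but the analytic heart of the theorem is missing from your proposal; its appeal to the source is, in effect, no more than the citation the paper itself makes.
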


\begin{proof}[Proof of Theorem \ref{th:minimum_of_Bergman_kernel}/$(2)$]
Let $p>2$ and set $\Omega_1=\overline{\Omega}^\circ$.  Apply Theorem \ref{th:Hedberg_approximation} with $q=1$ and $E=\overline{\Omega}$,  we see that every $f\in{A^1(\Omega_1)}$ can be approximated in the $L^1$-norm by a sequence of rational functions $\{f_k\}$ with poles in $\mathbb{C}\setminus\overline{\Omega}$.  In paricular,  $f_k\in{A^p(\Omega)}$, $f_k$ converges locally uniformly to $f$ and
\[
\lim_{k\rightarrow\infty}\int_\Omega{f_k}=\int_\Omega{f}.
\]
Thus it follows from Lemma \ref{lm:mean_value} that
\[
f(a)=\lim_{k\rightarrow\infty}f_k(a)=\lim_{k\rightarrow\infty}\frac{1}{|\Omega|}\int_\Omega{f_k}=\frac{1}{|\Omega|}\int_\Omega{f}=\frac{1}{|\Omega_1|}\int_{\Omega_1}f,
\]
for  $|\Omega_1\setminus\Omega|=0$.  The proof of Theorem \ref{th:minimum_of_Bergman_kernel}/$(1)$ yields that  $\Omega_1$ is a disc.
\end{proof}

\begin{remark}
In particular,  if $\Omega$ is fat,  then the condition $K_p(a)=1/|\Omega|$ for some $a\in\Omega$ and some $2\le p<\infty$ implies that $\Omega$ is a disc.
\end{remark}

\begin{corollary}
Let $\Omega=\Omega_1\times \cdots \times \Omega_n$,  where $\Omega_j$,  $1\le j\le n$,   are bounded planar domains.   Then the following properties hold:
\begin{enumerate}
\item[$(1)$] If $K_p(a)=1/|\Omega|$ for some $a\in\Omega$ and some $0<p<2$, then $\Omega$ is a polydisc centred at $a$.
\item[$(2)$] Suppose furthermore that all $\Omega_j$ are fat.  
If $K_p(a)=1/|\Omega|$ for some $a\in\Omega$ and some $2\le p<\infty$,  then $\Omega$ is a  polydisc centred at $a$.
\end{enumerate}
\end{corollary}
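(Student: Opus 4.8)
The plan is to reduce the product case to the one-dimensional rigidity statement, Theorem \ref{th:minimum_of_Bergman_kernel}, by separating variables. Write $a=(a_1,\dots,a_n)$ with $a_j\in\Omega_j$. First, when $0<p<1$, I would use the monotonicity of $t\mapsto|\Omega|^{1/t}K_t(z)^{1/t}$ (cf. (6.3) in \cite{CZ}) to conclude that $K_p(a)=1/|\Omega|$ forces $K_1(a)=1/|\Omega|$, exactly as in the proof of Theorem \ref{th:minimum_of_Bergman_kernel}/(1); hence it suffices to treat the range $1\le p<\infty$, and from now on I assume $1\le p<\infty$.

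The next step is to translate the hypothesis via Lemma \ref{lm:mean_value}: $K_p(a)=1/|\Omega|$ is equivalent to the mean-value identity $f(a)=\frac1{|\Omega|}\int_\Omega f$ for all $f\in A^p(\Omega)$. The key point is to test this identity against functions depending on a single coordinate. Fix $1\le j\le n$ and $g\in A^p(\Omega_j)$, and set $G(z_1,\dots,z_n):=g(z_j)$. Then $G\in\mathcal O(\Omega)$, and Fubini's theorem gives $\int_\Omega|G|^p=\big(\prod_{k\ne j}|\Omega_k|\big)\int_{\Omega_j}|g|^p<\infty$, so $G\in A^p(\Omega)$. Applying the mean-value identity to $G$ and using $|\Omega|=\prod_k|\Omega_k|$ yields $g(a_j)=\frac1{|\Omega_j|}\int_{\Omega_j}g$. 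Since $g\in A^p(\Omega_j)$ was arbitrary, the converse direction of Lemma \ref{lm:mean_value}, applied to the planar domain $\Omega_j$, gives $K_{\Omega_j,p}(a_j)=1/|\Omega_j|$.

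Finally I would invoke the one-dimensional rigidity coordinatewise. In case (1) we have $1\le p<2$, so Theorem \ref{th:minimum_of_Bergman_kernel}/(1) shows each $\Omega_j$ is a disc centred at $a_j$; in case (2) we have $2\le p<\infty$ and each $\Omega_j$ is fat, so the Remark following Theorem \ref{th:minimum_of_Bergman_kernel}/(2) again forces each $\Omega_j$ to be a disc centred at $a_j$. In either case $\Omega=\Omega_1\times\cdots\times\Omega_n$ is a polydisc centred at $a$, which is the assertion.

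I do not anticipate a genuine obstacle here — the argument is essentially a separation-of-variables bookkeeping on top of the already-established planar rigidity. The only points requiring a little care are the reduction of the range $0<p<1$ to $p=1$ before Lemma \ref{lm:mean_value} becomes applicable, the verification that the separated function $G$ still lies in $A^p(\Omega)$, and the observation that fatness of \emph{each factor} $\Omega_j$ is precisely what is needed to apply the sharp form of Theorem \ref{th:minimum_of_Bergman_kernel}/(2) to each $\Omega_j$.
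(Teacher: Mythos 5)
Your argument is correct, but it reaches the factorwise statement $K_{\Omega_j,p}(a_j)=1/|\Omega_j|$ by a different mechanism than the paper. The paper simply invokes the product rule for the $p$-Bergman kernel on product domains (\cite{CZ}, Proposition 2.8), writing $1/|\Omega|=K_{\Omega,p}(a)=\prod_j K_{\Omega_j,p}(a_j)$ and combining this with the trivial bound $|\Omega_j|\,K_{\Omega_j,p}(a_j)\ge 1$ to force equality in each factor; this works at once for every $0<p<\infty$, so part (1) needs no separate treatment of $0<p<1$, and part (2) is ``similar.'' You instead pass through Lemma \ref{lm:mean_value} in both directions: the mean-value identity on $\Omega$ tested against coordinate-separated functions $G(z)=g(z_j)$ (with the Fubini check that $G\in A^p(\Omega)$ and that $A^p\subset L^1$ for $p\ge1$) yields the mean-value identity on each $\Omega_j$, hence $K_{\Omega_j,p}(a_j)=1/|\Omega_j|$, after which both proofs conclude identically from Theorem \ref{th:minimum_of_Bergman_kernel} (and, in case (2), the Remark on fat planar domains). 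What your route buys is independence from the product formula of \cite{CZ} — everything you use is proved in this paper — at the modest cost of the preliminary reduction of $0<p<1$ to $p=1$ via the monotonicity of $t\mapsto|\Omega|^{1/t}K_t(a)^{1/t}$, which Lemma \ref{lm:mean_value} ($1\le p<\infty$ only) makes necessary; the paper's route is shorter and uniform in $p$ but leans on the external product rule.
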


\begin{proof}
$(1)$ By the product rule of the $p-$Bergman kernel (cf.  \cite{CZ},  Proposition 2.8),  we have
$$
\frac1{|\Omega_1|}\cdots \frac1{|\Omega_n|}=\frac1{|\Omega|} = K_{\Omega,p}(a) = K_{\Omega_1,p}(a_1)\cdots K_{\Omega_n,p}(a_n),
$$
i.e.,  
$$
\prod_{j=1}^n \left[ |\Omega_j| \cdot K_{\Omega_j,p}(a_j) \right] =1.
$$
Since $|\Omega_j| \cdot K_{\Omega_j,p}(a_j)\ge 1$ for all $j$,  we have $|\Omega_j| \cdot K_{\Omega_j,p}(a_j)= 1$,  so that $\Omega_j$ has to be a disc centred at $a_j$ for every $j$, i.e.,  $\Omega$ is a polydisc centred at $a$.  

$(2)$  The argument is similar.   
\end{proof}

If $\Omega$ is a bounded complete circular domain in $\mathbb{C}^n$,  then $K_{\Omega,2}(0)=1/|\Omega|$ (cf.  \cite[\S\,4]{Boas}). The same conclusion actually holds for general $p$.  To see this,  it suffices to verify that if $\varphi$ is psh on $\Omega$,  then the following mean-value inequality holds: 
\begin{equation}\label{eq:mean_value_circular}
\varphi(0)\leq\frac{1}{|\Omega|}\int_\Omega\varphi,
\end{equation}
for $|f(0)|^p\leq\frac{1}{|\Omega|}\int_\Omega |f|^p$ would imply $K_p(0)\le 1/|\Omega|$.  
To get \eqref{eq:mean_value_circular},   first apply the coarea formula (cf. \cite[\S\,13.5]{Chirka}, see also \cite[Theorem 3.2.22]{Federer}) to the natural map $F:\mathbb{C}^n\rightarrow\mathbb{P}^{n-1}$,  that is,
\begin{equation}\label{eq:coarea}
\int_\Omega\varphi=\int_{\mathbb{P}^{n-1}}\int_{F^{-1}(\xi)}\varphi(z)|z|^{2n-2}d\sigma(z)dV_{FS}(\xi),
\end{equation}
where $F^{-1}(\xi)$ is the intersection of $\Omega$ with the complex line $\xi$, $d\sigma$ is the area measure on the complex line $\xi$, and $dV_{FS}$ denotes  the Fubini-Study volume element  on $\mathbb{P}^{n-1}$.   Since $\Omega$ is complete circular, $F^{-1}(\xi)$ can be identified with a disc in $\mathbb{C}$,  so that  the mean-value inequality implies
\[
\int_{F^{-1}(\xi)}\varphi(z)|z|^{2n-2}d\sigma(z)\geq{C(\xi)}\varphi(0),
\]
where
\[
C(\xi)=\int_{F^{-1}(\xi)}|z|^{2n-2}d\sigma(z).
\]
Thus
\[
\int_\Omega\varphi\geq\varphi(0)\int_{\mathbb{P}^{n-1}}C(\xi)dV_{FS}(\xi).
\]
Apply  \eqref{eq:coarea} with  $\varphi\equiv1$, we have
\[
\int_{\mathbb{P}^{n-1}}C(\xi)dV_{FS}(\xi)=|\Omega|,
\]
which in turn gives \eqref{eq:mean_value_circular}.

We are left to give the example in Proposition \ref{prop:Non-rigid}.  
Let $\varphi$ be a bounded continuous function on $\mathbb{D}$. Consider the following Hartogs domain
\[
\Omega_\varphi:=\left\{(z_1,z_2):\ |z_2|^2<e^{-\varphi(z_1)},\ z_1\in\mathbb{D}\right\}.
\]
By Ligocka's formula (cf. \cite{Ligocka}), we have
\[
K_{\Omega_\varphi,2}(0)=\frac{1}{\pi}K_{\mathbb{D},\varphi}(0),
\]
where $K_{\mathbb{D},\varphi}$ denotes the reproducing kernel of the weighted $L^2$ Bergman space $A^2(\mathbb{D},\varphi)$.  Since
\[
|\Omega_\varphi|=\pi\int_{\mathbb{D}}e^{-\varphi},
\]
we conclude that $K_{\Omega_\varphi,2}(0)=1/|\Omega_\varphi|$ if and only if
\begin{equation}\label{eq:Hartogs}
K_{\mathbb{D},\varphi}(0)=\frac{1}{\int_{\mathbb{D}}e^{-\varphi}}.
\end{equation}
Analogously to Lemma \ref{lm:mean_value},  one may show that \eqref{eq:Hartogs} is  equivalent to
$$
f(0)=\frac{\int_{\mathbb{D}}{f}e^{-\varphi}}{\int_{\mathbb{D}}{e^{-\varphi}}},\ \ \ \forall\,f\in{A^2(\mathbb D,\varphi)}.
$$
Since $\varphi$ is bounded,  this is equivalent to 
\begin{equation}\label{eq:mean_value_weighted}
f(0)=\frac{\int_{\mathbb{D}}{f}e^{-\varphi}}{\int_{\mathbb{D}}{e^{-\varphi}}},\ \ \ \forall\,f\in{A^2(\mathbb D)}.
\end{equation}
Note that \eqref{eq:mean_value_weighted} holds whenever $\varphi$ is radial, i.e., $\varphi(z)=\varphi(|z|)$. We are going to construct a non-radial weight $\varphi$ which still verifies  
 \eqref{eq:mean_value_weighted}. 
 
First,  fix  a smooth radial function $\psi$ on $\overline{\mathbb{D}}$ such that $e^{-\psi(0)}> \frac1\pi \int_\mathbb D e^{-\psi}$.  Then the real-valued function
\[
\eta:=e^{-\psi}-\frac{1}{\pi}\int_{\mathbb{D}}e^{-\psi}
\]
verifies
\begin{equation}\label{eq:orthogonal}
\int_{\mathbb{D}}{f\eta}=0,\ \ \ \forall\,f\in{A^2(\mathbb{D})},
\end{equation}
i.e., $\eta\in{A^2(\mathbb{\mathbb{D}})^\perp}\subset{L^2(\mathbb{D})}$.   Given $a\in\mathbb{D}$,  take $F_a(w):=(w-a)/(1-\bar{a}w)$.   It follows that $f\circ{F_a}\cdot{F_a'}\in{A^2(\mathbb{D})}$ for $f\in A^2(\mathbb D)$.  Moreover, since $F_a'(w)=(1-|a|^2)/(1-\bar{a}w)^2$ is bounded on $\mathbb{D}$, we see that $f\circ{F_a}\in{A^2(\mathbb{D})}$. Thus \eqref{eq:orthogonal} gives
\[
\int_{\mathbb{D}}f\circ{F_a}\cdot{\eta}=\int_{\mathbb{D}}f\cdot ({\eta\circ{F_a^{-1}}})  |(F_a^{-1})'|^2=0,\ \ \ \forall\,f\in{A^2(\mathbb{D})},
\]
that is,
\begin{equation}\label{eq:orthogonal2}
\widetilde{\eta}:=(\eta\circ{F_a^{-1}}) |(F_a^{-1})'|^2\in{A^2(\mathbb{D})^\perp}.
\end{equation}
Note that $F_a^{-1}(z)=(z+a)/(1+\bar{a}z)$ and $(F_a^{-1})'(z)=(1-|a|^2)/(1+\bar{a}z)^2$. Thus
\[
\widetilde{\eta}(a)=\eta\left(\frac{2a}{1+|a|^2}\right)\frac{(1-|a|^2)^2}{(1+|a|^2)^4},\ \ \ \widetilde{\eta}(-a)=\frac{\eta(0)}{(1-|a|^2)^2}.
\]
Letting $|a|\rightarrow1$, we have $\widetilde{\eta}(a)\rightarrow0$ and $\widetilde{\eta}(-a)\rightarrow+\infty$,  for $\eta$ is a bounded function with  $\eta(0) > 0$.  Thus there exists $a\in\mathbb{D}$ such that $\widetilde{\eta}(a)\neq\widetilde{\eta}(-a)$,  i.e.,   $\widetilde{\eta}$ is not a radial function.  Moreover, since $\widetilde{\eta}$ is a bounded on $\mathbb{D}$,  there exists $0<\varepsilon\ll1$ such that
\[
e^{-\psi}+\varepsilon\widetilde{\eta}>0\ \ \ \text{on}\ \ \overline{\mathbb D}.
\]
Set $\varphi:=-\log(e^{-\psi}+\varepsilon\widetilde{\eta})$.  From  \eqref{eq:orthogonal2} we know that
\[
\frac{\int_{\mathbb{D}}fe^{-\varphi}}{\int_{\mathbb{D}}e^{-\varphi}}
=\frac{\int_{\mathbb{D}}fe^{-\psi}-\varepsilon\int_{\mathbb{D}}f\widetilde{\eta}}{\int_{\mathbb{D}}e^{-\psi}-\varepsilon\int_{\mathbb{D}}\widetilde{\eta}}
=\frac{\int_{\mathbb{D}}fe^{-\psi}}{\int_{\mathbb{D}}e^{-\psi}}=f(0),\ \ \ \forall\,f\in{A^2(\mathbb{D})},
\]
which implies
 $$
 K_{\Omega_\varphi,2}(0)=1/|\Omega_\varphi|.
 $$
For $2<p<\infty$,  we have
 $$
 1= |\Omega_\varphi|^{\frac12} K_{\Omega_\varphi,2}(0)^{\frac12}\ge  |\Omega_\varphi|^{\frac1p} K_{\Omega_\varphi,p}(0)^{\frac1p}\ge 1,
 $$
so that $K_{\Omega_\varphi,p}(0)=1/|\Omega_\varphi|.$ 
 
As  $\varphi$ is smooth on $\overline{\mathbb{D}}$,  we see that $\Omega_\varphi$ has piecewise smooth boundary ($\partial \Omega_\varphi$ is  non-smooth only when $|z_1|=1$ and $|z_2|^2=e^{-\varphi(z_1)}$).

 $\Omega_\varphi$ is non-circular.  To see this,  first take  $z\in\mathbb{D}$ and $\lambda\in\partial\mathbb{D}$ such that $e^{-\varphi(z)}>e^{-\varphi(\lambda z)}$,  then take $r>0$ with
  $$
  e^{-\varphi(z)}>r^2>e^{-\varphi(\lambda{z})}.
  $$
  It follows that $(z,r)\in \Omega_\varphi$,  while $(\lambda{z},\lambda{r})\notin \Omega_\varphi$.

\begin{problem}
Does there exists a bounded non-circular fat domain $\Omega\ni 0$ on which $K_p(0)=1/|\Omega|$ holds for all $0<p<2$? 
\end{problem}

\section{Proof of Theorem \ref{th:completeness_isolated}}

We shall make use of the following two results,  whose proofs are the same as the standard cases.

\begin{proposition}[Localization Principle]\label{prop:localization}
Let $\Omega$ be a bounded pseudoconvex domain in $\mathbb C^n$ and $V\subset\subset U$ two given neighborhoods of $w\in \partial \Omega$.  Then there exist constants $C_1,C_2>0$ such that for all $p>0$,  
$$
C_1 ds^2_{\Omega,p}\le ds^2_{\Omega\cap U,p} \le C_2 ds^2_{\Omega,p}\ \ \ \text{on}\ \ \ \Omega\cap V.
$$
\end{proposition}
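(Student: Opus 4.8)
The plan is to recognise that the Narasimhan--Simha metric $ds^2_p$ is precisely the Bergman metric of the weighted Hilbert space $A^2(\Omega,\varphi_\Omega)$ with $\varphi_\Omega:=\log K_{\Omega,p}$, so that $K_{2,p}(z)=K_{\Omega,\varphi_\Omega}(z,z)$, and then to run the classical localization argument, keeping track of the fact that the weights attached to $\Omega$ and to $\Omega':=\Omega\cap U$ are the two different functions $\varphi_\Omega$ and $\varphi_{\Omega'}:=\log K_{\Omega',p}$. Writing $K^{(2)}_{\Omega,p}(z):=K_{\Omega,\varphi_\Omega}(z,z)$ and, for $X\in\mathbb{C}^n$,
$$
M_{\Omega,p}(z;X)^2:=\sup\left\{|Xf(z)|^2:\ f\in A^2(\Omega,\varphi_\Omega),\ \int_\Omega|f|^2e^{-\varphi_\Omega}\le1,\ f(z)=0\right\},
$$
the usual extremal description of the logarithmic Hessian of a reproducing kernel gives $ds^2_{\Omega,p}(z;X)=M_{\Omega,p}(z;X)^2/K^{(2)}_{\Omega,p}(z)$, and likewise on $\Omega'$. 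It therefore suffices to prove $K^{(2)}_{\Omega,p}\asymp K^{(2)}_{\Omega',p}$ and $M_{\Omega,p}(\cdot;X)\asymp M_{\Omega',p}(\cdot;X)$ on $\Omega\cap V$, uniformly in $X$ and in $p$.

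One inequality in each pair is trivial: since $\Omega'\subset\Omega$ and $\varphi_{\Omega'}\ge\varphi_\Omega$ on $\Omega'$ (a smaller domain, even with a heavier weight, has a larger diagonal kernel), restriction shows $K^{(2)}_{\Omega,p}\le K^{(2)}_{\Omega',p}$ and $M_{\Omega,p}(\cdot;X)\le M_{\Omega',p}(\cdot;X)$ on $\Omega'$. For the reverse inequalities one uses the standard H\"ormander correction. Fix $z\in\Omega\cap V$, choose $V\subset\subset V'\subset\subset U$ and $\chi\in C_0^\infty(U)$ with $\chi\equiv1$ on $\overline{V'}$, and take $g$ to be $K_{\Omega',\varphi_{\Omega'}}(\cdot,z)$ when comparing kernels and an extremal competitor for $M_{\Omega',p}(z;X)$ when comparing metrics. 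The $(0,1)$-form $\theta:=g\,\bar\partial\chi$ is $\bar\partial$-closed on the pseudoconvex domain $\Omega$ and supported in $\overline U\setminus V'\subset\subset\Omega\setminus\{z\}$; solving $\bar\partial u=\theta$ with the plurisubharmonic weight $\varphi_\Omega+2(n+1)\log|\cdot-z|+|\cdot|^2$ (plurisubharmonic since $\log|\cdot-z|$ and $\varphi_\Omega=\log K_{\Omega,p}$ are, cf.\ \cite{CZ}, the last term added for strictness) yields $u$ vanishing to second order at $z$ with $\int_\Omega|u|^2e^{-\varphi_\Omega}\lesssim\int_{\overline U\setminus V'}|g|^2e^{-\varphi_\Omega}$. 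Then $F:=\chi g-u\in A^2(\Omega,\varphi_\Omega)$ satisfies $F(z)=g(z)$, $XF(z)=Xg(z)$ and $\|F\|^2_{\Omega,\varphi_\Omega}\lesssim\int_{\mathrm{supp}\,\chi\,\cap\,\Omega}|g|^2e^{-\varphi_\Omega}$, so that testing the extremal problems on $\Omega$ against $F$ gives $K^{(2)}_{\Omega,p}(z)\gtrsim K^{(2)}_{\Omega',p}(z)$ and $M_{\Omega,p}(z;X)^2\gtrsim M_{\Omega',p}(z;X)^2$ --- \emph{provided} $e^{-\varphi_\Omega}\lesssim e^{-\varphi_{\Omega'}}$ on $\mathrm{supp}\,\chi\,\cap\,\Omega$, i.e.\ provided $K_{\Omega',p}\lesssim K_{\Omega,p}$ on a fixed neighbourhood of $\mathrm{supp}\,\chi\,\cap\,\overline\Omega$. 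This last fact is the localization of the $p$-Bergman kernel, which we take from \cite{CZ} (for $1\le p\le2$ it also follows from Theorem \ref{th:p<2} and the $L^2$-method, and in general it is a $0$-th order version of the very estimate above). Combining with the trivial inequalities gives $ds^2_{\Omega,p}\asymp ds^2_{\Omega',p}$ on $\Omega\cap V$.

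Finally one checks that all implied constants may be taken independent of $p\in(0,\infty)$. The parameter $p$ enters only through $\varphi_\Omega$ and $\varphi_{\Omega'}$, and these are used above solely on compact subsets of $\Omega$ (the supports of $\chi$ and $\bar\partial\chi$, the set $\{|\cdot-z|\ge d(\overline V,\partial V')\}$, and so on), the strictly plurisubharmonic term $|\cdot|^2$ absorbing the weight everywhere else. On any fixed compact $L\subset\Omega$ the crude bounds $1/|\Omega|\le K_{\Omega,p}\le C_n\delta^{-2n}$ and their analogues for $\Omega'$ give $\sup_L(|\varphi_\Omega|+|\varphi_{\Omega'}|)\le C(L)$ with $C(L)$ independent of $p$, and they also furnish $p$-uniform two-sided bounds for $K^{(2)}_{\Omega,p}$ and $M_{\Omega,p}$ on $L$; since the H\"ormander constant depends only on $\mathrm{diam}\,\Omega$, everything is $p$-free. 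The main obstacle is therefore the $p$-Bergman kernel localization for $p\neq2$ with a $p$-independent constant, the one ingredient that is not a verbatim transcription of the classical $L^2$ theory; granting it, the rest of the argument is the standard one.
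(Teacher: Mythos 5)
Your overall route is exactly the one the paper intends (the paper itself offers no argument beyond ``the proofs are the same as the standard cases''): represent $ds^2_{\Omega,p}$ as the Bergman metric of the weighted space $A^2(\Omega,\log K_{\Omega,p})$, use the extremal quotient $M^2/K^{(2)}$, get the easy inequalities by restriction (using $\varphi_{\Omega'}\ge\varphi_\Omega$), and reverse them by a cut-off plus a H\"ormander correction with the singular weight $2(n+1)\log|\cdot-z|$. The construction and the jet-matching at $z$ are fine. The genuine gap is the ingredient you defer: you need $K_{\Omega\cap U,p}\le C\,K_{\Omega,p}$ on $\mathrm{supp}\,\chi\cap\Omega$ --- a set which always reaches $\partial\Omega$ (near $w$ itself, through the term $\int_\Omega|\chi g|^2e^{-\varphi_\Omega}$, and on $\mathrm{supp}\,\bar\partial\chi$ whenever $\partial\Omega\cap U\not\subset V'$) --- and, for the statement as claimed, with $C$ independent of $p$. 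This is not established anywhere in your proposal. It is not ``a $0$-th order version of the very estimate above'': the H\"ormander argument localizes the weighted $L^2$ kernel $K_{\Omega,\varphi}$, whereas what is needed is the localization of the $L^p$ extremal kernel $K_p$; for $p\neq2$ the $L^2$ bound on the correction term cannot be converted into the $L^p$ bound demanded by the extremal problem defining $K_p$ on a region touching $\partial\Omega$ (for $p>2$ no $L^p$ $\bar\partial$-estimate is available, and for $p<2$ the $L^2$ norm of the data is not controlled by the $L^p$ norm up to the boundary). The suggested reduction via Theorem \ref{th:p<2} for $1\le p\le2$ faces the same circularity, since the weight $(2-p)\log|m_p(\cdot,z)|$ itself depends on the domain; and the present paper never cites a $p$-kernel localization from \cite{CZ}, so ``we take it from \cite{CZ}'' cannot simply be assumed.

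A second, related error: your uniformity-in-$p$ paragraph asserts that $\varphi_\Omega,\varphi_{\Omega'}$ are used only on compact subsets of $\Omega$, and earlier you write $\overline U\setminus V'\subset\subset\Omega\setminus\{z\}$; both are false in general, since $\mathrm{supp}\,\chi\cap\Omega$ and $\mathrm{supp}\,\bar\partial\chi\cap\Omega$ meet $\partial\Omega$. The crude $p$-free bounds $1/|\Omega|\le K_p\le C_n\delta^{-2n}$ settle the weight comparison only on compacta, hence only in the special situation $\partial\Omega\cap U\subset V'$ would they handle $\mathrm{supp}\,\bar\partial\chi$ --- essentially the isolated-boundary-point case, which is the only case the paper actually uses (in the proof of Theorem \ref{th:completeness_isolated}, where the needed comparison of $K_{\Omega,p}$ and $K_{\mathbb D^\ast,p}$ near the puncture can be checked directly, e.g.\ via Lemma \ref{lm:Ap} and explicit competitors such as $z^{-k_p}$). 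So your argument is salvageable for that application, but as a proof of the proposition as stated it leaves the crux --- the localization of $K_p$ itself, uniformly in $p$ --- unproved and incorrectly justified.
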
 

\begin{proposition}[Kobayashi's Criterion]\label{prop:b_4}
Let $\Omega$ be a bounded domain in $\mathbb C^n$. Suppose there exists a dense subset $\mathcal S$ in $A^2(\Omega,\log K_p)$ such that for every $f\in \mathcal S$ and every discrete sequence $\{z_k\}$ in $\Omega$ there is a subsequence $\{z_{k_j}\}$ such that 
$$
\frac{|f(z_{k_j})|^2}{K_{2,p}(z_{k_j})}\rightarrow 0\ \ \ (j\rightarrow \infty).
$$
Then $ds^2_p$ is complete.
\end{proposition}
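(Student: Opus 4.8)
The plan is to transplant Kobayashi's classical completeness criterion (cf.\ \cite{Kobayashi59}) to the Hilbert space $H:=A^2(\Omega,\log K_p)$, whose reproducing kernel is $K_{2,p}(\cdot,\cdot)$ and whose Bergman metric is precisely $ds^2_p$ by the definition in \S\,1. First I would record the two structural facts that make everything run. Since $1/|\Omega|\le K_p\le C_n\delta^{-2n}$, the weight $e^{-\log K_p}=1/K_p$ is bounded by $|\Omega|$, so $A^2(\Omega)\subset H$; in particular $H\neq\{0\}$, $K_{2,p}(z)=K_{2,p}(z,z)>0$ for every $z$, and the \emph{Kobayashi map}
$$
\iota:\Omega\longrightarrow \mathbb P(H),\qquad z\longmapsto [\,K_{2,p}(\cdot,z)\,]
$$
is well defined; moreover $\iota$ is continuous into $(\mathbb P(H),d_{FS})$ because $d_{FS}(\iota(z),\iota(w))=\arccos\big(|K_{2,p}(z,w)|/\sqrt{K_{2,p}(z)K_{2,p}(w)}\big)$ and $K_{2,p}(\cdot,\cdot)$ is continuous with positive diagonal. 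The standard computation with an orthonormal basis shows that, up to the usual identification of a $(1,1)$-form with a Hermitian metric, $\iota^\ast\omega_{FS}=i\partial\bar\partial\log K_{2,p}=ds^2_p$; consequently $\iota$ is a local isometry from $(\Omega,ds^2_p)$ into $(\mathbb P(H),\omega_{FS})$, so $\mathrm{length}_{FS}(\iota\circ\gamma)=\mathrm{length}_{ds^2_p}(\gamma)$ for every piecewise $C^1$ curve $\gamma$ in $\Omega$.

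Next I would argue by contradiction. If $ds^2_p$ is incomplete, then (by the usual length-space reduction) there is a curve $\gamma:[0,1)\to\Omega$ of finite $ds^2_p$-length which eventually leaves every compact subset of $\Omega$, i.e.\ $\gamma(t)\to\partial\Omega$ as $t\to1^-$. Then $\iota\circ\gamma$ has finite Fubini--Study length. Because the unit sphere of a Hilbert space is complete and the angular distance on $\mathbb P(H)$ is bi-Lipschitz to the chordal distance up to the diameter, $(\mathbb P(H),d_{FS})$ is a complete metric space; a rectifiable curve of finite length is Cauchy, so $\iota(\gamma(t))$ converges in $d_{FS}$ to some $[u]\in\mathbb P(H)$ with $u\in H\setminus\{0\}$.

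The heart of the matter is the elementary identity, valid for $f\in H\setminus\{0\}$ and $z\in\Omega$,
$$
\frac{|f(z)|^2}{K_{2,p}(z)}=\frac{|\langle f,K_{2,p}(\cdot,z)\rangle_H|^2}{\|K_{2,p}(\cdot,z)\|_H^2}=\|f\|_H^2\,\cos^2 d_{FS}\big([f],\iota(z)\big),
$$
which is just the reproducing property together with Cauchy--Schwarz. Fixing $f\in H$ and letting $t\to1^-$, continuity of $d_{FS}(\,\cdot\,,\cdot\,)$ and $\iota(\gamma(t))\to[u]$ give $|f(\gamma(t))|^2/K_{2,p}(\gamma(t))\to \|f\|_H^2\cos^2 d_{FS}([f],[u])$. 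Now choose any $t_k\uparrow1$; since $\gamma(t)\to\partial\Omega$, the points $\{\gamma(t_k)\}$ form a discrete sequence in $\Omega$, so for each $f\in\mathcal S$ the hypothesis yields a subsequence along which $|f(\gamma(t_k))|^2/K_{2,p}(\gamma(t_k))\to0$. Comparing with the limit just computed forces $\langle f,u\rangle_H=0$ for every $f\in\mathcal S$, and density of $\mathcal S$ in $H$ then gives $u=0$, a contradiction. Hence $ds^2_p$ is complete.

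I expect the delicate points to be bookkeeping rather than conceptual: confirming that $ds^2_p$ really is $\iota^\ast\omega_{FS}$ (immediate once $\log K_{2,p}$ is recognized as the potential), that $(\mathbb P(H),d_{FS})$ is complete when $H$ is infinite-dimensional, and that the interchange of limits in the last paragraph is legitimate — all of which rest only on joint continuity and positivity of $K_{2,p}(\cdot,\cdot)$ and standard reproducing-kernel Hilbert space theory. This is exactly why the statement may be asserted to have ``the same proof as the standard case''; an alternative presentation avoiding projective space, working directly with $\sigma(z,w):=1-|K_{2,p}(z,w)|^2/(K_{2,p}(z)K_{2,p}(w))$ as a chordal pseudo-distance dominated by the $ds^2_p$-distance, would run in the same way.
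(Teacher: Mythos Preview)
The paper does not actually supply a proof of this proposition; it states it together with the localization principle and remarks that their ``proofs are the same as the standard cases,'' then proceeds directly to Corollary~\ref{cor:b_5}. Your proposal is a faithful execution of precisely that standard case --- Kobayashi's projective-embedding argument transplanted verbatim to the weighted Hilbert space $H=A^2(\Omega,\log K_p)$ --- and it is correct. The reproducing-kernel identity, the pullback relation $\iota^\ast\omega_{FS}=i\partial\bar\partial\log K_{2,p}=ds^2_p$, and the contradiction via density of $\mathcal S$ are exactly the ingredients of Kobayashi's original proof in \cite{Kobayashi59}, so there is nothing to compare: you have written out what the paper takes for granted.
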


\begin{corollary}\label{cor:b_5}
Let $\Omega$ be a bounded domain in $\mathbb C^n$ which satisfies 
\begin{enumerate}
\item[$(1)$] $K_{2,p}(z)\rightarrow \infty$ as $z\rightarrow \partial \Omega$;
\item[$(2)$] for every $w\in \partial\Omega$, $A^\infty(\Omega,w)$ lies dense in $A^2(\Omega,\log K_p)$, where 
$$
A^\infty (\Omega,w):=\left\{f\in \mathcal O(\Omega): \limsup_{z\rightarrow w} |f(z)|<\infty\right\}.
$$
\end{enumerate}
Then $ds^2_p$ is complete.
\end{corollary}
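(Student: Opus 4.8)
The plan is to verify the hypotheses of Kobayashi's Criterion (Proposition~\ref{prop:b_4}). First I would record the elementary observation that a sequence $\{z_k\}\subset\Omega$ has no accumulation point in $\Omega$ precisely when $\delta(z_k)\to 0$: since $\overline\Omega$ is compact, every subsequence of a discrete sequence has a further subsequence converging to a point of $\partial\Omega$, while any interior accumulation point would contradict discreteness. Combined with hypothesis $(1)$ this already yields $K_{2,p}(z_k)\to\infty$ for every discrete sequence $\{z_k\}$.

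Next, fix a discrete sequence $\{z_k\}$ and pass to a subsequence $\{z_{k_j}\}$ with $z_{k_j}\to w$ for some $w\in\partial\Omega$. By hypothesis $(2)$, the family $\mathcal S_w:=A^\infty(\Omega,w)\cap A^2(\Omega,\log K_p)$ is dense in $A^2(\Omega,\log K_p)$. For $f\in\mathcal S_w$ there are $r>0$ and $M<\infty$ with $|f|\le M$ on $\Omega\cap B(w,r)$; since $z_{k_j}\in B(w,r)$ for $j$ large,
\[
\frac{|f(z_{k_j})|^2}{K_{2,p}(z_{k_j})}\le \frac{M^2}{K_{2,p}(z_{k_j})}\longrightarrow 0
\]
by $(1)$. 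In the language of point-evaluations on $A^2(\Omega,\log K_p)$, whose norms equal $K_{2,p}(\cdot)^{1/2}$, this says that the normalized functionals $L_{z_{k_j}}/\lVert L_{z_{k_j}}\rVert$ tend weakly to $0$ along $\{z_{k_j}\}$, and the reasoning underlying Proposition~\ref{prop:b_4} then forces the Riemannian distance of $ds_p^2$ from a fixed base point to $z_{k_j}$ to tend to $\infty$. Since every subsequence of $\{z_k\}$ admits such a further subsequence, the distance from the base point to $z_k$ tends to $\infty$, i.e. $ds^2_p$ is complete.

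The one point requiring care — and the step I expect to be the main obstacle — is that Proposition~\ref{prop:b_4} is phrased with a \emph{single} dense subset $\mathcal S$ serving all discrete sequences, whereas the dense family $\mathcal S_w$ furnished by $(2)$ depends on the boundary cluster point $w$, and in general no fixed dense subset of $A^2(\Omega,\log K_p)$ need consist of functions bounded near \emph{every} boundary point. This is harmless: the proof of Kobayashi's Criterion operates one sequence at a time, using only a dense family of test functions whose normalized contributions vanish along that particular sequence, which is exactly what $(1)$ and $(2)$ supply; I would make this explicit by reformulating Proposition~\ref{prop:b_4} in its per-sequence form. Alternatively, whenever the sets $A^\infty(\Omega,w)$ in $(2)$ can be chosen inside the fixed space $A^\infty(\Omega)$ of globally bounded holomorphic functions — the situation in our applications — one may take $\mathcal S:=A^\infty(\Omega)\cap A^2(\Omega,\log K_p)$ verbatim, since then $|f|^2/K_{2,p}\le\lVert f\rVert_{L^\infty(\Omega)}^2/K_{2,p}\to 0$ at $\partial\Omega$ by $(1)$. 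Everything else is immediate from hypothesis $(1)$.
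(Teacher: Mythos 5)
Your verification step is fine, but you resolve the $w$-dependence differently from the paper, and your resolution leans on something you do not prove. The paper never touches Proposition \ref{prop:b_4}: it applies it verbatim with $\mathcal S$ equal to \emph{all} of $A^2(\Omega,\log K_p)$, by upgrading the vanishing from $A^\infty(\Omega,w)$ to every $f$. Concretely, given a discrete sequence choose $z_{k_j}\to z_0\in\partial\Omega$, approximate $f$ by $f_m\in A^\infty(\Omega,z_0)$ in the weighted norm (hypothesis (2)), and use the point-evaluation bound $|f(z)-f_m(z)|^2\le K_{2,p}(z)\int_\Omega|f-f_m|^2/K_p$ to get
$$
\limsup_{j\rightarrow\infty}\frac{|f(z_{k_j})|^2}{K_{2,p}(z_{k_j})}\le 2\int_\Omega\frac{|f_m-f|^2}{K_p}\longrightarrow 0\ \ (m\rightarrow\infty),
$$
the middle term $|f_m(z_{k_j})|^2/K_{2,p}(z_{k_j})$ dying by hypothesis (1). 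This is exactly the functional-analytic fact already implicit in your own remark: the normalized evaluations $L_{z_{k_j}}/\lVert L_{z_{k_j}}\rVert$ have norm one, so vanishing on the dense set $A^\infty(\Omega,w)$ automatically propagates to every $f\in A^2(\Omega,\log K_p)$; had you drawn that conclusion, you would need no modification of Proposition \ref{prop:b_4} at all. Instead you route around the issue by asserting a ``per-sequence'' reformulation of Kobayashi's criterion. That reformulation is in fact true (in the standard proof the dense set is only used to detect non-orthogonality to the limit vector attached to the given Cauchy sequence), but since neither you nor the paper writes out the proof of Proposition \ref{prop:b_4}, invoking a strengthened version of it is the weakest link in your argument, and it is avoidable by the two-line estimate above. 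Also note that your fallback option --- taking $\mathcal S=A^\infty(\Omega)\cap A^2(\Omega,\log K_p)$ --- is not justified by the hypotheses of the corollary: (2) only provides density of functions bounded near the \emph{single} boundary point $w$, and in the paper's application (Theorem \ref{th:completeness_isolated}) the approximants are holomorphic across $w$ but need not be globally bounded, so the per-sequence argument (or the paper's approximation step) is what actually carries the proof.
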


\begin{proof}
Let $\{z_k\}$ be a discrete sequence in $\Omega$. We may choose a subsequence $z_{k_j}\rightarrow z_0\in \partial \Omega$. For every $f\in A^2_p(\Omega)$ there exists a sequence $\{f_m \}\subset A^\infty(\Omega,z_0)$ such that 
$$
\int_\Omega \frac{|f_m-f|^2}{K_p}\rightarrow 0\ \ \ (m\rightarrow \infty).
$$
Since
\begin{eqnarray*}
\frac{|f(z_{k_j})|^2}{K_{2,p}(z_{k_j})} & \le & \frac{2|f(z_{k_j})-f_m(z_{k_j})|^2}{K_{2,p}(z_{k_j})} + \frac{2|f_m(z_{k_j})|^2}{K_{2,p}(z_{k_j})}\\
& \le & 2  \int_\Omega \frac{|f_m-f|^2}{K_p} + \frac{2|f_m(z_{k_j})|^2}{K_{2,p}(z_{k_j})},
\end{eqnarray*}
it follows from (1) that 
$$
\limsup_{j\rightarrow \infty} \frac{|f(z_{k_j})|^2}{K_{2,p}(z_{k_j})} \le 2  \int_\Omega \frac{|f_m-f|^2}{K_p}\rightarrow 0\ \ \ (m\rightarrow \infty),
$$
and Kobayashi's Criterion applies.
\end{proof}

It turns out that condition (1) in Corollary \ref{cor:b_5} is superfluous  for $p<2$ at least for the case $n=1$:

\begin{lemma}\label{lm:b_6}
Let $\Omega$ be a bounded domain in $\mathbb C$ and $0<p<2$. Then  
\begin{equation}\label{eq:p-Lower_1}
 \frac{2-p}{2\pi R^{2-p}} \delta(z)^{-p} \le K_p(z)\le \pi^{-1}\delta(z)^{-2},
 \end{equation}
 \begin{equation}\label{eq:p-Lower_2}
K_{2,p}(z)\ge C_0 \left(\frac{2-p}{2\pi}\right) R^{p-4}\cdot \delta(z)^{-p},
\end{equation}
where $C_0$ is a universal constant and  $R$ means the diameter of $\Omega$.
\end{lemma}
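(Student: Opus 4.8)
The plan is to obtain all three estimates from a single explicit competitor together with the (sub)mean value inequality for $|f|^p$. After a harmless normalisation we set $\delta:=\delta(z)$, choose a boundary point $w_0\in\partial\Omega$ realising $|z-w_0|=\delta$, and put
$$
f(w):=\frac{z-w_0}{w-w_0},\qquad w\in\Omega.
$$
Since $w_0\notin\Omega$ this is holomorphic on $\Omega$, $f(z)=1$, and $|f(w)|=\delta/|w-w_0|$. The only geometry used is that, $R$ being the diameter of $\Omega$, one has $\Omega\subset\overline{\mathbb{D}_R(w_0)}$, hence $|\Omega|\le\pi R^2$ and, for every $\gamma<2$,
$$
\int_\Omega\frac{dA(w)}{|w-w_0|^{\gamma}}\le\int_{\mathbb{D}_R(w_0)}\frac{dA}{|w-w_0|^{\gamma}}=2\pi\int_0^R r^{1-\gamma}\,dr=\frac{2\pi R^{2-\gamma}}{2-\gamma}.
$$

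For \eqref{eq:p-Lower_1}: the right-hand inequality is the mean value inequality, since for any $g\in A^p(\Omega)$ with $g(z)=1$ subharmonicity of $|g|^p$ on $\mathbb{D}(z,\delta)\subset\Omega$ gives $1\le(\pi\delta^2)^{-1}\int_\Omega|g|^p$, so $m_p(z)^p\ge\pi\delta^2$ and $K_p(z)=m_p(z)^{-p}\le\pi^{-1}\delta^{-2}$. For the left-hand inequality I would feed $f$ into the minimising problem: since $p<2$ the displayed bound with $\gamma=p$ yields $\|f\|_p^p=\delta^p\int_\Omega|w-w_0|^{-p}\,dA\le\tfrac{2\pi R^{2-p}}{2-p}\delta^p<\infty$, whence $m_p(z)^p\le\|f\|_p^p$ gives $K_p(z)\ge\tfrac{2-p}{2\pi R^{2-p}}\delta^{-p}$. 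This is precisely where $p<2$ enters: $|w-w_0|^{-p}$ is no longer integrable near the boundary point $w_0$ when $p\ge2$.

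For \eqref{eq:p-Lower_2}: recall $K_{2,p}=K_{\Omega,\log K_p}$ is the reproducing kernel of $A^2(\Omega,\log K_p)=\{h\in\mathcal O(\Omega):\int_\Omega|h|^2/K_p<\infty\}$, well defined since $K_p$ is continuous with $1/|\Omega|\le K_p\le C_n\delta^{-2n}$, so by its extremal property $K_{2,p}(z)\ge|f(z)|^2\big/\int_\Omega|f|^2/K_p$. The decisive point is to bound the weight $1/K_p(w)=m_p(w)^p$ not by the lower bound just proved but by the mean value estimate $m_p(w)^p\le\pi\,\delta(w)^2$ from the first part; combined with $\delta(w)\le|w-w_0|$ (as $w_0\in\partial\Omega$) this cancels the singularity of $|f|^2$ pointwise,
$$
\frac{|f(w)|^2}{K_p(w)}=\frac{\delta^2}{|w-w_0|^2}\,m_p(w)^p\le\pi\delta^2,
$$
so $\int_\Omega|f|^2/K_p\le\pi\delta^2|\Omega|\le\pi^2R^2\delta^2$ and $K_{2,p}(z)\ge(\pi^2R^2\delta^2)^{-1}$. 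Finally $\delta\le R$ and $0<p<2$ give $(\delta/R)^{2-p}\le1$, i.e. $R^{-2}\delta^{-2}\ge R^{p-4}\delta^{-p}$, and $\tfrac{2-p}{2}\le1$; hence $K_{2,p}(z)\ge\pi^{-2}R^{p-4}\delta^{-p}\ge\pi^{-1}\cdot\tfrac{2-p}{2\pi}R^{p-4}\delta^{-p}$, which is \eqref{eq:p-Lower_2} with the universal constant $C_0=1/\pi$.

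Everything reduces to one polar-coordinate integral, so nothing is technically heavy; the step I would flag as the crux is the choice in \eqref{eq:p-Lower_2} of bounding $1/K_p$ by $\pi\delta(w)^2$ rather than by the sharper-looking lower bound for $K_p$. The latter would force the factor $\int_\Omega|w-w_0|^{p-2}\,dA\asymp R^p/p$ and produce a constant of order $p(2-p)$ that degenerates as $p\to0^+$, whereas the mean value bound matches the $|w-w_0|^{-2}$ blow-up of $|f|^2$ exactly and yields an estimate uniform in $p$. Note that no pseudoconvexity or boundary regularity of $\Omega$ is needed anywhere.
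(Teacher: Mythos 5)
Your proof of \eqref{eq:p-Lower_1} is correct and is essentially the paper's own argument (the paper tests with the unnormalised competitor $(\zeta-z_\ast)^{-1}$, you normalise it at $z$; the upper bound is the Bergman/mean-value inequality in both cases). The proof of \eqref{eq:p-Lower_2}, however, breaks down at exactly the step you flag as the crux: the inequality $m_p(w)^p\le\pi\,\delta(w)^2$ is the \emph{reverse} of what the mean value estimate gives. Applying sub-mean value to a competitor $g$ with $g(w)=1$ yields $\|g\|_p^p\ge\pi\delta(w)^2$, hence $m_p(w)^p\ge\pi\delta(w)^2$, equivalently $K_p(w)\le\pi^{-1}\delta(w)^{-2}$ --- that is precisely how you obtained the upper bound in \eqref{eq:p-Lower_1}. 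What your argument needs, namely $K_p(w)\ge\pi^{-1}\delta(w)^{-2}$, is false for general bounded planar domains: by Proposition \ref{prop:asyptotic_punctured}, on $\mathbb{D}^\ast$ one has $K_p(w)\asymp|w|^{-pk_p}$ near the puncture with $pk_p<2$, so $\delta(w)^2K_p(w)\asymp|w|^{2-pk_p}\rightarrow0$ there. Consequently the pointwise cancellation $|f(w)|^2/K_p(w)\le\pi\delta^2$ fails and the bound $\int_\Omega|f|^2/K_p\le\pi^2R^2\delta^2$ does not hold.

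The argument can be repaired along the very route you dismissed: the upper bound on the weight that \emph{does} follow from \eqref{eq:p-Lower_1} is $m_p(w)^p\le\frac{2\pi R^{2-p}}{2-p}\delta(w)^p\le\frac{2\pi R^{2-p}}{2-p}|w-w_0|^p$, which gives $\int_\Omega|f|^2/K_p\le\frac{4\pi^2R^2}{p(2-p)}\,\delta^2$ and hence $K_{2,p}(z)\ge\frac{p}{2\pi}\cdot\frac{2-p}{2\pi}\,R^{p-4}\delta(z)^{-p}$. This proves the estimate with $C_0$ replaced by $p/(2\pi)$, which is perfectly adequate for the application in Theorem \ref{th:completeness_isolated} (there $1\le p<2$), but it is not the universal constant asserted in the lemma, since it degenerates as $p\rightarrow0+$ --- the degeneration you were trying to avoid, but the price of a correct inequality with this test function. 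The paper sidesteps the issue entirely: since $\log K_p$ is subharmonic, Demailly's approximation theorem (an Ohsawa--Takegoshi-type extension bound for the weight $\log K_p$) gives $K_{2,p}(z)\ge C_0R^{-2}K_p(z)$ with $C_0$ universal, and \eqref{eq:p-Lower_2} then follows at once from the lower bound in \eqref{eq:p-Lower_1}.
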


\begin{proof}
Given $z\in \Omega$,   take $z_\ast \in \partial \Omega$ so that $|z-z_\ast|=\delta(z)$.  Set $f_\ast(\zeta)=(\zeta-z_\ast)^{-1}$.  Since 
$$
\int_\Omega |f_\ast|^p\le \frac{2\pi R^{2-p}}{2-p}, 
$$
 we see that 
$$
K_p(z)\ge \frac{|f_\ast(z)|^p}{\int_\Omega |f_\ast|^p}\ge \frac{2-p}{2\pi R^{2-p}}\cdot \delta(z)^{-p}. 
$$
The second inequality in \eqref{eq:p-Lower_1} follows directly from the Bergman inequality.   

Since $\log K_p$ is subharmonic,  it follows from  Demailly's approximation \cite{Demailly} that 
$$
K_{2,p}(z)\ge \frac{C_0}{R^2} \cdot K_p(z).
$$
The proof is complete.
\end{proof}

Let $\Omega$ be a bounded  domain in $\mathbb C$.  Let $w$ be a non-isolated boundary point, i.e., there exists a sequence $\{w_j\}\subset \mathbb C\setminus \Omega$ such that $w_j\neq w$ for all $j$ and $w_j\rightarrow w$ as $j\rightarrow \infty$.  Set
$$
\Omega_{w}^{j}:=\Omega\cup \{z:|z-w|<|w_j-w|\}, \ \ \ j\in \mathbb Z^+.
$$ 
Without loss of generality, we may assume that $\{|w_j-w|\}$ is strictly decreasing, so that  $\Omega^j_w$ is also decreasing. 
Set $\delta_j:=\delta_{\Omega^j_w}$.  Since $w_j\in \partial \Omega_{w}^{j}$,  we have 
\begin{equation}\label{eq:b-dist_1}
\delta_j(z)\le |z-w|+\delta_j(w)\le 2|w_j-w|
\end{equation}
for all $z\in \Omega^j_w\backslash \Omega$, and
\begin{equation}\label{eq:b-dist_2}
\delta_j(z)\rightarrow \delta(z)\ \ \ \text{uniformly\ on}\ \ \ \overline{\Omega}. 
\end{equation}
 
\begin{lemma}\label{lm:M_2}
If\/ $1\le p<2$, then $K_{\Omega_{w}^{j}, p}(z) \uparrow K_{\Omega,p}(z)$ for $z\in \Omega$ as $j\rightarrow \infty$.
\end{lemma}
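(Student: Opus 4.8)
The plan is to prove monotone convergence $K_{\Omega_w^j,p}(z)\uparrow K_{\Omega,p}(z)$ in two parts: monotonicity, and the limit. Monotonicity is immediate: since $\{\Omega_w^j\}$ is a decreasing sequence of domains all containing $\Omega$, the inclusion $A^p(\Omega_w^{j+1})\hookrightarrow A^p(\Omega_w^j)$ (restriction) is norm-nonincreasing, so $K_{\Omega_w^{j+1},p}(z)\ge K_{\Omega_w^j,p}(z)$ for $z\in\Omega$; likewise $K_{\Omega,p}(z)\ge K_{\Omega_w^j,p}(z)$ since $\Omega\subset\Omega_w^j$. Hence the sequence $\{K_{\Omega_w^j,p}(z)\}$ increases and is bounded above by $K_{\Omega,p}(z)$, so it has a limit $L(z)\le K_{\Omega,p}(z)$. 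It remains to show $L(z)\ge K_{\Omega,p}(z)$, i.e. the limit is not strictly smaller.

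For this, fix $z\in\Omega$ and let $f=m_p(\cdot,z)$ be the (unique, since $p\ge 1$) minimizer for $K_{\Omega,p}(z)$, so $f\in A^p(\Omega)$, $f(z)=1$, $\|f\|_{L^p(\Omega)}^p=K_{\Omega,p}(z)^{-1}$. I would like to produce, for each $j$, a competitor $g_j\in A^p(\Omega_w^j)$ with $g_j(z)=1$ and $\|g_j\|_{L^p(\Omega_w^j)}^p \to \|f\|_{L^p(\Omega)}^p$, which would give $K_{\Omega_w^j,p}(z)\ge \|g_j\|_{L^p(\Omega_w^j)}^{-p}\to K_{\Omega,p}(z)$. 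The natural candidate is to extend $f$ across the little cap $C_j:=\{|\zeta-w|<|w_j-w|\}$. Since $w$ is non-isolated and $C_j\setminus\Omega$ can be large, a holomorphic extension of $f$ need not exist, so instead I would use a normal-family / diagonal argument running backwards: take $h_j:=m_p(\cdot,z)$ for the domain $\Omega_w^j$; these are holomorphic on $\Omega_w^j\supset\Omega$, with $h_j(z)=1$ and $\|h_j\|_{L^p(\Omega_w^j)}^p = K_{\Omega_w^j,p}(z)^{-1}\le K_{\Omega,p}(z)^{-1}$ bounded. In particular $\|h_j\|_{L^p(\Omega)}$ is uniformly bounded, and by the mean-value inequality $\{h_j\}$ is locally uniformly bounded on $\Omega$, hence a normal family there; pass to a subsequence with $h_j\to h$ locally uniformly on $\Omega$. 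Then $h\in\mathcal O(\Omega)$, $h(z)=1$, and Fatou's lemma gives $\|h\|_{L^p(\Omega)}^p \le \liminf_j \|h_j\|_{L^p(\Omega)}^p \le \liminf_j \|h_j\|_{L^p(\Omega_w^j)}^p = L(z)^{-1}$. Therefore $h\in A^p(\Omega)$ is a competitor for $K_{\Omega,p}(z)$, so $K_{\Omega,p}(z)\le \|h\|_{L^p(\Omega)}^{-p}\le L(z)$, which combined with $L(z)\le K_{\Omega,p}(z)$ yields $L(z)=K_{\Omega,p}(z)$. Since every subsequence of the monotone sequence $\{K_{\Omega_w^j,p}(z)\}$ has this limit, the full sequence converges to $K_{\Omega,p}(z)$.

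The one point requiring care — and the main (mild) obstacle — is the passage $\liminf_j\|h_j\|_{L^p(\Omega)}^p \le \liminf_j\|h_j\|_{L^p(\Omega_w^j)}^p$: this is simply $\|h_j\|_{L^p(\Omega)}\le\|h_j\|_{L^p(\Omega_w^j)}$ because $\Omega\subset\Omega_w^j$, so it is automatic. What one does use essentially is that $K_{\Omega_w^j,p}(z)^{-1}$ stays bounded (true by monotonicity against $\Omega$) and that the $h_j$ form a normal family on $\Omega$ (true by the Bergman inequality, uniformly on compact subsets of $\Omega$, since $\|h_j\|_{L^p(\Omega)}$ is bounded). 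No hyperconvexity or regularity of $\Omega$ is needed, and the hypothesis $1\le p<2$ is used only to guarantee uniqueness of minimizers so that $m_p(\cdot,z)$ is well defined — though even uniqueness is not strictly necessary for the displayed convergence, only for identifying the limit with $m_p(\cdot,z)$ afterwards. I would remark that the argument works verbatim for any decreasing sequence of bounded domains containing a fixed domain, and that the same reasoning gives locally uniform convergence of $K_{\Omega_w^j,p}(\cdot)$ on compact subsets of $\Omega$.
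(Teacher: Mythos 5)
The easy half of your argument (monotonicity in $j$ and $L(z)\le K_{\Omega,p}(z)$) is fine, but the hard half is not proved: your final chain of inequalities is reversed. If $h\in A^p(\Omega)$ satisfies $h(z)=1$, then $h$ is a competitor in the supremum defining $K_{\Omega,p}(z)$, so it yields $K_{\Omega,p}(z)\ge\|h\|_{L^p(\Omega)}^{-p}$, not $K_{\Omega,p}(z)\le\|h\|_{L^p(\Omega)}^{-p}$; likewise, from $\|h\|_{L^p(\Omega)}^{p}\le L(z)^{-1}$ you get $\|h\|_{L^p(\Omega)}^{-p}\ge L(z)$, not $\le L(z)$. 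Consequently the normal-family limit $h$ of the extremal functions $h_j$ of $\Omega_w^j$ only gives $K_{\Omega,p}(z)\ge L(z)$, which is exactly the trivial inequality you already had from $\Omega\subset\Omega_w^j$; nowhere is $L(z)$ bounded from below by $K_{\Omega,p}(z)$. (A smaller slip of the same kind: $K_{\Omega_w^j,p}(z)^{-1}\le K_{\Omega,p}(z)^{-1}$ is also backwards, although the boundedness you need survives, e.g. $K_{\Omega_w^j,p}(z)^{-1}\le|\Omega_w^1|$.)

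The gap cannot be repaired inside your scheme, and your closing remark is the warning sign: the assertion that the argument works for any decreasing sequence of bounded domains containing a fixed domain is false. Take $\Omega=\mathbb{D}^\ast$ and $\Omega_j=\mathbb{D}$ (or $\mathbb{D}_{1+1/j}$): for $p<2$ one has $K_{\mathbb{D}_{1+1/j},p}\to K_{\mathbb{D},p}\neq K_{\mathbb{D}^\ast,p}$, since $K_{\mathbb{D}^\ast,p}(z)\asymp|z|^{-pk_p}\to\infty$ as $z\to0$ by Proposition \ref{prop:asyptotic_punctured}. So the non-isolatedness of $w$ and the restriction $p<2$ must enter essentially, and they never enter your proof (in particular, your claim that $1\le p<2$ is needed only for uniqueness of minimizers is incorrect). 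What must be argued is the opposite approximation: the extremal function $f$ for $K_{\Omega,p}(z)$, normalized by $\|f\|_{L^p(\Omega)}=1$ and $|f(z)|^p=K_{\Omega,p}(z)$, has to be approximated in $L^p(\Omega)$ by functions holomorphic on the larger domains. This is what the paper does: apply Theorem \ref{th:Hedberg_approximation} (rational approximation in $L^q$, valid precisely for $1\le q<2$) to $E=\Omega\cup\{w\}$, whose interior equals $\Omega$ exactly because $w$ is non-isolated, obtaining $f_k\in\mathcal O(\Omega)\cap\mathcal O_w$ with $\|f_k-f\|_{L^p(\Omega)}\to0$; each $f_k$ lies in $A^p(\Omega_w^{j_k})$ for suitable $j_k$ with $\|f_k\|_{L^p(\Omega_w^{j_k}\setminus\Omega)}\to0$, whence $K_{\Omega_w^{j_k},p}(z)\ge|f_k(z)|^p/\|f_k\|^p_{L^p(\Omega_w^{j_k})}\to K_{\Omega,p}(z)$, and the monotonicity you established upgrades this subsequential statement to convergence of the full increasing sequence.
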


\begin{proof}
Given $z\in \Omega$,  choose $f\in A^p(\Omega)$ such that $\|f\|_{L^p(\Omega)}=1$ and $|f(z)|^p=K_{\Omega, p}(z)$. 
Set $E=\Omega\cup \{w\}$.  Since $w$ is a non-isolated boundary point,   we see that $E^\circ =\Omega$. Apply Theorem \ref{th:Hedberg_approximation} to the set $E$, we get  a sequence 
$\{f_k\}\subset \mathcal O({\Omega})\cap \mathcal O_w$, where $\mathcal O_w$ is the germ of  holomorphic functions at $w$, such that 
$$
\|f_k-f\|_{L^p({\Omega})}\rightarrow 0\ \ \ (k\rightarrow \infty).
$$
We also have 
\begin{equation}\label{eq:uniform}
|f_k(z)-f(z)|^p \le \frac1{\pi\delta(z)^2} \|f_k-f\|_{L^p({\Omega})}^p\rightarrow 0
\end{equation}
in view of the mean-value inequality. 
Clearly, for every $k$ there exists $j_k\gg 1$ such that $f_k\in \mathcal O(\Omega^{j_k}_w)$ and 
$\|f_k\|_{L^p(\Omega^{j_k}_w\setminus {\Omega})}\rightarrow 0$ as $k\rightarrow \infty$. Thus 
$$
\|f_k\|_{L^p(\Omega^{j_k}_w)}\rightarrow \|f\|_{L^p(\Omega)}=1 \ \ \  (k\rightarrow \infty).
$$
It follows that 
$$
\varliminf_{k\rightarrow \infty} K_{\Omega^{j_k}_w,p}(z)\ge \varliminf_{k\rightarrow \infty}  \frac{|f_k(z)|^p}{\|f_k\|_{L^p(\Omega^{j_k}_w)}^p}
=|f(z)|^p=K_{\Omega,p}(z).
$$
On the other hand, we have $K_{\Omega^{j_k}_w,p}(z)\le K_{\Omega,p}(z)$, so that $K_{\Omega^{j_k}_w,p}(z)\rightarrow K_{\Omega,p}(z)$. As the sequence $\{K_{\Omega^j_w,p}(z)\}$ is increasing, we see that $K_{\Omega^j_w,p}(z)\rightarrow K_{\Omega,p}(z)$.
\end{proof}

\begin{proof}[Proof of Theorem \ref{th:completeness_isolated}/$(1)$]
By Corollary \ref{cor:b_5} and \eqref{eq:p-Lower_2},  it suffices to show that $A^\infty(\Omega,w)$ lies dense in $A^2(\Omega,\log K_p)$. Without loss of generality, we assume that $K_{\Omega^j_w,p}(z)>e$. Set 
$$
\varphi_j(z)=\log K_{\Omega^j_w,p}(z),\ \ \ \psi_j(z)=-\frac12 \log \varphi_j(z).
$$
We have
\begin{eqnarray*}
 i\partial\bar{\partial} (\varphi_j+\psi_j) & = &  i\partial\bar{\partial} \varphi_j - \frac{i\partial\bar{\partial} \varphi_j}{2\varphi_j}
  + \frac{i\partial \varphi_j\wedge \bar{\partial}\varphi_j}{2\varphi_j^2}\\
  & \ge & 2 i\partial \psi_j\wedge \bar{\partial}\psi_j
\end{eqnarray*}
on $\Omega^j_w$. Let $\chi:\mathbb R\rightarrow [0,1]$ be a smooth function satisfying $\chi|_{(-\infty,-\log 2]}=1$ and $\chi|_{[0,\infty)}=0$. 
   Given $f\in A^2(\Omega,\log K_p)$ and $0<\varepsilon\ll 1$,  define
   $$
   \eta_{j,\varepsilon}:= f\chi(-2\psi_j-\log\log1/\varepsilon).
   $$
   Since 
   $$
   -2\psi_j-\log\log1/\varepsilon\le 0 \iff K_{\Omega^j_w,p}\le 1/\varepsilon,
      $$
      it follows from \eqref{eq:p-Lower_1} that there exists a constant $C_1>0$ (depending only on $p$ and $R$) such that 
$$
\text{supp}\, \eta_{j,\varepsilon}\subset \{z\in \Omega^j_w: \delta_{j}(z)\ge (C_1\varepsilon)^{1/p}\}.
$$
By (\ref{eq:b-dist_1}) and (\ref{eq:b-dist_2}), we see that  for some $j_\varepsilon\gg 1$
$$
\text{supp}\, \eta_{j,\varepsilon}\subset \{z\in \Omega: \delta(z)\ge (C_1\varepsilon/2)^{1/p}\},\ \ \ \forall\,j\ge j_\varepsilon.
$$
Analogously, as 
$$
-2\psi_j-\log\log1/\varepsilon\le -\log 2 \iff K_{\Omega^j_w,p}\le 1/\sqrt{\varepsilon},
$$
it follows again from \eqref{eq:p-Lower_1}  that 
\begin{eqnarray*}
\{ \eta_{j,\varepsilon}=1 \} & \supset &  \{z\in \Omega^j_w: \delta_{j}(z)\ge \varepsilon^{1/4}/\pi^{1/2}\} \\
& \supset &     \{z\in \Omega: \delta(z)\ge \varepsilon^{1/4}/\pi^{1/2}\}  
\end{eqnarray*}
since $\delta_j\ge \delta$ on $\Omega$. By the Donnelly-Fefferman estimate (cf. \cite{BCh}), there is a solution $u_{j,\varepsilon}$ of the equation
$
\bar{\partial} u=\bar{\partial}\eta_{j,\varepsilon}
$
together with the following estimate
\begin{eqnarray}\label{eq:M_1}
\int_{\Omega^j_w}  |u_{j,\varepsilon}|^2 e^{- \varphi_j} & \le & C_0 \int_{\Omega^j_w} 
            |\bar{\partial}\eta_{j,\varepsilon}|^2_{i\partial\bar{\partial}( \varphi_j+\psi_j)} e^{- \varphi_j} \nonumber\\
            & \le & C_2    \int_{(C_1\varepsilon/2)^{1/p}\le \delta\le \varepsilon^{1/4}/\pi^{1/2}} \frac{|f|^2}{K_{\Omega^j_w,p}}\nonumber\\
            & \le &    2 C_2    \int_{ (C_1\varepsilon/2)^{1/p}\le\delta\le \varepsilon^{1/4}/\pi^{1/2}} \frac{|f|^2}{K_{\Omega,p}}
\end{eqnarray}
whenever $j\ge j_\varepsilon\gg 1$, in view of Lemma \ref{lm:M_2} and the dominated convergence theorem. Here $C_0$ is a universal constant and $C_2$ is a constant depending only on $p,R$.  It follows that 
$$
f_{j,\varepsilon}:=f\chi(-2\psi_j-\log\log1/\varepsilon)-u_{j,\varepsilon}\in \mathcal O(\Omega^j_w)
\subset A^\infty(\Omega,w)
$$
and 
\begin{eqnarray*}
\int_\Omega \frac{|f_{j,\varepsilon}-f|^2}{K_{\Omega,p}} 
            & \le & 2\int_{\delta\le  \varepsilon^{1/4}/\pi^{1/2}} \frac{|f|^2}{K_{\Omega,p}} 
              + 2\int_\Omega  \frac{|u_{j,\varepsilon}|^2}{K_{\Omega,p}} \\
              & \le & (2
              +  4 C_2)    \int_{\delta\le \varepsilon^{1/4}/\pi^{1/2}} \frac{|f|^2}{K_{\Omega,p}}    
\end{eqnarray*}
in view of (\ref{eq:M_1}),  since $K_{\Omega^j_w,p}\le K_{\Omega,p}$ on $\Omega$. The proof is complete.
\end{proof}

Let $\mathbb D^\ast:=\mathbb D\setminus \{0\}$ be the punctured unit disc.  We have the following elementary fact.

\begin{lemma}\label{lm:Ap}
Set $k_p:=\max\{k\in \mathbb Z^+:k < 2/p\}$,   $0<p<2$.  Then
$$
A^p(\mathbb{D}^\ast)=\left\{\sum_{k=1}^{k_p}  c_k z^{-k}+\tilde{f}(z): c_k\in \mathbb C,\ \tilde{f}\in A^p(\mathbb{D})\right\}.
$$
\end{lemma}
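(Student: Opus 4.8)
The plan is to describe $A^p(\mathbb{D}^\ast)$ via Laurent expansions and use the integrability threshold of $z^{-k}$ on a punctured disc.

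\medskip

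\noindent\textbf{Proof proposal.} Every $f\in\mathcal{O}(\mathbb{D}^\ast)$ has a Laurent expansion $f(z)=\sum_{k\in\mathbb Z} c_k z^k$ converging locally uniformly on $\mathbb{D}^\ast$. Write $f=g+h$ where $g(z):=\sum_{k<0}c_kz^k$ is the principal part (holomorphic on $\mathbb C\setminus\{0\}$, in fact holomorphic near $\infty$ and vanishing there) and $h(z):=\sum_{k\ge0}c_kz^k\in\mathcal O(\mathbb D)$. The first step is to check that $h\in A^p(\mathbb D)$ whenever $f\in A^p(\mathbb D^\ast)$: since $h$ is bounded on any $\mathbb{D}_r$ with $r<1$, only the behavior near $|z|=1$ matters, and there $g$ is smooth and bounded, so $|h|^p\lesssim|f|^p+|g|^p\in L^1$ on the annulus $\{1/2<|z|<1\}$; combined with local boundedness of $h$ on $\mathbb{D}_{1/2}$ this gives $h\in A^p(\mathbb D)$. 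Consequently $g=f-h\in L^p(\mathbb D^\ast)$ as well.

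\medskip

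\noindent The second step is the key computation: for the monomial $z^{-k}$ with $k\ge1$, using polar coordinates,
\[
\int_{\mathbb D^\ast}|z^{-k}|^p = 2\pi\int_0^1 r^{-kp}\,r\,dr = 2\pi\int_0^1 r^{1-kp}\,dr,
\]
which is finite if and only if $1-kp>-1$, i.e. $k<2/p$. More generally I would argue that for a principal part $g(z)=\sum_{k=1}^{N}c_kz^{-k}$ (with $c_N\ne0$), $g\in L^p$ near $0$ forces $N<2/p$: indeed $|g(z)|\sim|c_N||z|^{-N}$ as $z\to0$, so $\int_{|z|<\varepsilon}|g|^p\asymp\int_0^\varepsilon r^{1-Np}\,dr$, finite iff $N<2/p$, i.e. $N\le k_p$. (If the principal part were infinite, $g$ would have an essential singularity and in particular $|g|$ would not be dominated by any $|z|^{-N}$, so $\int_{|z|<\varepsilon}|g|^p=\infty$; alternatively one notes $g$ extends holomorphically to a neighborhood of $\infty$ in $\mathbb P^1$ only when the principal part is finite, and an essential singularity is incompatible with $L^p$-integrability by a quick estimate.) This shows any $f\in A^p(\mathbb D^\ast)$ has the claimed form with $g=\sum_{k=1}^{k_p}c_kz^{-k}$ and $h\in A^p(\mathbb D)$.

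\medskip

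\noindent Conversely, each $z^{-k}$ with $1\le k\le k_p$ lies in $A^p(\mathbb D^\ast)$ by the computation above, and $A^p(\mathbb D)\subset A^p(\mathbb D^\ast)$ trivially; since $A^p(\mathbb D^\ast)$ is a vector space, every function $\sum_{k=1}^{k_p}c_kz^{-k}+\tilde f$ with $\tilde f\in A^p(\mathbb D)$ belongs to $A^p(\mathbb D^\ast)$. This gives the reverse inclusion and completes the proof.

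\medskip

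\noindent The only mildly delicate point is ruling out an infinite principal part, i.e. showing an essential singularity at $0$ is incompatible with $L^p$ integrability for $p>0$; I expect to handle this by the remark that $f\in L^p(\mathbb D^\ast)\cap\mathcal O(\mathbb D^\ast)$ together with the subharmonicity of $|f|^p$ and the mean value inequality gives $|f(z)|^p\lesssim |z|^{-2}\int_{|w-z|<|z|/2}|f|^p$, so $|f(z)|=O(|z|^{-2/p})$ as $z\to0$, whence the singularity is at worst a pole of order $\le k_p$ — which directly yields the stated description without even invoking the Laurent series separately. Everything else is routine.
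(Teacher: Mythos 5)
Your argument is correct, but it runs differently from the paper's. The paper also splits $f=f_1+f_2$ into principal and regular parts, but then substitutes $\zeta=1/z$, so that $\tilde f_1(\zeta)=f_1(1/\zeta)$ becomes an entire function with $\int_{|\zeta|>2}|\tilde f_1|^p|\zeta|^{-4}<\infty$; the mean-value inequality then gives $|\tilde f_1(\zeta)|\lesssim|\zeta|^{2/p}$, Cauchy estimates kill every Taylor coefficient of degree $>2/p$ (this is how the a priori infinite principal part is handled), and the borderline coefficient of degree exactly $2/p$, when $2/p\in\mathbb Z^+$, is removed by the explicit non-integrability of $|z|^{-2}$. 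You instead stay on the punctured disc: the mean-value inequality applied to $|f|^p$ on discs $\{|w-z|<|z|/2\}$ gives $|f(z)|=O(|z|^{-2/p})$, so the singularity is a pole, and then the asymptotics $|g(z)|\sim|c_N||z|^{-N}$ of a finite principal part together with the threshold computation force $N<2/p$, i.e.\ $N\le k_p$; the converse inclusion is the same monomial computation in both treatments. The two proofs are of comparable length and both ultimately rest on the sub-mean-value property; yours avoids the inversion and the coefficient estimates, while the paper's handles all coefficients (including the infinite tail) in one stroke.

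Two small cautions. First, your parenthetical claim that an essential singularity is excluded because $|g|$ "is not dominated by any $|z|^{-N}$, so the integral is infinite" is not a valid deduction by itself (failure of a pointwise bound does not force divergence of an integral); but this does not matter, since the mean-value-inequality argument you give afterwards is the correct replacement. Second, your closing remark overstates what that growth bound alone yields: when $2/p\in\mathbb Z^+$, the estimate $|f(z)|=O(|z|^{-2/p})$ only bounds the pole order by $2/p=k_p+1$, not by $k_p$, so you still need the integrability-threshold step (your strict inequality $N<2/p$, or equivalently the paper's observation that $|z|^{-(k_p+1)p}=|z|^{-2}$ is not integrable) to eliminate the top coefficient in that borderline case. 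Since that step is already present in the body of your argument, the proof as assembled is complete.
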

                      
\begin{proof}
Given $f\in A^p(\mathbb{D}^\ast)$,    write $f=f_1+f_2$,  where $f_1$ is the principle  part.   Since $f_2$ is holomorphic on $\mathbb{D}$, it follows that $f_1\in A^p(\mathbb{D}^\ast_{1/2})$,  where $\mathbb D^\ast_r=\{z:0<|z|<r\}$.  Moreover,
$$
\int_{\mathbb{D}^\ast_{1/2}} |f_1|^p= \int_{|\zeta|>2} |f_1(1/\zeta)|^p \cdot |\zeta|^{-4}.
$$
Note that $\tilde{f}_1(\zeta):=f_1(1/\zeta)=\sum_{k=1}^\infty c_k \zeta^k$ is an entire function which satisfies 
$$
\int_{|\zeta|>2} |\tilde{f}_1(\zeta)|^p \cdot |\zeta|^{-4}<\infty.
$$
The mean-value inequality implies that  
\begin{eqnarray*}
|\tilde{f}_1(\zeta_0)|^p & \le & \frac1{\pi (|\zeta_0|/2)^2}  \int_{|\zeta-\zeta_0|< |\zeta_0|/2} |\tilde{f}_1|^p \\
              & \le & C_0 |\zeta_0|^{2}   \int_{|\zeta-\zeta_0|< |\zeta_0|/2}  |\tilde{f}_1|^p \cdot |\zeta|^{-4} \\
              & \le  &  C_0 |\zeta_0|^{2}   \int_{|\zeta|>2} |\tilde{f}_1|^p \cdot |\zeta|^{-4}   
\end{eqnarray*}   
whenever $|\zeta_0|>4$.   Thus    
$$
                      |c_k|=\frac1{2\pi} \left| \int_{|\zeta|=r} \frac{\tilde{f}_1(\zeta) }{\zeta^{k+1}} d\zeta\right|\le 
                      C r^{2/p-k}\rightarrow 0\ \ \ (r\rightarrow \infty)
$$
whenever  $k>2/p$, so that
$$
\tilde{f}_1(\zeta)=\sum_{k=1}^{k_p+1}  c_k \zeta^k, \ \ \ \text{i.e.,}\ \ \ 
f_1(z)= \sum_{k=1}^{k_p+1}  c_k z^{-k}.
$$
If $2/p\notin \mathbb Z^+$, then $k_p+1>2/p$, so that $c_{k_p+1}=0$. If $2/p\in \mathbb Z^+$, then $k_p= 2/p-1$ and 
$$
\int_{\mathbb{D}^\ast_{1/2}}|z^{-(k_p+1)}|^p=  \int_{\mathbb{D}^\ast_{1/2}}|z|^{-2}=\infty.
$$
Since
$$
\int_{\mathbb{D}^\ast_{1/2}}|z^{-k}|^p < \infty,\ \ \ k\leq{k_p},
$$                      
we conclude that $c_{k_p+1}=0$.  
\end{proof}

\begin{proof}[Proof of Theorem \ref{th:completeness_isolated}/(2)]
In view of Proposition \ref{prop:localization},  we may assume without loss of generality that $\Omega$ is the punctured unit disc $\mathbb{D}^\ast$.  By Proposition \ref{prop:asyptotic_punctured} we have
$$
K_p(z)\asymp |z|^{-pk_p}
$$
as $z\rightarrow 0$, where $k_p=\max\{k\in\mathbb{Z}^+:\ k<2/p\}$.  For every $f\in A^2(\mathbb{D}^\ast,\log K_p)$ we have
$$
\int_{\mathbb{D}^\ast_{1/2}} |f|^2 \cdot |z|^{pk_p}<\infty, 
$$
so that 
$$
f(z) = c/z+ h(z) ,
$$
where $h\in A^2(\mathbb{D},\log K_p)$, since $pk_p<2$. Thus
$$
K_{2,p}(z) =  \sum_{k=-1}^\infty a_k |z|^{2k}
$$
where  all $a_k>0$. Note that
$$
\left|\frac{\partial K_{2,p}(z)}{\partial z}\right| =|z|^{-3}(a_{-1}+O(|z|^2))
$$
as $z\rightarrow 0$, and 
$$
K_{2,p}(z)\cdot  \frac{\partial^2 K_{2,p}(z)}{\partial z\partial \bar{z}} = a_{-1}^2 |z|^{-6}+ O(|z|^{-4}).
$$
Thus
\begin{eqnarray*}
\frac{\partial^2 \log K_{2,p}(z)}{\partial z\partial \bar{z}} 
                         & = & K_{2,p}(z)^{-2}\cdot \left( K_{2,p}(z)\cdot \frac{\partial^2 K_{2,p}(z)}{\partial z\partial \bar{z}}                                
                          -  \left|\frac{\partial K_{2,p}(z)}{\partial z}\right|^2\right)
\end{eqnarray*}
is uniformly bounded as $z\rightarrow 0$, i.e., $ds^2_p$ is incomplete at $0$.
\end{proof}

\section{Appendix}
In this appendix,  we study the asymptotic behavior of $K_p(z)$ for $\mathbb D^\ast$ and $0<p<2$ as $z\rightarrow 0$.  Let  $k_p$ be as in Lemma \ref{lm:Ap}.  We are going to show the following 

\begin{proposition}\label{prop:asyptotic_punctured}
\begin{equation}\label{eq:asyptotic_punctured_2}
K_{p}(z)=\frac{2-pk_p}{2\pi|z|^{pk_p}}+\frac{4-pk_p}{2\pi}|z|^{2-pk_p}+o(|z|^{2-pk_p}),\ \ \ z\rightarrow0.
\end{equation}
\end{proposition}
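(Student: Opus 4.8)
The plan is to reduce everything, via Lemma \ref{lm:Ap} and the rotational symmetry of $\mathbb{D}^\ast$, to the near-origin asymptotics of a weighted $p$-Bergman kernel on the unit disc. By rotation invariance we may fix $z=t\in(0,1)$, and we abbreviate $k:=k_p$. Lemma \ref{lm:Ap} says precisely that $f\mapsto A:=(\zeta/t)^k f$ is a bijection between $\{f\in A^p(\mathbb{D}^\ast):f(t)=1\}$ and $\{A\in A^p(\mathbb{D}):A(t)=1\}$ (the pole condition of order $\le k$ is exactly what makes $(\zeta/t)^k f$ holomorphic at $0$), and under it $\int_{\mathbb{D}^\ast}|f|^p=t^{pk}\int_{\mathbb{D}^\ast}|\zeta|^{-pk}|A|^p$. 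Hence $K_p(t)=t^{-pk}\,\mathfrak m(t)^{-p}$ with $\mathfrak m(t)^p:=\inf\{\int_{\mathbb{D}^\ast}|\zeta|^{-pk}|A|^p:A\in A^p(\mathbb{D}),\,A(t)=1\}$, and, setting $I_j:=\int_{\mathbb{D}^\ast}|\zeta|^{2j-pk}=\tfrac{2\pi}{2j+2-pk}$ for $j=0,1$ (finite since $pk<2$), the assertion \eqref{eq:asyptotic_punctured_2} is equivalent to
$$\mathfrak m(t)^p=I_0-\frac{I_0^2}{I_1}\,t^2+o(t^2)\qquad(t\to0),$$
because then $\mathfrak m(t)^{-p}=I_0^{-1}\big(1+\tfrac{I_0}{I_1}t^2+o(t^2)\big)=\tfrac{2-pk}{2\pi}+\tfrac{4-pk}{2\pi}t^2+o(t^2)$, and one multiplies by $t^{-pk}$.

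The one fact used throughout is elementary: since $|g|^s$ is subharmonic for $g\in\mathcal O(\mathbb{D})$ and any $s>0$, averaging over circles gives $\int_{\mathbb{D}^\ast}|\zeta|^m|g|^s\ge\big(\int_{\mathbb{D}^\ast}|\zeta|^m\big)|g(0)|^s$ for $m>-2$, with equality only for constant $g$; in particular the weighted $L^2$ Bergman kernels at $0$ with the radial weights $|\zeta|^{-pk}$, $|\zeta|^{2-pk}$ are $1/I_0$, $1/I_1$. The upper bound for $\mathfrak m(t)^p$ — hence the lower bound for $K_p$ — comes from the explicit competitor $A^{(\alpha)}(\zeta):=(1+\alpha\zeta)/(1+\alpha t)$, $\alpha\in\mathbb R$. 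Since $w\mapsto|1+w|^p$ is real-analytic near $0$, expanding $|1+\alpha\zeta|^p$ and $|1+\alpha t|^{-p}$ to second order and integrating, using $\int_{\mathbb{D}^\ast}|\zeta|^{-pk}\zeta^j=0$ for $j\ge1$, gives $\int_{\mathbb{D}^\ast}|\zeta|^{-pk}|A^{(\alpha)}|^p=I_0-pI_0\alpha t+\tfrac{p^2}{4}I_1\alpha^2+O\big((|\alpha|+t)^3\big)$; optimizing the quadratic at $\alpha=2I_0t/(pI_1)$ yields $\mathfrak m(t)^p\le I_0-I_0^2 t^2/I_1+o(t^2)$.

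For the matching lower bound I would first show the minimizer $A_t$ (unique for $p>1$; take any one for $p\le 1$) converges to the constant $1$: $\int|\zeta|^{-pk}|A_t|^p\le I_0$ and the mean-value inequality make $\{A_t\}$ a normal family, and any locally uniform limit $A_0$ satisfies $A_0(0)=\lim_{t\to0}A_t(t)=1$ and $\int|\zeta|^{-pk}|A_0|^p\le I_0$, forcing $A_0\equiv1$ by the extremality fact. A bootstrap then upgrades this to the rate $\|A_t-1\|_{L^\infty(\mathbb{D}_{1-\varepsilon})}=O(t)$ for each fixed $\varepsilon$: combining the global convexity-with-gain inequality $|A_t|^p\ge1+p\,\mathrm{Re}(A_t-1)+A_p|A_t-1|^2(1+|A_t|)^{p-2}$ from \cite{CZ} (for $1<p\le2$; the analogous inequality with an $|A_t-1|^p$ gain when $p\ge2$) with $\int|\zeta|^{-pk}(A_t-1)=I_0\,(A_t-1)(0)$, with $(A_t-1)(0)=O\big(t\,\|A_t-1\|\big)$ since $(A_t-1)(t)=0$, and with $\int|\zeta|^{-pk}|A_t|^p\le I_0$, one obtains a self-improving estimate for $\|A_t-1\|$ on slightly shrinking discs that closes at order $t$.

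Finally, write $B_t:=A_t-1=(\zeta-t)S_t$, with $S_t\in\mathcal O(\mathbb{D})$ and $S_t=O(t)$ on $\mathbb{D}_{1-2\varepsilon}$, and Taylor-expand $\mathfrak m(t)^p=\int_{\mathbb{D}^\ast}|\zeta|^{-pk}|1+B_t|^p$ to second order. Splitting off the boundary layer $\{1-\varepsilon<|\zeta|<1\}$, whose weight has total mass $\delta(\varepsilon)\to0$ and whose contribution to $\mathfrak m(t)^p$ exactly cancels the $\delta(\varepsilon)$ from the bulk (by the orthogonality $\int_{|\zeta|<1-\varepsilon}|\zeta|^{-pk}\zeta^j=0$, $j\ge1$), and noting $\mathrm{Re}\,B_t^2$ contributes only $O(|B_t(0)|^2)$, one gets
$$\mathfrak m(t)^p\ \ge\ I_0+pI_0\,\mathrm{Re}\,B_t(0)+\frac{p^2}{4}(1-o(1))\!\int_{\mathbb{D}_{1-\varepsilon}^\ast}\!|\zeta|^{-pk}|B_t|^2\ +\ o(t^2).$$
Since $B_t(0)=-tS_t(0)+o(t)$ and $\int_{\mathbb{D}_{1-\varepsilon}^\ast}|\zeta|^{-pk}|B_t|^2=\int_{\mathbb{D}_{1-\varepsilon}^\ast}|\zeta|^{2-pk}|S_t|^2+o(t^2)\ge I_1|S_t(0)|^2+o(t^2)$ by the extremality fact, this gives $\mathfrak m(t)^p\ge I_0-pI_0 t|S_t(0)|+\tfrac{p^2}{4}I_1 t^2|S_t(0)|^2+o(t^2)\ge I_0-\tfrac{I_0^2}{I_1}t^2+o(t^2)$, the last step being the scalar inequality $-pI_0 u+\tfrac{p^2}{4}I_1 u^2\ge -I_0^2/I_1$. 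Together with the upper bound this proves \eqref{eq:asyptotic_punctured_2}. The main obstacle is exactly this lower bound: pinning down the rate $\|A_t-1\|=O(t)$ by bootstrapping, and controlling the boundary layer near $|\zeta|=1$ (where the second-order Taylor expansion of $|\cdot|^p$ is unavailable), both require care; but both are made tractable by the energy bound $\int|\zeta|^{-pk}|A_t|^p\le I_0$ and by the cancellation of the annular weight mass $\delta(\varepsilon)$.
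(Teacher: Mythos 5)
Your reduction via Lemma \ref{lm:Ap} to the weighted problem $\mathfrak m(t)^p=\inf\{\int_{\mathbb D}|\zeta|^{-pk_p}|A|^p:A(t)=1\}$, and your test-function computation with $A^{(\alpha)}=(1+\alpha\zeta)/(1+\alpha t)$ giving $\mathfrak m(t)^p\le I_0-\tfrac{I_0^2}{I_1}t^2+o(t^2)$, are correct for all $0<p<2$ and match the paper's lower bound for $K_p$ (the paper instead tests with the zero-free weighted $L^2$ kernel raised to the power $2/p$). The genuine problem is the other half. The Proposition is asserted for the whole range $0<p<2$ in which $k_p$ is defined (see the remark covering $p=2/m$, $m\ge 2$), but every quantitative ingredient of your lower bound for $\mathfrak m(t)^p$ requires $p>1$: the convexity-with-gain inequality from \cite{CZ} that drives your bootstrap has gain constant $A_p=\tfrac p2\min\{1,p-1\}$, which vanishes at $p=1$ and does not exist for $p<1$ (indeed even the first-order inequality $|b|^p\ge |a|^p+p\,\mathrm{Re}\{|a|^{p-2}\bar a(b-a)\}$, which you also need on the boundary annulus to make its contribution "exactly cancel" $\delta(\varepsilon)$, is false for $p<1$). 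The rate $\|A_t-1\|_{L^\infty(\mathbb D_{1-\varepsilon})}=O(t)$ is not a cosmetic convenience in your scheme: without it, e.g. the cross term $2t\int|\zeta|^{1-pk_p}|S_t|^2$ arising from $|\zeta-t|^2|S_t|^2$ is only $o(t)$, which swamps the $t^2$ terms you are trying to isolate. So as written your argument proves \eqref{eq:asyptotic_punctured_2} only for $1<p<2$ (and the parenthetical about $p\ge 2$ is vacuous, since $k_p$ is not defined there). The paper closes exactly this case by a different device: once $g_z\to 1$ locally uniformly (your step (a), which is Lemma \ref{lm:continuity_maximizer}), the extremal is zero-free on $\mathbb D_\rho$, so one writes it as a $2/p$-th power, uses the monotone-circle-means inequality of Lemma \ref{lm:integrating_on_circles} to pass from $\mathbb D$ to $\mathbb D_\rho$, and compares with the explicit weighted $L^2$ kernel of $\mathbb D_\rho$; this needs no convexity in $p$ and works uniformly in $0<p<2$. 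You would need to graft some such argument (or a genuinely new estimate) onto your scheme to cover $0<p\le 1$.

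Two smaller points in the part that does work. First, your final display has a scaling slip: consistent with the line before it, the quadratic term should be $\tfrac{p^2}{4}(1-o(1))I_1|S_t(0)|^2$, not $\tfrac{p^2}{4}I_1t^2|S_t(0)|^2$, and the concluding scalar inequality should be $-pI_0tv+\tfrac{p^2}{4}I_1v^2\ge -I_0^2t^2/I_1$ applied with $v=|S_t(0)|$; as literally written, $-pI_0u+\tfrac{p^2}{4}I_1u^2\ge -I_0^2/I_1$ with $u=t|S_t(0)|$ only yields a constant-size deficit, not one of order $t^2$. Second, the sub-mean-value step on $\mathbb D_{1-\varepsilon}$ gives the factor $I_1(1-\varepsilon)^{4-pk_p}$ rather than $I_1+o(t^2)$, so the conclusion is reached by letting $t\to 0$ first and then $\varepsilon\to 0$ (a limsup/liminf argument exactly parallel to the paper's $\rho\to 1$); this should be said explicitly, but it is routine.
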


\begin{remark}
If $p=2/m$ ($m\in\mathbb{Z}^+$), then $k_p=m-1$, $pk_p=2-2/m$, and \eqref{eq:asyptotic_punctured_2} becomes
\[
K_{2/m}(z)=\frac{1}{m\pi|z|^{2-2/m}}+\frac{m+1}{m\pi}|z|^{2/m}+o(|z|^{2/m}),\ \ \ z\rightarrow0.
\]
\end{remark}

Since $\mathbb D^\ast$ is invariant under rotations and $K_p(z)$ is locally Lipschitz continuous,  it follows from the transformation rule  (cf.   \cite[Proposition 2.7]{CZ}) that  $K_p(z)=\phi_p(|z|)$,  where $\phi_p:(0,1)\rightarrow\mathbb{R}$ is a locally Lipschitz continuous function.  Moreover,  since $\log K_p(z)$ is subharmonic,  it follows from a classical theorem of Hardy that $\log{\phi_p}$  is convex with respect to $\log{r}$.

Given $f\in{A^p}(\mathbb{D}^\ast)$,  write $f(z)=g(z)/z^{k_p}$ for some $g\in{A^p(\mathbb{D})}$ in view of Lemma \ref{lm:Ap}.  Hence
\begin{equation}\label{eq:relation}
\frac{|f(z)|^p}{\int_{\mathbb{D}^\ast}|f|^p}=\frac{|g(z)|^p}{\int_{\mathbb{D}^\ast}|g(w)|^p/|w|^{pk_p}}\frac{1}{|z|^{pk_p}}.
\end{equation}
Set $\varphi_p(z):=pk_p\log|z|$ and define
\[
A^p(\mathbb{D},\varphi_p):=\left\{g\in\mathcal{O}(\mathbb{D}):\ \|g\|_{p,\varphi_p}^p=\int_{\mathbb{D}}|g|^p e^{-\varphi_p}<\infty\right\},
\]
\begin{equation}\label{eq:disc_kernel_weighted}
K_{p,\varphi_p}(z):=\sup\left\{\frac{|g(z)|^p}{\|g\|_{p,\varphi_p}^p}: g\in{A}^p(\mathbb{D}),g\neq0\right\}.
\end{equation}
By \eqref{eq:relation} we have
\begin{equation}\label{eq:punctured_and_weighted}
K_p(z)=\frac{K_{p,\varphi_p}(z)}{|z|^{pk_p}},\ \ \ z\in\mathbb{D}^\ast.
\end{equation}

For any zero-free $h\in{A^2(\mathbb{D})}$, we have $h^{2/p}\in{A^p(\mathbb{D})}$,  so that
\begin{equation}\label{eq:puncture_1}
K_{p,\varphi_p}(z)\geq \frac{|h(z)|^2}{\int_{\mathbb{D}}|h(w)|^2/|w|^{pk_p}}=\frac{|h(z)|^2}{\|h\|_{2,\varphi_p}^2}.
\end{equation}
On the other hand,  a straightforward calculation shows that 
\[h_j(z)=a_jz^j,\ \ \ j=0,1,2,\cdots,
\ \ \ a_j=\left(\frac{2j+2-pk_p}{2\pi}\right)^{1/2},
\]
is a complete orthonormal basis of $A^2(\mathbb D,\varphi_p)$,  so that
\begin{eqnarray}
K_{2,\varphi_p}(w,z) &=& \sum^\infty_{j=0}\frac{2j+2-pk_p}{2\pi}(w\bar{z})^j\nonumber\\
&=& \sum^\infty_{j=0}\frac{j+1}{\pi}(w\bar{z})^j-\frac{pk_p}{2\pi}\sum^\infty_{j=0}(w\bar{z})^j\nonumber\\
&=& \frac{1}{\pi}\frac{1}{(1-w\bar{z})^2}-\frac{pk_p}{2\pi}\frac{1}{1-w\bar{z}}\nonumber\\
&=& \frac{2-pk_p+pk_pw\bar{z}}{2\pi(1-w\bar{z})^2}.\label{eq:disc_2_Bergman}
\end{eqnarray}
In particular,   $K_{2,\varphi_p}(\cdot,z)$ is zero-free when $|z|<\frac{2-pk_p}{pk_p}$,  so that
\begin{equation}\label{eq:punctured_disc_lower}
K_{p,\varphi_p}(z)\geq\frac{|K_{2,\varphi_p}(z)|^2}{\|K_{2,\varphi_p}(\cdot,z)\|_{2,\varphi_p}^2}=K_{2,\varphi_p}(z),\ \ \ |z|<\frac{2-pk_p}{pk_p}.
\end{equation}
This together with \eqref{eq:punctured_and_weighted}, \eqref{eq:puncture_1} and \eqref{eq:disc_2_Bergman} yield
\begin{equation}\label{eq:puncture_2}
K_p(z)\ge \frac{2-pk_p+pk_p|z|^2}{2\pi|z|^{pk_p}(1-|z|^2)^2},\ \ \ |z|<\frac{2-pk_p}{pk_p}.
\end{equation}

Note that $K_{p,\varphi_p}(z)$ can be also defined at $z=0$.  The mean-value inequality gives
\[
\|g\|_{p,\varphi_p}^p=\int^1_0r^{1-pk_p}\int^{2\pi}_0|g(re^{i\theta})|^pd\theta{dr}\geq2\pi|g(0)|^p\int^1_0r^{1-pk_p}dr=\frac{2\pi}{2-pk_p}|g(0)|^p,
\]
with equality if and only if $g$ is a constant. Hence
\[
K_{p,\varphi_p}(0)=\frac{2-pk_p}{2\pi}.
\]

A standard normal family argument yields that given $z\in\mathbb{D}$,  there exists  $g_z\in{A^p(\mathbb{D},\varphi_p)}$ such that $g_z(z)=1$ and $K_{p,\varphi_p}(z)=1/\|g_z\|_{p,\varphi_p}^p$.   The above argument yields that $g_0\equiv 1$.

\begin{lemma}\label{lm:continuity_maximizer}
\begin{enumerate}
\item[
$(1)$] $K_{p,\varphi_p}(z)$ is locally Lipschitz continuous on $\mathbb{D}$.
\item
[$(2)$] $g_z$ converges locally uniformly on $\mathbb D$ to $g_0$ when $z\rightarrow0$.
\end{enumerate}
\end{lemma}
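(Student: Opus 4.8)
The plan is to run a normal-family argument of the same flavour as in \cite{CZ}, the only new feature being the weight $e^{-\varphi_p}=|w|^{-pk_p}$. Write $m_{p,\varphi_p}(z):=K_{p,\varphi_p}(z)^{-1/p}=\|g_z\|_{p,\varphi_p}$ for the value of the minimizing problem $\inf\{\|g\|_{p,\varphi_p}:g\in A^p(\mathbb D,\varphi_p),\ g(z)=1\}$. First I would record a uniform bound: testing against the constant $1$ gives $m_{p,\varphi_p}(z)^p\le\|1\|_{p,\varphi_p}^p=\frac{2\pi}{2-pk_p}$ for every $z\in\mathbb D$. Since $pk_p<2$ forces $|w|^{-pk_p}\ge 1$ on $\mathbb D$, we have $\|g\|_{p,\varphi_p}^p\ge\int_{\mathbb D}|g|^p$, so the ordinary Bergman inequality together with Cauchy's estimates yield, for every compact $L\subset\mathbb D$, a constant $M_L$ \emph{independent of $z$} with $\sup_L\big(|g_z|+|g_z'|\big)\le M_L$; in particular $\{g_z\}_{z\in\mathbb D}$ is a normal family.

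For part $(1)$, fix a compact $K\subset\mathbb D$, let $L$ be a slightly larger compact neighbourhood of $K$, and let $z,z'\in K$ with $|z-z'|$ small. The derivative bound gives $|g_z(z')-1|=|g_z(z')-g_z(z)|\le M_L|z-z'|$, so $g_z(z')\neq 0$ and $h:=g_z/g_z(z')$ is admissible at $z'$, whence $m_{p,\varphi_p}(z')\le\|g_z\|_{p,\varphi_p}/|g_z(z')|\le m_{p,\varphi_p}(z)\big(1-M_L|z-z'|\big)^{-1}$. Interchanging $z$ and $z'$ and using the uniform upper bound on $m_{p,\varphi_p}$, one gets $|m_{p,\varphi_p}(z)-m_{p,\varphi_p}(z')|\le C_K|z-z'|$. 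Finally the Bergman inequality gives $K_{p,\varphi_p}\le C_K'$ on $K$, so $m_{p,\varphi_p}$ is bounded below there, and $K_{p,\varphi_p}=m_{p,\varphi_p}^{-p}$ is Lipschitz on $K$.

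For part $(2)$, take any sequence $z_j\to 0$. By normality some subsequence $g_{z_{j_k}}$ converges locally uniformly to $g_\ast\in\mathcal O(\mathbb D)$, and then $g_\ast(0)=\lim_k g_{z_{j_k}}(z_{j_k})=1$, while Fatou's lemma and the continuity of $m_{p,\varphi_p}$ at $0$ from part $(1)$ give $\|g_\ast\|_{p,\varphi_p}^p\le\liminf_k\|g_{z_{j_k}}\|_{p,\varphi_p}^p=m_{p,\varphi_p}(0)^p$; the reverse inequality is the definition of $m_{p,\varphi_p}(0)$, so $g_\ast$ realizes the minimum at $0$. But the inequality $\|g\|_{p,\varphi_p}^p=\int_0^1 r^{1-pk_p}\big(\int_0^{2\pi}|g(re^{i\theta})|^p\,d\theta\big)\,dr\ge\frac{2\pi}{2-pk_p}|g(0)|^p$, already used to compute $K_{p,\varphi_p}(0)$, is strict unless the circular means $\frac1{2\pi}\int_0^{2\pi}|g(re^{i\theta})|^p\,d\theta$ are constant in $r$; since $|g|^p$ is subharmonic this means $|g|^p$ is harmonic on $\mathbb D$, which for a holomorphic $g$ with $g(0)=1$ forces $g\equiv 1$. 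Hence $g_\ast\equiv 1=g_0$, and since every subsequence of $\{g_{z_j}\}$ has a further subsequence converging locally uniformly to $g_0$, the whole family converges, which proves $(2)$.

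I expect the main obstacle to be obtaining the genuine Lipschitz (rather than merely H\"older-$\frac12$) modulus in $(1)$: this is precisely why one needs the uniform derivative bound on the extremal functions $g_z$, hence the uniform norm bound $m_{p,\varphi_p}\le\|1\|_{p,\varphi_p}$ and the trivial weight estimate $|w|^{-pk_p}\ge 1$. The remaining delicate point — uniqueness of the minimizer at $0$, used in $(2)$ — is handled by the equality analysis of the mean-value inequality just indicated.
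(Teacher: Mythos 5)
Your argument is correct and follows essentially the same route as the paper: a normal-family/uniform-derivative bound on the extremal functions (coming from the weight $|w|^{-pk_p}\ge 1$ plus the Bergman and Cauchy estimates) yields the Lipschitz estimate in $(1)$ by comparing extremals at nearby points, and $(2)$ is obtained exactly as in the paper via normality, Fatou's lemma, continuity of the extremal value at $0$, and the uniqueness of the minimizer $g_0\equiv 1$ from the equality case of the mean-value inequality. The only cosmetic differences are that you phrase $(1)$ in terms of $m_{p,\varphi_p}=K_{p,\varphi_p}^{-1/p}$ rather than $K_{p,\varphi_p}^{1/p}$, and that you spell out the equality analysis (constant circular means, hence $|g|^p$ harmonic, hence $g$ constant) which the paper records just before the lemma.
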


\begin{proof}
(1) Let $S$ be a compact set in $\Omega$ and $z,w\in{S}$. Take $f:=g_z/\|g_z\|_{p,\varphi}$, we have
\[
K_{p,\varphi_p}(z)^{1/p}=f(z)\leq{|f(w)|+C_S|z-w|}\leq{K_{p,\varphi_p}(w)^{1/p}}+C_S|z-w|,
\]
which implies that $K_{p,\varphi_p}^{1/p}$, and hence $K_{p,\varphi_p}$, are locally Lipschitz continuous in $z$.

(2) Take any sequence $\{z_j\}$ with $z_j\rightarrow0$ as $j\rightarrow\infty$.  Then we have
 $$
 \|g_{z_j}\|_{p,\varphi_p}=K_{p,\varphi_p}(z_j)^{-1/p}\rightarrow{K_{p,\varphi_p}(0)^{-1/p}}=\|g_0\|_{p,\varphi_p}. 
 $$
 In particular, $\|g_{z_j}\|_{p,\varphi_p}$ is bounded,  so that $\{g_{z_j}\}$ forms a normal family, and there is a subsequence $\{g_{z_{j_k}}\}$ converging locally uniformly to some $h\in\mathcal{O}(\mathbb{D})$.

We claim that $h=g_0$.  Indeed,  Fatou's lemma gives
\[
\int_{\mathbb{D}}\frac{|h(w)|^p}{|w|^{pk_p}}\leq\liminf_{k\rightarrow\infty}\int_{\mathbb{D}}\frac{|g_{z_{j_k}}(w)|^p}{|w|^{pk_p}}=\frac{1}{K_{p,\varphi_p}(0)}.
\]
Moreover, since
\[
|h(0)-1|=|h(0)-g_{z_{j_k}}(z_{j_k})|\leq|h(0)-h(z_{j_k})|+|h(z_{j_k})-g_{z_{j_k}}(z_{j_k})|\rightarrow0\ \ \ (k\rightarrow\infty),
\]
we have $h(0)=1$.  Thus
\[
K_{p,\varphi_p}(0)\leq\frac{|h(0)|^p}{\int_{\mathbb{D}}|h(w)|^p/|w|^{pk_p}}.
\]
Since $g_0$ is the unique maximizer  of \eqref{eq:disc_kernel_weighted} at $z=0$,  we conclude that  $h=g_0$.  Since the sequence $\{z_j\}$ can be chosen arbitrarily,  we conclude the proof.
\end{proof}

 Lemma \ref{lm:continuity_maximizer} implies that for any $0<\rho<1$,  $g_z$ is zero-free on  $\mathbb{D}_\rho:=\{w: |w|<\rho\}$ for $|z|\ll1$.  In particular, there exists $h_z\in{\mathcal{O}(\mathbb{D}_\rho)}$ such that $h_z^{2/p}=g_z$. 

\begin{lemma}\label{lm:integrating_on_circles}
For any $g\in{A^p(\mathbb{D})}$ and $0<\rho<1$, we have
\[
\int_{\mathbb{D}_\rho}\frac{|g(w)|^p}{|w|^{pk_p}}\leq \rho^{2-pk_p}\int_{\mathbb{D}}\frac{|g(w)|^p}{|w|^{pk_p}}.
\]
\end{lemma}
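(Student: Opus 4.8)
The plan is to reduce the inequality to a one-variable statement in the radial variable, in exactly the same spirit as the computation $s_p(\mathbb{D}_r,\mathbb{D})=r^2$ carried out in \S\ref{subsec:basic_property}, the only new feature being the weight $|w|^{-pk_p}$. If $\int_{\mathbb{D}}|g(w)|^p|w|^{-pk_p}=\infty$ the asserted inequality is vacuous, so I would assume this integral is finite from the outset.

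First I would set $\beta:=2-pk_p$, which is strictly positive since $k_p<2/p$, and pass to polar coordinates: putting $\mu(t):=\int_0^{2\pi}|g(te^{i\theta})|^p\,d\theta$ for $0<t<1$, Fubini gives
\[
\int_{\mathbb{D}_\rho}\frac{|g(w)|^p}{|w|^{pk_p}}=\int_0^\rho t^{\beta-1}\mu(t)\,dt,\qquad \int_{\mathbb{D}}\frac{|g(w)|^p}{|w|^{pk_p}}=\int_0^1 t^{\beta-1}\mu(t)\,dt,
\]
the integrability near $t=0$ being ensured by $\beta-1>-1$. Since $|g|^p$ is subharmonic, $\mu$ is nondecreasing on $(0,1)$ (just as $M_p(r,f)^p$ is nondecreasing in $r$ in the computation of $s_p(\mathbb{D}_r,\mathbb{D})$), and finiteness of the integral over $\mathbb{D}$ forces $\mu(t)<\infty$ for every $t<1$.

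The remaining step is the standard monotonicity trick. From $\mu$ nondecreasing I would estimate
\[
\int_0^\rho t^{\beta-1}\mu(t)\,dt\le \mu(\rho)\int_0^\rho t^{\beta-1}\,dt=\frac{\rho^\beta}{\beta}\,\mu(\rho),\qquad \int_\rho^1 t^{\beta-1}\mu(t)\,dt\ge \mu(\rho)\int_\rho^1 t^{\beta-1}\,dt=\frac{1-\rho^\beta}{\beta}\,\mu(\rho),
\]
whence $(1-\rho^\beta)\int_0^\rho t^{\beta-1}\mu(t)\,dt\le \rho^\beta\int_\rho^1 t^{\beta-1}\mu(t)\,dt$. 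Adding $\rho^\beta\int_0^\rho t^{\beta-1}\mu(t)\,dt$ to both sides and invoking the two identities above gives
\[
\int_{\mathbb{D}_\rho}\frac{|g(w)|^p}{|w|^{pk_p}}=\int_0^\rho t^{\beta-1}\mu(t)\,dt\le \rho^\beta\int_0^1 t^{\beta-1}\mu(t)\,dt=\rho^{2-pk_p}\int_{\mathbb{D}}\frac{|g(w)|^p}{|w|^{pk_p}},
\]
which is the desired inequality.

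I do not expect a serious obstacle here: the only point that calls for a line of justification is that $\mu$ is a well-defined, finite, measurable and monotone function on $(0,1)$, which follows from the subharmonicity of $|g|^p$ together with the standing assumption that the weighted integral over $\mathbb{D}$ is finite; everything else is Fubini's theorem and the elementary comparison above.
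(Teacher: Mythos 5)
Your proof is correct and follows essentially the same route as the paper: pass to polar coordinates, use that the circular means $\mu(t)=\int_0^{2\pi}|g(te^{i\theta})|^p\,d\theta$ are nondecreasing, and compare the weighted integrals over $(0,\rho)$ and $(\rho,1)$ — the paper phrases the comparison via the substitution $s=t^{2-pk_p}$ while you compare both pieces to $\mu(\rho)$ directly, exactly as in the paper's own computation of $s_p(\mathbb{D}_r,\mathbb{D})=r^2$, but this is only a cosmetic difference. No gaps.
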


\begin{proof}
If we set
\[
M(t)=\int^{2\pi}_0|g(te^{i\theta})|^pd\theta,
\]
then
\begin{eqnarray*}
\int_{\mathbb{D}_\rho}\frac{|g(w)|^p}{|w|^{pk_p}} &=& \int^\rho_0t^{1-pk_p}M(t)dt
\ =\  \frac{1}{2-pk_p}\int^\rho_0M(t)d(t^{2-pk_p})\\
&=& \frac{1}{2-pk_p}\int^{\rho^{2-pk_p}}_0M\left(s^{\frac{1}{2-pk_p}}\right)ds,
\end{eqnarray*}
and analogously, 
\[
\int_{\mathbb{D}}\frac{|g(w)|^p}{|w|^{pk_p}}=\frac{1}{2-pk_p}\int^1_0M\left(s^{\frac{1}{2-pk_p}}\right)ds.
\]
Since $M(t)$ is an increasing function, we have
\[
\frac{1}{1-\rho^{2-pk_p}}\int^1_{\rho^{2-pk_p}}M\left(s^{\frac{1}{2-pk_p}}\right)ds\geq\frac{1}{\rho^{2-pk_p}}\int^{\rho^{2-pk_p}}_0M\left(s^{\frac{1}{2-pk_p}}\right)ds,
\]
so that
\begin{eqnarray*}
\int^1_0M\left(s^{\frac{1}{2-pk_p}}\right)ds &=&\int^{\rho^{2-pk_p}}_0M\left(s^{\frac{1}{2-pk_p}}\right)ds+\int^1_{\rho^{2-pk_p}}M\left(s^{\frac{1}{2-pk_p}}\right)ds\\
&\geq& \left(1+\frac{1-\rho^{2-pk_p}}{\rho^{2-pk_p}}\right)\int^{\rho^{2-pk_p}}_0M\left(s^{\frac{1}{2-pk_p}}\right)ds\\
&=&\frac{1}{\rho^{2-pk_p}}\int^{\rho^{2-pk_p}}_0M\left(s^{\frac{1}{2-pk_p}}\right)ds,
\end{eqnarray*}
which completes the proof.
\end{proof}

Lemma \ref{lm:integrating_on_circles} implies that for $|z|\ll1$,  
\begin{eqnarray}
K_{p,\varphi_p}(z) &=& \frac{|g_z(z)|^p}{\int_{\mathbb{D}}|g_z(w)|^p/|w|^{pk_p}}
\ \leq\  \rho^{2-pk_p} \frac{|g_z(z)|^p}{\int_{\mathbb{D}_\rho}|g_z(w)|^p/|w|^{pk_p}}\nonumber\\
&=& \rho^{2-pk_p} \frac{|h_z(z)|^2}{\int_{\mathbb{D}_\rho}|h_z(w)|^2/|w|^{pk_p}}
\ \leq \  \rho^{2-pk_p} K_{\rho,2,\varphi_p}(z),\label{eq:disc_comparison}
\end{eqnarray}
where $K_{\rho,2,\varphi_p}$ is the kernel of $A^2(\mathbb{D}_\rho,\varphi_p)$.   Similar as  \eqref{eq:disc_2_Bergman},  we obtain
\begin{equation}\label{eq:disc_2_Bergman_smaller}
K_{\rho,2,\varphi_p}(z)=\frac{1}{\rho^{2-pk_p}}\sum^\infty_{j=0}\frac{2j+2-pk_p}{2\pi{\rho^{2j}}}|z|^{2j}=\frac{1}{\rho^{2-pk_p}}\frac{2-pk_p+pk_p|z|^2/\rho^2}{2\pi(1-|z|^2/\rho^2)^2}.
\end{equation}
This together with \eqref{eq:disc_comparison} yield
\[
K_{p,\varphi_p}(z)\leq\frac{2-pk_p+pk_p|z|^2/\rho^2}{2\pi(1-|z|^2/\rho^2)^2},\ \ \ |z|\ll 1,
\]
so that
\begin{equation}\label{eq:puncture_3}
K_{p}(z) \leq\frac{2-pk_p+pk_p|z|^2/\rho^2}{2\pi|z|^{pk_p}(1-|z|^2/\rho^2)^2},  \ \ \ |z|\ll 1.
\end{equation}

\begin{proof}[Proof of Proposition \ref{prop:asyptotic_punctured}]
We infer from \eqref{eq:disc_2_Bergman}, \eqref{eq:puncture_2}, \eqref{eq:disc_2_Bergman_smaller} and \eqref{eq:puncture_3} that
\[
\frac{2-pk_p}{2\pi}+\frac{4-pk_p}{2\pi}|z|^2+\cdots\leq{K_{p,\varphi_p}(z)}\leq\frac{2-pk_p}{2\pi}+\frac{4-pk_p}{2\pi\rho^2}|z|^2+\cdots
\]
for $|z|\ll 1$.  For fixed $0<\rho<1$,  we have
\[
\frac{4-pk_p}{2\pi}\leq\liminf_{z\rightarrow0}\frac{K_{p,\varphi_p}(z)-\frac{2-pk_p}{2\pi}}{|z|^2}\leq\limsup_{z\rightarrow0}\frac{K_{p,\varphi_p}(z)-\frac{2-pk_p}{2\pi}}{|z|^2}\leq\frac{4-pk_p}{2\pi\rho^2}.
\]
Letting $\rho\rightarrow1$, we conclude that
\[
K_{p,\varphi_p}(z)=\frac{2-pk_p}{2\pi}+\frac{4-pk_p}{2\pi}|z|^2+o(|z|^2)\ \ \ (z\rightarrow0),
\]
which implies \eqref{eq:asyptotic_punctured_2}.
\end{proof}

We know that $K_{p}(z)=\phi_p(|z|)$, where $\phi_p$ is convex with respect to $\log{r}$. Since $A^p(\mathbb{D}^\ast)\subset{A^p(\mathbb{D})}$, we have
\[
K_{\mathbb{D}^\ast,p}(z)\geq{K_{\mathbb{D},p}(z)}=\frac{1}{\pi(1-|z|^2)^2},\ \ \ \forall\, z\in\mathbb{D},
\]
so that $\phi_p(r)\rightarrow+\infty$ as $r\rightarrow1-$. On the other hand,  Proposition \ref{prop:asyptotic_punctured} implies $\phi_p(r)\rightarrow+\infty$ as $r\rightarrow0+$ for $0<p<2$. Hence for $0<p<2$, there exists some $r_p\in(0,1)$ such that $\phi_p$ is decreasing on $(0,r_p)$ and increasing on $(r_p,1)$. In particular, $K_{p}(z)$ attains the minimum $\phi_p(r_p)$ when $|z|=r_p$.  A natural question is 
\begin{problem}
What are the precise values of $r_p$ and $\phi_p(r_p)$?
\end{problem}

{\bf Acknowledgements.} The authors would like to thank Liangying Jiang for catching a big error in the proof of Theorem \ref{th:HL_2}.

\end{document}